\documentclass[12pt,a4paper]{amsart}
\usepackage{tikz-cd}
\usepackage{amscd}
\usepackage{amssymb}
\usepackage[centering,text={15.5cm,22cm}]{geometry}
\usepackage{algorithm2e,setspace}
\usepackage{graphicx,color}
\usepackage{xcolor}
\usepackage{pgfplots}
\usepackage[all]{xy}
\usepackage{mathrsfs}
\usepackage{marvosym}
\usepackage{wrapfig,caption}
\usepackage{hyperref,url}
\usepackage{comment}
\usepackage{enumitem}

\definecolor{shadecolor}{rgb}{1,0.9,0.7}

\setlength{\marginparwidth}{12ex}
\setcounter{tocdepth}{1}

\newtheorem{theorem}{Theorem}[section]
\newtheorem{lemma}[theorem]{Lemma}
\newtheorem{lemma-definition}[theorem]{Lemma-Definition}
\newtheorem{proposition}[theorem]{Proposition}
\newtheorem{corollary}[theorem]{Corollary}

\newtheorem{assumption}[theorem]{Assumption}

\newtheorem{terminology}[theorem]{Terminology}

\theoremstyle{definition}

\newtheorem{definition}[theorem]{Definition}

\newtheorem{example}[theorem]{Example}

\newtheorem{notation}[theorem]{Notation}

\theoremstyle{remark}
\newtheorem{remark}[theorem]{Remark}

\numberwithin{equation}{section}
\numberwithin{figure}{section}

%===========================================================

%\newcommand {\lfor} {\mbox{$[\hspace{-1.5pt}[$}}
%\newcommand {\rfor} {\mbox{$]\hspace{-1.5pt}]$}}

\newcommand{\ZZ} {\mathbb{Z}}
\newcommand{\QQ} {\mathbb{Q}}
\newcommand{\RR} {\mathbb{R}}
\newcommand{\CC} {\mathbb{C}}

\newcommand{\PP} {\mathbb{P}}

\newcommand {\shA}  {\mathcal{A}}

\newcommand {\shL}  {\mathcal{L}}

\newcommand {\shO}  {\mathcal{O}}

\newcommand {\shS}  {\mathcal{S}}

%===========================================================

%\newcommand {\arg}  {\operatorname{arg}}
\newcommand {\Aut}  {\operatorname{Aut}}

\newcommand {\Comp} {\operatorname{Comp}}

\newcommand{\coDelta}{%
  \begingroup\normalfont
  \hspace{.6pt}\includegraphics[height=\fontcharht\font`\B]{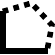}%
  \endgroup
}
\newcommand {\coker} {\operatorname{coker}}
\newcommand {\conv} {\operatorname{conv}}

\newcommand {\Disc} {\operatorname{Disc}}

\newcommand {\Diff} {\operatorname{Diff}}

\newcommand {\eps}  {\varepsilon}

\newcommand {\GL}  {\operatorname{GL}}

\newcommand {\Ham}  {\operatorname{Ham}}
\newcommand {\Hess}  {\operatorname{Hess}}

\newcommand {\Hom}  {\operatorname{Hom}}

\newcommand {\hra} {\hookrightarrow}

\newcommand {\id}  {\operatorname{id}}
\newcommand {\im}  {\operatorname{Im}}
\newcommand {\IIm}  {\operatorname{Im}}

\newcommand {\Int}  {\operatorname{Int}}

\renewcommand {\ker } {\operatorname{ker}}

\newcommand {\la}  {\leftarrow}

\newcommand {\lra}  {\longrightarrow}

\renewcommand {\max} {{\operatorname{max}}}

\newcommand {\mult} {\operatorname{mult}}

\newcommand {\ra}  {\to}

\newcommand {\Reg}  {\operatorname{Reg}}

\newcommand {\Sing} {\operatorname{Sing}}

\newcommand {\Smooth} {\operatorname{Smooth}}

\newcommand {\Sympl} {\operatorname{Symp}}

\newcommand {\tran}  {{\operatorname{tran}}}

\newcommand {\Type}  {{\operatorname{Type}}}

\newcommand {\wt} {\widetilde}

\def\mydate{\ifcase\month \or January\or February\or March\or
April\or May\or June\or July\or August\or September\or October\or
November\or December\fi \space\number\day,\space\number\year}

%=========================================================

%Definitions specific to this paper:

\begin{document}

%===========================================================

\title
[Tropically constructed Lagrangians in mirror quintic threefolds]
{Tropically constructed Lagrangians in mirror quintic threefolds}
%\mbox{\tiny -preliminary version-}}
\author{Cheuk Yu Mak and Helge Ruddat}

\date{\today}

% author one information
%\author{Helge Ruddat}
\address{\tiny JGU Mainz, Institut f\"ur Mathematik, Staudingerweg 9, 55099 Mainz, Germany}
%\curraddr{}
%\thanks{This work was partially supported by DFG research grant RU 1629/1-1}
\email{ruddat@uni-mainz.de}

% author two information
%\author{Bernd Siebert}
\address{\tiny DPMMS, University of Cambridge, CB3 0WB, UK}
%\curraddr{}
\email{cym22@dpmms.cam.ac.uk}
%\thanks{{\tiny The first author was funded by 
%National Science Foundation under agreement No. DMS-1128155 and by EPSRC (Establish Career Fellowship EP/N01815X/1).
%The second author was funded by DFG grant RU 1629/4-1.
%The authors also gratefully acknowledge the support of Hausdorff Research Institute for Mathematics
%(Bonn), through the Junior Trimester Program on Symplectic Geometry and Representation Theory.
%}}

\begin{abstract}
 We use tropical curves and toric degeneration techniques to construct closed embedded Lagrangian rational homology spheres in a lot of Calabi-Yau threefolds. The homology spheres are mirror dual to the holomorphic curves contributing to the GW invariants.
In view of Joyce's conjecture, these Lagrangians are expected to have special Lagrangian representatives and hence solve a special Lagrangian enumerative problem in Calabi-Yau threefolds.

 We apply this construction to the tropical curves obtained from the 2875 lines on the quintic Calabi-Yau threefold. 
 Each admissible tropical curve gives a Lagrangian rational homology sphere in the corresponding mirror quintic threefold and
the Joyce's weight of each of these Lagrangians equals to the multiplicity of the corresponding tropical curve. 

As applications, we show that disjoint curves give pairwise homologous but non-Hamiltonian isotopic Lagrangians and we check in an example that $>300$ mutually disjoint curves (and hence Lagrangians) arise. 
Dehn twists along these Lagrangians generate an abelian subgroup of the symplectic mapping class group with that rank.
\vspace{-1cm}
\end{abstract}
\maketitle

\setcounter{tocdepth}{1}
\tableofcontents

%%%%%%% INTRODUCTION %%%%%%%%%%%%%%%%%%%%%%%%%%%%%%%%%%%%%%%%%%%%%%%%%%%%%%%%%%%%%%%%%%%%%%%%%%%%%%%%%%%%%%

\section{Introduction}

Special Lagrangian submanifolds of Calabi-Yau threefolds have received much attention due to their role in mirror symmetry.
Based on Thomas and Yau \cite{Thomas01}, \cite{ThomasYau02}, Dominic Joyce \cite{Joyce15} conjectured that
a Lagrangian submanifold $L$ admits a special Lagrangian representative 
(after surgery at a discrete set of times under Lagrangian mean curvature flow) if 
$L$ is a stable object in the derived Fukaya category with respect to an appropriate Bridgeland stability condition.
Therefore, roughly speaking, special Lagrangians correspond to stable objects.
In \cite{Joyce02}, Joyce proposed a counting invariant for rigid special Lagrangians 
(i.e.~special Lagrangian rational homology spheres) 
so that each of these Lagrangians $L$ is weighted by $w(L):=|H_1(L,\ZZ)|$ when it is counted
and, under 
the conjectural correspondence between special Lagrangians and stable objects, Joyce's counting invariant is conjectured to be mirror to the
Donaldson-Thomas invariant.
One possible explanation of the weight $w(L)=|H_1(L,\ZZ)|$ is that objects in the Fukaya category are Lagrangians with local systems and $|H_1(L,\ZZ)|$ is exactly the number of rank one local systems
on $L$, giving $|H_1(L,\ZZ)|$ many different objects in the Fukaya category. (The original explanation in \cite{Joyce02} is different.)
%By analyzing the deformation of special Lagrangians, Joyce proposed that to get an invariant, each spherical Lagrangian rational homology sphere $L$ 
%should be weighted by $w(L):=|H_1(L,\ZZ)|$ when it is counted.

Even before counting, finding special Lagrangians is a challenging problem (\cite{Hit01}, \cite{Joyce05} etc).
The main source of examples is given by the set of real points.
Making a given Lagrangian special is hard. Even without the specialty assumption, there aren't many explicit methods to construct closed embedded Lagrangian submanifolds in 
Calabi-Yau threefolds in the literature, especially when the Calabi-Yau is assumed to be compact and the Lagrangians spherical. 
In this paper, we provide a new method to address the latter difficulty using toric degeneration techniques and tropical curves.

The Lagrangians are constructed by dualizing tropical curves that contribute to the Gromov-Witten invariant of the mirror.
Therefore, even though we do not show that the Lagrangians we construct are (Hamiltonian isotopic to) special Lagrangians,
their quasi-isomorphism classes in the Fukaya category are conjecturally mirror dual to the stable sheaves contributing to the Donaldson-Thomas invariants (via DT/GW correspondence).
It is expected that these Lagrangians would give the full set of stable objects in a fixed $K$-class in the Fukaya category with respect to a stability condition, and hence play an important role towards the enumerative 
study of stable objects in the Fukaya category.
In particular, we find that the weight $w(L)$ indeed coincides with the multiplicity of the corresponding tropical curve which is also how it enters the mirror dual Gromov-Witten count. We had communicated this result to Mikhalkin who then also confirmed it in his approach \cite{Mikhalkin}.

The idea of construction is motivated by Strominger-Yau-Zaslow's (SYZ) conjecture \cite{SYZ} and the construction of cycles in \cite{RuddatSiebert19}. 
Parallel results without connection to enumerative geometry have very recently been achieved independent
from us in the situation where the symplectic manifold is non-compact \cite{Matessi1}, \cite{Matessi2}, \cite{Hicks0,Hicks1}, a toric variety \cite{Mikhalkin}, \cite{Hicks2} or a torus bundle over torus \cite{SheridanSmith}.

Roughly speaking, if there is a Lagrangian torus fibration for a Calabi-Yau manifold and a tropical curve $\gamma$ in the base integral affine manifold such that all edges of $\gamma$ have weight one, then it is easy to construct for each edge $e$ of $\gamma$, a Lagrangian torus times interval $L_e$ lying above $e$ and for each trivalent vertex $v$ of   $\gamma$, a Lagrangian pairs of pants times torus $L_v$ lying above a small neighborhood of $v$. Moreover, these local pieces can be 
constructed in a way that can be patched together smoothly, resulting in a Lagrangian submanifold $L^{\circ}_{\gamma}$. Furthermore, if $\gamma$ hits the discriminant at the end points appropriately, then $L^\circ_{\gamma}$ can be closed up to a closed embedded Lagrangian $L_{\gamma}$, whose diffeomorphism type is determined by the combinatorial type of $\gamma$ \emph{and} the local monodromy at points where the discriminant is hit. 
We will explain this in more details in Section \ref{ss:LagrangianLift}. We call $L_{\gamma}$ a tropical Lagrangian over $\gamma$. 
The key point is that, this construction is straightforward only when we have been given a Lagrangian torus fibration. 
However, the only compact K\"ahler Calabi-Yau threefolds that knowingly admit a Lagrangian torus fibration are torus bundles over a torus.

Our actual construction starts with a family of smooth threefold hypersurfaces $M_t \subset \PP_\Delta$ in a toric $4$-orbifold $\PP_\Delta$ degenerating to $M_0=\partial \PP_\Delta$, the toric boundary divisor of $\PP_\Delta$ with the reduced scheme structure.
 Let $(\partial \PP_\Delta)_{\Sing}$ be the locus of singular points of $\partial \PP_\Delta$, $(\partial \PP_\Delta)_{\Smooth}:=\partial \PP_\Delta \setminus (\partial \PP_\Delta)_{\Sing}$,
 $\Disc:=M_t \cap (\partial \PP_\Delta)_{\Sing}$ be the discriminant, $\pi_\Delta: \PP_\Delta \to \Delta$ be the moment map
 and $\shA:=\pi_{\Delta}(\Disc)$.
 Suppose that $\PP_\Delta$ has at worst isolated Gorenstein orbifold singularities. The singularities are necessarily at the preimage of vertices of $\Delta$ under $\pi_\Delta$, 
 and thus $M_t$ is a smooth threefold for $|t| > 0$ small.

 Starting with a reflexive polytope $\Delta_X$, \cite{MGross05} exhibited a Minkowski summand $\Delta'$ so that $\Delta=\Delta_X+\Delta'$ has the property that $(\partial \Delta, \shA)$ is \emph{simple} (\cite{GrossSiebert06}, Definition 1.60). 
We equip $\partial \Delta\setminus\shA$ with an integral affine structure using the integral affine structure on $\pi_\Delta((\partial \PP_\Delta)_{\Smooth})$
 and the fan structure at the vertices, see Definition 3.13 in \cite{MGross05}, Example~1.18 in \cite{GrossSiebert06}.
 Therefore, we can define tropical curves $\gamma$ in $(\partial \Delta, \shA)$, see Definition \ref{def-trop-curve} below.
 We require $\gamma\cap \shA$ to be the set of univalent vertices of $\gamma$.
%\begin{definition}
%By a \emph{tropical curve} we refer to a connected embedded graph $\gamma$ in $(\partial \Delta, \shA)$ such that
% \begin{enumerate}
%  \item all edges of $\gamma$ are contained in straight lines of rational tangent direction,
%  \item every vertex is trivalent and the primitive directions of the edges adjacent to a vertex are integrally linear equivalent to $\{-e_1,-e_2,e_1+e_2\}$,
%  \item $\gamma$ may have edges that are non-compact in $(\partial \Delta, \shA)$ and the infinity point of such a non-compact edge is required to be a regular point of $\pi_\Delta(\Disc)$
%  (see Assumption \ref{a:singularModel}, \ref{a:assumptionOrbit} and the paragraph before Example \ref{ex:m3m4}),
% \end{enumerate}
%\end{definition}
Let $\Lambda$ be the local system of integral tangent vectors on $\partial \Delta \setminus \shA$.
When $\gamma$ is rigid and $\Lambda$ is trivializable over $\gamma$,
we can associate to $\gamma$ its multiplicity $\mult(\gamma)$ defined in \cite{MR16} (cf. \cite{MR19},\cite{NS06}), which we recall in Section \ref{section-weight-mult}.
The multiplicity depends on the directions of edges of $\gamma$ as well as the monodromy action around $\shA$ near the univalent vertices of $\gamma$.
We call a tropical curve $\gamma$ {\it admissible} if for each univalent vertex $v$, there is a neighborhood $O_v$ of $v$ such that $O_v \cap \shA$
is an embedded curve (rather than two-dimensional). 
The tropical lines that end on the ``internal edges of the quintic curves'' in a mirror quintic threefold are admissible, see Lemma \ref{lem-admissible} -- in the example of Section~\ref{section-computer},  more than half of the lines are admissible. 
Moreover, an admissible tropical curve determines a diffeomorphism type of a $3$-manifold in the way that the diffeomorphism type of a tropical Lagrangian over $\gamma$ is determined by $\gamma$. By slight abuse of terminology, we call the  diffeomorphism type determined by $\gamma$ a Lagrangian lift of $\gamma$ (see Section \ref{ss:LagrangianLift}).
We denote the $\epsilon$-neighborhood of $\gamma$ with respect to the Euclidean distance on $\Delta$ by $W_{\epsilon}(\gamma)$.
Our main theorem is:

\begin{theorem}\label{t:Construction}
%Let $W_{\epsilon}(\gamma)$ denote the $\epsilon$-neighborhood of $\gamma$ with respect to the Euclidean distance on $\Delta$.
 Let $\gamma$ be an admissible tropical curve in $\partial \Delta$.
 For any $\epsilon>0$, there exist a $\delta>0$ such that for all $0<|t|<\delta$, there is
 a closed embedded Lagrangian $L \subset M_t$ such that $\pi_\Delta(L) \subset W_{\epsilon}(\gamma)$ and $L$ is diffeomorphic to a Lagrangian lift of $\gamma$.
 Moreover, whenever $\mult(\gamma)$ is well-defined, we have $w(L)=\mult(\gamma)$.
\end{theorem}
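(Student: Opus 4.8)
The plan is to build the Lagrangian $L$ piece by piece over the tropical curve $\gamma$, working in the toric degeneration, and then to compare the resulting $3$-manifold's first homology with the combinatorially-defined multiplicity. First I would set up the symplectic geometry near $M_0 = \partial\PP_\Delta$: fix a K\"ahler form on $\PP_\Delta$ and use the moment map $\pi_\Delta$ together with the simpleness of $(\partial\Delta,\shA)$ to produce, for $0<|t|<\delta$, a Lagrangian torus fibration on the part of $M_t$ lying over $W_\epsilon(\gamma)\setminus\shA$ (this is essentially the local SYZ picture near the large complex structure limit, available away from the discriminant). Over each edge $e$ of $\gamma$ one then has the standard local model $L_e \cong T^2\times(0,1)$ sitting above $e$, and over each trivalent vertex $v$ the local model $L_v \cong P\times T^1$ where $P$ is a pair of pants, exactly as sketched in the introduction and to be spelled out in Section~\ref{ss:LagrangianLift}. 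The matching of the $L_e$ and $L_v$ along overlaps is dictated by the integral affine structure and the local system $\Lambda$, which is trivializable over $\gamma$ by hypothesis; choosing such a trivialization rigidifies all gluing maps and lets one patch the local pieces into a smooth Lagrangian $L^\circ_\gamma$ over the complement of the univalent vertices.

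Next I would handle the univalent vertices, where $\gamma$ meets $\shA$. Admissibility guarantees that near each such $v$ the discriminant $\shA$ is (the image of) an embedded curve, so the geometry of $M_t$ over $O_v$ is the local model of a conifold-type / $I_1$-type degeneration, in which the vanishing cycle of the relevant torus fibre over $e$ can be capped off by a Lagrangian disc (or, depending on the monodromy, the appropriate solid-torus-type filling). Carrying this out for each leaf produces the closed Lagrangian $L = L_\gamma \subset M_t$. I would then verify the two required properties: embeddedness follows because the local pieces live over disjoint pieces of $W_\epsilon(\gamma)$ and are themselves embedded, so one only has to check the finitely many overlaps and the cappings; the containment $\pi_\Delta(L)\subset W_\epsilon(\gamma)$ is built into the construction once $\delta$ is taken small enough that the torus fibration model is valid on that neighborhood. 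The diffeomorphism type of $L$ is read off from the combinatorial gluing data — the directions of the edges and the monodromy at the univalent vertices — which is precisely the data defining a ``Lagrangian lift of $\gamma$''.

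For the last sentence, $w(L)=\mult(\gamma)$, the strategy is to compute $H_1(L,\ZZ)$ directly from a Mayer--Vietoris / CW decomposition adapted to the decomposition $L = \bigcup L_e \cup \bigcup L_v \cup (\text{caps})$. Each $L_e$ contributes an $H_1 = \ZZ^2$ from its torus factor; the gluing at a trivalent vertex identifies these via the (unimodular) transition matrices coming from the affine structure; and each univalent cap kills a rank-one sublattice corresponding to the vanishing cycle, twisted by the local monodromy. The upshot is an explicit presentation of $H_1(L,\ZZ)$ by a matrix whose entries are exactly the edge directions and monodromy matrices — and this is, by the very definition recalled in Section~\ref{section-weight-mult}, the same lattice-index data that defines $\mult(\gamma)$ in \cite{MR16}. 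So the identity $|H_1(L,\ZZ)| = \mult(\gamma)$ reduces to matching two descriptions of the same cokernel.

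The main obstacle I anticipate is not the abstract patching but making the local-to-global construction genuinely \emph{Lagrangian and smooth} simultaneously: the naive local models $L_e$ and $L_v$ are Lagrangian for a model symplectic form, and one must interpolate between these models and the actual K\"ahler form on $M_t$ while keeping the submanifold Lagrangian (e.g.\ via Moser-type arguments or by working with the symplectomorphism type of a neighborhood of the degenerate fibre), controlling all estimates uniformly in $t$ so that a single $\delta=\delta(\epsilon)$ works. The capping at the univalent vertices is the second delicate point, since there one is genuinely near the discriminant where no torus fibration exists and one must use the explicit local structure of the conifold-type degeneration, carefully keeping track of how the monodromy around $\shA$ acts — this is exactly the place where admissibility is used and where the monodromy-dependence of both the diffeomorphism type and of $\mult(\gamma)$ enters.
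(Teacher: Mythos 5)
Your overall architecture (local Lagrangian pieces over edges and trivalent vertices, cappings at univalent vertices, a combinatorial $H_1$ computation) matches the paper's. But there are two places where your proposal glosses over what are in fact the main technical obstacles, and one of them is a genuine gap.

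First, a caveat rather than a gap. You posit ``a Lagrangian torus fibration on the part of $M_t$ lying over $W_\epsilon(\gamma)\setminus\shA$.'' No such fibration is known to exist on $M_t$ itself; indeed the paper emphasizes that no simply-connected compact CY3 is known to admit one. The actual mechanism is different: one first performs a compactly supported symplectic isotopy of the \emph{hypersurface} $M_t$ inside $\PP_\Delta$, through a family of admissible sections (Lemma~\ref{l:GoodDeformation}, Lemma~\ref{l:symplecticIsotopy}, Corollary~\ref{c:firstApproximation}), so that the deformed hypersurface $M_t^{s^1}$ is literally ``$x$-standard'' in each symplectic corner chart and therefore \emph{does} admit local torus fibrations. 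One builds the Lagrangian there and then transports it back by the Moser-type symplectomorphism $\phi_{V,t}$. You do gesture at this (``interpolate between these models and the actual K\"ahler form\dots via Moser-type arguments''), so I'd call this underdeveloped rather than wrong, but be aware that the isotopy must be applied to the hypersurface, not to the candidate Lagrangian, and that verifying the deformation stays within symplectic hypersurfaces without creating new discriminant (Lemma~\ref{l:GoodDeformation}) is nontrivial.

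The genuine gap is the capping at univalent vertices. You write that ``the vanishing cycle of the relevant torus fibre over $e$ can be capped off by a Lagrangian disc (or \dots solid-torus-type filling),'' citing the conifold/$I_1$-type local model. This describes the desired conclusion, not a proof. Exhibiting the Lagrangian solid torus is the hardest part of the paper (Theorem~\ref{t:LagrangianSolidTori}, Proposition~\ref{p:LagrangianConstructionSingLoci}), and it requires several deep inputs that your outline does not identify: one must (i) straighten the discriminant by a Hamiltonian isotopy (Proposition~\ref{p:correctSingModel}) so that the local family really is a product near $C$; (ii) produce a smoothly trivial exact symplectic fibration over an annulus (Propositions~\ref{p:fibration}--\ref{p:fibrationAfterGoodDeform}), invoking Donaldson's linear-algebra criterion to keep the fibres symplectic; (iii) recognize the fibre boundary as the standard contact $S^3$ and use the Gromov--Eliashberg theorem that its Liouville filling is the standard $B^4$; and (iv) kill the symplectic monodromy using Gromov's theorem that $\mathrm{Symp}_c(\mathbb{R}^4)$ is contractible, then parallel-transport a Chantraine-type Lagrangian filling of the Legendrian unknot. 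None of these steps is routine, and without them, the claim that the leaf can be capped by a Lagrangian solid torus with the right meridian is unsupported. In contrast, your Mayer--Vietoris computation of $H_1(L,\ZZ)$ and its identification with $\mult(\gamma)$ is a cosmetic variant of the paper's \v{C}ech-cohomology argument (Proposition~\ref{p:multiplicity}) and is fine as stated.
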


\begin{remark}
For discussion of non-admissible tropical curves, see Remark \ref{r:trivalentVertex1}, \ref{r:trivalentVertex2}.
\end{remark}

\begin{remark}
 While our main examples are mirror quintics, Theorem \ref{t:Construction} applies to all admissible tropical curves that arise from the setup in \cite{MGross05} explained above.
 For example, the tropical curves are not necessarily simply connected. 
\end{remark}

\begin{remark}
While  $\mult(\gamma)$ is only defined when $\Lambda$ is trivializable over $\gamma$ \cite{MR16}, in view of Theorem \ref{t:Construction}, it is tempting to define $\mult(\gamma)$ by $w(L)$ when $\Lambda$ is not trivializable.
We believe that this definition will have application to enumerative problems in algebraic/tropical geometry.
\end{remark}

\begin{remark}
 One can easily generalize $w(L)=\mult(\gamma)$ to all dimensions (see Remark \ref{r:higherMult}). 
 However, it is pointed out to us by Joyce that we do not expect a special Lagrangian counting invariant in dimensions higher than $4$.
\end{remark}

\begin{comment}
In our forthcoming paper, we will apply this construction to tropical curves that
contribute to the Gromov-Witten invariant of the line class in quintic threefolds $M_t^{\vee}$.
In this case,
%Theorem \ref{t:Construction} $(1),(2)$ are always satisfied, and $(3),(4),(5)$ are also satisfied for generic cases.
we will also show that $\{L_i\}$ are homologous but pairwise non-Hamiltonian isotopic.
Computer-aided example will also be given to illustrate that various diffeomorphism types of Lagrangians can be obtained.

%It should be pointed out that when $\{\gamma_i\}_{i=1}^k$ contribute to the Gromov-Witten invariant of the line class in the corresponding quintic threefold $M_t^{\vee}$,
%then $(1),(2)$ above are satisfied.
%Generic choice of triangulation of $\partial \Delta$ can produce a lot of $\gamma_i$ such that $(3),(4),(5)$ are satisfied.
%A computer-aided example is given in the last section.
\end{comment}

\begin{proof}[Sketch of proof of Theorem \ref{t:Construction}]
The construction is divided into two parts: for the geometry away from the discriminant and near the discriminant.
Both constructions rely heavily on the fact we can isotope
$M_t$ symplectically to a nice symplectic hypersurface in local coordinates, as long as the isotopy is away from the discriminant
and does not produce new discriminant (see Lemma \ref{l:GoodDeformation}).

For the construction away from the discriminant, we isotope $M_t$ to a standard form (Lemma \ref{l:ExistenceStandardSymp}) in a chart such that $M_t$
admits a local Lagrangian torus fibration and we can construct a local Lagrangian from the torus fibration (Proposition \ref{p:standardLagModel}).
We have to deal with compatibility of standard forms (Lemma \ref{l:standardTransitionModel}), transition of symplectic charts (Corollary \ref{c:pStandardIndep})
and the trivalent vertices of $\gamma$ (Lemma \ref{l:Gluing3Legs}).
The outcome will be an embedded Lagrangian with toroidal boundaries such that the $\pi_\Delta$-image lies in a small neighborhood of $\gamma$.

Then we need to close up the Lagrangian with toroidal boundaries by Lagrangian solid tori near the discriminant, which is the essential part of the construction.
The basic idea is that we can deform $M_t$ to a particular $M$ such that we have complete control away from the discriminant.
We find an appropriate open subset $V$ of $M$ which is an exact symplectic manifold with contact boundary (Proposition \ref{p:fibrationAfterGoodDeform})
and we have complete control near the contact boundary of $V$.
We show that $V$ is a symplectic bundle over an annulus and we use our control near $\partial V$ to show that the boundaries of the fibers are standard contact $S^3$.
By a famous result of Gromov, each fiber is symplectomorphic to an open $4$-ball (Theorem \ref{t:filling}).
There is a Legendrian $T^2$ inside $\partial V$, which is an $S^1$-bundle over $S^1$ with respect to the symplectic $4$-ball fiber bundle structure on $V$.
This Legendrian $T^2$ can be filled by a Lagrangian solid torus in $V$ by a soft symplectic method (Proposition \ref{p:LagrangianConstructionSingLoci}), which gives the Lagrangian solid torus we need.

Once the Lagrangian is constructed, the statement that $w(L)=\mult(\gamma)$ follows from a simple calculation using \v{C}ech cohomology (see Subsection \ref{ss:MultiplicityWeight})
which was independently obtained in \cite{Mikhalkin} by a different argument after a presentation of our result given by the second author in 2017.
\end{proof}

\begin{comment}

\begin{corollary}\label{c:non-Ham}
 Let $k_{\max}$ be the maximum number of disjoint tropical curves satisfying Theorem \ref{t:Construction}.
 Then there are at least $k_{\max}$ many homologous but pairwise non-Hamiltonian isotopic embedded Lagrangian submanifolds in $M$ in the class $[L_i]$.
\end{corollary}

\begin{proof}[Proof of \ref{c:non-Ham}]
 By Theorem \ref{t:Construction}(1), $L_i$ are rational homology spheres.
 By \cite{FOOObook}, we have well-defined Floer cohomology $HF^*(L_i,L_i)=H^*(S^3)$ and $HF(L_i,L_j)=0$ for all $i \neq j$.
 Floer cohomology are invariant under Hamiltonian isotopy so the result follows.
\end{proof}

\end{comment}

\subsubsection*{Application to symplectic topology}

%When $M_t$ is a pencil of mirror quintics, all the $2875$ degree one
%tropical curves in $(\partial \Delta, \shA)$ are admissible so Theorem \ref{t:Construction} can be applied.
Let $\shS$ be the set of admissible tropical lines in $(\partial \Delta, \shA)$ associated to a pencil of mirror quintics.
We can show that the Lagrangians constructed by Theorem \ref{t:Construction} are homologous and non-Hamiltonian isotopic in the following sense:

\begin{theorem}\label{t:homologous}
 Let $\gamma \in \shS$ and $L_\gamma$ be a Lagrangian obtained by Theorem \ref{t:Construction}.
 For any $\gamma' \in \shS$, we can get a Lagrangian $L_{\gamma'}$ by Theorem \ref{t:Construction}
 such that $[L_\gamma]=[L_{\gamma'}] \in H_3(M,\ZZ)$.
 Moreover, if $\gamma \cap \gamma'=\emptyset$, then $L_\gamma$ is not Hamiltonian isotopic to $L_{\gamma'}$.
\end{theorem}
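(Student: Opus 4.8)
The strategy splits naturally into the homology claim and the non-Hamiltonian-isotopy claim. For the homology statement, the plan is to exploit that all members of the pencil of mirror quintics are deformation equivalent, so $H_3(M,\ZZ)$ is canonically identified along the family, and that a tropical line $\gamma$ and its Lagrangian lift $L_\gamma$ are built inside a small neighborhood $W_\epsilon(\gamma)$ of $\gamma$ in the central fiber's dual intersection complex $\partial\Delta$. The class $[L_\gamma]\in H_3(M,\ZZ)$ should be computed via the SYZ/dual-intersection picture: a tropical line over $\partial\Delta$ represents, after dualizing, a fixed topological cycle whose homology class depends only on the homology class of $\gamma$ relative to $\shA$ in $\partial\Delta$. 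Since any two lines in the pencil $\shS$ are related by the $\PP^1$-family of quintics and have the same degree/class as one-cycles in the relevant sense, the dualized classes $[L_\gamma]$ and $[L_{\gamma'}]$ coincide. Concretely I would: (i) recall from Section~\ref{ss:LagrangianLift} the explicit chain representing $[L_\gamma]$ in terms of the torus-times-interval and pair-of-pants-times-torus pieces; (ii) observe that the boundary-capping solid tori near $\Disc$ contribute no net homology; (iii) show the resulting cycle is, up to boundaries, the ``dual'' of $\gamma$, and that all admissible tropical lines in $\shS$ are homologous as tropical $1$-cycles in $(\partial\Delta,\shA)$ — this uses that they all arise from the same line class on the quintic and can be connected by a path of tropical lines (possibly passing through non-rigid ones, which is fine at the level of homology). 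Hence $[L_\gamma]=[L_{\gamma'}]$.

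For the non-Hamiltonian-isotopy claim under $\gamma\cap\gamma'=\emptyset$, the plan is a Floer-theoretic separation argument. If $\gamma$ and $\gamma'$ are disjoint tropical curves, then for $\epsilon$ small enough $W_\epsilon(\gamma)\cap W_\epsilon(\gamma')=\emptyset$, so by Theorem \ref{t:Construction} we can realize $L_\gamma$ and $L_{\gamma'}$ as \emph{disjoint} embedded Lagrangians in the same $M_t$. Since each $L_\gamma$ is a rational homology sphere (a Lagrangian lift of an admissible curve has $b_1=0$ after capping, as recorded in Section~\ref{ss:LagrangianLift}), it is in particular monotone (indeed the minimal Maslov number is large, so all holomorphic-disk obstructions vanish for index reasons), hence unobstructed, and its Lagrangian Floer cohomology is well-defined by \cite{FOOObook}. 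The key computations are $HF^*(L_\gamma,L_\gamma)\neq 0$ — nonvanishing self-Floer cohomology, which follows from the rational-homology-sphere condition and the standard spectral sequence from $H^*(L_\gamma)$, since there is no room for differentials to kill everything (the unit survives) — and $HF^*(L_\gamma,L_{\gamma'})=0$, which is immediate because the two Lagrangians are disjoint and Floer cohomology of disjoint Lagrangians vanishes. If $L_\gamma$ were Hamiltonian isotopic to $L_{\gamma'}$, then by Hamiltonian invariance of Floer cohomology we would get $HF^*(L_\gamma,L_{\gamma'})\cong HF^*(L_\gamma,L_\gamma)\neq 0$, a contradiction.

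A few technical points I would need to nail down. First, one must check that the monotonicity/unobstructedness hypotheses of \cite{FOOObook} genuinely apply to $M_t$ (a Calabi-Yau threefold, so $c_1=0$; here one uses that $L_\gamma$ is a $\QQ$-homology sphere so there are no nonconstant holomorphic disks of Maslov index $\le 0$ other than constants, making the Floer differential well-defined over an appropriate Novikov ring, and that the whole package is invariant under Hamiltonian isotopy). In the Calabi-Yau case one should be slightly careful: Floer cohomology is defined over the Novikov field and $HF^*(L_\gamma,L_\gamma)$ need not a priori be nonzero if $L_\gamma$ bounds disks, but a spherical $L$ with $H_1(L,\QQ)=0$ in a CY $3$-fold has no Maslov-index-$2$ disk classes available to run an obstruction, so the $A_\infty$ structure is formal enough that $HF^*(L_\gamma,L_\gamma)\cong H^*(S^3;\Lambda)\neq0$. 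Second, the homology computation requires being precise about orientations and about which capping data (monodromy at the univalent vertices) one picks; I would fix these once and for all so that the identification of $[L_\gamma]$ with the dual of $\gamma$ is canonical. The main obstacle is the \emph{homology} half: showing all lines in $\shS$ give the \emph{same} class requires a genuine geometric input — that the dualization procedure sends homologous tropical $1$-cycles to homologous Lagrangian $3$-cycles, and that the $2875$ (or in the pencil, the relevant) tropical lines are all homologous in $(\partial\Delta,\shA)$ — rather than a formal manipulation; the non-isotopy half, by contrast, is a soft and essentially immediate Floer argument once disjointness is arranged.
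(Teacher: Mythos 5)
Your non-isotopy half is essentially the paper's argument and is fine: disjointness gives $HF(L_\gamma,L_{\gamma'})=0$, the spectral sequence from $H^*(L_\gamma)$ degenerates to give $HF(L_\gamma,L_\gamma)\cong H^*(S^3)\neq 0$, and Hamiltonian invariance gives the contradiction. One small correction: the paper does not invoke monotonicity at all (the Lagrangians are not monotone in a Calabi--Yau threefold in any useful sense), but rather unobstructedness of rational homology spheres over a characteristic-zero Novikov ring, citing \cite{FOOO}; your worry about Maslov index $2$ disks is addressed there rather than by a ``minimal Maslov number is large'' hypothesis.

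The homology half has a genuine gap, which you flag yourself, and I want to be precise about why your proposed route is not the right one. First, the statement is existential --- ``we \emph{can get} a Lagrangian $L_{\gamma'}$ with $[L_{\gamma'}]=[L_\gamma]$'' --- precisely because the construction is not canonical: by Remark~\ref{r:absorbAlpha}, translating the 2-torus fibers over an edge by a loop produces a new $L_{\gamma'}$ differing from the old one by an integer multiple of the torus-fiber class $[\alpha]$. Any argument that the classes ``coincide'' automatically is therefore doomed; one must first show they agree \emph{modulo $\ZZ[\alpha]$} and then exercise the wrapping freedom to cancel the remaining multiple of $[\alpha]$. Your proposal of (a) dualization of tropical $1$-cycles being homology-preserving and (b) all lines in $\shS$ being homologous as tropical $1$-cycles in $(\partial\Delta,\shA)$ would at best give agreement modulo the image of $H_1$-type classes (i.e.\ modulo $[\alpha]$ and related pieces of $W_2$), so even if established it would not close the argument without also invoking the wrapping freedom --- which you do not mention. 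Second, neither (a) nor (b) is established in the paper or by you; they are plausible but would require work comparable to the paper's actual argument. What the paper does instead (Lemma~\ref{lema-LGamma} and Proposition~\ref{prop-homologous}) is build one auxiliary tropical $2$-cycle $\Gamma$ whose $3$-cycle $L_\Gamma$ has $L_\Gamma . L_\gamma = 1$ for every tropical line $\gamma$, then exploit that $H_3(M_t,\ZZ)$ of the mirror quintic has rank $4$ with a basis $[\alpha],[L_\gamma],[L_\Gamma],S$ compatible with the intersection form; disjointness of $\gamma,\gamma'$ and the pairing numbers force $[L_{\gamma'}]=a[\alpha]+[L_\gamma]$, and then Remark~\ref{r:absorbAlpha} kills the $a[\alpha]$. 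Your write-up leaves out both the intersection-theoretic input and the wrapping freedom, and these are the two ideas that actually make the homology statement go.
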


Theorem \ref{t:homologous}
gives a large number of pairwise homologous but non-Hamiltonian isotopic Lagrangian rational homology spheres, which is a 
rare application to symplectic topology in the literature.

When $L$ is diffeomorphic to a free quotient of a sphere by a finite subgroup of $SO(4)$, we can define Dehn twist along $L$, which is 
an element in the symplectomorphism group $\Sympl(M)$ of $M$.
It is easy to deduce from Theorem \ref{t:Construction} the following:

\begin{corollary}\label{c:Symp}
 Let $k_{\max}$ be the maximum number of disjoint tropical curves satisfying Theorem \ref{t:Construction} such that for each $i=1,\dots,k_{\max}$, the corresponding $L_i$
 is a spherical manifold.
 Then $\pi_0(\Sympl(M))$ contains an abelian subgroup isomorphic to $\mathbb{Z}^{k_{\max}}$.
\end{corollary}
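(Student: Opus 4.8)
The plan is to deduce Corollary \ref{c:Symp} from Theorem \ref{t:Construction}, Theorem \ref{t:homologous}, and the standard theory of Dehn twists along spherical Lagrangians. First I would fix a maximal collection $\gamma_1,\dots,\gamma_{k_{\max}}$ of pairwise disjoint admissible tropical curves whose Lagrangian lifts $L_1,\dots,L_{k_{\max}}$ are spherical manifolds (free quotients of $S^3$ by a finite subgroup of $SO(4)$), and apply Theorem \ref{t:Construction} simultaneously: since the curves are disjoint, choosing $\epsilon$ small enough that the neighborhoods $W_\epsilon(\gamma_i)$ are pairwise disjoint produces, for all sufficiently small $|t|$, pairwise disjoint closed embedded Lagrangians $L_i\subset M_t$ with $\pi_\Delta(L_i)\subset W_\epsilon(\gamma_i)$, each diffeomorphic to a Lagrangian lift of $\gamma_i$ and hence spherical. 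For each such $L_i$ the generalized Dehn twist $\tau_{L_i}\in\Sympl(M)$ is defined (using the model Dehn twist on a spherical cotangent-bundle neighborhood), giving classes $[\tau_{L_i}]\in\pi_0(\Sympl(M))$.

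Next I would show that these classes commute and generate a free abelian group of rank $k_{\max}$. Commutativity is immediate: the $\tau_{L_i}$ are supported in disjoint Weinstein neighborhoods of the disjoint $L_i$, so they commute already in $\Sympl(M)$, hence a fortiori in $\pi_0$; this gives a homomorphism $\ZZ^{k_{\max}}\to\pi_0(\Sympl(M))$. The real content is injectivity, i.e.\ that no nontrivial word $\prod_i\tau_{L_i}^{n_i}$ is symplectically isotopic to the identity. The argument I would run is the Floer-theoretic one: by Theorem \ref{t:Construction} each $L_i$ is a rational homology sphere, so by the work of Fukaya--Oh--Ohta--Ono its self-Floer cohomology $HF^*(L_i,L_i)$ is well-defined (the relevant obstruction/disc-bubbling issues vanish for rational homology spheres in these Calabi--Yau threefolds) and, together with the disjointness and Theorem \ref{t:homologous} giving $HF^*(L_i,L_j)=0$ for $i\neq j$, the spheres $L_1,\dots,L_{k_{\max}}$ form a Floer-theoretically orthogonal collection. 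One then feeds this into the long exact sequence for Dehn twists (Seidel) to see that the induced action of $\prod_i\tau_{L_i}^{n_i}$ on the split-generated Fukaya category, or more concretely on $HF^*$ with a suitable test object, records the exponents $n_i$; alternatively, following Seidel's argument for $\pi_1$ of symplectomorphism groups, one detects the $n_i$ via the action on $QH^*(M)$ or on Floer homology of the $L_i$ themselves. Either way the map $\ZZ^{k_{\max}}\to\pi_0(\Sympl(M))$ is shown to be injective.

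The main obstacle I expect is precisely this injectivity/detection step: one needs to be sure the Dehn twists act with infinite order and independently, which requires the Floer cohomology groups $HF^*(L_i,L_i)$ to be unobstructed and nonzero and the orthogonality $HF^*(L_i,L_j)=0$ to be genuine — this is where Theorem \ref{t:homologous} and its proof (the non-Hamiltonian-isotopy statement, which already rests on the same Floer package) are essential, and where one must invoke the Calabi--Yau condition together with the FOOO machinery to rule out disc bubbling that could obstruct the Floer complexes. A secondary, softer obstacle is bookkeeping: one must guarantee that a single choice of $t$ (equivalently a single symplectic manifold $M=M_t$) simultaneously carries all $k_{\max}$ disjoint Lagrangians; this is handled by taking $\delta$ to be the minimum of the finitely many $\delta$'s produced by Theorem \ref{t:Construction} for the finitely many $\gamma_i$, and using that disjointness of the $\gamma_i$ persists to disjointness of the $W_\epsilon(\gamma_i)$ for small $\epsilon$. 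Once these points are in place, the corollary follows formally: $n\mapsto\prod_i\tau_{L_i}^{n_i}$ is an injective homomorphism $\ZZ^{k_{\max}}\hookrightarrow\pi_0(\Sympl(M))$, exhibiting the desired abelian subgroup.
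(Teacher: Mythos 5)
Your setup (disjointness, simultaneous application of Theorem~\ref{t:Construction}, commutativity from disjoint supports) matches the paper, but the injectivity step is where your proposal is genuinely incomplete. You gesture at Seidel's exact sequence, the action on $QH^*(M)$, or ``Floer homology of the $L_i$'' recording the exponents $n_i$, but none of these, taken at face value, actually does the job. The problem is that as \emph{ungraded} Lagrangians $\tau_{L_i}^{n}(L_i)=L_i$, so $HF^*(L_i,\tau_{L_i}^n(L_i))=HF^*(L_i,L_i)\cong H^*(S^3)$ independently of $n$: the ungraded Floer group carries no trace of the exponent. Similarly, for $L_i\cong S^3$ the twist $\tau_{L_i}$ acts trivially on $H_*(M)$, so the action on quantum cohomology won't see the powers either.

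The paper's proof supplies exactly the missing device: because $c_1(M)=0$, each $\tau_{L_i}$ lifts canonically to a $\ZZ$-graded symplectomorphism, and as \emph{graded} Lagrangians one has $\tau_{L_i}(L_i)=L_i[-2]$ while $\tau_{L_i}(L_j)=L_j$ for $j\neq i$. The element $g=\prod_i\tau_{L_i}^{n_i}$ therefore sends the graded object $L_j$ to $L_j[-2n_j]$, and the tuple $\bigl(HF^{*}(L_i,g(L_j))\bigr)_{i,j}$ with its $\ZZ$-grading recovers every $n_j$. That grading shift is the entire content of the injectivity argument; without invoking the canonical graded lift, the detection mechanism you propose does not close the gap. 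You should also note that $\pi_0(\Sympl(M))=\Sympl(M)/\Ham(M)$ here because $\pi_1(M)=0$, which the paper uses implicitly when passing from symplectic isotopy to Hamiltonian invariance of $HF$.
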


Note that, in generic situations, most tropical curves are disjoint from the others so Corollary \ref{c:Symp} gives a large rank of abelian subgroup in $\pi_0(\Sympl(M))$.

By a computer-aided search for a particular symplectic mirror quintic, we found $354$ pairwise disjoint admissible tropical lines giving $312$ Lagrangian $S^3$ and $42$ Lagrangian $\RR\PP^3$ in the mirror quintic all of which are pairwise disjoint.
The total number of admissible tropical lines in our example is $1451$ out of which $1406$ have multiplicity one (giving Lagrangian $S^3$'s) 
and $45$ have multiplicity two (giving Lagrangian $\RR\PP^3$'s).
The total number of tropical lines in our example is however $2785$ out of which $2695$ have multiplicity one 
%(corresponding to $S^3$'s) 
and $90$ have multiplicity two, 
%(corresponding to $\RR\PP^3$'s)
so the weighted sum is indeed 2875. Their adjacency matrix has full rank, which implies that every tropical line intersects some other tropical line.
Inspection of the center of Figure~\ref{graph:575lines} gives an impression of the meeting of tropical lines, yet tropical lines also meet another across components of the degenerate Calabi-Yau unlike possibly expected. We don't know whether this is a general phenomenon or
due to possibly not having picked the most general deformation. We chose a random small perturbation of the subdivision given in Section \ref{section-subdivision}.

\subsubsection*{Structure of the paper}
In Section \ref{section-lines2Lag} we give some background of SYZ mirror symmetry and 
the tropical curves in the affine base.
We also explain the topology of the Lagrangians and derive some consequences, including Theorem \ref{t:homologous}, by assuming Theorem \ref{t:Construction}, which is 
proved in the subsequent sections.
In Section \ref{ss:toricBasic}, we review toric geometry from symplectic perspective.
%following \cite{AbreuOrbifold} and \cite{Abreu}.
In Section \ref{ss:geometricSetup}, we explain how to perform symplectic isotopy away from the discriminant for our pencil of hypersurfaces.
After that, we explain the construction of the Lagrangians away from the discriminant
and near the discriminant in Section \ref{s:AwayFromSing} and \ref{s:NearSingLoci}, respectively.
We conclude the proof of Theorem \ref{t:Construction} in Section \ref{ss:Concluding}.

\subsubsection*{Acknowledgments}
We thank Paul Seidel for suggesting to study mirror duals of lines and for bringing the authors together at the Institute for Advanced Study in Princeton.
Further thanks for helpful discussions goes to Mohammed Abouzaid, Ivan Smith, Travis Mandel, Johannes Walcher, Hans Jockers and Penka Georgieva. We also thank the two anonymous referees.

%We had useful discussions with Mohammed Abouzaid on his related work and he suggested Hyperk\"ahler rotation to produce the local Lagrangian at a vertex.
%We thank Ivan Smith for discussing the geometry of the discriminant in the affine base and Travis Mandel for discussing the multiplicity. 
%We are also grateful to Johannes Walcher and Hans Jockers for their interest and sharing of ideas.

{\small The first author was funded by 
National Science Foundation under agreement No. DMS-1128155 and by EPSRC (Establish Career Fellowship EP/N01815X/1).
The second author was funded by DFG grant RU 1629/4-1.
The authors thank the support of Hausdorff Research Institute for Mathematics
(Bonn), through the Junior Trimester Program on Symplectic Geometry and Representation Theory.}

%%%%%%% Lines on the Quintic and SYZ story %%%%%%%%%%%%%%%%%%%%%%%%%%%%%%%%%%%%%%%%%%%%%%%%%%%%%%%%%%%%%%%%%%%%%%%%%%%%%%%%%%%%%%

\section{From 2875 lines on the quintic to Lagrangians in the quintic mirror}\label{section-lines2Lag}
The toy model of the SYZ mirror symmetry conjecture is the following. Set $V=\RR^n$ and let $TV$ and $T^*V$ denote the tangent and cotangent bundle. Let $T_{\ZZ}V$ denote the local system on $V$ of integral tangent vectors (using the lattice $2\pi\ZZ^n$ in $\RR^n$). The quotient
$TV/T_{\ZZ}V$ is an $(S^1)^n$-bundle over $V$. Similarly, we can define $T^*_{\ZZ}V$ and another $(S^1)^n$-bundle $T^*V/T^*_{\ZZ}V$. We arrive at dual torus fibrations over $V$,
$$ X:=TV/T_{\ZZ}V \ra V \la T^*V/T^*_{\ZZ}V=:\check X $$
where the left one carries a natural complex structure 
with complex coordinates $(z_j)_{j=1,\dots,n}$ given by $(x, \alpha=\sum_j y_j \frac{\partial}{\partial x_j}) \mapsto (z_j=x_j+ \sqrt{-1} y_j)_{j=1,\dots,n}$ and $x_j$ the $j$th coordinate on $V$. The right one carries a natural symplectic structure inherited from the canonical one of $T^*V$.
Part of the conjecture of SYZ is that mirror symmetry is locally of this form. 
Unless talking about complex tori, in practice there are also singular torus fibers in these bundles for Euler characteristic reasons and we will get back to this.

Note that this toy model gives insight on how a complex submanifold ought to become a Lagrangian submanifold of the mirror dual (see Section 6.3 of \cite{DbraneBook}).
If $W$ is an integrally generated linear subspace of $V$, then $TW/T_{\ZZ}W$ is naturally a complex submanifold of $X$.
On the other hand, $W^\perp/(W^\perp\cap T^*_{\ZZ}V)$
as a subbundle of $T^*V/T^*_{\ZZ}V$ supported over $W$ is a Lagrangian submanifold of $\check X$.
To reach sufficient generality, one needs to run this construction for the situation where $W$ is a tropical variety, i.e.~a polyhedral complex. At a general point it still looks just like the above but then pieces are glued non-trivially when polyhedral parts meet another.
However, this doesn't produce a differentiable submanifold, let alone complex or Lagrangian. 
Improvements on the symplectic side can be made by thickening the tropical $W$ to an amoeba, see \cite{Matessi2}.
In this article, we are only interested in the situation where $W$ is one-dimensional, so a tropical curve,
and the focus will be put on constructing closed Lagrangian submanifolds in Calabi-Yau threefolds using tropical curves.
Whenever $\check X$ compactifies to a projective toric variety and the tropical curve attaches to the codimension two strata in the moment polytope in particular ways, 
Mikhalkin recently gave a construction of closed Lagrangian submanifolds in the projective toric variety \cite{Mikhalkin}. 
On the other hand, no Lagrangian torus fibration is known for any simply-connected compact Calabi-Yau threefold. This is the situation that we are interested in, which is also the subject of the SYZ conjecture.
Luckily, most Calabi-Yau threefolds permit degenerations to a reducible union of toric varieties, introducing the toric techniques we lay out for the quintic and its mirror dual in the next sections.

\subsection{The quintic threefold and its symplectic mirror duals} \label{section-quintic}
The most famous Calabi-Yau threefold is the quintic $X$ in $\CC\PP^4$. 
Its mirror dual $\check X$ is a crepant resolution of an anti-canonical hypersurface in the weighted projective space $\PP_{\Delta_{\check X}}$ associated to the lattice simplex 
$$\Delta_{\check X} = \conv\{e_1,...,e_4,-\sum_j e_j\}.$$
One finds $\PP_{\Delta_{\check X}}\cong\CC\PP^4/(\ZZ_5)^3$. As progress towards nailing the SYZ conjecture for the quintic, Mark Gross \cite[Theorem 4.4]{Gross01} gave a topological torus fibration on a space that is diffeomorphic to $X$ and Matessi and Casta\~no-Bernard \cite{CBM06} showed that this one can be upgraded to a piecewise smooth Lagrangian fibration for some symplectic structure and a similar approach works for $\check X$. 
Recently, Evans-Mauri \cite{EvansMauri} gave a Lagrangian fibration local model for parts of the fibration that are most difficult to deal with in dimension three. Whether this can be used for global compactifications of fibrations or tropical Lagrangian attachment problems presumably requires a similarly careful analysis as for the situations that we are going to consider. 
For recent progress on the SYZ topology, we refer to \cite{RZ1,RZ2,RuddatZharkov20}.

We are not working on the diffeomorphic model but on the actual symplectic quintic mirror $\check X$, in fact our construction applies to Lagrangians in each of the many possibilities resulting from different choices of a crepant resolution for the quintic mirror ($h^{1,1}(\check X)=101$). 
For our construction, it suffices to have a Lagrangian torus fibration \emph{locally} around the Lagrangian $L$ that we wish to construct from a tropical curve $\gamma$.
To say where $\gamma$ lives, we make use of the construction of the real affine base space of the torus fibration from \cite{Gross01}.

The Newton polytope of the quintic is the polar dual to $\Delta_{\check X}$, that is, the convex hull $\conv\{0,5e_1,...,5e_4\}$ translated by $(-1,-1,-1,-1)$ so that its unique interior lattice point $(1,1,1,1)$ becomes the origin. We call the resulting polytope $\Delta_X$. 
Choosing $a_m\in\RR_{\ge 0}$ for each $m\in \partial\Delta_X\cap\ZZ^4$ yields a cone in $\RR^4\oplus\RR$ generated by the set of $(m,a_m)$ and its 
boundary gives the graph
of a piecewise linear convex function $\varphi:\RR^4\ra \RR$.
We require that every face in the boundary is a simplicial cone and we assume that each $(m,a_m)$ generates a ray of this cone, in particular, $\varphi(m)=a_m$ for all $m$.
There are lots of $\varphi$ satisfying these properties and each one gives a toric projective crepant partial resolution
$$\operatorname{res}:\PP_{\Delta}\ra\PP_{\Delta_{\check X}}$$
where $\PP_{\Delta}$ is given by the fan in $\RR^4$ whose maximal cones are the maximal regions of linearity of $\varphi$. 
Equivalently, $\PP_{\Delta}$ is given by the polytope 
\begin{equation} \label{eq-Delta-from-am}
\Delta=\bigcap_{m\in\partial\Delta_X\cap\ZZ^4}\{n\in\RR^4| \langle n,m\rangle\ge -a_m\} = \{n\in\RR^4| \langle n,m\rangle\le \varphi(m) \hbox{ for all }m\in\RR^4\}
\end{equation}
\begin{lemma} \label{lema-isolated-orbi}
$\PP_{\Delta}$ has at worst isolated Gorenstein orbifold singularities.
\end{lemma}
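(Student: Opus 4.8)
The plan is to read off the three assertions — orbifold, Gorenstein, isolated — from the combinatorics of the fan $\Sigma$ of $\PP_\Delta$, whose rays pass through the lattice points of $\partial\Delta_X$ and whose maximal cones are simplicial by hypothesis. First I would record that a simplicial toric variety has only finite quotient singularities, so $\PP_\Delta$ is an orbifold (in particular $\QQ$-Gorenstein); this is immediate from the simpliciality assumption on the maximal cones of $\Sigma$.

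For the Gorenstein property I would use that $\operatorname{res}\colon\PP_\Delta\to\PP_{\Delta_{\check X}}$ is a toric crepant morphism: $\Sigma$ refines the normal fan of $\Delta_{\check X}$, equivalently the face fan of $\Delta_X$. Hence each maximal cone $\sigma\in\Sigma$ lies in $\cone(G)$ for a facet $G$ of $\Delta_X$, and — since $\Delta_X$ is reflexive — $G$ lies on an affine hyperplane at lattice distance one from the origin, with primitive lattice normal $u_G$. As the lattice points of $\partial\Delta_X$ contained in $\cone(G)$ are exactly those of $G$, every primitive ray generator of $\sigma$ satisfies $\langle\,\cdot\,,u_G\rangle=1$; by the standard criterion this is precisely the statement that $K_{\PP_\Delta}$ is Cartier near the fixed point $x_\sigma$, so $\PP_\Delta$ is Gorenstein.

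For ``isolated'' I would argue that it suffices to show every cone $\tau\in\Sigma$ with $\dim\tau\le 3$ is smooth: the singular locus of the simplicial toric variety $\PP_\Delta$ is $\bigcup_{\tau\ \text{non-smooth}}V(\tau)$ with $\codim V(\tau)=\dim\tau$, so if only the $4$-dimensional cones can be singular then the singularities are torus fixed points $x_\sigma$, hence isolated, and $\pi_\Delta(x_\sigma)$ is the corresponding vertex of $\Delta$, which matches the assertion following the lemma. Now, because the construction forces every lattice point of $\partial\Delta_X$ to be a ray of $\Sigma$, the cones of $\Sigma$ inside a given $\cone(G)$ are the cones over the simplices of a lattice triangulation of the facet $G$ that uses \emph{all} lattice points of $G$; therefore each such simplex, and hence each of its faces, is empty, i.e.\ contains no lattice point other than its vertices. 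So a cone $\tau$ of dimension $k+1\le 3$ is the cone over an empty $k$-simplex $P\subset G$ with $k\le 2$, and by the hyperplane normalization from the Gorenstein step ($\PP_\Delta$ being Gorenstein lets one split $N=\ZZ v_0\oplus(\ker u_G\cap N)$ for the generators $v_0,\dots,v_k$ of $\tau$) one has: $\tau$ is smooth $\iff$ $P$ is unimodular in the affine lattice of its span. For $k=0$ this is primitivity of the vertex, which holds because a lattice point $m\in\partial\Delta_X$ has all coordinates $\ge -1$ and coordinate sum $\le 1$, forcing $\gcd(m)=1$; for $k=1$ it is primitivity of the edge direction, automatic for an empty segment; for $k=2$ it follows from Pick's theorem, since an empty lattice triangle has normalized area $1$. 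Hence all cones of dimension $\le 3$ are smooth, and the proof is complete.

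The main obstacle — and the only place where dimension four is not completely analogous to lower dimensions — is exactly this last dimension count: one must notice that for isolated singularities of a fourfold it is enough to have smooth cones up to dimension $3$, and that the failure of ``empty $\Rightarrow$ unimodular'' (Reeve-type tetrahedra) first occurs for $3$-dimensional lattice simplices, i.e.\ for the maximal $4$-dimensional cones themselves, which are permitted to be singular. A secondary point needing care is checking that $\Sigma$ really does refine the face fan of $\Delta_X$ and that using every boundary lattice point makes all simplices of the induced facet triangulations and their faces empty; this is what links the Gorenstein and the isolated parts, since smoothness of a $2$-dimensional cone over an edge genuinely needs the two generators to lie on a common lattice-distance-one hyperplane, not merely the edge to be primitive.
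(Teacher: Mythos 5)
Your argument is correct and follows essentially the same route as the paper's proof: identify each maximal cone as the cone over an empty lattice simplex at height one inside a facet of $\Delta_X$ (using that $\Sigma$ refines the face fan of $\Delta_X$ and that every boundary lattice point is a ray), conclude Gorenstein from the height-one property, and conclude isolatedness from smoothness of all proper faces. The only difference is one of detail: where the paper simply cites the fact that a four-dimensional terminal Gorenstein toric orbifold singularity is isolated, you supply its proof (empty lattice simplices of dimension $\le 2$ are unimodular, via Pick's theorem in the top case), which is a worthwhile expansion of the same argument rather than a different approach.
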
 
\begin{proof} 
If $\sigma$ is a maximal cone in the fan, i.e.~a maximal region of linearity of $\varphi$, then it is simplicial by assumption, hence generated by $m_1,...,m_4\in\partial\Delta_{X}$ say. 
Moreover, these generators are all contained in a single facet $F$ of $\Delta_X$ because the fan refines the normal fan of $\Delta_{\check X}$.
By the assumption that each $(m,\varphi(m))$ for $m\in\partial\Delta_{X}\cap \ZZ^4$ is a ray generator, we find $F\cap\sigma\cap \ZZ^4 = \{m_1,...,m_4\}$. 
So $\sigma$ is a cone over the elementary lattice simplex given by the convex hull of $m_1,...,m_4$, thus gives a 
terminal toric Gorenstein orbifold singularity and these have codimension four. 
\end{proof}
Let $w_j$ be the monomial associated to $e_j$. 
Consider the (singular) hypersurface in $\PP_{\check X}$ given by an anti-canonical section written as a Laurent polynomial on $(\CC^*)^4$,
$$h=\alpha_1w_1+\alpha_2w_2+\alpha_3w_3+\alpha_4w_4+\alpha_0+\alpha_5(w_1w_2w_3w_4)^{-1}$$
with $\alpha_j\in\CC^*$ and $-5\alpha_0^5\neq \prod_{i=1}^5\alpha_i$ (so that $h=0$ gives a submanifold of $(\CC^*)^4$, cf.~\cite[\S2]{Candelas}). 
The monomial exponents of $h$ are precisely the lattice points of $\Delta_{\check X}$.
The closure of $h=0$ in $\PP_\Delta$ misses the isolated orbifold points at zero-dimensional strata (Lemma~\ref{lema-isolated-orbi}) and gives a symplectic $6$-manifold $\check X$ 
with symplectic structure induced from $\PP_\Delta$.
Furthermore, $\check X$ is a Calabi-Yau manifold as it agrees with the crepant resolution under $\operatorname{res}$ of the anti-canonical hypersurface, the closure of $h=0$ inside $\PP_{\Delta_{\check X}}$.
By deforming the $a_m\in\RR_{\ge 0}$, one can study continuous deformations of the symplectic structure. 
The space of crepant symplectic resolutions acquires an interesting chamber structure with a point on a wall given by a set of $a_m$ that violates the simplicialness of $\varphi$. Just as a remark: the wall geometry is governed by the \emph{secondary polytope} of $\Delta_X$. 

\subsection{The real affine manifold and tropical curves}
\label{section-affine-charts}
Following \cite{MGross05}, we next explain how to give a real integral affine structure on a large open subset of $\partial\Delta$ where $\Delta$ is a lattice polytope obtained from a $\varphi$ as in Section \ref{section-quintic}. 
We split $\Delta$ as a Minkowski sum
$$\Delta=\Delta_{\check X}+\Delta'$$
where $\Delta'$ is the polytope associated to the piecewise linear function $\varphi'$ that takes value $\varphi(m)-1$ at $m\in\partial\Delta_X\cap\ZZ^4$. 
Indeed, this gives a decomposition as claimed because $\Delta_{\check X}$ is the polytope of the function $\varphi_{\check X}$ taking value $1$ on all of $\partial\Delta_X$, so $\varphi=\varphi_{\check X}+\varphi'$.
By the decomposition, every vertex $v$ of $\Delta$ is uniquely expressible as $v=v_{\check X}+v'$
for $v_{\check X}$ a vertex of $\Delta_{\check X}$ and $v'$ a vertex of $\Delta'$. 
We project a small neighborhood $W_v\subset \partial\Delta$ of $v$ onto the quotient of the affine four-space $[v+\RR^4]$ that contains $W_v$ by the affine line $[v+\RR v_{\check X}]$ resulting an affine three-space. The projection is thus injective and thereby gives a real affine chart for $W_v$. There is also an integral structure obtained by complementing $v_{\check X}$ to a lattice basis of $\RR^4$ to find a lattice for the quotient. We do this for each vertex $v$ of $\Delta$. 
Furthermore, for each facet $F$ of $\Delta$, its interior $\Int(F)$ carries a natural integral affine structure from the tangent space to the facet.
Combining the resulting charts $W_v$ with the interiors of facet $\Int(F)$ yields an atlas on the union of these charts for an integral affine structure, i.e.~transitions in $\GL_3(\ZZ)\ltimes \RR^3$. 
By choosing $W_v$ suitably, the complement of the union of charts can be made to be
$$\shA:=\pi_\Delta(\check X\cap \PP_\Delta^{[2]})=\pi_\Delta(\check X)\cap \partial\Delta^{[2]}$$
where $\PP_\Delta^{[2]}$ is the union of complex two-dimensional strata, $\partial\Delta^{[2]}$ the union of two-cells of $\Delta$ and
$\pi_\Delta:\PP_\Delta\ra\Delta$ is the moment map for the Hamiltonian $(S^1)^4$-action on $\PP_\Delta$.
We don't need $\Delta$ to be a lattice polytope for this construction. 
The affine structure is integral affine because $\Delta_{\check X}$ is a lattice polytope.
Let $\Lambda$ denote the local system of integral tangent vectors on $\partial\Delta\setminus\shA$ (we also used $T_\ZZ$ before).
\begin{definition} \label{def-trop-curve}
A \emph{tropical curve} in $(\partial\Delta,\shA)$ is a graph $\gamma$ (realized as a topological space) together with a continuous injection $h:\gamma\ra \partial\Delta$ such that
\begin{enumerate} 
\item a vertex of $\gamma$ is either univalent or trivalent,
\item $h(v)\in\shA \iff v$ is a univalent vertex of $\gamma$,
\item the image of the interior of an edge $e$ is a straight line segment in the affine structure of $\partial\Delta\setminus\shA$ of rational tangent direction,
\item For $v$ with $h(v)\in\shA$, the primitive tangent vector of the adjacent edge $e$ generates the image of $T_\nu-\id$ for $T_\nu$ the monodromy of $\Lambda$ along any non-trivial simple loop $\nu$ around $\shA$ in a small neighborhood of $v$.
\item For every trivalent vertex $v$, and $e_1,e_2,e_3\in \Lambda_v$ the primitive tangent vectors into the outgoing edges, we have $e_1+e_2+e_3=0$ and $e_1,e_2$ span a saturated sublattice of $\Lambda_v$.
\end{enumerate}
We consider two tropical curves $\Gamma_1,\Gamma_2$ the same if there exists a homeomorphism $\Gamma_1\ra\Gamma_2$ that commutes with $h_1,h_2$.
By slight abuse of notation, we also use $\gamma$ to refer to the image of $h$.
The following lemma guarantees that we can always satisfy (4) above as long as the tropical curve approaches $\shA$ from the right direction. The lemma directly follows from the aforementioned simplicity of $(\partial \Delta, \shA)$, c.f. \cite{Gross01}.
\begin{lemma} \label{lemma-simple}
Let $x\in \partial\Delta\setminus \shA$ be a point contained in a small neighborhood $V$ so that $(V,V \cap \shA)$ is homotopic to $(D,p)$
as a pair, where $D$ is an open disc and $p$ is a point in $D$. 
Let $\nu\in \pi_1(V\setminus \shA)=\pi_1(D\setminus p) \cong \ZZ$ be a generator and $T_\nu:\Lambda_x\ra\Lambda_x$ the monodromy of $\Lambda$ along $\nu$. 
In a suitable basis of $\Lambda_x\cong \ZZ^3$, 
 $T_{\nu}$ is given by $\left(\begin{smallmatrix} 1&0&0\\0&1&1\\0&0&1 \end{smallmatrix}\right)$.
In particular, the image of $T_\nu-\id$ is saturated of rank one, i.e.~generated by a primitive vector.
\end{lemma}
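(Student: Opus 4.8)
The plan is to reduce the statement to the known local structure of the discriminant in the Gross--Siebert program and then read off the conclusion. The key point is that the topological hypotheses on $V$ isolate a \emph{generic} point of $\shA$, where the monodromy of $\Lambda$ is governed by the simplicity of $(\partial\Delta,\shA)$.

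First I would localize. Recall that $\shA=\pi_\Delta(\check X)\cap\partial\Delta^{[2]}$ is a trivalent graph sitting inside the $2$-skeleton $\partial\Delta^{[2]}$, so a small ball $V$ around $x$ meets $\shA$ in an embedded arc, a tripod, or the empty set. The requirement that $(V,V\cap\shA)$ be homotopic to $(D,p)$ as a pair forces $V\cap\shA$ to be contractible and, more importantly, forces $V\setminus\shA\simeq D\setminus p\simeq S^1$, i.e.\ $\pi_1(V\setminus\shA)\cong\ZZ$; of the three possibilities for $V\cap\shA$ only the embedded arc through a one-dimensional (non-vertex) point of $\shA$ has this property (the tripod case gives a simply connected complement, and the empty case gives a ball). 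Hence it suffices to compute the monodromy of $\Lambda$ around a small meridian $\nu$ at a generic point of $\shA$.

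Second I would invoke simplicity. By construction of the affine structure (Definition 3.13 in \cite{MGross05}, following Example~1.18 and Definition~1.60 in \cite{GrossSiebert06}), the pair $(\partial\Delta,\shA)$ is simple, which means precisely that near a generic point of $\shA$ the integral affine manifold $\partial\Delta\setminus\shA$ is isomorphic to the standard codimension-one local model: it is glued from two affine charts --- two facet interiors, or a facet interior and a vertex (fan) chart --- along the complement of a hyperplane, and the transition cocycle around $\nu$ has unipotent linear part consisting of a single Jordan block of size two. Concretely I would trivialize $\Lambda$ near $x$ using the facet chart containing $x$, transport a $\ZZ$-basis once around $\nu$ through the other chart by the explicit transition maps, and record the resulting matrix; choosing the basis so that the first vector spans the tangent line to $\shA$ and the second vector is the primitive direction of the vanishing cycle, and orienting $\nu$ appropriately, one gets exactly $\left(\begin{smallmatrix}1&0&0\\0&1&1\\0&0&1\end{smallmatrix}\right)$. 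This is the computation carried out for the quintic fibration in \cite{Gross01}, so I would cite it rather than repeat it. The last assertion is then immediate: $T_\nu-\id$ is the matrix whose only nonzero entry is a $1$ in position $(2,3)$, so its image is $\ZZ\cdot e_2$, a saturated rank-one sublattice generated by the primitive vector $e_2$, which moreover is the primitive tangent to the adjacent edge as needed in Definition~\ref{def-trop-curve}(4).

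The only real work is the bookkeeping in the second step: writing down the charts around $x$ and their transition functions from the fan-and-facet description and checking that the monodromy is $\GL_3(\ZZ)$-conjugate to the stated normal form. But since simplicity of $(\partial\Delta,\shA)$ is already established in \cite{MGross05}, this input is not new --- the lemma is essentially a restatement of the codimension-one case of simplicity --- and the genuinely new remark is the first step, namely that the topological hypotheses on $V$ force $x$ to lie near a generic point of $\shA$ so that the codimension-one model applies.
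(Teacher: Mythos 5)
Your route coincides with the paper's: the paper's entire justification for the lemma is the one-line remark that it ``directly follows from the aforementioned simplicity of $(\partial\Delta,\shA)$, c.f.\ \cite{Gross01},'' and your proposal fleshes this out (localize to a generic point of the discriminant, invoke the simple codimension-two local model, read off the unipotent Jordan block). Two side remarks in your first step are factually off though harmless to the conclusion: $\shA$ is the amoeba of a curve and so is generically two-dimensional rather than a trivalent graph (the graph picture is the topological model, and the locus where it is genuinely one-dimensional is exactly what the admissibility condition and Lemma~\ref{lem-admissible} single out); and the complement of a tripod in a $3$-ball is not simply connected but has free fundamental group of rank two (retract $V\setminus\shA$ radially onto $\partial V$ minus three points), which still differs from $\ZZ$ and so does not change your case analysis.
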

\end{definition}

\subsection{Katz's methods for finding lines on a quintic} \label{section-Katz-lines}
The quintic $X$ permits a flat degeneration to the union of coordinate hyperplanes simply by interpolation. If $X$ is given by the homogeneous quintic equation $f_5$ in the variables $u_0,...,u_4$ then we define the family of hypersurfaces in $\CC\PP^4$ varying with $t$ by
$$ u_0\cdot...\cdot u_4+tf_5=0$$
and denote by $X_0$ the fiber with $t=0$. 
Since $X_0$ is the union of five projective spaces, it contains infinitely many lines. 
However, only a finite number of them deforms to the nearby fibers, worked out by Katz in \cite{Katz86}. 
Assuming $f_5$ is general, the intersection of $X$ with each coordinate two-plane is a smooth complex quintic curve. There are ten of these.
\begin{theorem}[Katz]
A line in $X_0$ deforms into the nearby fiber if and only if it does not meet any coordinate line of $\PP^4$ but meets four of the $10$ quintic curves.
\end{theorem}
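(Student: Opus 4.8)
The plan is to analyze which lines $\ell\subset X_0$ (a line lying in one of the five coordinate hyperplanes $H_i=\{u_i=0\}\cong\CC\PP^3$) extend to a line in the nearby fiber $X_t$, using the standard first-order deformation/obstruction calculus for the Hilbert scheme of lines. First I would recall the setup: for a line $\ell$ in $X_t$, the condition to deform is governed by $H^0(\ell,N_{\ell/X_t})$ and the obstruction lives in $H^1(\ell,N_{\ell/X_t})$; since a line on a quintic threefold has $N_{\ell/X_t}\cong\O(-1)^{\oplus 2}$ when it is a rigid (isolated) line, both groups vanish and the line is unobstructed and isolated. So the real content is to decide which lines of the reducible $X_0$ survive to first order — equivalently, which lines of $X_0$ lie in the closure of the space of lines on $X_t$ for $t\ne 0$. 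I would organize the argument around the degeneration $u_0\cdots u_4+tf_5=0$ and a line $\ell\subset H_{i}$, say $i=0$, so $\ell\subset\{u_0=0\}$.

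The key computation is to write down the first-order equation for moving $\ell$. Parametrize $\ell$ by $\PP^1$ and pick a normal vector field; a deformation of $\ell$ into $X_t$ to first order in $t$ requires solving, on $\ell\cong\PP^1$, an equation of the form $u_1u_2u_3u_4|_\ell\cdot(\text{component in the }u_0\text{ direction}) + t\,f_5|_\ell = 0$ modulo higher order, because on $H_0$ the product $u_0\cdots u_4$ vanishes identically and its normal derivative in the $u_0$-direction is $u_1u_2u_3u_4$. Here $u_1u_2u_3u_4|_\ell$ is a section of $\O_{\PP^1}(4)$ whose zeros are exactly the four points where $\ell$ meets the coordinate planes $\{u_j=0\}\cap H_0$ for $j=1,2,3,4$ (these intersections $H_0\cap H_j$ are the coordinate two-planes, and $\ell$ meets each such two-plane in one point unless $\ell$ lies in it). Meanwhile $f_5|_\ell$ is a general section of $\O_{\PP^1}(5)$. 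The deformation exists iff we can choose the $u_0$-direction normal component — a section of $\O_{\PP^1}(1)$, i.e. a $2$-dimensional space, after accounting for the normal bundle of $\ell$ in $\CC\PP^4$ being $\O(1)^{\oplus 3}$ and the ambient automorphisms — to solve this divisibility: we need $f_5|_\ell$ to be divisible, in the appropriate sheaf-theoretic sense, by the degree-$4$ section $u_1u_2u_3u_4|_\ell$. Counting: $\O(1)$ normal component times $\O(4)$ gives $\O(5)$, so generically the equation $g\cdot(u_1u_2u_3u_4|_\ell)=-f_5|_\ell$ with $g\in H^0(\O(1))$ has a solution iff $f_5|_\ell$ vanishes at the four zeros of $u_1u_2u_3u_4|_\ell$, i.e. iff $\ell$ meets the quintic curve $C_{0j}:=X\cap(H_0\cap H_j)$ at the point $\ell\cap(H_0\cap H_j)$ for each of the four $j\ne 0$. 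This is exactly the stated condition "meets four of the ten quintic curves"; and "does not meet any coordinate line of $\PP^4$" is precisely the genericity hypothesis ensuring the four points are distinct and $\ell$ is not contained in any $H_0\cap H_j$, so that the count is clean and $\ell$ is a genuine isolated line rather than part of a positive-dimensional family.

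For the converse direction — that such a line does deform, not merely to first order — I would invoke the vanishing $H^1(\ell,N_{\ell/X_t})=0$ for the expected rigid line (one checks $N_{\ell/X_t}=\O(-1)^{\oplus 2}$, which follows once one knows the deformation is obstruction-free, e.g. via the exact sequence $0\to N_{\ell/X_t}\to N_{\ell/\CC\PP^4}\to N_{X_t/\CC\PP^4}|_\ell\to 0$, i.e. $0\to N_{\ell/X_t}\to\O(1)^{\oplus 3}\to\O(5)\to 0$, and the restricted quintic equation), so that the functor of deformations of $\ell$ over the base of the family is smooth at $t=0$; hence the formal first-order solution lifts to an actual line in $X_t$ for small $t$. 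Combined with a dimension count showing $X_0$ carries only finitely many such lines (one for each choice of a plane $H_i$, a line in it meeting the four curves $C_{ij}$ appropriately — a finite set by BÃ©zout-type counting against the quintic curves), this yields the theorem.

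The main obstacle I anticipate is making the first-order divisibility argument rigorous and coordinate-free: one must correctly identify the normal-bundle directions of $\ell$ in $\CC\PP^4$ along which the family $X_t$ is "moving" versus those tangent to $X_0$, keep track of the $\O(1)$ versus $\O(4)$ versus $\O(5)$ bookkeeping on $\ell\cong\PP^1$, and verify that the genericity of $f_5$ guarantees both that the four intersection points are distinct and that no extraneous solutions (lines in a coordinate two-plane, lines meeting a coordinate line) contaminate the count. In other words, the heart of the proof is the local analysis at $t=0$ of the incidence scheme $\{(\ell,t):\ell\subset X_t\}$ and checking it is smooth of relative dimension zero over the $t$-line at precisely the lines satisfying Katz's combinatorial condition.
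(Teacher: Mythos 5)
The paper states this theorem without proof, merely citing \cite{Katz86}, so there is no in-paper argument to compare against. Taking your sketch on its own terms: the first-order computation is correct and is the heart of Katz's argument. For $\ell\subset H_0=\{u_0=0\}$, the out-of-$H_0$ normal component $g\in H^0(\O_\ell(1))$ must satisfy $g\cdot u_1u_2u_3u_4|_\ell+f_5|_\ell=0$, which has a solution precisely when $f_5|_\ell$ vanishes at the four zeros of $u_1u_2u_3u_4|_\ell$, i.e.\ when $\ell$ meets the four quintic curves $C_{0j}$; the no-coordinate-line hypothesis is exactly what makes those zeros distinct and ensures $u_1u_2u_3u_4|_\ell\not\equiv 0$. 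This gives the ``only if'' direction cleanly.

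The ``if'' direction has a real gap, which you partly flag yourself. Invoking $N_{\ell/X_t}\cong\O(-1)^{\oplus 2}$ ``once one knows the deformation is obstruction-free'' is circular, and in any case unavailable at $t=0$: the sheaf $N_{\ell/X_0}$ is not locally free because $X_0$ is singular along the coordinate $2$-planes and a Katz line passes through four such points, so the sequence $0\to N_{\ell/X_t}\to\O(1)^{\oplus 3}\to\O(5)\to 0$ does not specialize to $t=0$. Worse, the total space $\mathcal X=\{u_0\cdots u_4+tf_5=0\}\subset\PP^4\times\CC$ is itself singular at each point $(p,0)$ with $p$ on one of the ten quintic curves (there $d(u_0\cdots u_4)=0$ and the $dt$-coefficient $f_5(p)$ vanishes too), and these are exactly the four points that a Katz line passes through; so you cannot run deformation theory of $\ell$ inside a smooth ambient family. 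The standard remedy is a small resolution of this curve of conifold-type singularities of $\mathcal X$, followed by a normal-bundle computation for the proper transform of $\ell$, together with a genericity argument for $f_5$. Finally, your closing remark that the incidence scheme $\{(\ell,t):\ell\subset X_t\}$ is ``smooth of relative dimension zero over the $t$-line'' is false as stated --- the fiber over $t=0$ contains the full four-dimensional $G(2,H_i)$ for every $i$. The correct object is the closure of the $t\neq 0$ locus, and identifying which lines of $X_0$ lie in that closure is precisely the content of the theorem.
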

Note that it follows that a line which deforms needs to be contained in a unique irreducible component $H\cong\PP^3$ of $X_0$ and needs to meet the $4$ quintic curves that are contained in this component, namely the intersections of the four coordinate planes of $H$ with $X$.
A general quintic hosts $2875=5\cdot 575$ many lines and in the degeneration, each $H$ contains $575$ deformable lines, \cite{Katz86}.
On the dense algebraic torus $(\CC^*)^3$ of $\CC\PP^3$, we may apply the map $(\CC^*)^3\ra\RR^3$ given by $\log|\cdot|$ for each coordinate. Each line maps to an amoeba with four legs going off to infinity in the directions of the rays in the fan of the toric variety $\PP^3$.
Furthermore, these legs ``meet the amoeba of the quintic plane curves at infinity''. We are not going to make this more precise because we only use this idea as inspiration. There is a closely related theorem that was our main motivation combined with Katz's findings:
\begin{theorem}[\cite{MR16}]
The number of tropical lines in $\RR^3$ meeting $5$ general quintic tropical curves at tropical infinity each in one of the four directions of the rays of the fan of $\PP^3$ when counted with their tropical multiplicities agrees with the number of complex lines in $\PP^3$ meeting five general quintic plane curves.
\end{theorem}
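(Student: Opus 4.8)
The plan is to obtain both numbers from a single toric-degeneration correspondence theorem --- in the spirit of Nishinou--Siebert and Mandel--Ruddat --- matching the classical enumeration of lines with the tropical one vertex by vertex and leg by leg. First I would recast the classical side: a complex line $\ell\subset\PP^3$ meeting the prescribed quintic plane curves (each lying in a toric boundary divisor $D\cong\PP^2$ of $\PP^3$, in one of the four directions given by the rays of the fan) is the same datum as a genus-zero degree-one stable map $f\colon\PP^1\to\PP^3$ together with the marked points $x_j=f^{-1}(D_j)$ at the preimages of those divisors, subject to $f(x_j)\in C_j$ --- a degree-five condition pulled back from $D_j$. Once the $C_j$ are general, Kleiman--Bertini makes the resulting finite set of lines reduced, so the classical number is simply its cardinality.

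Next I would degenerate. Working over Puiseux series with its non-archimedean valuation (equivalently, taking the tropical limit $t\to 0$) and tropicalizing with respect to the open torus $(\CC^*)^3\subset\PP^3$: a general family $\ell_t$ of solution lines has tropicalization a balanced rational one-cycle of degree one in $\RR^3$, i.e. a tropical line $\Gamma$ with legs in the ray directions $e_1,e_2,e_3,-e_1-e_2-e_3$ of the fan of $\PP^3$; and, degenerating the $C_j$ generically, the incidence $f(x_j)\in C_j$ tropicalizes to the statement that the leg of $\Gamma$ in direction $e_j$ lies, modulo $\RR e_j$, on the tropical quintic $\tau_j=\Trop(C_j)\subset\RR^3/\RR e_j\cong\RR^2$ --- that is, $\Gamma$ meets $\tau_j$ at tropical infinity in the prescribed direction. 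This gives a map from classical to tropical solutions, and by genericity of the $\tau_j$ every tropical solution should be rigid, trivalent, and meet each $\tau_j$ in the interior of an edge, so that $\Lambda$ is trivializable over it and $\mult(\cdot)$ is defined.

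The heart of the argument, and the step I expect to be hardest, is the lifting count: I would show each tropical solution $\Gamma$ lifts to exactly $\mult(\Gamma)$ complex lines. The idea is to regard $\Gamma$ as a log-smooth (pre-log) stable map into the central fibre of a toric degeneration adapted to $\Gamma$, check that its logarithmic deformations over the Puiseux disc are unobstructed --- the domain being a tree of rational curves and the incidence data generic --- and compute the number of smoothings as a product of local indices: a factor $1$ at each trivalent vertex (by condition~(5) of Definition~\ref{def-trop-curve}), and at each leg running into $\shA$ a factor recording the weight of the edge of $\tau_j$ being met together with the monodromy of $\Lambda$ about $\shA$; by the definition of $\mult(\gamma)$ recalled in Section~\ref{section-weight-mult} (following \cite{MR16}), these multiply out to $\mult(\Gamma)$. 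Summing then gives the classical number $=\sum_\Gamma\mult(\Gamma)$, the tropical count, and one checks this is independent of the generic degeneration chosen. The two delicate points are (i) making the ``meet $\tau_j$ at infinity'' conditions compatible with the logarithmic toric degeneration --- tracking log structures and tangency orders along the $D_j$, and checking that general plane quintics degenerate to tropically transverse configurations --- and (ii) carrying out the vertex-by-vertex smoothing count precisely enough to match the deformation-theoretic multiplicity with the combinatorial $\mult(\Gamma)$, including the contribution of the nontrivial monodromy of $\Lambda$ where the legs meet $\shA$. An alternative worth keeping in reserve, avoiding logarithmic deformation theory, is to tropicalize the incidence correspondence directly inside the tropical Grassmannian $\Trop(\Gr(2,4))$ and evaluate the resulting stable intersection number; the multiplicity bookkeeping comes out the same way.
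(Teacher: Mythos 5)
The paper does not prove this theorem --- it is cited from \cite{MR16} with no argument given, so there is no ``paper's own proof'' to compare against. Your sketch reconstructs, in outline, the approach that \cite{MR16} itself takes: a logarithmic Nishinou--Siebert-style correspondence theorem adapted to incidence conditions on unbounded legs, showing each rigid tropical solution $\Gamma$ lifts to exactly $\mult(\Gamma)$ complex lines, and you correctly isolate the genuinely hard steps (unobstructed log deformation theory and matching the deformation-theoretic lift count to the combinatorial multiplicity vertex by vertex and leg by leg). Since you are paraphrasing the strategy of the cited reference rather than supplying a new route, there is no alternative to contrast.

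Two corrections to the bookkeeping. First, a tropical line in $\RR^3$ has exactly four unbounded legs, one per ray direction of the fan of $\PP^3$, so the incidence data is one tropical quintic per leg, i.e.\ four curves, not five; the ``five'' appearing in both halves of the displayed statement is inconsistent with the surrounding discussion (the generic classical count $2\cdot 5^4$ comes from four codimension-one Chow-form conditions on $\Gr(2,4)$) and is surely a slip carried over into the quote. Second, the discriminant $\shA$ and the local system $\Lambda$ that you invoke in your multiplicity factorization live on $\partial\Delta$, not on $\RR^3$; for a tropical line in $\RR^3$ the constraints $A_v$ entering the map $\Phi$ of \cite[Equation (13)]{MR16} are read off from the leg direction $e_j$ together with the edge of $\tau_j$ being met, with no reference to monodromy. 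The two descriptions do agree once $\gamma$ is re-embedded in $(\partial\Delta,\shA)$ --- that is exactly the content of Proposition~\ref{p:multiplicity} --- but for a proof of the $\RR^3$ statement this layer is extraneous and risks obscuring what is actually being counted.
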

So we may almost deduce from Katz's count of complex lines a count of tropical lines via this theorem. The only issue here is the attribute ``general''. Indeed, the quintic curves in Katz's situation are \emph{not} in general position. If they were, the count would be $2\cdot 5^4$ by standard Schubert calculus but this number is way bigger than $575$. Indeed, any pair of quintic curves meets each other in $5$ points which wouldn't happen if they were in general position. 
They meet each other because they arise from the same equation $f_5=0$ restricted to each coordinate plane.

We expect that in the more special position where the tropical quintics meet each other, after removing degenerate tropical lines (meaning those that move in positive-dimensional families, meet vertices of the discriminant curve or don't have the expected combinatorial type \mbox{$>\hspace{-6pt}-\hspace{-6pt}<$}), then one actually finds $575$ when counting these with multiplicity. 
We verify this below in a global example. 
Before going into its details, let us clarify why tropical lines in $\RR^3$ that meet tropical quintics at infinity relate to $(\partial\Delta,\shA)$ in the sense of Definition~\ref{def-trop-curve}. 
For $s\in[0,1]$, setting $\Delta_s=\Delta_{\check X}+s\Delta'$, we observe $\Delta_0=\Delta_{\check X}$ and $\Delta_1=\Delta$. 
In this sense, $\Delta$ is a deformation of $\Delta_{\check X}$ and note that $\Delta_s$ has the same combinatorial type for all $s>0$. 
Recall the notion of the discrete Legendre transform from \cite{GrossSiebert03,GrossSiebert06,Ruddat14}. 
Since $\Delta_X$ and $\Delta_{\check X}$ are polar duals, their boundaries are discrete Legendre dual, \cite[Example 1.18]{GrossSiebert06}. 
The subdivided boundary of $\Delta_X$ by means of $\varphi$ is the discrete Legendre dual to $\partial\Delta$.
For a $2$-cell $\tau$ in $\partial\Delta$, there are three possibilities for what its deformation $\bar\tau$ in $\Delta_0=\Delta_{\check X}$ can be, namely $0$-, $1$- or $2$-dimensional. 
These cases match with whether its dual (one-dimensional) face $\check\tau$ in the subdivision of $\partial\Delta_X$ lies in a $3$-, $2$- or $1$-cell of $\Delta_X$.
Most importantly, since the subdivision of $\Delta_X$ by $\varphi$ governs the composition $\PP_{\Delta}\stackrel{\operatorname{res}}\lra\PP_{\Delta_{\check X}}\ra\Delta_{\check X}$, the following holds.
\begin{lemma} \label{lem-admissible}
Let $\tau\subset\partial\Delta$ be a $2$-cell. 
Recall the monomials $w_1,...,w_4$. We set $w_5:=(w_1w_2w_3w_4)^{-1}$ and $i\in\bar\tau$ means that the vertex of $\Delta_{\check X}$ corresponding to $w_i$ is contained in $\bar\tau$.
The amoeba part $\shA\cap\tau$ is given by 
$$g_{\tau,0}:=\sum_{i\in\bar\tau} \alpha_iw_i$$
as an equation on the torus orbit dense in the stratum of $\PP_\Delta$ given by $\tau$. 

In particular, for $\tau$ deforming to an edge of $\Delta_{\check X}$, $g_{\tau,0}$ is a binomial. Also note that $\shA\cap\tau=\emptyset$ if $\bar\tau$ is a $0$-cell.
\end{lemma}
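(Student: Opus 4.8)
The plan is to determine $\check X\cap O_\tau$, where $O_\tau\subset\PP_\Delta$ is the $2$-dimensional torus orbit attached to the $2$-cell $\tau$, by transporting the computation to $\PP_{\Delta_{\check X}}$ along $\operatorname{res}$. First I would record the relevant structure: $\check X$ is the proper transform under the crepant partial resolution $\operatorname{res}\colon\PP_\Delta\to\PP_{\Delta_{\check X}}$ of the anticanonical hypersurface $\check X_0:=\overline{\{h_0=0\}}\subset\PP_{\Delta_{\check X}}$, where $h_0=\alpha_1w_1+\dots+\alpha_4w_4+\alpha_0+\alpha_5w_5$ with $w_5=(w_1w_2w_3w_4)^{-1}$, and $\operatorname{res}$ is an isomorphism over the dense torus (cf.\ Lemma~\ref{lema-isolated-orbi} and the discussion around it). The six monomials of $h_0$ are exactly the six lattice points of the reflexive simplex $\Delta_{\check X}$: the vertex $e_i$ carries $\alpha_iw_i$ for $i=1,\dots,4$, the vertex $-\sum_je_j$ carries $\alpha_5w_5$, and the unique interior lattice point $0$ carries the constant $\alpha_0$.

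The main step is the standard toric computation of the restriction of $h_0$ to a torus orbit of $\PP_{\Delta_{\check X}}$, transported up to $\PP_\Delta$. Let $\sigma_\tau$ denote the cone of the fan of $\PP_\Delta$ dual to $\tau$, so $O_\tau$ is its orbit. Since that fan refines the normal fan of $\Delta_{\check X}$, the cone $\sigma_\tau$ lies inside the normal cone of a unique face of $\Delta_{\check X}$; testing against a functional $\xi$ in the relative interior of $\sigma_\tau$ and using uniqueness of the Minkowski decomposition $\tau=\mathrm{face}_\xi(\Delta_{\check X})+\mathrm{face}_\xi(\Delta')$ of faces identifies that face with $\bar\tau$. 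Hence $\operatorname{res}$ maps $O_\tau$ onto the torus orbit $O_{\bar\tau}\subset\PP_{\Delta_{\check X}}$ attached to $\bar\tau$, by a surjective homomorphism of tori (an isomorphism if $\dim\bar\tau=2$, a fibration with torus fibres otherwise). Restricting $h_0$ to $O_{\bar\tau}$ keeps exactly the terms whose exponent lies on the face $\bar\tau$ and deletes the others (equivalently, it is the $\sigma_\tau$-initial form of $h_0$); since $\dim\bar\tau\le 2<4$ forces $\bar\tau$ to be a proper face of the simplex $\Delta_{\check X}$, its lattice points are precisely its vertices — never the interior point $0$ — so the constant $\alpha_0$ never survives and $h_0|_{O_{\bar\tau}}=\sum_{i\in\bar\tau}\alpha_iw_i$. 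Pulling back along $O_\tau\to O_{\bar\tau}$ and using that $\check X\cap O_\tau$ is the preimage of $\check X_0\cap O_{\bar\tau}$, I would conclude that $\check X\cap O_\tau$ is cut out on $O_\tau$ by $g_{\tau,0}=\sum_{i\in\bar\tau}\alpha_iw_i$, each $w_i$ now meaning its pullback character on $O_\tau$. As $\pi_\Delta$ restricts on $O_\tau$ to the moment map of the toric surface $\overline{O_\tau}$ with moment polytope $\tau$, this gives the stated description of $\shA\cap\tau$ as the amoeba of $\{g_{\tau,0}=0\}$.

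The ``in particular'' statements are then immediate from $\Delta_{\check X}$ being a $4$-simplex: a $k$-dimensional face $\bar\tau$ is the convex hull of exactly $k+1$ of its vertices, so $g_{\tau,0}$ has $k+1$ terms — a binomial $\alpha_iw_i+\alpha_jw_j$ when $\bar\tau$ is an edge, and a single term $\alpha_iw_i$ with $\alpha_i\in\CC^*$ when $\bar\tau$ is a vertex. In the latter case $g_{\tau,0}$ is nowhere zero on the torus $O_\tau$ (a nonzero scalar times a character), so $\check X\cap O_\tau=\emptyset$; the same holds for every face of $\tau$, whose Minkowski summand is then also that vertex, and hence $\shA\cap\tau=\emptyset$.

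I do not expect a genuine obstacle here: the content is the toric dictionary applied once downstairs and transported upstairs. The one point requiring care is the bookkeeping between the two toric varieties — matching the monomials of $h$, indexed by the lattice points of $\Delta_{\check X}$, with the orbit structure of $\PP_\Delta$, indexed by the faces of $\Delta$, through the Minkowski-summand identification $\tau\leftrightarrow\bar\tau$ coming from the subdivision by $\varphi$ — together with remembering that $\operatorname{res}|_{O_\tau}$ collapses a torus factor whenever $\dim\bar\tau<2$, so that the $w_i$ appearing in $g_{\tau,0}$ are pullback characters and $\{g_{\tau,0}=0\}\subset O_\tau$ may be a union of fibres of that collapse.
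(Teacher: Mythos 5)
Your proof is correct and fills in exactly the argument the paper leaves implicit in the sentence preceding the lemma: transport the computation to $\PP_{\Delta_{\check X}}$ along $\operatorname{res}$, use the standard toric fact that a section of $\shO(-K)$ restricts to a torus orbit as the sum of the monomials on the face of the line-bundle polytope cut out by the corresponding cone, and identify that face with $\bar\tau$ via the Minkowski decomposition $\Delta=\Delta_{\check X}+\Delta'$. The paper states the lemma as a direct consequence of this picture; your write-up supplies the same route with the details made explicit, including the needed observation that every lattice point of a proper face of $\Delta_{\check X}$ is a vertex, so the interior point $0$ (hence $\alpha_0$) never appears in $g_{\tau,0}$, and that all faces of a $\tau$ with $\bar\tau$ a vertex have the same deformation, giving $\shA\cap\tau=\emptyset$.
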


Note that $g_{\tau,0}$ is a binomial if and only if the corresponding amoeba is one-dimensional and hence tropical curves ending on it will be admissible (see also Remark \ref{r:justifyA6}).

If $\varphi$ is a unimodular subdivision, e.g.~as in Figure~\ref{graph:subdivision}, then most two-cells of $\partial\Delta$ have $\shA\cap\tau=\emptyset$, there are $5\cdot 10$ many two-cells of $\partial\Delta$ that deform to triangles in $\Delta_{\check X}$ but most interestingly for us, $10\cdot 30$ two-cells deform to edges, hence their amoeba is given by a binomial. 
These amoeba pieces arrange as $10$ plane quintic curves, e.g. as in Figure~\ref{fig-plane-quintic}. (One verifies that indeed the number of interior edges is $30$ here.) Each quintic curve is dual to the triangulation of a two-face of $\Delta_X$, e.g. consider the front face in the right hand part of Figure~\ref{graph:subdivision}. Each facet of $\Delta_X$ contains four triangle faces,
\begin{wrapfigure}[14]{r}{0.38\textwidth}
\captionsetup{width=.75\linewidth}
\begin{center}
 \includegraphics[width=0.35\textwidth]{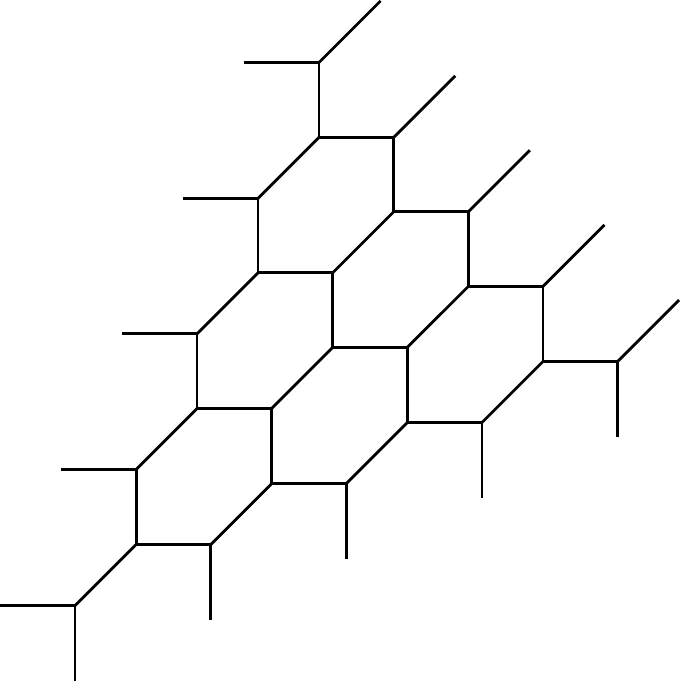}
\end{center}
 \caption{The tropical plane quintic curve that describes part of the discriminant $\shA$ of $\partial\Delta$.}
 \label{fig-plane-quintic}
\end{wrapfigure}
hence dually, four quintic curves arrange together as the boundary of a space tropical quintic surface in $\RR^3$. In particular, we can view them as lying at infinity and since they make up the discriminant in $\partial\Delta$, a tropical line in $\RR^3$ with ends on the four quintics thus gives a tropical curve in $\partial\Delta$. 
There are a lot of these, see Figure~\ref{graph:575lines} and most of them are admissible, i.e.~they meet one of the 30 inner edges of each quintic, rather than the $15$ outer ones. 
Also note that this configuration appears five times in the boundary of $\partial\Delta$.

\begin{figure}
  \centering
  \begin{tabular}{p{0.4\linewidth}cp{0.4\linewidth}}
 \includegraphics[width=0.4\textwidth]{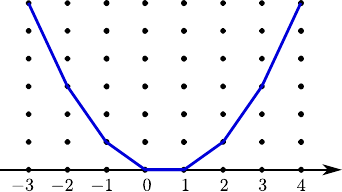}&\qquad &\includegraphics[width=0.4\textwidth]{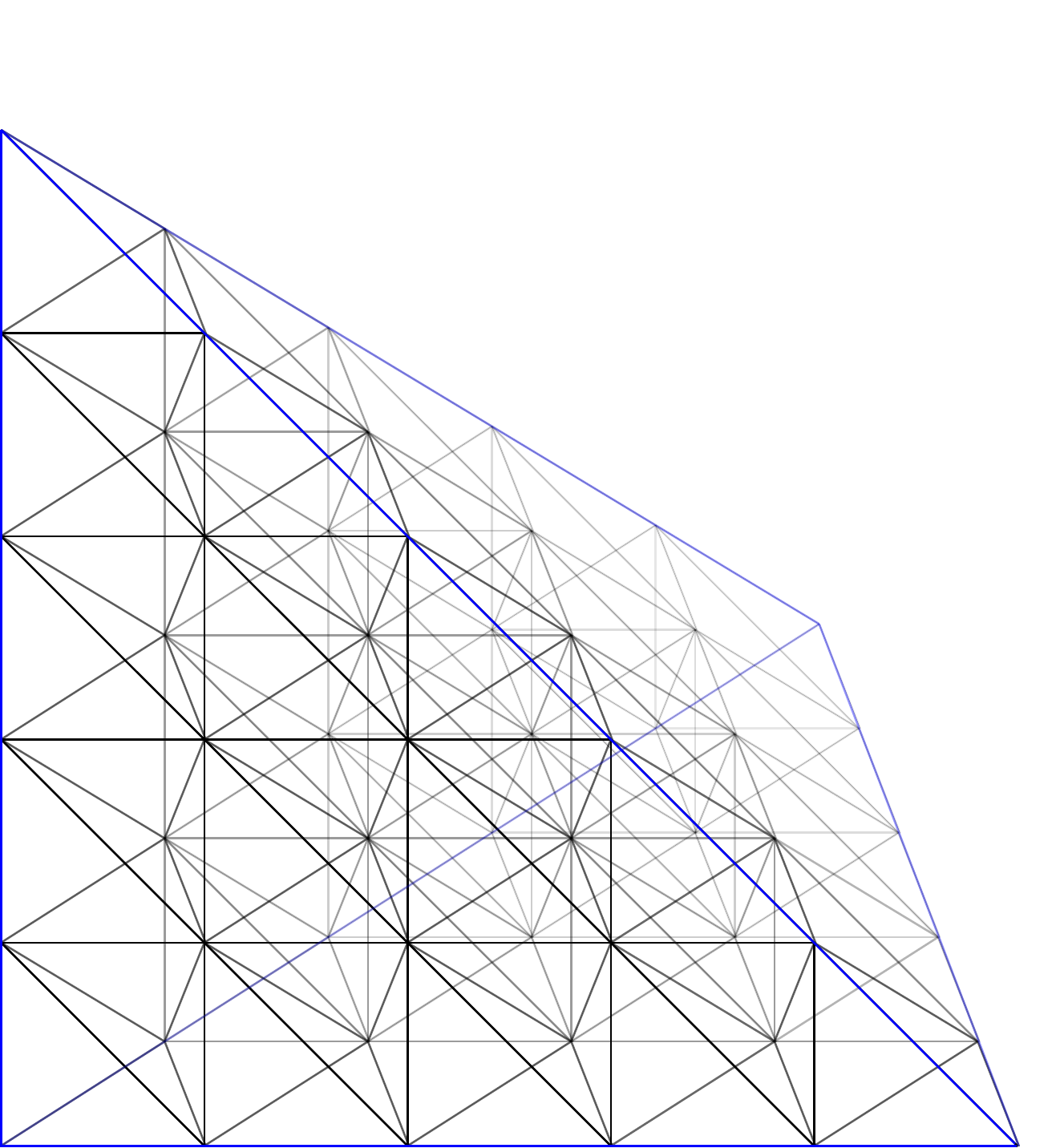}
  \end{tabular}
 \caption{Graph of $\varphi_0$ and Weyl-$A_3$ subdivision of a facet of the moment polytope of the quintic threefold (obtained from $\varphi_1$).}
 \label{graph:subdivision}
\end{figure}

\subsection{A very symmetric subdivision and resolution of the quintic mirror} \label{section-subdivision}

We next give an example for $\PP_\Delta$ that is even a manifold, see also \cite[p.\,122: Fig. 4.6]{Gross01}.
The subdivision of each facet of $\partial\Delta_X$ is obtained from the affine Weyl chambers of type $A_3$, cf. \cite[III,\S2]{KKMS}. 
Concretely, let $\varphi_0:\RR\ra\RR$ be the unique continuous convex function that is linear on each connected component of $\RR\setminus\ZZ$, changes slope by $1$ at each point in $\ZZ$ and is constantly zero on $[0,1]$, see Figure~\ref{graph:subdivision}. 
One finds $\varphi_0(n)=n(n-1)/2$ for $n\in\ZZ$ (``discrete parabola''). 
Now consider the piecewise affine function $\varphi_1:\RR^4\ra \RR$ given by
\begin{align*}
\varphi_1(x_1,x_2,x_3,x_4)= & \ \varphi_0(x_1)+\varphi_0(x_2)+\varphi_0(x_3)+\varphi_0(x_4)\\
&+\varphi_0(x_1+x_2)+\varphi_0(x_2+x_3)+\varphi_0(x_3+x_4)\\
&+\varphi_0(x_1+x_2+x_3)+\varphi_0(x_2+x_3+x_4)\\
&+\varphi_0(x_1+x_2+x_3+x_4).
\end{align*}
and finally define $\varphi$ as the unique piecewise linear function on $\RR^4$ that coincide with $\varphi_1$ on $\partial\Delta_X$. 
For $m\in\partial\Delta_X\cap\ZZ^4$, set $a_m=\varphi(m)$ and recall from Section \ref{section-quintic} that $\varphi$ is entirely determined from the set of $a_m$.

The induced subdivisions of any two facets are isomorphic and looks like what is on the right in Figure~\ref{graph:subdivision}.
One checks that each four-dimensional cone in the fan given by $\varphi$ is lattice-isomorphic to the standard cone $\RR_{\ge 0}^4\subset\RR^4$, so the resulting $\PP_\Delta$ is smooth.
In our explicit example below, we will use a slight perturbation replacing $a_m$ by $a_m+\eps_m$ for random $\eps_m$ to increase our chance of being in a generic situation. 
Plugging the perturbed $a_m$ into \eqref{eq-Delta-from-am} yields a slightly deformed $\Delta$ and while the complex manifold $\PP_\Delta$ doesn't change, as this slightly perturbs the symplectic form in a well-understood manner.
The best way to understand what $(\partial\Delta,\shA)$ looks like is by considering its discrete Legendre dual. The subdivision of $\Delta_X$ by $\varphi$ is five copies of the right hand side of Figure~\ref{graph:subdivision} glued along facets.
Therefore, after identification there are ten $2$-faces, each carrying a subdivision that is dual to that of a quintic curve in its most symmetric form show in Figure~\ref{fig-plane-quintic}.

\subsection{The findings of a computer search for the tropical lines} \label{section-computer}
As described in the previous section, we obtained a particular $\partial\Delta$ as a small perturbation of $\varphi$ that gave the very symmetric subdivision of $\Delta_X$. 
From Katz's work as described in Section \ref{section-Katz-lines}, we are looking for tropical lines in $\partial\Delta$ that meet the quadruple of tropical quintic curves where each tropical quintic is dual to the subdivision of one of the $10$ triangle faces of $\Delta_X$. 
We used a computer for this search following the pseudo code\footnote{For more details, the complete code with instructions and results, see\\ \url{https://www.staff.uni-mainz.de/ruddat/lines-in-quintic/lines.html} or look at the ancillary files of this third arxiv version}.\vspace{2mm}
\begin{center}
\RestyleAlgo{boxruled}
\begin{algorithm}[H]
%\setstretch{1.8}
%\SetAlgoLined
 \KwData{Unimodular regular subdivision of the convex hull $\Delta_X$ of $0,5e_1,...,5e_4$ in $\RR^4$ by piecewise affine height function that is integral at integral points.}
 \KwResult{Findings of all tropical lines.}
  compute the tropical quintic threefold inside $\RR^4$ associated to the height function\;
  compute the $5$ tropical quintic surfaces $S_i$ ($i=1,...,5$) at the five asymptotic directions of infinity (each sitting inside an $\RR^3$)\;
  compute for each of these quintic surfaces $S_i$ the quintic curves $C_{ij}$ ($j=1,...,4$) at the respective four directions of infinity. Each $C_{ij}$ has $45$ edges\;
 \For{$i\leftarrow 1,...,5$}{
  \For{each of the $45^4$ tuples $(a_1,...,a_4)$ with $a_j$ an edge of $C_{ij}$}{
   \For{each of the $3$ generic combinatorial types of a tropical line in $\RR^3$}{\vspace{1mm}
     Check if there exists a tropical line of the given type meeting $a_1,...,a_4$ and if so, record it.
   }
  }
 }
 From the recorded tropical lines, remove all those that are non-rigid (i.e.~they are part of a positive-dimensional family) or are special (they meet vertices of the tropical quintics $C_{ij}$). The remaining ones are the result of the search.
\caption{Tropical line search}
\end{algorithm}
\end{center}
\vspace{2mm}
After removing all lines that meet vertices of the quintics, that have only one internal vertex or are non-rigid (that is move in families), we did actually get the expected count --- when counting with multiplicity (which is remarkable in view of \cite{Vigeland,PaniVige,CuetoDeo}). That is, maybe surprisingly, the lines weren't all of multiplicity one. We give the definition of the multiplicity in Section \ref{ss:MultiplicityWeight}.
For each of the five facets of $\Delta_X$, the count with multiplicity of the tropical curves gave indeed $575$, so in total $2875$ as expected. We found $2695$ curves of multiplicity one and $90$ of multiplicity two. These $90$ did not evenly distribute over the $5$ facets: $15+16+18+20+21$. While $90$ is a number that hasn't appeared yet in the context of the quintic to our knowledge, one may speculate that relates to the count of real lines that was found to be $15$ in \cite{Solomon}: for rational curves on an elliptic surface, the presence of higher multiplicity tropical curves is implied from the Welschinger invariant to differ from the Gromov-Witten invariant, see e.g. \cite[\S4.2.2]{GOR15}.

The goal is to construct Lagrangian threefolds from these tropical curves. The remainder of this article carries this out for admissible curves. Recall that the requirement is that the tropical curve meets the discriminant amoeba $\shA$ in points where this amoeba is one-dimensional. By Lemma~\ref{lem-admissible}, this holds true if the tropical line meets the internal edges of the quintic curves, i.e.~no outer edges. A bit more than half the curves feature this: we get $1451$ admissible lines out of which $45$ have multiplicity two (multiplicity weighted account is $1496$). 
Interestingly, the admissible curves don't meet curves of other facets (unlike non-admissible ones), though possibly still other curves in their own facet. We found a set of $354$ admissible lines that are pairwise disjoint out of which $42$ have multiplicity two.
%\begin{figure}
%  \centering
%  \begin{tabular}{p{0.4\linewidth}cp{0.4\linewidth}}
% \includegraphics[width=0.4\textwidth]{AllAM.png}&\qquad &\includegraphics[width=0.4\textwidth]{AM.png}
%  \end{tabular}
% \caption{The upper triangular part of the adjacency matrix of the $2785$ tropical lines (left) and of the subset of $1451$ admissible lines (right).}
% \label{fig-incidence-matrices}
%\end{figure}

\begin{figure}
  \includegraphics[width=0.8\textwidth]{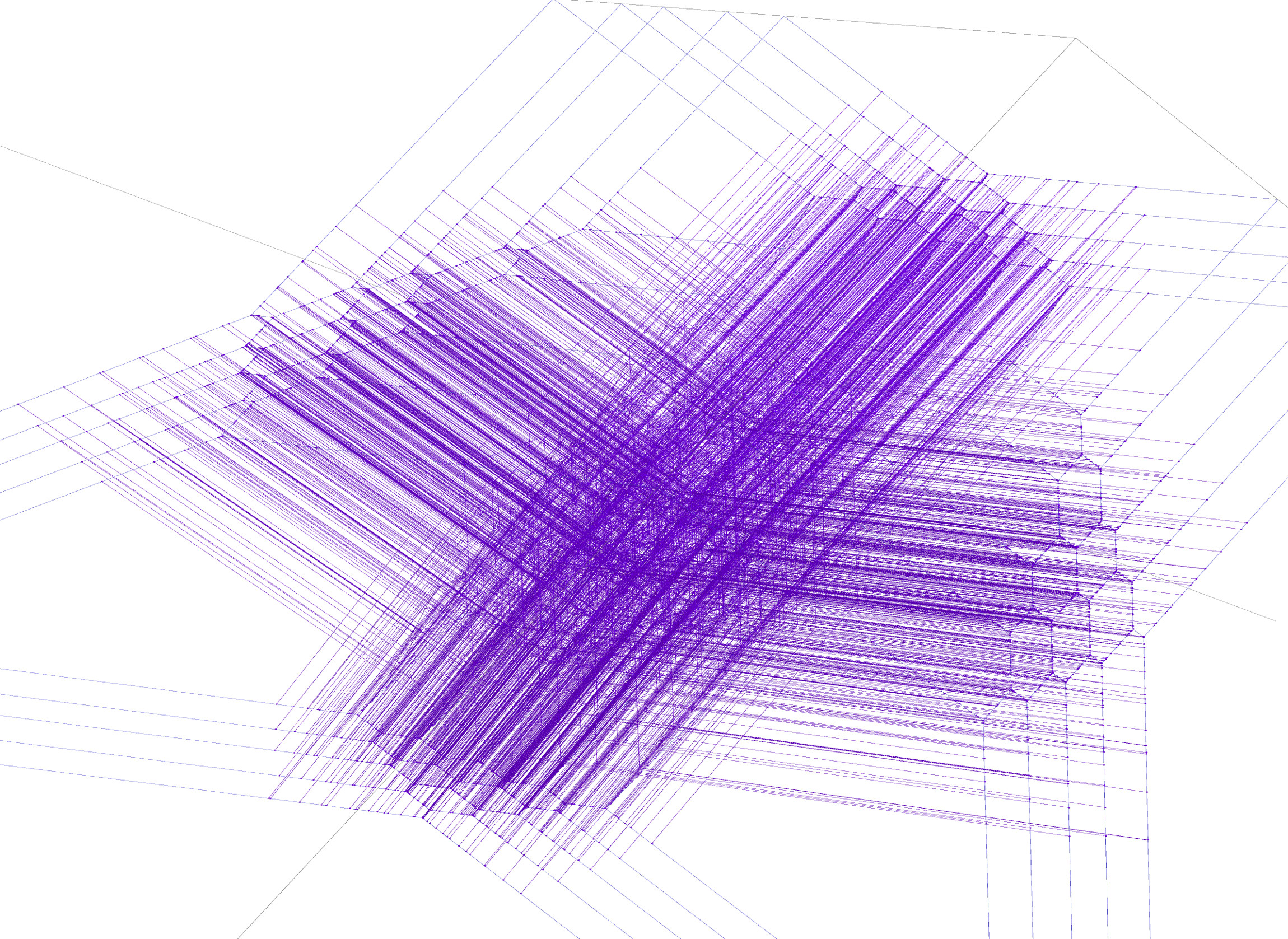}
 \caption{Tropical lines in $\RR^3$ meeting $4$ tropical quintics at infinity, of total multiplicity $575$.}
 \label{graph:575lines}
\end{figure}

\subsection{Lagrangian lift of a tropical curve}\label{ss:LagrangianLift}
In this section, we give the definition of the diffeomorphism type of a Lagrangian lift of a tropical curve $\gamma$ in $(\partial \Delta, \shA)$ to the Calabi-Yau given by $\partial \Delta$, e.g. the mirror quintic as before.
Using the integral affine structure on $\partial \Delta \setminus \shA$, we can define a Lagrangian torus bundle by
\begin{align}
\check X^\circ:=T^*(\partial \Delta \setminus \shA)/T^*_{\mathbb{Z}}(\partial \Delta \setminus \shA).
\end{align} 
Recall the notation $\Lambda=T_{\mathbb{Z}}(\partial \Delta \setminus \shA)$ and note that $\partial\Delta$ is an orientable topological manifold and so $\bigwedge^3\Lambda\cong\ZZ$. Fixing an orientation once and for all, we can talk about oriented bases of stalks of $\Lambda$.

For each edge $e$ of $\gamma$ and a point $x$ in the interior $e^\circ$ of $e$, we get the $2$-dimensional subspace $e^{\perp}$ of $T^*_x(\partial \Delta \setminus \shA)$ consisting of co-vectors that are perpendicular to the direction of $e$.
By Definition \ref{def-trop-curve}\,(3), every translation $e^{\perp}+a$ descends to an embedded $2$-torus in $\check X^\circ$.
A smooth family of these 2-tori over $x \in e^\circ$ defines a (trivial) torus bundle $L_{e^\circ}$ over $e^\circ$ 
and the total space $L_{e^\circ}$ is a Lagrangian submanifold in $\check X^\circ$. It extends over the vertices of $e$ that don't lie in $\shA$, and we let $L_{e}$ denote the extension.

\begin{remark}\label{r:absorbAlpha}
 Let $f: e^\circ \to \mathbb{R}$ be a smooth function such that it descends to a compactly supported function $f':e^\circ \to \mathbb{R}/2\pi\mathbb{Z}$.
 Given a smooth family of 2-tori over $x \in e^\circ$ as above, we can define a new family by fiberwise translating the 2-tori by $f(x)$.
 The resulting Lagrangian is a different embedding of a 2-torus times interval to  $\check X^\circ$.
 The function $f'$ being compactly supported corresponds to that the two embeddings coincides near the ends of $e^\circ$.  
\end{remark}

\begin{wrapfigure}[9]{r}{0.26\textwidth}
\begin{center}\vspace{-.4cm}
  \includegraphics[width=0.2\textwidth]{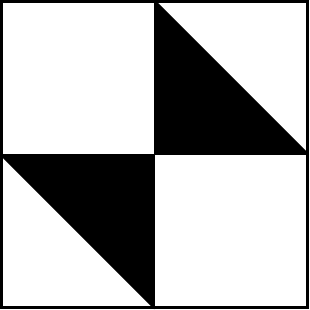}
 \captionsetup{width=.85\linewidth}
 \caption{A pair of pants.}
 \label{graph:angularPOP}
\end{center}
\end{wrapfigure}
For each trivalent vertex $v$ of $\gamma$, by Definition \ref{def-trop-curve}\,(5), we can identify the primitive tangent vector of the outgoing edges as
$e_1=(1,0,0)$, $e_2=(0,1,0)$ and $e_3=(-1,-1,0)$ with respect to a $\mathbb{Z}$-basis of $\Lambda_v\cong \ZZ^3$.
Let $\tilde{L}_v$ be the subset of $T_v \partial \Delta/(T_v \partial \Delta)_{\mathbb{Z}}=(\mathbb{R}/2\pi\mathbb{Z})^3$ consisting of all the points $(q_1,q_2,q_3)$ such that 
\begin{equation}
\begin{aligned}
\{ q_1,q_2\ge 0 \text{ and }& q_1+q_2 \le \pi \} \text{ or }\qquad\qquad\qquad\qquad\qquad\\ 
\{ q_1,q_2\le 0 \text{ and }& q_1+q_2 \ge -\pi\}.
\end{aligned}\label{eq:2triangles}
\end{equation}
Equipping $\tilde{L}_v$ with the subspace topology yields a finite CW complex of the same homotopy type as a pairs of pants times a circle. More explicitly, $\tilde{L}_v$ has a trivial circle factor given by the $q_3$-coordinate, and \eqref{eq:2triangles} defines two triangles in the $q_1,q_2$-coordinates and the vertices of the triangles are glued at $(q_1,q_2)=(0,0), (0,\pi), (\pi,0)$ respectively (see Figure \ref{graph:angularPOP}).

If we equip the two triangles in  \eqref{eq:2triangles} with opposite orientations, then the boundary of them is exactly given by the circles $C_1:=\{q_1=0\}$, $C_2:=\{q_2=0\}$ and $C_3:=\{q_1+q_2=\pi\}$.
For $i=1,2,3$, the product of $C_i$ with the circle in $q_3$-coordinate is exactly $e_i^{\perp}+a_i \subset T^*_v(\partial \Delta \setminus \shA)$ where $a_1=a_2=0$ and $a_3=\pi$.
For an appropriate choice of orientations, one can see that the boundary of $\tilde{L}_v$ cancels the boundary of $\cup_{i=1}^3 L_{e_i}$ lying above $v$, yet $\tilde{L}_v \cup \bigcup_{i=1}^3 L_{e_i}$ is only a Lagrangian cell complex instead of a manifold.
In Section \ref{ss:trivalent}, we explain how to replace the union of the triangles by a pairs of pants 
and obtain a Lagrangian pair of pants times circle $L_v$ that can be glued with $\cup_{i=1}^3 L_{e_i}$ smoothly.

Every univalent vertex $v$ of $\gamma$ lies in $\shA$ by Definition \ref{def-trop-curve}(2).
Let $\nu$ and $T_{\nu}$ be as in  Definition \ref{def-trop-curve}(4), so, by Lemma~\ref{lemma-simple}, $T_{\nu}=\left(\begin{smallmatrix} 1&0&0\\0&1&1\\0&0&1 \end{smallmatrix}\right)$ for a suitable basis.
The primitive direction of the edge $e$ adjacent to $v$ is by assumption given by $\pm (0,1,0)$ so the 2-tori in $L_e$ lying above $e$
are generated by $\partial_{q_1},\partial_{q_3}$. We can glue a solid torus $L_v$ to the toroidal boundary component of $L_e$ lying above $v$ to cap off this boundary component. Moreover, we require that the circle generated by $\partial_{q_3}$ is a meridian of  $L_v$. It is useful to observe that $\partial_{q_3}$ is characterized by being perpendicular 
to the invariant plane $\ker(T_{\nu}-\id)$.

\begin{definition}\label{d:LagLift}
The \emph{diffeomorphism type of a Lagrangian lift} of a tropical curve $\gamma$ is the diffeomorphism type of the closed $3$-manifold obtained by gluing $L_v$ and $L_e$ as above over all vertices $v$ and edges $e$ of $\gamma$.
\end{definition}

%\newpage
\subsection{Lagrangian weight versus tropical multiplicity}\label{ss:MultiplicityWeight}\label{section-weight-mult}
\begin{wrapfigure}[11]{r}{0.26\textwidth}
\begin{center}\vspace{-.3cm}
  \includegraphics[width=0.2\textwidth]{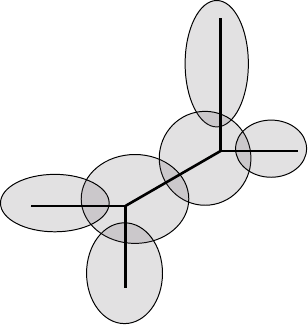}
 \captionsetup{width=.85\linewidth}
 \caption{\small An admissible \v{C}ech covering of a tropical curve}
 \label{graph:CechCover}
\end{center}
\end{wrapfigure}
Following Joyce, we define the weight of a Lagrangian rational homology sphere $L$ to be $w(L):=|H_1(L,\mathbb{Z})|$ and more generally $w(L):=|H_1(L,\mathbb{Z})_{\operatorname{tor}}|$.
Let $\gamma$ be a tropical curve in $(\partial \Delta, \shA)$.
In this subsection, we explain how $w(L_{\gamma})$ of a tropical Lagrangian $L_\gamma$ can be computed by a \v{C}ech covering of the corresponding tropical curve $\gamma$.
Since our Lagrangian $L_{\gamma}$ is homotopic to the Lagrangian cell complex $\tilde{L}_{\gamma}$ that is built by $\tilde{L}_{v}$ instead of $L_v$ at the trivalent vertices $v$ (see Section \ref{ss:LagrangianLift}),
it suffices to compute the first homology of $\tilde{L}_{\gamma}$.
For simplicity, we denote  $\tilde{L}_{\gamma}$ by $L_\gamma$ in this subsection. 
The universal coefficient theorem gives $(H_1(L_\gamma,\ZZ))_{\operatorname{tor}}=(H^2(L_\gamma,\ZZ))_{\operatorname{tor}}$, so we may compute $w(L_\gamma)$ via \v{C}ech cohomology.

A collection $\{U_{j}\}_{j=1}^m$ of open sets in $\partial \Delta$ that covers $\gamma$ is called {\it admissible} if
\begin{enumerate}
 \item $U_{j_1} \cap U_{j_2}  \cap U_{j_3}=\emptyset$ whenever $j_1,j_2,j_3$ are pairwise distinct, 
 \item for all $j$, $\gamma \cap U_j $ is connected and it contains exactly one vertex of $\gamma$ which is, by definition, either trivalent or univalent, and
 \item for $j_1 \neq j_2$, $\gamma \cap U_{j_1} \cap U_{j_2} $ (which may be empty) contains no vertex.
\end{enumerate}
For $\{U_{j}\}_{j=1}^m$ admissible, $H_i(U_j\cap\gamma,\ZZ)$ and $H_i(U_{j_1}\cap U_{j_2}\cap\gamma,\ZZ)$ are torsion free for all $i,j,j_1,j_2$ and therefore 
$$
w(L_\gamma)=\left|\coker\left(\bigoplus_j H^1(\pi^{-1}_{\Delta}(U_j)\cap L_\gamma,\ZZ)\stackrel{\Phi_\gamma}{\ra} \bigoplus_{i<j} H^1(\pi^{-1}_{\Delta}(U_i\cap U_j)\cap L_\gamma,\ZZ)\right)_{\operatorname{tor}}\right|
$$
where the map $\Phi_\gamma$ is the \v{C}ech map on $H^1$ (restriction with sign).
Recall from \cite{NS06,MR16,MR19} the definition of multiplicity $\mult(\gamma)$ of a tropical curve $\gamma$. 
Applicable for us is \cite[Equation (13)]{MR16} since we need to consider tropical curves with constraints on unbounded edges (i.e.~univalent vertices for us).
Let $\gamma^\circ$ be the interior of $\gamma$ and assume that we can trivialize $\Lambda$ on $\gamma^\circ$, i.e.~set $N:=\Gamma(\gamma^\circ,\Lambda)$ and $N\cong\ZZ^3$.
Furthermore, each univalent vertex $v$ of $\gamma$ gives a saturated rank two subspace $A_v$ in $N$ as the kernel of $T_\nu-\id$ near $v$. We view this as a constraint for the tropical curve $\gamma$ in $N_\RR$ in the sense of \cite{MR16}.
Given these constraints, \cite[Equation (13)]{MR16} provides a map of lattices $\Phi$ whose cokernel torsion gives the \emph{tropical multiplicity} $\mult(\gamma)$ of $\gamma$.
\begin{proposition} \label{p:multiplicity}
The \v{C}ech map $\Phi_\gamma$ is quasi-isomorphic to the tropical multiplicity computing map $\Phi$ from \cite[Equation (13)]{MR16} and thus $w(L_\gamma)=\mult(\gamma)$.
\end{proposition}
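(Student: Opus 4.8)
The plan is to compare the two cochain-level maps $\Phi_\gamma$ and $\Phi$ by exhibiting a dictionary between the \v{C}ech data on $L_\gamma$ and the combinatorial data on $\gamma$, and to show the dictionary is compatible with the respective differentials. First I would fix an admissible covering $\{U_j\}$ as in the definition, so that the index set of the \v{C}ech complex is: one piece per vertex $v$ of $\gamma$ (trivalent or univalent), and one piece per edge $e$ of $\gamma$ (recorded as a pairwise overlap $U_i\cap U_j$). Over such a $U_j$ I would identify $H^1(\pi_\Delta^{-1}(U_j)\cap L_\gamma,\ZZ)$ explicitly: over the interior of an edge $e$, the piece $L_{e^\circ}$ is $(S^1)^2\times(\text{interval})$, so $H^1\cong (e^\perp)^\vee$, which over $\gamma^\circ$ is canonically $N/\ZZ e$ where $\ZZ e$ is the line spanned by the primitive direction of $e$; over a trivalent vertex $v$, $\tilde L_v$ is homotopy-equivalent to (pair of pants)$\times S^1$, whose $H^1$ is $\ZZ^2\oplus\ZZ$ matching the three boundary tori via the relation $[C_1]+[C_2]+[C_3]=0$ dual to $e_1+e_2+e_3=0$; over a univalent vertex $v$, $L_v$ is a solid torus with $H^1\cong\ZZ$, the class dual to $\partial_{q_3}$, i.e.\ to the primitive generator of $A_v=\ker(T_\nu-\id)$. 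Thus both complexes have, term by term, the same lattices: the edge terms are $N/\ZZ e$ (respectively the perpendicular rank-two lattice), the vertex terms encode the trivalence relation and the monodromy constraint $A_v$. This is exactly the lattice data entering the map $\Phi$ of \cite[Equation (13)]{MR16}.

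Next I would check that the \v{C}ech differential $\Phi_\gamma$, which restricts a class on $\pi_\Delta^{-1}(U_j)\cap L_\gamma$ to the edge overlaps (with signs from the ordering of the cover), agrees under the above identifications with $\Phi$, which takes the direction data of edges together with the vertex relations and records the induced constraints on the affine positions of edges in $N_\RR$. Concretely: restricting the $H^1$-class over a trivalent vertex to an adjacent edge overlap is the projection $\ZZ^2\oplus\ZZ\to N/\ZZ e_k$ dual to the inclusion $C_k^\perp\hookrightarrow$ (pair of pants), and this is precisely the incidence term in $\Phi$ coming from that vertex-edge pair; restricting over a univalent vertex to its unique adjacent edge is the inclusion of $A_v^\vee$-data, matching the constraint term in $\Phi$; and the edge-to-overlap restrictions on the two ends of an edge differ by sign exactly as the \v{C}ech convention and the orientation convention in \cite{MR16} prescribe. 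Since $\Lambda$ is assumed trivializable on $\gamma^\circ$ (so that $\mult(\gamma)$ is defined — otherwise the last sentence of the proposition is vacuous), all the local identifications glue into an isomorphism of the two two-term complexes, hence $\coker\Phi_\gamma\cong\coker\Phi$, and taking torsion gives $w(L_\gamma)=\mult(\gamma)$. The torsion-freeness of $H^\ast(U_j\cap\gamma)$ and $H^\ast(U_i\cap U_j\cap\gamma)$ noted before the proposition ensures there is no $\mathrm{Ext}$-contribution muddying the universal-coefficient comparison $ (H_1)_{\mathrm{tor}}=(H^2)_{\mathrm{tor}}$.

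The main obstacle I expect is bookkeeping of signs and bases: one must pin down orientations of the pair-of-pants triangles in \eqref{eq:2triangles}, the induced orientations on the boundary circles $C_1,C_2,C_3$, the chosen ordering of the cover $\{U_j\}$, and the orientation of $\bigwedge^3\Lambda$ fixed in Section~\ref{ss:LagrangianLift}, and verify that these conventions are the same ones (or differ by an overall automorphism that does not affect the cokernel) used in the definition of $\Phi$ in \cite[Equation (13)]{MR16}. A secondary subtlety is the passage from $L_\gamma$ to its homotopy model $\tilde L_\gamma$ (already flagged in Section~\ref{ss:MultiplicityWeight}): I would justify that replacing $L_v$ by $\tilde L_v$ at trivalent vertices does not change $H^1$ of the relevant open pieces nor the restriction maps, which is immediate since $L_v\simeq\tilde L_v$ rel boundary tori. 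Once the sign conventions are aligned, the comparison is a formal identification of two-term complexes of free abelian groups and the statement follows.
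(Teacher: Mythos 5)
Your overall strategy — identify each local $H^1$ explicitly as a lattice built from $N=\Gamma(\gamma^\circ,\Lambda)$ and then match the \v{C}ech restriction maps to the incidence/constraint data in the map $\Phi$ of \cite[Equation (13)]{MR16} — is exactly the paper's approach, and the edge and trivalent-vertex identifications ($N/\ZZ e$ and $N$ respectively) are correct. However, the univalent-vertex identification as you state it is off in a way worth flagging. You write that $H^1$ of the solid torus piece is ``the class dual to $\partial_{q_3}$, i.e.\ to the primitive generator of $A_v=\ker(T_\nu-\id)$.'' First, $A_v$ is rank two, so it has no primitive generator. Second, $\partial_{q_3}$ is the \emph{meridian} of the solid torus $L_v$ (it is the direction perpendicular to the invariant plane, and the construction glues $L_v$ so that this circle bounds a disk); its class in $H_1$ is therefore zero and there is no class in $H^1$ ``dual'' to it. The generator of $H^1(L_v,\ZZ)\cong\ZZ$ is instead dual to the \emph{longitude}, i.e.\ to $\partial_{q_1}$ in the basis of Lemma~\ref{lemma-simple}. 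The intrinsic description, which is the one the proof needs, is $H^1\cong w^\perp/\ZZ v$ where $w^\perp=\ker(T_\nu-\id)=A_v$ and $v$ is the primitive generator of $\im(T_\nu-\id)$; i.e.\ a rank-one \emph{quotient} of $A_v$ by the edge direction. With that corrected, the restriction map to the adjacent edge overlap becomes the natural inclusion $w^\perp/\ZZ v\hookrightarrow N/\ZZ v$, which is the constraint term in $\Phi$, and the rest of your argument (including the sign/orientation bookkeeping and the $L_v\simeq\tilde L_v$ reduction) goes through as you describe.
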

\begin{proof} 
The assertion follows if one shows that there is a natural isomorphism
$$H^1(\pi^{-1}_{\Delta}(U_i\cap U_j)\cap L_\gamma,\ZZ)\cong N/\ZZ v$$
whenever $U_i\cap U_j\neq\emptyset$ and $v$ is the primitive generator of the edge of $\gamma$ that meets $U_i\cap U_j$ and an isomorphism
$$H^1(\pi^{-1}_{\Delta}(U_i)\cap L_\gamma,\ZZ)\cong N$$
whenever $U_i$ contains a trivalent vertex and an isomorphism
$$H^1(\pi^{-1}_{\Delta}(U_i)\cap L_\gamma,\ZZ)\cong w^\perp/\ZZ v$$
whenever $U_i$ contains a univalent vertex of $\gamma$, $w^\perp=\ker(T_\nu-\id)$ and $v$ the primitive generator of the image of $T_\nu-\id$. 
Furthermore the restriction maps $H^1(\pi^{-1}_{\Delta}(U_j)\cap L_\gamma,\ZZ){\ra} H^1(\pi^{-1}_{\Delta}(U_i\cap U_j)\cap L_\gamma,\ZZ)$ are supposed to be the natural maps under these isomorphisms.
The isomorphisms and naturality of restriction maps are straightforward to be checked from the local descriptions of $L_v$ and $L_e$ given in Section \ref{ss:LagrangianLift}.
\end{proof}

\begin{remark}\label{r:higherMult}
 Proposition \ref{p:multiplicity} can be generalized to all dimensions for all tropical curves $\gamma$ 
 satisfying exactly the same set of conditions in Definition \ref{def-trop-curve}.
 The main reason is that, in higher dimensions,  
 $\pi^{-1}_{\Delta}(U_i\cap U_j)\cap L_\gamma$ and $\pi^{-1}_{\Delta}(U_i)\cap L_\gamma$ split as a product and there is a trivial factor 
 accounting for the extra dimensions.
Moreover, the universal coefficient theorem gives $(H_1(L_\gamma,\ZZ))_{\operatorname{tor}}=(H^2(L_\gamma,\ZZ))_{\operatorname{tor}}$ no matter what the dimension is
so the same \v{C}ech cohomology calculation applies to conclude that  $w(L_\gamma):=|(H_1(L_\gamma,\ZZ))_{\operatorname{tor}}|=\mult(\gamma)$.
\end{remark}

\subsection{Homology class of the Lagrangians}
\begin{figure}
  \centering
 \includegraphics[width=0.4\textwidth]{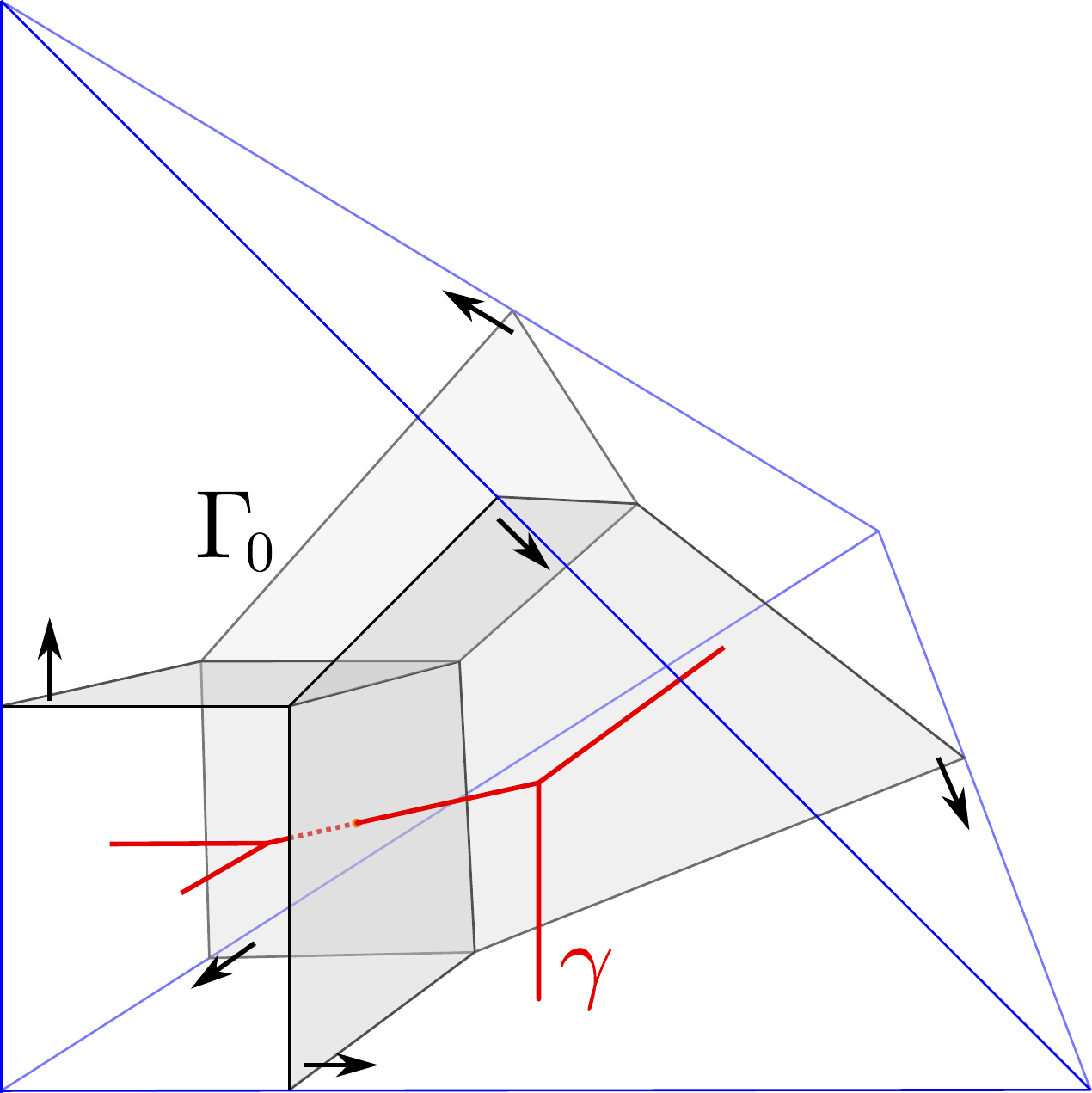}
 \caption{Intersection of a tropical line $\gamma$ and a tropical 2-cycle $\Gamma$ in $\partial\Delta_X$ where $\Gamma$ consists of five copies of the depicted cycle $\Gamma_0$, one copy for each facet of $\partial\Delta_X$. The line $\gamma$ however is contained in a unique facet of $\partial\Delta_X$.}
 \label{graph:tropicaltwocycle}
\end{figure}

Recall from Section~4 in \cite{Ruddat19} that a tropical 2-cycle in an affine manifold $B$ with singularities $\shA$ is simply a sheaf homology cycle representing a class in $H_2(B,\iota_*\bigwedge^2\Lambda)$ for $\iota:B\setminus\shA\ra B$ the inclusion of the regular part. 
Moreover, by $(0.6)$ in \cite{Ruddat19}, there is a homomorphism $r_{2}:H_2(B,\iota_*\bigwedge^2\Lambda)\ra H_3(\check X,\ZZ)/W_2$ with $W_2=\im(r_1)+\im(r_0)$ for similar maps $r_0,r_1$, c.f.~\cite{RZ1,RZ2}. 
For $\Gamma\in H_2(B,\iota_*\bigwedge^2\Lambda)$, we simply refer to any lift of $r_2(\Gamma)$ from $H_3(\check X,\ZZ)/W_2$ to $H_3(\check X,\ZZ)$ by $L_\Gamma$.

\begin{lemma} \label{lema-LGamma}
There is a tropical 2-cycle $\Gamma\subset \partial\Delta$ whose associated 3-cycle $L_\Gamma$ inside $M_t$ has intersection number $\pm1$ with each Lagrangian $L_\gamma$ constructed from a tropical line $\gamma$. Changing the orientation of $L_\gamma$ if needed, we can thus assume this intersection number is $+1$.
\end{lemma}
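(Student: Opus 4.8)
The strategy is to build the tropical $2$-cycle $\Gamma$ directly from the combinatorics of the boundary of the polar dual polytope $\Delta_X$, exploiting the dual picture to $(\partial\Delta,\shA)$ described in Section~\ref{section-Katz-lines}. Recall that each tropical line $\gamma$ constructed from Katz's recipe lives in a unique facet of $\partial\Delta_X$ (equivalently, in the part of $\partial\Delta$ that deforms to that facet), and that the four tropical quintic curves forming the local discriminant $\shA$ near $\gamma$ are dual to the triangulations of the four triangle $2$-faces of that facet. I would take $\Gamma=\sum_{i=1}^5 \Gamma_0^{(i)}$ to be a sum of five copies of a single model cycle $\Gamma_0$, one for each facet of $\partial\Delta_X$, where $\Gamma_0$ is a sheaf-homology $2$-cycle supported on a $2$-sphere in $\partial\Delta_X$ transverse to the four quintic curves of that facet (see Figure~\ref{graph:tropicaltwocycle}). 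Concretely $\Gamma_0$ should be modeled on the class of the hyperplane section: the facet of $\partial\Delta_X$ is (combinatorially) a $3$-simplex, and $\Gamma_0$ is the boundary of a small simplex inside it, decorated with the appropriate section of $\bigwedge^2\Lambda$ so that it defines a class in $H_2(\partial\Delta_X,\iota_*\bigwedge^2\Lambda)$.

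\textbf{Key steps.} First, I would verify that $\Gamma_0$ is a well-defined tropical $2$-cycle, i.e.\ that the local sections of $\bigwedge^2\Lambda$ one assigns to the $2$-cells of the supporting complex satisfy the balancing/cocycle condition at each edge and vertex where the cycle meets the discriminant; this is where the monodromy constraint (Definition~\ref{def-trop-curve}(4) and Lemma~\ref{lemma-simple}) enters, guaranteeing that a rank-one monodromy-invariant class can be consistently propagated. Second, I would transport $\Gamma_0$ from $\partial\Delta_X$ to $\partial\Delta$: the deformation $\Delta_s=\Delta_{\check X}+s\Delta'$ gives a combinatorial identification of $(\partial\Delta,\shA)$ with the subdivided $(\partial\Delta_X)^\vee$ (discrete Legendre duality as in Section~\ref{section-Katz-lines}), under which a tropical $2$-cycle in the one corresponds to a tropical $2$-cycle in the other; this produces $\Gamma\subset\partial\Delta$ and hence, via the map $r_2$ of \cite{Ruddat19}, the $3$-cycle $L_\Gamma\subset\check X$ (which we transfer to $M_t$ by the symplectic identification used throughout). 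Third — the crucial step — I would compute the intersection number $L_\Gamma\cdot L_\gamma$. Here I would use that intersection of $r_2(\Gamma)$ with the tropical Lagrangian $L_\gamma$ is computed tropically: $L_\gamma$ lives over the line $\gamma$ and $L_\Gamma$ lives over the $2$-cycle $\Gamma$, and the two subcomplexes of $\check X^\circ=T^*(\partial\Delta\setminus\shA)/T^*_\ZZ$ are $e^\perp$-type tori (for $L_\gamma$) and affine-plane-type tori (for $L_\Gamma$), which are mirror-dual fibrations over the base; their intersection is a finite set indexed by the (transverse) intersection points of $\gamma$ and $\Gamma$ in $\partial\Delta$, with each contributing a fiberwise pairing that is $\pm1$ precisely when the relevant lattice-direction datum is primitive. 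Since $\gamma$ is a tropical \emph{line} (each bounded edge of weight one, trivalent vertices with primitive outgoing directions) and $\Gamma$ is chosen to meet $\gamma$ in exactly one point transversally inside the unique facet containing $\gamma$, the total intersection number is $\pm1$. Reorienting $L_\gamma$ makes it $+1$.

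\textbf{Main obstacle.} The hard part is the third step: making precise that the topological intersection pairing between $L_\Gamma$ and $L_\gamma$ is governed by the tropical (base) intersection of $\Gamma$ and $\gamma$, and that each base intersection point contributes exactly $\pm1$. This requires (a) choosing the representative of $\Gamma$ so that $\Gamma\cap\gamma$ is a single transverse point in the regular locus $\partial\Delta\setminus\shA$, away from vertices of either $\gamma$ or the discriminant amoeba; (b) checking that near that point the dual-torus-fibration toy model of Section~\ref{section-lines2Lag} applies verbatim, so that the local contribution is the intersection of a subtorus $e^\perp\subset T^*_x$ with a complementary affine torus, which is $\pm\det$ of a change-of-basis matrix, equal to $\pm1$ by primitivity of the edge direction of the line $\gamma$; and (c) ruling out extra intersection points coming from the global topology of $L_\Gamma$ and $L_\gamma$ (in particular from the capping solid tori $L_v$ near univalent vertices and the pair-of-pants pieces $L_v$ at trivalent vertices) — these lie over small neighborhoods of $\gamma$ disjoint from the support of $\Gamma$, so they do not contribute. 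A subsidiary technical point is confirming that $r_2$ is well-defined on the class we wrote down and that working modulo $W_2$ does not affect the intersection number with $L_\gamma$, which follows because $L_\gamma$ is a rational homology sphere and pairs trivially with the images of $r_0,r_1$ for dimension/degree reasons.
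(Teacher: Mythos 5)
Your high-level strategy agrees with the paper's: build $\Gamma$ from a model cycle in each facet of $\partial\Delta_X$, transport via the discrete Legendre identification to $\partial\Delta$, and then compute the intersection with $\gamma$ tropically. You even say $\Gamma_0$ ``should be modeled on the class of the hyperplane section,'' which is the right intuition. But you then describe $\Gamma_0$ as ``the boundary of a small simplex inside [the facet]'' --- that is, a $2$-sphere --- and this is not what a tropical hyperplane is. A tropical hyperplane in $\RR^3$ is the $2$-skeleton of the normal fan of a simplex: six triangular flaps emanating from a central vertex toward the six edges of the tetrahedral facet, not a closed sphere. This is exactly what the paper takes for $\Gamma_0$ (``the compactification of the union of two-dimensional cones in the fan of $\PP^3$''), and the five copies are then glued along the edges of $\Delta_X$ to produce a union of $10$ $Y$-shaped disks indexed by the edges of $\partial\Delta_X$.

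The distinction is not cosmetic. A generic tropical line $\gamma$ of combinatorial type $>\!\!-\!\!<$ has four unbounded legs heading toward the four quintic curves on the $2$-faces of the facet. A small $2$-sphere enclosing the bounded part of $\gamma$ is crossed once by each leg, giving intersection number $4$, not $1$ (or $0$ if the sphere misses $\gamma$ entirely); and in any case a null-homotopic sphere in the interior of a facet is in danger of bounding, so it is not even clear the resulting class in $H_2(B,\iota_*\bigwedge^2\Lambda)$ is nonzero. By contrast the fan $2$-skeleton $\Gamma_0$ separates the facet into four chambers, one per $2$-face, and the four legs of $\gamma$ sit in four distinct chambers; only the central bounded edge of $\gamma$ crosses one flap, giving a unique transverse intersection point. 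That uniqueness is what makes the paper's computation close; you would need to replace your sphere by the fan skeleton to recover it. Aside from this, your third step is in the right spirit but is essentially a re-derivation of what the paper takes as a black box from Theorem~7 of \cite{Ruddat19} (tropical intersection computes topological intersection), and your remark about the $W_2$ ambiguity being harmless is consistent with the intersection numbers in \eqref{eq-int-numbers}, though the paper simply chooses a lift rather than arguing this.
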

\begin{proof} 
Recall from Section~\ref{section-Katz-lines} that the subdivided boundary of $\Delta_X$, call it $\check B$, is discrete Legendre dual to $B:=\partial\Delta$.
In particular, $\check B$ and $B$ are homeomorphic with dual linear parts of their affine structures. 
This means the homeomorphism $\frak{D}:B\ra\check B$ identifies the local system $\check\Lambda_{\check B}$ of integral tangent vectors on $\check B\setminus \frak{D}(\shA)$ with the similar local system $\Lambda$ on $B\setminus\shA$. 
We remark that $\frak{D}(\shA)$ is contained in a neighbourhood of the union of 2-faces of $\partial\Delta_X$.
Making use of $\frak{D}$, in order to produce the desired tropical 2-cycle $\Gamma$, it therefore suffices to give a cycle for $H_2(\check B,\check\iota_*\bigwedge^2\check\Lambda_{\check B})$ where $\check\iota$ is the inclusion $\check B\setminus \frak{D}(\shA)\hra \check B$.
Since $\check B$ is orientable, we have an isomorphism $\check\iota_*\bigwedge^2\check\Lambda_{\check B}\cong \check\iota_*\Lambda_{\check B}$ for $\Lambda_{\check B}$ the dual of $\check\Lambda_{\check B}$.
We may thus give a suitable cycle $\Gamma$ representing a class in $H_2(\check B,\check\iota_*\Lambda_{\check B})$ in order to prove the lemma.

Recall that $\partial\Delta_X$ consists of $5$ tetrahedra.
Figure~\ref{graph:tropicaltwocycle} shows one such tetrahedron containing a union $\Gamma_0$ of six polyhedral disks.
The configuration $\Gamma_0$ can be described as a homeomorphic version of the compactification of the union of two-dimensional cones in the fan of $\PP^3$. 
Each of the five facets of $\Delta_X$ contains such a configuration $\Gamma_0$ and we may move the five copies of $\Gamma_0$ so that they fit together to a cycle $\Gamma$. 
That is, $\Gamma$ is actually a union of only $10$ disks, each of which is glued from $3$ disks that stem from different copies of $\Gamma_0$. 
The $10$ disks of $\Gamma$ are naturally in bijection with the edges of $\partial\Delta_X$, indeed we simply match a disk with the edge that it meets (transversely).

As the next step, we need to attach a section in $\Gamma(D,\check\iota_*\Lambda_{\check B})$ to each of the 10 disks $D$ so that the 10 sections satisfy the cycle-condition at the 1-cells where disks meet (three at a time).
We make use of the fact that the tangent space to a cell of the polyhedral decomposition of $\Delta_X$ is always monodromy-invariant for all monodromy transformations along loops
in $U\setminus (\frak{D}(\shA)\cap U)$ for $U$ a neighbourhood of the interior of the cell. In the case of a pair $(D,e)$ of a disk $D$ of $\Gamma$ and the corresponding transverse edge $e$ of $\Delta_X$, we may choose a primitive generator $v_D$ of the tangent direction to $e$ as the section of $\Gamma(D,\check\iota_*\Lambda_{\check B})$ that we associate with $D$. 
Making use of the existence of an orientation of $\check B$, the sign of $v_D$ and orientations of $D$ can be chosen so that the cocycle condition on $\{v_D\}_D$ is satisfied and we have thus produced a valid cycle $\Gamma$ as desired. 

It remains to show that $L_\Gamma$ satisfies the claimed intersection-theoretic property. For this purpose, we take the image of $\gamma$ along $\frak{D}$ and view $\gamma$ as a cycle in $H_1(\check B,\check\iota_*\check\Lambda)$
Theorem~7 in \cite{Ruddat19} says that the intersection number $L_\gamma.L_\Gamma$ agrees with the tropical intersection of $\gamma$ and $\Gamma$.
The tropical intersection number in turn is defined in item (3) of Theorem~6 in \cite{Ruddat19}. 
Note that $\gamma$ and $\Gamma$ have a unique point of physical intersection. 
We are left with verifying that the sections carried by $\gamma$ and $\Gamma$ at this point respectively pair to $\pm 1$.
The sections of $\gamma$ carried by the outer legs are precisely generators for the perp space of the 2-cells of $\Delta_X$ that they meet. The balancing condition then implies what the section at the central edge of $\gamma$ is. With this information and the knowledge that a disk $D$ of $\Gamma$ carries the section $v_D$ that is a generator for the tangent space to the edge of $\Delta_X$ that is met by $D$, it is easy to see from Figure~\ref{graph:tropicaltwocycle}
that the tropical intersection of $\gamma$ and $\Gamma$ is indeed $\pm1$ (and invariance of the intersection number under deforming the cycles being given by Theorem~6 in \cite{Ruddat19}).
\end{proof}

For a fixed tropical line $\gamma$, there are more than one $L_\gamma$ that can be constructed from Theorem \ref{t:Construction} due to the freedom of choices in the construction.
In particular, for each $L_\gamma$ and any integer $a$, we can construct another Lagrangian $(L_{\gamma})'$ by Theorem \ref{t:Construction}
such that the difference of their homology classes $[(L_{\gamma})']-[L_\gamma]$ is $a$ times the torus fiber class.
Using this freedom, we can prove the following.

\begin{proposition} \label{prop-homologous}
If $\gamma,\gamma'$ are two disjoint tropical lines, Lagrangians $L_\gamma, L_{\gamma'}$ can be constructed via Theorem \ref{t:Construction} so that they are homologous.
\end{proposition}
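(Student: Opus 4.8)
The plan is to combine Lemma~\ref{lema-LGamma} with the homology-class freedom described just above the statement. By Lemma~\ref{lema-LGamma}, there is a fixed tropical 2-cycle $\Gamma\subset\partial\Delta$ such that the associated 3-cycle $L_\Gamma$ meets every tropical-line Lagrangian in intersection number $+1$ (after orienting). Apply Theorem~\ref{t:Construction} to $\gamma$ and to $\gamma'$ separately to obtain $L_\gamma\subset M_t$ and $L_{\gamma'}\subset M_t$; since $\gamma\cap\gamma'=\emptyset$, by shrinking $\epsilon$ we may arrange $W_\epsilon(\gamma)\cap W_\epsilon(\gamma')=\emptyset$, hence $L_\gamma\cap L_{\gamma'}=\emptyset$. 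The point is that $[L_\gamma]$ and $[L_{\gamma'}]$ need not agree on the nose, but they agree \emph{up to torus fiber classes}, and the fiber-class ambiguity can be absorbed into the construction.

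First I would analyze the difference $[L_\gamma]-[L_{\gamma'}]\in H_3(M_t,\ZZ)$. Both Lagrangians are tropical lines with four univalent ends and the same combinatorial skeleton $\mbox{$>\!-\!<$}$, so at the level of the affine base both $h(\gamma)$ and $h(\gamma')$ represent the same class in $H_1(\partial\Delta\setminus\shA,\Lambda)$-type homology after the identification in Lemma~\ref{lema-LGamma} (they are both degree-one curves meeting the four quintic curves of their respective facets, and by Lemma~\ref{lema-LGamma} both pair to $+1$ with $L_\Gamma$). The subtlety is that the tropical-cycle class only controls $r_1(\gamma)\in H_3(\check X,\ZZ)/W_2$, i.e.\ the class modulo the subspace $W_2$ generated by images of $r_1,r_0$; concretely, modulo the torus fiber class and lower-weight classes. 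So I would argue that $[L_\gamma]-[L_{\gamma'}]$ lies in the subgroup generated by the $T^3$-fiber class (the other contributions vanish for rational homology spheres with the relevant cohomological constraints — one checks $H_1$ and $H_2$ of the tropical Lagrangians force the $r_0$-part to drop out). Write $[L_\gamma]-[L_{\gamma'}]=a\,[T^3_{\mathrm{fib}}]$ for some $a\in\ZZ$.

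Next I would invoke the freedom recalled immediately before the Proposition: for the chosen $L_{\gamma'}$ and the integer $a$, Theorem~\ref{t:Construction} produces another Lagrangian $(L_{\gamma'})'$, diffeomorphic to the same Lagrangian lift, still with $\pi_\Delta$-image in $W_\epsilon(\gamma')$, hence still disjoint from $L_\gamma$, and with $[(L_{\gamma'})']-[L_{\gamma'}]=a\,[T^3_{\mathrm{fib}}]$. Concretely this comes from Remark~\ref{r:absorbAlpha}: along an edge $e$ of $\gamma'$ one may translate the $2$-torus fibers by a loop that wraps $a$ times around a vanishing cycle direction when passing a univalent vertex, which changes the homology class by $a$ times the fiber class without affecting the diffeomorphism type or the support. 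Then $[L_\gamma]=[L_{\gamma'}]+a\,[T^3_{\mathrm{fib}}]=[(L_{\gamma'})']$, so $L_\gamma$ and $(L_{\gamma'})'$ are homologous and disjoint, which is the claim (renaming $(L_{\gamma'})'$ to $L_{\gamma'}$).

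The main obstacle is the second step: controlling $[L_\gamma]-[L_{\gamma'}]$ precisely enough to know it is an \emph{integer} multiple of the single fiber class and not something in a larger rank subgroup. This requires unpacking the maps $r_0,r_1,r_2$ from \cite{Ruddat19}/\cite{RZ1,RZ2}: one must show the $W_2$-ambiguity in $r_2(\Gamma)$-style lifting, when restricted to the rank-one situation of tropical lines, reduces to exactly the $\ZZ$-span of $[T^3_{\mathrm{fib}}]$ — equivalently that $L_\gamma$ and $L_{\gamma'}$ already agree in $H_3(\check X,\ZZ)/\langle[T^3_{\mathrm{fib}}]\rangle$. The cleanest route is to verify that Lemma~\ref{lema-LGamma} pins down the class against a basis of a complement of $\langle[T^3_{\mathrm{fib}}]\rangle$ in $H_3$ (using that $L_\Gamma$ together with the obvious complementary cycles detects everything but the fiber), so that $[L_\gamma]$ and $[L_{\gamma'}]$ can differ only in the fiber direction; everything else is then a direct application of the built-in flexibility of the construction.
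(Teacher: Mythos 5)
Your high-level strategy is the same as the paper's: pin down most of the class by pairing with $L_\Gamma$ from Lemma~\ref{lema-LGamma} and with disjointness, then absorb the remaining fiber-class ambiguity using the freedom in Remark~\ref{r:absorbAlpha}. But the step you yourself flag as "the main obstacle" is exactly the step the paper nails down, and your proposed route to it is different from — and weaker than — what the paper does. You reach for the abstract tropical-cycle machinery ($r_0, r_1, r_2$, the weight filtration $W_2$) and assert that "one checks $H_1$ and $H_2$ of the tropical Lagrangians force the $r_0$-part to drop out," but you do not actually perform that check, and it is not clear how you would rule out a rank-$>1$ ambiguity from this angle alone. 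As written this is a genuine gap.

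The paper's actual argument is far more elementary and is specific to the quintic mirror: since $h^{2,1}(\check X)=1$, $H_3(M_t,\ZZ)$ has rank $4$, and one can complement $[\alpha], [L_\gamma], [L_\Gamma]$ (where $\alpha\cong T^3$ is the primitive vanishing cycle with $\alpha.\alpha=0$) by a fourth cycle $S$ chosen orthogonal to $\mathrm{span}(L_\gamma, L_\Gamma)$. Expanding $[L_{\gamma'}]=a[\alpha]+b[L_\gamma]+c[L_\Gamma]+dS$ and using the intersection numbers $L_\Gamma.L_{\gamma'}=1$, $\alpha.L_{\gamma'}=0$, $L_\gamma.L_{\gamma'}=0$ (the last from disjointness), plus $\alpha.S\neq 0$ (forced by primitivity of $[\alpha]$ and Poincar\'e duality), gives $d=0$, $c=0$, $b=1$, so $[L_{\gamma'}]=a[\alpha]+[L_\gamma]$ by a direct linear-algebra computation — no appeal to the $r_i$ maps or $W_2$ is needed. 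Two more observations: you also never use $L_\gamma.L_\gamma=0$ (the anti-symmetry of the pairing in odd degree), which the paper uses to set up the orthogonality of $S$; and your remark that $\gamma,\gamma'$ have "the same combinatorial skeleton" is both unjustified (there are several generic types of tropical lines) and unnecessary for the conclusion. If you want to complete your proof along your proposed lines, you would need to actually carry out the $W_2$ analysis rather than gesture at it; the rank-$4$ argument is the short road.
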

\begin{proof} 
We use the well-known fact that the vanishing cycle $\alpha\cong T^3$ of the quintic mirror degeneration is a primitive non-trivial homology class (it generates $W_0\cong\ZZ$ of the monodromy weight filtration) with $\alpha.\alpha=0$.
Using the cycle $L_\Gamma$ from Lemma~\ref{lema-LGamma}, we find the following intersection numbers
\begin{equation} \label{eq-int-numbers}
L_\Gamma.L_\gamma=1,\qquad L_\Gamma.\alpha=0,\qquad L_\gamma.\alpha=0,\qquad L_{\gamma}.L_{\gamma}=0
\end{equation}
where the middle ones follow from the fact that $\alpha$ can be supported in the complement of $L_\Gamma$ and $L_\gamma$ and the last one follows since the intersection pairing is anti-symmetric on $H_3(M_t)$.
We have equations \eqref{eq-int-numbers} similarly for $L_{\gamma'}$ in place of $L_\gamma$. 
Since the middle cohomology of the mirror quintic has rank four, we can complement $[\alpha],[L_\gamma],[L_\Gamma]$ to a basis of $H_3(M_t,\ZZ)$ by adding a fourth cycle $S$. 
Moreover since the restriction of the intersection pairing to the span of $L_\gamma,L_\Gamma$ is $\left(\begin{smallmatrix} 0&1\\-1&0\end{smallmatrix}\right)$, we can require $S$ to be in its orthogonal complement.
We write
$[L_{\gamma'}]=a[\alpha]+b[L_\gamma]+c[L_\Gamma]+dS$ and want to determine the coefficients $a,b,c,d$. 
From the analogue of \eqref{eq-int-numbers} for $L_{\gamma'}$, we find $d=0$ by pairing $L_{\gamma'}$ with $\alpha$ since necessarily $\alpha.S\neq 0$ for $[\alpha]$ being non-zero. 
Since $\gamma,\gamma'$ don't meet, $L_\gamma.L_{\gamma'}=0$ which yields $c=0$. 
Consequently, $1=L_\Gamma.L_{\gamma'}=b$
and hence $[L_{\gamma'}]=a[\alpha]+[L_\gamma]$ for some $a$.

As explained in Remark \ref{r:absorbAlpha}, for the construction of the Lagrangian torus bundle over an edge $e$ of $\gamma'$, there is a freedom given by translating 
the 2-tori fibers by a function on $e$. Note that $[\alpha]$ is exactly the fundamental class of the trace of the $2\pi$ translation by a 2-torus in a 3-torus fiber.
By applying the freedom in the construction and wrapping around $-a$ times, we can construct $L_{\gamma'}$ such that $[L_{\gamma'}]=[L_\gamma]$.
%\footnote{\red{Say that we can absorb copies of $\alpha$ into $L_\gamma$}}
\end{proof}

\begin{proof}[Proof of Theorem \ref{t:homologous}]
 The Lagrangians $L_{\gamma}$ and $L_{\gamma'}$ being homologous is the content of Proposition \ref{prop-homologous}.
 Since they are rational homology spheres, they have unobstructed Floer cohomology  over characteristic $0$ \cite{FOOO}
 and we have $HF(L_{\gamma},L_{\gamma})=HF(L_{\gamma'},L_{\gamma'})=H^*(S^3)$ by the degeneration of the spectral sequence in the second page.
 Moreover, when $\gamma \cap \gamma' =\emptyset$, we have $HF(L_{\gamma},L_{\gamma'})=0$.
 By Hamiltonian invariance of Floer cohomology, we conclude that $L_{\gamma}$ is not Hamiltonian isotopic to $L_{\gamma'}$.
\end{proof}

\begin{remark}
Theorem \ref{t:homologous} also works when $\gamma \cap \gamma' $ is a single point.
In this case, if  $L_{\gamma}$ is Hamiltonian isotopic to $L_{\gamma'}$, then 
$HF(L_{\gamma},L_{\gamma'})$ would be well-defined but one can see from the local model that
$L_{\gamma}$ intersects cleanly with $L_{\gamma'}$ along a circle so 
$HF(L_{\gamma},L_{\gamma'})$ is either $0$ or concentrated on consecutive degree. It gives a contradiction.

It is less clear what $HF(L_{\gamma},L_{\gamma'})$ is when $\gamma$ overlaps with $\gamma' $
along a codimension $0$ subset. These cases arise in our computer-aided search.
\end{remark}

\subsection{Symplectomorphism group}

\begin{proof}[Proof of Corollary \ref{c:Symp}]
 Each spherical Lagrangian submanifold $L_i$ gives rise to a symplectomorphism $\tau_{L_i}:M \to M$, called the Dehn twist along $L_i$, supported inside an arbitrarily small neighborhood of $L_i$, see \cite{Seidel00}, \cite{MWspherical}.
 Therefore, it is clear that $\{\tau_{L_i}\}_{i=1}^{k_{\max}}$ generates an abelian subgroup in $\Sympl(M)$ that descends to an abelian subgroup $G$ of
 $\pi_0(\Sympl(M)) =\Sympl(M)/\Ham(M)$ (the equality uses the fact that $\pi_1(M)$ is trivial).

 We recall from \cite{Seidel00} that each $\tau_{L_i}$ can be
 lifted canonically to a $\mathbb{Z}$-graded symplectomorphism because $c_1(M)=0$.
 Moreover, we know that $\tau_{L_i}(L_i)=L_i[-2]$ and $\tau_{L_i}(L_j)=L_j$ as $\mathbb{Z}$-graded Lagrangians, for all $i \neq j$.
 Therefore, $g \in G$ is completely determined by $({HF}(L_i,g(L_j)) )_{i,j=1}^{k_{\max}}$ and $G$ is isomorphic to $\mathbb{Z}^{k_{\max}}$.
\end{proof}

\begin{remark}
 If $L_i$ is a spherical Lagrangian with $|\pi_1(L_i)|=m$, then $(\tau_{L_1})_*A=A+m([L_i]\cdot A)[L_i]$ for $A \in H_3(M,\mathbb{Z})$.
 Since $[L_i]=[L_j]$ for all $i,j$ (Theorem \ref{t:Construction}(2)), the natural map $G \subset \pi_0(\Sympl(M)) \to \Aut(H_3(M,\mathbb{Z}))$ has a large kernel.
 It is less clear what the kernel of the natural map $G \subset \pi_0(\Sympl(M)) \to \pi_0(\Diff(M))$ is.
\end{remark}

%%%%%%% Toric geometry in symplectic coordinates %%%%%%%%%%%%%%%%%%%%%%%%%%%%%%%%%%%%%%%%%%%%%%%%%%%%%%%%%%%%%%%%%%%%%%%%%%%%%%%%%%%%%%

\section{Toric geometry in symplectic coordinates}\label{ss:toricBasic}
We review some material about complex toric orbifolds. The presentation below is extracted from \cite{AbreuOrbifold} and \cite{Abreu} (see also \cite{Guillemin}, \cite{LermanTolman} and \cite{CdS}).
Any projective complex toric orbifold $X$  is K\"ahler and can be equipped with a K\"ahler form $\omega_X$
such that, for $i=\sqrt{-1}$, the action of the real torus
$$T^n:=i\mathbb{R}^n/2\pi i \mathbb{Z}^n \subset \mathbb{C}^n/2\pi i \mathbb{Z}^n=:T^n_{\mathbb{C}}$$
is effective and Hamiltonian with respect to $\omega_X$.
The effective Hamiltonian action induces a moment map $\pi_\Delta:X \to \mathbb{R}^n$
with image $\Delta:=\pi_\Delta(X)$ being a {\bf simple} and {\bf rational} convex polytope.
It means that $\Delta$ is a convex polytope such that
\begin{itemize}
 \item there are precisely $n$ edges meeting at each vertex $p$;
 \item each edge meeting a vertex $p$ is of the form $\{p+rv_j|r\in[0,r_j]\}$ for some $v_j \in \mathbb{Z}^n$, $r_j\ge 0$ for $1 \le j \le n$;
 \item $\{v_j\}_{j=1}^n$ form a $\mathbb{Q}$-basis of the lattice $\mathbb{Z}^n$.
\end{itemize}
If the last bullet is replaced by that $\{v_j\}_{j=1}^n$ can be chosen to be a $\mathbb{Z}$-basis of the lattice $\mathbb{Z}^n$, then $\Delta$ is called a Delzant polytope and $X$ is a smooth manifold.

We call a face of codimension one of $\Delta$ a \emph{facet}.

\begin{definition}
A labeled polytope is a simple rational convex polytope $\Delta$ plus a positive integer $m$ (label) attached to each facet of $\Delta$.
\end{definition}
The label $m$ of a facet $F$ is the order of the orbifold structure group of the generic points in $(\pi_\Delta)^{-1}(F)$.
If not mentioned, we assume all labels to be $1$.

Lerman and Tolman \cite{LermanTolman} prove that a labeled simple rational convex polytope $\Delta$ determines a
unique (up to equivariant symplectomorphism) compact symplectic orbifold $(X,\omega_X)$ with effective Hamiltonian torus action and moment map image $\Delta$,
which is a generalization of Delzant's result on Delzant polytope and compact symplectic manifold $(X,\omega_X)$ with effective Hamiltonian torus action \cite{Delzant}.
They also prove that if $J_1$ and $J_2$ are torus invariant complex structures on $X$ that are  compatible with $\omega_X$
then $(X,J_1)$ and $(X,J_2)$ are equivariantly biholomorphic
(\cite[Theorem 9.4]{LermanTolman}, see also \cite[Section 2]{AbreuOrbifold}).
However, since there can be different torus invariant K\"ahler structures on $X$, we need to go into details about the transition between complex and symplectic coordinates.

\subsection{Complex coordinates}
Let $X^\circ:=\{x \in X\,|\, T^n \text{ acts freely on } x\}$.
There is a biholomorphic identification
$$X^\circ=\mathbb{C}^n/2\pi i\mathbb{Z}^n=\{u+iv \,|\, u\in \mathbb{R}^n, v \in \RR^n/2\pi\mathbb{Z}^n \}$$
such that $t \in T^n$ acts by $$t \cdot (u+iv)=u+i(v+t).$$
The K\"ahler form $\omega_X$ is given by $\omega_X:=2i\partial \bar{\partial} f_{\omega}$
for a potential $f_{\omega}(u,v)=f_{\omega}(u) \in C^{\infty}(X^\circ)$, depending only on $u$ (see \cite{Guillemin} or \cite[Exercise $3.5$]{Abreu} for the definition of $f_{\omega}(u)$).

\subsection{Symplectic coordinates}
Dually, we have the symplectic identification
$X^\circ=\Delta^\circ \times T^n$,
where $\Delta^\circ$ is the interior of $\Delta$.
The torus acts on $(p,q) \in \Delta^\circ \times T^n$ by
$$t \cdot (p,q)=(p,q+t)$$
and the symplectic form is $\omega_X:=dp \wedge dq$.
The complex structure $J$ is determined by a function $f_J(p,q)=f_J(p) \in C^{\infty}(X^\circ)$ according to  the following procedure.
Let $F_J:=\Hess_p(f_J)$ be the Hessian of $f_J$ in the $p$ coordinates ($f_J$ and $F_J$ are denoted by $g$ and $G$, respectively, in \cite{AbreuOrbifold}).
The complex structure in $(p,q)$ coordinates is given by
 \[
   J=
  \left[ {\begin{array}{cc}
   0 & -F_J^{-1} \\
   F_J & 0 \\
  \end{array} } \right].
\]
The transition maps between the complex and symplectic coordinates are given by
\begin{align} \label{eq:Transition}
\left\{
\begin{array}{ll}
 p=\frac{\partial f_{\omega}}{\partial u},&q=v,\\
 u=\frac{\partial f_J}{\partial p},& v=q. 
\end{array}
\right. 
\end{align}
There are restrictions for $f_{\omega}$ and $f_J$ to satisfy near infinity so that we have a well-defined K\"ahler structure on $X$.

A canonical choice of complex structure is given by Guillemin as follows.
The simple rational convex polytope $\Delta$ can be described by a set of inequalities of the form
\begin{align*}
 \langle p, \mu_r \rangle-\rho_r \ge 0 \text{ for } r=1,\dots,d
\end{align*}
where $d$ is the number of facets, each $\mu_r$ is a primitive element of $\ZZ^n$ and $\rho_r \in \RR$.
We define affine linear functions $l_r:\mathbb{R}^n \to \RR$, $r=1,\dots,d$,
\begin{align*}
l_r(p):= \langle p, m_r\mu_r \rangle-\lambda_r
\end{align*}
where $m_r$ is the label of the $r^{th}$ facet and $\lambda_r=m_r \rho_r$, so $p \in \Delta$ if and only if $l_r(p) \ge 0$ for all $r=1,\dots,d$.

\begin{theorem}[\cite{AbreuOrbifold}, \cite{Abreu}, \cite{Guillemin}]\label{t:Guillemin}
 The `canonical' compatible complex structure $J_{\Delta}$ on $\Delta^\circ \times T^n$ is given (in $(p,q)$-coordinates) by

  \begin{align}
   J_{\Delta}=
  \left[ {\begin{array}{cc}
   0 & -F_{J,can}^{-1} \\
   F_{J,can} & 0 \\
  \end{array} } \right] \label{eq:Jcan}
  \end{align}
%   J_{\Delta}=
%  \left[ {\begin{array}{cc}
%   0 & -F_{J,can}^{-1} \\
%   F_{J,can} & 0 \\
%  \end{array} } \right]
%\]
where $F_{J,can}=\Hess_p(f_{J,can})$ and
\begin{align}
f_{J,can}(p):=\frac{1}{2} \sum_{r=1}^dl_r(p) \log(l_r(p)) \label{eq:fJcan}.
\end{align}

\end{theorem}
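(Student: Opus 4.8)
The plan is to verify, for the almost complex structure $J_\Delta$ of \eqref{eq:Jcan}, the three properties that pin down the canonical toric K\"ahler structure: that $J_\Delta^2=-\id$ and $J_\Delta$ is manifestly $T^n$-invariant, that it is $\omega_X$-compatible, and that it is integrable and extends across $\pi_\Delta^{-1}(\partial\Delta)$ to the canonical complex structure of $X$.

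First I would compute the Hessian of $f_{J,can}$ explicitly. Writing $v_r:=m_r\mu_r$ so that $\partial_{p_i}l_r=(v_r)_i$, one gets
\[
\partial_{p_i}f_{J,can}=\tfrac12\sum_{r=1}^d (v_r)_i\bigl(\log l_r(p)+1\bigr),\qquad F_{J,can}=\Hess_p(f_{J,can})=\tfrac12\sum_{r=1}^d\frac{v_r v_r^{\,T}}{l_r(p)}.
\]
Since $\Delta$ is a bounded full-dimensional polytope its facet normals $\{\mu_r\}_{r=1}^d$ span $\RR^n$, so on $\Delta^\circ$ (where every $l_r>0$) the matrix $F_{J,can}$ is symmetric positive definite, hence invertible, which gives $J_\Delta^2=-\id$. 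Compatibility with $\omega_X=dp\wedge dq$ is then a block computation: in the $(p,q)$ splitting $\omega_X$ is represented by $\left(\begin{smallmatrix}0&I\\-I&0\end{smallmatrix}\right)$, so the bilinear form $\omega_X(\,\cdot\,,J_\Delta\,\cdot\,)$ is represented by $\operatorname{diag}(F_{J,can},F_{J,can}^{-1})$, which is symmetric and positive definite; thus $J_\Delta$ is $\omega_X$-compatible.

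For integrability I would pass to complex coordinates via Legendre duality. Because $f_{J,can}$ is strictly convex on $\Delta^\circ$, the gradient map $u=\partial f_{J,can}/\partial p$ is an immersion, and the $l_r\log l_r$ blow-up of $f_{J,can}$ along each facet forces $u$ to be a diffeomorphism $\Delta^\circ\to\RR^n$; setting $v=q$ one obtains coordinates $z_j=u_j+\sqrt{-1}\,v_j$ on $X^\circ$. A chain-rule computation using the transition relations \eqref{eq:Transition} then shows that in these coordinates $J_\Delta$ is the constant standard complex structure, so $J_\Delta$ is integrable. (Equivalently, this is the Abreu--Guillemin correspondence between symplectic potentials and torus-invariant compatible complex structures, applied to $f_{J,can}$.)

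Finally I would argue that the resulting complex manifold is $X$ with its canonical toric complex structure, which is also where the boundary behaviour enters. The point is that the singular term $\tfrac12\sum_r l_r\log l_r$ is precisely the normal form of a symplectic potential near a facet, in the sense of Abreu's characterization, so the associated K\"ahler metric (orbifold-)extends smoothly over every toric stratum of $X$; to identify this extension with the canonical complex structure one realizes $(X,\omega_X)$ as the symplectic reduction of $(\CC^d,\omega_{\mathrm{std}})$ by the subtorus $\ker\bigl((S^1)^d\to(S^1)^n\bigr)$ at the level prescribed by the $\lambda_r$, whose reduced complex structure is by definition the canonical one, and checks that the induced symplectic potential of the quotient is exactly $f_{J,can}$. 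The main obstacle is this last step --- controlling the asymptotics of $f_{J,can}$ near facets and lower-dimensional faces finely enough to see both the smooth (orbifold) extension and its agreement with the quotient complex structure; away from it the statement reduces to the routine computations above. This argument is carried out in \cite{Guillemin}, \cite{Abreu}, \cite{AbreuOrbifold}, which we follow.
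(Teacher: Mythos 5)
The paper does not prove this statement; it cites it directly from \cite{Guillemin}, \cite{Abreu}, \cite{AbreuOrbifold} and only uses the formula \eqref{eq:Jcan} downstream (e.g.\ in Example~\ref{ex:standard}). Your proposal is therefore not replacing an argument in the paper but reconstructing the cited one, and it does so accurately: the Hessian $F_{J,can}=\tfrac12\sum_r v_r v_r^T/l_r$ and its positive definiteness on $\Delta^\circ$ are correct, the block computation $\Omega J_\Delta=\operatorname{diag}(F_{J,can},F_{J,can}^{-1})$ establishing $\omega$-compatibility is correct, and the Legendre change of coordinates $u=\partial f_{J,can}/\partial p$, $v=q$ is exactly the transition \eqref{eq:Transition} that the paper itself records, so integrability on $X^\circ$ is as you say. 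You also correctly flag the genuinely nontrivial step --- the boundary asymptotics of $f_{J,can}$ near faces and the identification, via reduction of $(\CC^d,\omega_{\mathrm{std}})$ by the subtorus $\ker((S^1)^d\to(S^1)^n)$, of the extended complex structure with the canonical one --- and defer it to the references, which is where the paper's citation carries the real weight. One small caveat worth keeping in mind if you wanted to make the argument self-contained: in the orbifold case of \cite{AbreuOrbifold} the quotient is by a possibly disconnected group, and the reduction/normal-form analysis has to be run at the orbifold level; your sketch silently treats the Delzant (manifold) case, so the parenthetical ``(orbifold-)extends'' is doing more work than it lets on.
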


\begin{remark}
Fixing $\omega_X$, all torus invariant complex structures $J$ on $X$ compatible with $\omega_X$ are classified in \cite[Theorem $2$]{AbreuOrbifold}.
\end{remark}

\begin{example}[Extending charts]\label{ex:standard}

We consider the following important non-compact example.
Let $X=\CC^n$ with moment polytope $\Delta=\RR^n_{\ge 0}$ and
$l_r(p_1,\dots,p_n)=p_r$.
We have symplectic coordinates $(p_j,q_j) \in X^{\circ} =(\CC^*)^n \subset X$.
Define $z_j=x_j+iy_j=\sqrt{2p_j}\exp(iq_j) \in \CC^*$, so that we have $\sum_{j=1}^n dx_j \wedge dy_j=\sum_{j=1}^n dp_j \wedge dq_j$.
We can extend the domain of $z_j$ from $\CC^*$ to $\CC$ and
thus provide a symplectic chart to $X$ and moment map $X\ra\RR_{\ge 0}^n$ is given by
$(z_1, \dots,z_n)\mapsto\frac{1}{2}(|z_1|^2, \dots, |z_n|^2)$.

For the complex coordinates, \eqref{eq:fJcan} yields $f_{J,can}=\frac{1}{2} \sum_{j=1}^n p_j \log(p_j)$ and
$\frac\partial{\partial{p_j}}f_{J,can}=\frac{1}{2}(1+\log(p_j))$, so the Hessian of $f_{J,can}$ is given by
  \[
   F_{J,can}=
  \left[ {\begin{array}{ccc}
   \frac{1}{2p_1} & 0 & 0 \\
   0 & \dots & 0\\
   0 & 0 & \frac{1}{2p_n}\\
  \end{array} } \right].
\]

We define $J_{\Delta}$ by Equation \eqref{eq:Jcan}.
Then a direct calculation gives
\begin{align*}
 J_{\Delta}(\partial_x)=\partial_y.
\end{align*}
Let $u_j=\frac{\partial f_{J,can}(p)}{\partial p_j}$, $v_j=q_j$ and $w_j=e^{u_j+iv_j}$ be the holomorphic coordinates on $(\CC^*)^n$ (see \eqref{eq:Transition}).
Then $u_j=\frac{1}{2}(1+\log(p_j))$ and $w_j=e^{\frac{1}{2}}\sqrt{p_j}e^{iq_j}=(\frac{e}{2})^{\frac{1}{2}}z_j$.
The holomorphic coordinates $(w_1, \dots,w_n)$ on $(\CC^*)^n$ naturally extend to holomorphic coordinates on $\CC^n$.

\end{example}

%When we define $z_j:=x_j+iy_j=\sqrt{2p_j}\exp(iq_j) \in \CC^*=(\RR^2)^*$, it is true in general that
%$dx \wedge dy=dp \wedge dq$ so we can obtain symplectic charts for points $p \in X$ where $T^n$ does not act freely.

%These symplectic charts are almost never preserving the complex structures.

\begin{lemma}[Integral linear transformation]\label{l:IntLinearTran}
 Let $\Delta_1$ be a labeled polytope and $\Delta_2=A\Delta_1+v$ where $A\in GL_n(\mathbb{Z})$ and $v \in \mathbb{Z}^n$.
 Let $X_1$ and $X_2$ be the canonical K\"ahler toric orbifold with moment polytope being $\Delta_1$ and $\Delta_2$, respectively.
 Then $X_1$ and $X_2$ are K\"ahler isomorphic.
\end{lemma}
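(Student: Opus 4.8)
The plan is to reduce the statement to the explicit description of the canonical Kähler structure via Guillemin's formula (Theorem~\ref{t:Guillemin}) and track how an affine lattice transformation acts on all the data. First I would record how the defining inequalities transform: if $\Delta_1=\{p\mid \langle p,\mu_r\rangle-\rho_r\ge 0\}$ with labels $m_r$, then $\Delta_2=A\Delta_1+v$ is cut out by the inequalities $\langle p', (A^{-1})^T\mu_r\rangle - (\rho_r+\langle (A^{-1})^T\mu_r, v\rangle)\ge 0$; since $A\in GL_n(\ZZ)$, the covectors $(A^{-1})^T\mu_r$ are again primitive in $\ZZ^n$, so the natural labels of $\Delta_2$ are the same $m_r$, and the affine functions satisfy $l_r^{(2)}(p') = l_r^{(1)}(A^{-1}(p'-v))$. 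In particular the new potential is $f_{J,can}^{(2)}(p') = f_{J,can}^{(1)}(A^{-1}(p'-v))$.

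Next I would write down the candidate isomorphism in symplectic coordinates. On $X_i^\circ = \Delta_i^\circ\times T^n$ with $\omega_{X_i}=dp\wedge dq$, define $\Psi\colon X_1^\circ\to X_2^\circ$ by $(p,q)\mapsto (Ap+v,\ (A^{-1})^Tq)=:(p',q')$. This is a diffeomorphism because $A\in GL_n(\ZZ)$ (so $(A^{-1})^T$ preserves the lattice $2\pi\ZZ^n$ and descends to $T^n$), and it is symplectic since $dp'\wedge dq' = A\,dp\wedge (A^{-1})^T dq = dp\wedge dq$ (the pairing $\langle Ap, (A^{-1})^Tq\rangle=\langle p,q\rangle$ is what makes this work). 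Then I would check that $\Psi$ is holomorphic for the canonical complex structures: differentiating $f_{J,can}^{(2)}(p')=f_{J,can}^{(1)}(A^{-1}(p'-v))$ twice gives $F_{J,can}^{(2)}(p') = (A^{-1})^T F_{J,can}^{(1)}(p)\,A^{-1}$, and plugging into the block form~\eqref{eq:Jcan} one verifies $d\Psi\circ J_{\Delta_1} = J_{\Delta_2}\circ d\Psi$ by a direct $2\times 2$ block computation using $d\Psi = \left(\begin{smallmatrix} A & 0\\ 0 & (A^{-1})^T\end{smallmatrix}\right)$. Equivalently, one can use the complex-coordinate picture of~\eqref{eq:Transition}: the relation $u=\partial f_{J,can}/\partial p$ transforms as $u' = (A^{-1})^T u$, so in the holomorphic coordinates $w_j=e^{u_j+iv_j}$ the map $\Psi$ is a monomial (toric) map $w'^{\,k} = \prod_j w_j^{((A^{-1})^T)_{kj}}$ composed with a translation, which is manifestly biholomorphic onto $(\CC^*)^n$.

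Finally I would address the extension from the open dense torus $X_i^\circ$ to all of $X_i$. The compact orbifolds $X_i$ are the Lerman--Tolman orbifolds associated to $(\Delta_i,\{m_r\})$, and since $\Psi$ is an equivariant symplectomorphism $X_1^\circ\to X_2^\circ$ intertwining the torus actions (up to the automorphism $(A^{-1})^T$ of $T^n$) and matching up the face structure (facet $r$ of $\Delta_1$ goes to facet $r$ of $\Delta_2$ with the same label), it extends uniquely to an equivariant symplectomorphism $X_1\to X_2$ by the uniqueness part of Lerman--Tolman; that it remains biholomorphic on the boundary strata follows because the canonical complex structure is torus-invariant and determined by the same combinatorial data, so one can argue stratum by stratum (or invoke that a holomorphic map on a dense open set of a normal complex space extending continuously is holomorphic). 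The main obstacle I anticipate is precisely this last extension step — making the passage from the open torus to the compact orbifold rigorous, i.e.\ checking that $\Psi$ genuinely extends smoothly/holomorphically across the lower-dimensional toric strata rather than just continuously; the clean way around it is to invoke the Lerman--Tolman uniqueness theorem together with the corresponding uniqueness of torus-invariant compatible complex structures (\cite[Theorem 9.4]{LermanTolman}, \cite[Section 2]{AbreuOrbifold}) quoted just before the lemma, which makes the extension automatic once $\Psi$ is identified on $X_1^\circ$.
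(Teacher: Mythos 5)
Your proof is correct and follows the same route as the paper, which simply observes in one sentence that the $\mu_r$, the $l_r$, and hence the Guillemin potential $f_{J,can}$ are all intrinsic to the integral affine structure and therefore carry over under $p\mapsto Ap+v$. What you have done is flesh out that assertion with the explicit coordinate change $(p,q)\mapsto(Ap+v,(A^{-1})^Tq)$, the verification that it is symplectic and $J$-holomorphic (via $F^{(2)}=(A^{-1})^TF^{(1)}A^{-1}$), and the Lerman--Tolman extension step, all of which are sound.
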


\begin{proof} 
This follows from realizing that neither the definition of the symplectic nor complex structures needs coordinates, as the $\mu_r$ are intrinsic to the integral affine structure and hence are the $l_r$.
\end{proof}

\begin{example}[Transforming hypersurfaces]\label{ex:Transforming hypersurfaces}
Let $X$ be a toric manifold with moment image a Delzant polyhedron $\Delta$.
By picking a vertex $v$ and replacing $\Delta$ by $A(\Delta-v)$ for some $A \in GL_n(\mathbb{Z})$ (see Lemma \ref{l:IntLinearTran}), we can assume $l_r(p_1,\dots,p_n)=p_r$ for $r=1,\dots,n$
and the remaining facets of $\Delta$ are contained respectively in $l_r =0$ for $r=n+1, \dots, d$.
 Let $w_j=\exp(u_j+iv_j)=\exp(\frac{\partial f_{J,can}(p)}{\partial p_j}+iq_j) \in \CC^*$,
 which gives a $T_{\CC}^n$ equivariant identification between $(\CC^*)^n \subset \CC^n$ and $X^\circ$.
 We know that  (see \eqref{eq:fJcan})
 \begin{align}
  f_{J,can}:=\frac{1}{2} \sum_{j=1}^n p_j \log(p_j)+ R
 \end{align}
 where $R$ is the contribution from other facets. 
Assume now we are given a family of hypersurfaces via 
\begin{equation} \label{eq-hypersurface}
 w_1\dots  w_n=tg(w)
\end{equation}
for some polynomial $g$ in holomorphic coordinates and $t\in \CC$ a family parameter.
The logarithm of this hypersurface equation is transformed to
\begin{equation}
\label{eq-transformed-hypersurface}
\frac{n}{2}+\log\left(\sqrt{\prod_{j=1}^n p_{j}}\right)+i\left(\sum_{j=1}^n q_{j}\right)+\sum_{j=1}^n \frac{\partial R(p)}{\partial p_{j}}=\log(t)+\log(g(w(p,q)))
\end{equation}
in symplectic coordinates.
Notice that $R$ can be smoothly extended to the origin, so by exponentiating and setting $z_{j}:=\sqrt{2p_{j}}\exp(iq_{j})$, we may write this equation as
\begin{equation} \label{eq-zf}
 \prod_{j=1}^n z_{j}=tf(p,q)
\end{equation}
 where 
\begin{equation}
f(p,q)=g(w(p,q))h(p)
\label{eq-nonvanishing-factor-in-p}
\end{equation}
 for $h=\sqrt{2}^{n}\exp(-\frac{n}{2}-\sum_{j=1}^n \frac{\partial R(p)}{\partial p_{j}})$. 
 Most importantly later on, $h$ is a non-vanishing $C^\infty$-function depending only on $p$.
 \end{example}

 With the above example, we know how to transform a complex hypersurface defined by the equation $w_{1}\dots w_{n}=tg(w)$
 into a symplectic hypersurface in symplectic coordinates $(p,q)$ for a toric manifold $X$.
 To cover a large range of applications, we need an analogue for toric orbifolds.

\subsection{Isolated Gorenstein toric orbifold singularities} \label{section-orbifold-local-model}
Now consider a cone $\Delta\subset\RR^n$ generated by $v_1,...,v_n\in\ZZ^n$.
The ring $\CC[\Delta\cap\ZZ^n]$ is the coordinate ring of an Abelian quotient singularity $X_\Delta$ as follows. The ring is regular if an only if the $v_i$ form a lattice basis. Let $\sigma$ be the dual cone of $\Delta$. 
It is also integrally generated, so let $N$ be the sublattice generated by the primitive ray generators of $\sigma$ as a sublattice of $(\ZZ^n)^*$, the dual lattice $M=\Hom(N,\ZZ)$ contains the original lattice $\ZZ^n$ and the cone $\Delta$ is a standard cone when viewed with respect to $M$, i.e.~$\CC[\Delta\cap M]=\CC[w_1,...,w_n]$ where $w_j$ is the monomial given by the primitive generator of $(\RR_{\ge 0}v_j)\cap M$.
The subring $\CC[\Delta\cap\ZZ^n]\subseteq \CC[\Delta\cap M]$
is the ring of invariants of the group action $K=(\ZZ^n)^*/N$ that acts on a monomial $z^m$ via $g.z^m=\exp(2\pi i \langle g,m\rangle)z^m$, see \cite[\S2.2,\,page 34]{Fulton1993}.
We need this a bit more explicit and also want to make further assumptions.
We require the singularity to be isolated. 
Since then $K$ necessarily acts faithfully on the subring $\CC[w_1]=\CC[(\RR_{\ge 0}v_1)\cap M]$, we conclude that $K$ is cyclic, say $K$ is the group of $k$th roots of unity.
Let $\zeta$ be a primitive generator. The action is 
$$\zeta.(w_1,...,w_n)=(\zeta^{a_1}w_1,...,\zeta^{a_n}w_n)$$ 
for some integers $a_j$ with $\gcd(a_j,k)=1$ for all $j$ which is equivalent to the isolatedness of the singularity. One can check the following result.
\begin{lemma}\label{l:unbranched}
Under the given assumptions, the cover $\CC^n\ra \CC^n/K$ is unbranched away from the origin.
\end{lemma}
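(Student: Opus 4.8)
The claim is that for the cyclic group $K$ of order $k$ acting on $\CC^n$ by $\zeta.(w_1,\dots,w_n)=(\zeta^{a_1}w_1,\dots,\zeta^{a_n}w_n)$ with $\gcd(a_j,k)=1$ for all $j$, the quotient map $q:\CC^n\to\CC^n/K$ is unbranched away from the origin. Since $K$ is finite and acts holomorphically, $q$ is a branched cover whose branch locus is precisely the image of the set of points with nontrivial stabilizer. So the plan is simply to compute the stabilizers: I would show that the only point of $\CC^n$ fixed by a nontrivial element of $K$ is the origin, and then conclude that $q$ restricts to an honest (unramified, étale) covering map $\CC^n\setminus\{0\}\to(\CC^n\setminus\{0\})/K$ of degree $k$.

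\emph{Step 1 (stabilizer computation).} Let $w=(w_1,\dots,w_n)\in\CC^n$ be fixed by $\zeta^m$ for some $m$ with $1\le m\le k-1$. Then for each $j$ we have $\zeta^{m a_j}w_j=w_j$, so either $w_j=0$ or $\zeta^{m a_j}=1$, i.e.\ $k\mid m a_j$. The condition $\gcd(a_j,k)=1$ forces $k\mid m$ in the latter case, which contradicts $1\le m\le k-1$. Hence $w_j=0$ for every $j$, i.e.\ $w=0$. Thus every nontrivial element of $K$ has $\{0\}$ as its fixed-point set, and in particular $K$ acts freely on $\CC^n\setminus\{0\}$.

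\emph{Step 2 (free action $\Rightarrow$ covering).} A finite group acting freely (and, being finite, properly discontinuously) by homeomorphisms on a Hausdorff space gives a covering map onto the quotient; since the action is by biholomorphisms, $q:\CC^n\setminus\{0\}\to(\CC^n\setminus\{0\})/K$ is a holomorphic covering map of degree $|K|=k$. Noting that $q^{-1}\big((\CC^n\setminus\{0\})/K\big)=\CC^n\setminus\{0\}$, this is exactly the statement that the cover $\CC^n\to\CC^n/K$ is unbranched away from the origin.

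\emph{Main obstacle.} There is essentially no obstacle here beyond bookkeeping: the entire content is the elementary number-theoretic observation in Step 1 that $\gcd(a_j,k)=1$ rules out partial fixed loci. The only mild care needed is to make the "free action implies covering'' step precise in the orbifold/analytic setting (properness of the action, local sections), which is standard. One should also remark, for the sake of completeness, that the origin genuinely is a branch point when $k>1$ — so "away from the origin'' cannot be improved — but this is not part of what is asserted.
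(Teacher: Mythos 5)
Your proof is correct and is exactly the verification the paper leaves to the reader (the paper simply says ``One can check the following result'' without supplying an argument). The whole content is your Step~1: if $\zeta^m$ with $1\le m\le k-1$ fixes $w$ and $w_j\neq 0$, then $k\mid m a_j$, and $\gcd(a_j,k)=1$ forces $k\mid m$, a contradiction; since this applies for every $j$, the only fixed point of any nontrivial group element is the origin, so the finite group acts freely on $\CC^n\setminus\{0\}$ and the quotient map there is a genuine covering. This is the intended argument, and your closing remark correctly identifies that $\gcd(a_j,k)=1$ for all $j$ is precisely the condition recorded just before the lemma as being equivalent to isolatedness of the singularity.
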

We want to further assume that the singularity is Gorenstein which is equivalent to the statement that the Gorenstein monomial $w_1w_2...w_n$ is invariant under $K$, that is $$\sum_j a_j\in k\ZZ.$$

We now address the symplectic coordinates. 
Let $T_0=(S^1)^n$ and consider the standard $T_0$-action on $\CC^n$ by 
  $\theta \cdot (w_1,\dots,w_n)=(e^{ \theta_1 i }w_1,...,e^{ \theta_n i }w_n)$.
Recall from Example~\ref{ex:standard} that the standard symplectic coordinates of the toric variety $\CC^n$ are
$z_j=e^{-\frac{1}{2}}w_j$ and $z_j=\sqrt{2p_j}\exp(iq_j)$ giving the moment map $\CC^n\ra \RR_{\ge 0}^n$.
Note that $K$ is a subgroup of $T_0$, so $T_1=T_0/K$ acts faithfully on the orbifold singularity $\CC^n/K$.
We claim that the moment map of $\CC^n$ factors through that of $\CC^n/K$, that is
$$
\xymatrix{
\CC^n \ar[d]\ar[r]& \CC^n/K \ar^{\pi}[d]  \\
\RR_{\ge 0}^n&\Delta\ar[l]
}
$$
where the bottom horizontal map is the real affine isomorphism given by the fact that $\Delta$ becomes a standard cone with respect to $M$.
The right vertical map is the moment map of the orbifold singularity. The diagram clearly commutes and since the symplectic structures can be defined using the moment maps, the diagram is compatible with symplectic structures. 
The only thing to check is that the complex structures used in the diagram coincide with the canonical ones obtained from the complex potential $f_{J,can}$ in Theorem~\ref{t:Guillemin}. 
By Example~\ref{ex:standard} this is true for the left vertical map.
Since the $\mu_j$ are actually the primitive generators of the rays of $\sigma$, and are therefore contained in $N$, we find that the potential
$f_{J,can}$ for $(\Delta,\ZZ^n)$ is identical with the one for $(\Delta,M)$ which gives the desired compatibility.

We finally want to consider the situation where the Gorenstein singularity appears locally at the vertex of a compact polytope $\Delta$.
Let $\PP_\Delta$ be the compact K\"ahler orbifold obtained from $\Delta$ and $\pi_\Delta:\PP_\Delta\ra\Delta$ the moment map.
Let $v\in \Delta$ be a vertex. Replacing $\Delta$ by $\Delta-v$ and invoking Lemma~\ref{l:IntLinearTran}, we may assume $v=0$.
Compared to the local study above, there is no difference for the complex structure, however, the compact polytope $\Delta$ gives a different symplectic structure on the local model $\CC^n$.

Consider a neighborhood $O_v$ of $v$ in $\Delta$ which is then also a neighborhood of $v$ in the cone $\RR_{\ge 0}\Delta$.
The two inverse images under the moments maps $\pi_\Delta^{-1}(O_v)$ and $\pi^{-1}(O_v)$ resulting from this are naturally symplectomorphic.
Assume now we have a family of hypersurfaces in $\CC^n/K$ as given by \eqref{eq-hypersurface}, i.e.
$$w_{1}\cdot...\cdot w_{n}=tg(w)$$
where we use the coordinates of $\CC^n$ and so $g(w)$ is now a $K$-invariant polynomial. 
By the Gorenstein assumption, the monomial $w_1w_2...w_n$ is $K$-invariant. 
The same analysis as in Example~\ref{ex:Transforming hypersurfaces} gives \eqref{eq-transformed-hypersurface} as the equation for the family of hypersurfaces in
symplectic coordinates with the only difference that now $f$ and $h$ are $K$-invariant.
 
\subsection{Corner charts in four-orbifolds} \label{section-corner-chart}
Let $\PP_\Delta$ be a four-dimensional Gorenstein projective toric orbifold with isolated singularities and moment polytope $\Delta$.
For each point $z$ of $\PP_\Delta$, we can choose a vertex $v$ of $\Delta$ lying in the face containing $\pi_\Delta(z)$.
Let 
\begin{align}
\coDelta=\Delta \backslash \cup_{v \not\in F} F \label{eq:cornerPolyhedron}
\end{align}
where $F$ are facets of $\Delta$. 
%be the partial compactification of the interior $\Delta^\circ$ by the facets of $\Delta$ that contain $v$.
If $\pi_\Delta^{-1}(v)$ is a smooth point of $\PP_\Delta$, then we can, by an integral affine linear transform, assume $v$ is the origin 
and the primitive edge directions emerging from $v$ coincide with the positive real axes in $\RR^4$. If $\Delta$ has integral points in its interior, after the transform, $(1,1,1,1)$ must be one of them (in fact the only one if $\Delta$ is reflexive).
We can give a symplectic chart $U \subset \RR^{8}$ to $\pi_\Delta^{-1}(\coDelta)$ as in Example~\ref{ex:standard}, which is $T_{\CC}^4$-equivariantly biholomorphic to $\CC^4$ (see Figure \ref{graph:cornerchart}).
More generally, if $\pi_\Delta^{-1}(v)$ is an orbifold point of $\PP_\Delta$, then we have just shown in Section \ref{section-orbifold-local-model}
that $\pi_\Delta^{-1}(\coDelta)$ is equivariantly symplectomorphic to the model $X_v=\CC^4/K$ with the symplectic structure induced from $\pi_\Delta$. The smooth case can be viewed like the situation $K=\{\id\}$.
In both cases, we call $X_v$ a {\bf symplectic corner chart} for $\PP_\Delta$ associated to the vertex $v$.
All mirror quintic threefolds are hosted inside a toric variety $\PP_\Delta$ of the type considered here.
\begin{figure}
 \includegraphics{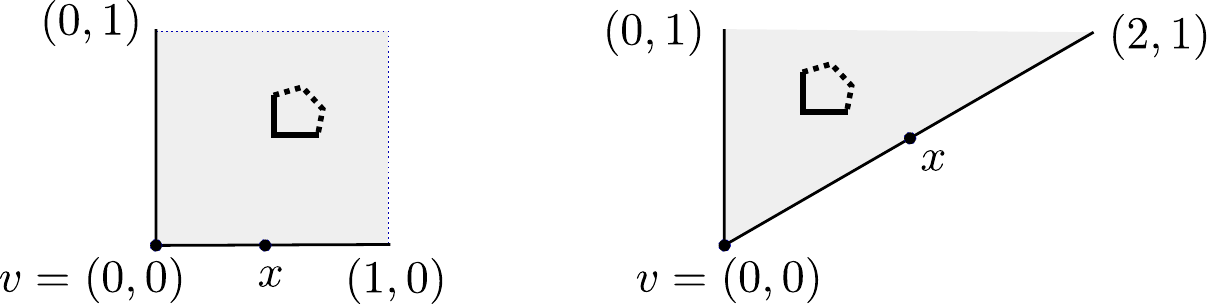}
 \caption{Left: two-dimensional analogue when $\pi_\Delta^{-1}(v)$ is a smooth point. 
 Right: two-dimensional analogue when $\pi_\Delta^{-1}(v)$ is an orbifold point. }
 \label{graph:cornerchart}
\end{figure}

\section{Geometric setup}\label{ss:geometricSetup}
%\begin{notation}\label{n:BAD}
%We will use the following notations
%\begin{align}
% B^2_r &= \{z \in \mathbb{C}| |z| \le \sqrt{2r}\} \\
% A^2_{a_0,a_1} &= \{(p,q) \in \RR \times \RR/2\pi\ZZ| a_0 \le p \le a_1\} \\
% D^2_{(p_0,q_0), \epsilon} &= \{(p,q) \in \RR \times \RR/2\pi\ZZ| |p-p_0|, |q-q_0| \le \epsilon\}
%\end{align}
%where $a_0< a_1$, $\epsilon<2\pi$ and $(p_0,q_0) \in \RR \times \RR/2\pi\ZZ$.
%We use $B^2$, $A^2$ and $D^2_{(p_0,q_0)}$ respectively
%to denote $B^2_r$, $A^2_{a_0,a_1}$ and $D^2_{(p_0,q_0), \epsilon}$ 
%for some appropriate choices of $r,a_0,a_1,\epsilon$ that are not specified.
%With these notations, a symplectic corner chart is given by $U=(B^2)^n$ where the $B^2$ in different factors of $U$ can have different sizes. 
%\end{notation}

%This section will gather ingredients for construction in the $3$-dimensional case with presentation in parallel to the $2$-dimensional case.
Let $\PP_\Delta$ be a complex projective toric orbifold of complex dimension four with moment polytope $\Delta$. 
Recall that $-K_{\PP_\Delta}=\partial \PP_\Delta$, we assume this is nef or equivalently (for a toric variety) that $\shO(-K_{\PP_\Delta})$ is generated by global sections (\cite{Oda}, Theorem~2.7). 
Let $\Delta_K$ denote the corresponding lattice polytope. We have a birational morphism $\PP_\Delta\ra \PP_{\Delta_K}$ that we will use to pull back an anti-canonical hypersurface.
We equip $\PP_\Delta$ with the canonical K\"ahler structure.
Set $\shL:=\shO(-K_{\PP_\Delta})$ and let $s_0 \in H^0(\PP_\Delta,\shL)$ such that $s_0^{-1}(0)=\partial \PP_\Delta$.

Let $C^{\infty}(\PP_\Delta,\shL)$ denote the vector space of $C^{\infty}$-sections of $\shL$.
For every $s  \in C^{\infty}(\PP_\Delta,\shL)$ and $t\in\CC$, we define
\begin{align}
 M_t^s:=\{s_0=ts\} \subset \PP_\Delta.
\end{align}
The total family of $M_t^s$ is denoted by $M^s$.
Let $(\partial \PP_\Delta)_{\Sing}$ denote the locus of
singular points of $\partial \PP_\Delta$ (we also used $ \PP_\Delta^{[2]}$ before).
%which is also the union of toric strata of complex codimension at least two.
We define the discriminant of $s$ via
\begin{align}
 \Disc(s):=s^{-1}(0) \cap (\partial \PP_\Delta)_{\Sing}.
\end{align}

%We are interested in the germ at $t=0$ so we always assume that $|t|>0$ is sufficiently small when necessary.
As explained in Section \ref{section-corner-chart},
a symplectic corner chart $U/K$ comes together with the quotient map $\Pi_{U/K}:U \to U/K$
and the diffeomorphism $\Phi_U:U \to \CC^4$. 
In a symplectic corner chart, we define 
\begin{align}
 \widetilde{M}_t^s:=&\Pi_{U/K}^{-1}(M_t^s \cap U/K )    \label{eq:tildeLocal1}\\
 =&\Phi_U^{-1}(\{w \in \CC^4|w_1w_2w_3w_4=tf(w)\}) \label{eq:tildeLocal2}
\end{align}
for some $K$-invariant function $f \in C^{\infty}(\CC^4,\CC)$.
The second equality comes from the fact that, with respect to a choice of trivialization, $s_0=w_1w_2w_3w_4h(w)$ for some non-vanishing $K$-invariant function $h$ on $\CC^4$.
It is clear that if $s \neq 0$ at the orbifold points of $\PP_\Delta$, then $M_t^s$ does not contain any orbifold point whenever $t\neq 0$.

When $s_1 \in H^0(\PP_\Delta,\shL)$, we get a family of complex subvarieties $M_t^{s_1}$ parametrized by $t \in \mathbb{C}$.
Let 
\begin{align}
 H^0(\PP_\Delta,\shL)_{\Reg}:=\{s_1 \in H^0(\PP_\Delta,\shL)| M_t^{s_1} \text{ is smooth for all $|t|>0$ small}\} \label{eq:RegSection}
\end{align}
When $M_t^{s_1}$ is a smooth manifold, it is a symplectic hypersurface in $\PP_\Delta$ and the symplectomorphism type is independent of $t$ by Moser's argument. 
For smooth but not necessarily holomorphic sections, we have the following sufficient condition to guarantee that
$M_t^s$ is symplectic (when $t$ is sufficiently close to $0$). 

%\hrule
%MAKE OUR PURPOSE CLEAR  (also note $s_1$ not $s^1$)
%\hrule
\begin{lemma}[Good deformation]\label{l:GoodDeformation}
 Let $s_1 \in H^0(\PP_\Delta,\shL)_{\Reg}$.
 Suppose we have a smooth family $(s^u)_{u \in [0,1]} \in C^{\infty}(\PP_\Delta,\shL)$ such that 
 \begin{itemize}
  %\item $s_1^0=s_1$,
  \item $s^u=s_1$ near $\Disc(s_1)$ for all $u$,
  \item $\Disc(s^u)=\Disc(s_1)$ for all $u$ 
 \end{itemize}
then there exist $\delta>0$ such that $M_{t}^{u}:=M_t^{s^u}$ is a smooth symplectic hypersurface in $\PP_\Delta$ for all $0<|t| < \delta$ and all $u$. 
\end{lemma}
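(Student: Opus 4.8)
The plan is to produce, for every $u\in[0,1]$ simultaneously, a single $\delta>0$ below which $M_t^u$ is (i) a smooth submanifold contained in the smooth locus $\PP_\Delta^{\sm}$ of $\PP_\Delta$ (the complement of its finitely many orbifold points) and (ii) symplectic for the canonical Kähler form. Since being a smooth symplectic submanifold is an open condition on the $1$-jet of the defining section, all the difficulty sits near $\partial\PP_\Delta=s_0^{-1}(0)$: on the complement of any fixed neighbourhood of $\partial\PP_\Delta$ one has $|s_0|\ge\eta>0$, so $s_0=ts^u$ has no solution once $|t|<\eta/\sup_{u}\|s^u\|_{C^0}$ (finite because $(s^u)$ is a smooth family over the compact $[0,1]$); hence $M_t^u$ lives in a shrinking neighbourhood of $\partial\PP_\Delta$, which I would split into three regions.

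\emph{Near $\Disc(s_1)$.} Fix an open $U_{\Disc}\supset\Disc(s_1)$ on which $s^u=s_1$ for all $u$, small enough that $\overline{U_{\Disc}}$ contains no orbifold point; this is possible because $s_1\in H^0(\PP_\Delta,\shL)_{\Reg}$ forces $s_1\ne0$ at each orbifold point (a smooth $M_t^{s_1}$ cannot pass through one), so orbifold points do not lie in $s_1^{-1}(0)\cap(\partial\PP_\Delta)_{\Sing}=\Disc(s_1)$. There $M_t^u\cap U_{\Disc}=M_t^{s_1}\cap U_{\Disc}$, which for $0<|t|$ small is, by hypothesis on $s_1$ (cf.\ the remark that a smooth $M_t^{s_1}$ is a symplectic hypersurface), a smooth complex hypersurface in the honest Kähler manifold $\PP_\Delta^{\sm}\supset U_{\Disc}$, hence symplectic; this region is entirely $u$-independent. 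I would also note that $\Disc(s^u)=\Disc(s_1)$ forces $s^u\ne0$ on $(\partial\PP_\Delta)_{\Sing}\setminus\Disc(s_1)$ (otherwise such a point lies in $\Disc(s^u)\setminus\Disc(s_1)$), and in particular at all orbifold points; by continuity of $(u,x)\mapsto|s^u(x)|$ and compactness of $[0,1]$ this yields a uniform $c_0>0$ with $|s^u|\ge c_0$ on the compact set $(\partial\PP_\Delta)_{\Sing}\setminus U_{\Disc}$, for every $u$.

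\emph{Near $\partial\PP_\Delta\setminus U_{\Disc}$.} Cover this compact set by finitely many local holomorphic charts — ordinary toric charts, and symplectic corner charts (Section \ref{section-corner-chart}) around vertices, the latter pulled back along the cover $\CC^4\to\CC^4/K$ — in which $s_0$ equals a nonvanishing smooth function times a monomial $\prod_{i\in I}w_i$ with $r:=|I|\ge 1$, so that $M_t^u$ becomes $\{\prod_{i\in I}w_i=t\,g^u(w)\}$ for a smooth $g^u$ absorbing the unit and the remaining (locally nonvanishing) coordinates. If $r=1$ this is the graph $w_i=t\,g^u$, smooth and with tangent planes $C^1$-$O(|t|)$-close, uniformly in $u$, to those of the complex divisor $\{w_i=0\}$, hence symplectic — no lower bound on $g^u$ needed. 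If $r\ge2$ the chart may be taken inside a neighbourhood of $(\partial\PP_\Delta)_{\Sing}\setminus U_{\Disc}$, where the previous paragraph gives $c_1\le|g^u|\le C$ uniformly in $u$. Then on $M_t^u$ one has $\prod_{i\in I}|w_i|=|t|\,|g^u|\in[c_1|t|,C|t|]$, so the holomorphic part $\sum_{i\in I}(\prod_{l\in I,\,l\ne i}w_l)\,dw_i$ of the differential of $\prod_{i\in I}w_i-t\,g^u$ has norm $\ge(\prod_{i\in I}|w_i|)^{(r-1)/r}\ge(c_1|t|)^{(r-1)/r}$, which strictly dominates the transverse correction $t\,dg^u=O(|t|)$ once $|t|$ is small, by a factor $\gtrsim|t|^{-1/r}$. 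Hence $M_t^u$ is smooth there (recovering in particular that $M_t^u$ misses the orbifold points for $t\ne0$), and its tangent planes are $O(|t|^{1/r})$-close, uniformly in $u$, to those of the complex hypersurface $\{\prod_{i\in I}w_i=\mathrm{const}\}$, hence symplectic. In a corner chart one runs this upstairs on $\CC^4$, where $K$ acts freely off the origin (Lemma \ref{l:unbranched}) and the lift of $M_t^u$ misses the origin for $|t|$ small, so the quotient is again a smooth symplectic submanifold.

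\emph{Assembling and the obstacle.} All ``$|t|$ small'' thresholds above — one for the discriminant region and finitely many from the charts — depend only on $\sup_{u\in[0,1]}\|s^u\|_{C^1}$, on $c_0$, and on fixed chart data, so $\delta$ = minimum of these works simultaneously for all $u\in[0,1]$. The step I expect to be the real obstacle is the $r\ge2$ estimate near $(\partial\PP_\Delta)_{\Sing}$ but away from $\Disc(s_1)$: there $M_t^u$ is a genuinely non-holomorphic smoothing of a normal-crossings singularity and it meets every neighbourhood of that locus, so one cannot merely invoke $C^1$-closeness to a fixed model — one must verify quantitatively that $\prod_{i\in I}|w_i|\sim|t|$ on $M_t^u$ pushes it to distance $\gtrsim|t|^{1/r}$ from the stratum, so that the holomorphic part of the differential dominates and the tangent planes stay near complex ones. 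Making all of this uniform in $u$ is then routine compactness of $[0,1]$, and the region near $\Disc(s_1)$ is essentially free since there the family is constant in $u$ and holomorphic.
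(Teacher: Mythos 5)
Your proposal is correct and follows essentially the same route as the paper: shrink to a neighborhood of $\partial\PP_\Delta$, split off the $u$-independent region near $\Disc(s_1)$, reduce to charts where $s_0$ is a monomial $\prod_{i\in I}w_i$ up to a unit, and then show the complex-linear part of the differential dominates the $O(|t|)$ correction via the bound $\prod_i|w_i|=|t|\,|g^u|\gtrsim|t|$ combined with a power-mean inequality. The only organizational differences — you index cases by the monomial degree $r$ rather than by $\Type(x)=4-r$, and you shrink the $r\ge 2$ charts into a tube around $(\partial\PP_\Delta)_{\Sing}\setminus U_{\Disc}$ where $|g^u|$ is uniformly bounded below, whereas the paper keeps larger neighborhoods and instead uses the trichotomy $\max\{|w_1|,\dots,|w_k|,|f^u|\}>c$ — are cosmetic and produce the same estimates and the same threshold $\delta$.
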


\begin{proof}
For any regular neighborhood $N$ of $\partial \PP_\Delta$, there exists $\delta'>0$ such that $M_t^u \subset N$ for all $|t|< \delta'$ for all $u$.
This is because $M_t^u$ $C^0$-converges to  $\partial \PP_\Delta$ uniformly as $|t|$ goes to $0$.
Therefore, if for each point $x \in \partial \PP_\Delta$, we can find a neighborhood $O_x$ of $x$ such that $M_t^u \cap O_x$ is symplectic  for all $|t|>0$ small and all $u\in [0,1]$, 
then $M_{t}^{u}$ is symplectic for all $|t|>0$ small and all $u\in [0,1]$.

Since $M_t^u$ is independent of $u$ in a neighborhood $O_{\Disc}$ of $\Disc(s_1)$ (by the first bullet), 
we can take $O_x=O_{\Disc}$ if $x \in O_{\Disc}$.
Now we assume that $x \in \partial \PP_\Delta \setminus O_{\Disc}$.

First suppose $\pi_\Delta(x)$ lies in the interior of a $3$-cell.
There exists a symplectic corner chart $U/K$ and an open subset $V \subset U$ such that $x \in \Pi_{U/K}(V)$, $\Pi_{U/K}(V) \cap \Disc(s_1)= \emptyset$ and
\begin{align*}
 \widetilde{M}_t^u \cap V=\Phi_U^{-1}(\{w_1=tf^u(w)\})
\end{align*}
for some smooth family of functions $f^u:\Phi_U(V) \to \mathbb{C}$.
This is because we can assume $w_2,w_3,w_4$ are invertible in $\Phi_U(V)$ and absorbed by $f^u$.
Let $F^u_t=w_1-tf^u(w)$.
The differential is given by 
\begin{align*}
 DF^u_t=[1,0,0,0]-tDf^u
\end{align*}
Since $\ker(DF^u_t)=T\widetilde{M}^u_t$ and the first term of $DF^u_t$ dominates 
(say, with respect to the Euclidean norm in the chart) when $|t|$ small, $\widetilde{M}_t^u \cap V$ is symplectic  for all $|t|>0$ small and all $u\in [0,1]$.
Therefore, we can take $O_x=\Pi_{U/K}(V)$.

Now suppose $\pi_\Delta(x)$ lies in the interior of a $2$-cell.
There exists a symplectic corner chart $U/K$ and an open subset $V \subset U$ such that $x \in \Pi_{U/K}(V)$ and
\begin{align*}
 \widetilde{M}_t^u\cap V=\Phi_U^{-1}(\{w_1w_2=tf^u(w)\})
\end{align*}
 for some smooth family of functions $f^u:\Phi_U(V) \to \mathbb{C}$ such that $f^u(0,0,w_3,w_4) \neq 0$ 
 (by the second bullet and the assumption that $x \in \partial \PP_\Delta \setminus O_{\Disc}$).
 It is because we can assume $w_3,w_4$ are invertible in $\Phi_U(V)$ and $\Phi_U(\Pi_{U/K}^{-1}((\partial \PP_\Delta)_{\Sing}) \cap V)=\Phi_U(V) \cap \{w_1=w_2=0\}$.
 Therefore, there exists $c>0$ such that $\max \{|w_1|,|w_2|,|f^u(w)|\} >c$ for all points in $\Phi_U(V)$.
 %, where $|-|$ is the Euclidean norm in $\Phi_U(V)$.
 Let $F^u_t=w_1w_2-tf^u(w)$. 
  The differential is given by 
\begin{align*}
DF^u_t=[w_2,w_1,0,0]-tDf^u 
\end{align*}
Again, we want to show that the first term of $DF^u_t$ dominates for $w \in \{F^u_t=0\}$ for all $u$ when $|t|>0$ small.
 
Since $\|Df^u\|$ is bounded, the norm of the second vector is of order $|t|$.
 At points where $|w_1|>c$ or $|w_2|>c$, the first term clearly dominates when $|t|>0$ small.
 By the assumption, all other points satisfies $|f^u| >c$.
 As a result, for $w \in \{F_t^u=0\}$, we have
 $|w_1|^2+|w_2|^2 \ge 2|w_1w_2|=2|tf^u|>2|t|c$ so the norm of $[w_2,w_1,0]$ is of order at least $\sqrt{|t|}$ and hence dominates when $|t|>0$ small.
It implies that there exist $\delta>0$ such that $\widetilde{M}_t^u\cap V$ is a symplectic manifold for all $0<|t| < \delta$ and all $u$.

Similarly, when $\pi_\Delta(x)$ lies in the interior of a $1$-cell, we have
\begin{align*}
 \widetilde{M}_t^u \cap V=\Phi_U^{-1}(\{w_1w_2w_3=tf^u(w)\})
\end{align*}
for some $V$ and $f^u$.
There exists $c>0$ such that $\max \{|w_1w_2|,|w_2w_3|,|w_1w_3|,|f^u(w)|\} >c$.
At points where $|w_1w_2|>c$ or $|w_2w_3|>c$ or $|w_1w_3|>c$, the first term of $DF^u_t$, which is given by $[w_2w_3,w_1w_3,w_1w_2,0]$, dominates when $|t|>0$ small.
At points where $|f^u| >c$, we have $|w_1w_2|^2+|w_2w_3|^2+|w_1w_3|^2 \ge 3|w_1w_2w_3|^{\frac{4}{3}}>3|t|^{\frac{4}{3}}c$ so 
the norm of the first term of $DF^u_t$ is  of order $|t|^{\frac{2}{3}}$ and the second term of $DF^u_t$ is  of order $|t|$ so the first term dominates when $|t|>0$ small.

One can do the same analysis when $\pi_\Delta(x)$ is a vertex of $\Delta$. In this case, the norm of 
the first term and second term of $DF^u_t$ is  of order $|t|^{\frac{3}{4}}$ and $|t|$, respectively.
%By compactness, $\delta$ can be taken uniformly for all charts.
 
\end{proof}

We remark that $\Disc(s^u)=\Disc(s_1)$ implies that $s^u$ does not vanish at the orbifold points.
In view of Lemma \ref{l:GoodDeformation}, it is convenient to have the following definition.

\begin{definition}\label{d:admissible}
 Let $s_1 \in H^0( \PP_\Delta,\shL)_{\Reg}$. We say that $s\in C^\infty(\PP_\Delta,\shL)$ is \emph{$s_1$-admissible} if
 $s=s_1$ in a neighborhood of $\Disc(s_1)$ and $\Disc(s)=\Disc(s_1)$.
 
 We say that $s\in C^\infty(\PP_\Delta,\shL)$ is \emph{admissible} if
 it is $s_1$-admissible for some $s_1 \in H^0( \PP_\Delta,\shL)_{\Reg}$.
\end{definition}
%\begin{minipage}{\linewidth}
\noindent
\begin{minipage}{.5\linewidth}
\begin{corollary}\label{c:firstApproximation} 
 For $s_1 \in H^0( \PP_\Delta,\shL)_{\Reg}$ and any regular neighborhood $N$ of $(\partial \PP_\Delta)_{\Sing}$, there is a symplectic hypersurface $M \subset \PP_\Delta$
 such that $M$ is symplectic isotopic to $M_t^{s_1}$ for some $|t|>0$ small, and 
 $(\partial \PP_\Delta \setminus N) \subset M \subset (\partial \PP_\Delta  \cup N)$, see Figure~\ref{graph:good-defo}.
\end{corollary}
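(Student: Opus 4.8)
The plan is to realize $M$ as a single member of a \emph{good deformation} in the sense of Lemma~\ref{l:GoodDeformation}, namely a family interpolating between the given complex hypersurface $M_t^{s_1}$ and a perturbation that has been forced to coincide with $\partial\PP_\Delta$ off $N$. Since $(\partial\PP_\Delta)_{\Sing}$ is a compact subset of the open set $N$, I would first choose an open set $N'$ with $(\partial\PP_\Delta)_{\Sing}\subset N'\subset\overline{N'}\subset N$ and a smooth function $\beta\colon\PP_\Delta\to[0,1]$ that is identically $1$ on $N'$ and satisfies $\supp\beta\subset N$. For $u\in[0,1]$ put $\beta_u:=(1-u)+u\beta\in[0,1]$ and
\[
 s^u:=\beta_u\,s_1\in C^\infty(\PP_\Delta,\shL),
\]
so that $s^0=s_1$ and $s^1=\beta s_1$.

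Next I would verify that $(s^u)_{u\in[0,1]}$ satisfies the two hypotheses of Lemma~\ref{l:GoodDeformation}. Because $\beta_u\equiv 1$ on the neighborhood $N'$ of $(\partial\PP_\Delta)_{\Sing}$ — and $\Disc(s_1)\subset(\partial\PP_\Delta)_{\Sing}\subset N'$ — we have $s^u=s_1$ on $N'$, in particular near $\Disc(s_1)$. Moreover $\beta_u$ is nowhere zero on $(\partial\PP_\Delta)_{\Sing}$, so $(s^u)^{-1}(0)\cap(\partial\PP_\Delta)_{\Sing}=s_1^{-1}(0)\cap(\partial\PP_\Delta)_{\Sing}$, i.e.\ $\Disc(s^u)=\Disc(s_1)$ for every $u$; in particular $s^u$ does not vanish at the orbifold points of $\PP_\Delta$, which lie in $N'$ where $s^u=s_1\neq 0$ since $s_1\in H^0(\PP_\Delta,\shL)_{\Reg}$. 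Thus each $s^u$ is $s_1$-admissible in the sense of Definition~\ref{d:admissible}, and Lemma~\ref{l:GoodDeformation} produces a $\delta>0$ such that $M^u_t:=M^{s^u}_t$ is a smooth symplectic hypersurface in $\PP_\Delta$ for all $0<|t|<\delta$ and all $u\in[0,1]$.

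Then I would fix some $t$ with $0<|t|<\delta$ and set $M:=M^{1}_t=M_t^{\beta s_1}$. The smooth family $(M^u_t)_{u\in[0,1]}$ is a path of symplectic hypersurfaces from $M^{s_1}_t$ to $M$; as these are mutually diffeomorphic (the total space of the family fibers over $[0,1]$ by Ehresmann, using the uniform transversality established in the proof of Lemma~\ref{l:GoodDeformation}), a relative Moser argument upgrades the path to an ambient symplectic isotopy, so $M$ is symplectic isotopic to $M^{s_1}_t$ for this small $t$. Finally, $\beta\equiv 0$ on $\PP_\Delta\setminus N$, so there the defining equation $s_0=t\,\beta s_1$ of $M$ reduces to $s_0=0$, whence $M\cap(\PP_\Delta\setminus N)=\partial\PP_\Delta\cap(\PP_\Delta\setminus N)$. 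Writing $M=\bigl(M\cap(\PP_\Delta\setminus N)\bigr)\cup\bigl(M\cap N\bigr)$ gives $M\subset\partial\PP_\Delta\cup N$, and $\partial\PP_\Delta\setminus N=\partial\PP_\Delta\cap(\PP_\Delta\setminus N)=M\cap(\PP_\Delta\setminus N)\subset M$, which is the asserted sandwich.

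The statement is essentially a formal consequence of Lemma~\ref{l:GoodDeformation}, so I do not expect a genuine obstacle. The one point that needs care — and the reason to take $\beta\equiv 1$ on a whole neighborhood of $(\partial\PP_\Delta)_{\Sing}$ rather than merely near $\Disc(s_1)$ — is to ensure that multiplying $s_1$ by the cutoff $\beta_u$ introduces no new discriminant points, so that $\Disc(s^u)=\Disc(s_1)$ holds throughout the homotopy and Lemma~\ref{l:GoodDeformation} genuinely applies.
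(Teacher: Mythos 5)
Your proof is correct and is essentially the paper's own argument: the paper also takes a bump function $\chi$ equal to $1$ on a smaller neighborhood $N'$ of $(\partial\PP_\Delta)_{\Sing}$ and supported in $N$, forms the straight-line homotopy $s^u=(1-u)s_1+u\chi s_1$ (your $\beta_u s_1$), invokes Lemma~\ref{l:GoodDeformation} and Moser, and reads off the sandwich from $\chi\equiv 0$ outside $N$. You merely spell out more explicitly the verification that each $s^u$ is $s_1$-admissible and that no new discriminant appears, which the paper leaves implicit.
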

\emph{Proof.}
 Let $N' \subset N$ be a smaller neighborhood of $(\partial \PP_\Delta)_{\Sing}$.
 Let $\chi: \PP_{\Delta}  \to \RR$ be a smooth function\hfill that\hfill has\hfill values\hfill $1$\hfill in\hfill $N'$
\vspace{7pt}
\end{minipage}
\begin{minipage}{.5\linewidth}
%\captionsetup{width=.85\linewidth}
%\centering
 \begin{center}
 \includegraphics[width=0.4\textwidth]{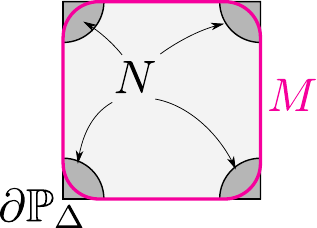} 
 \captionof{figure}{\label{graph:good-defo}The symplectic model $M$ close to the boundary of $\PP_\Delta$.}
 \end{center}
\end{minipage}
%\end{minipage}
 and $0$ outside $N$.
 Then $s:=\chi s_1$ is an $s_1$-admissible section.
 Moreover, $s^u:=(1-u)s_1+us$ is a smooth family of $s_1$-admissible sections so we can apply Lemma \ref{l:GoodDeformation}.
 Let the resulting family be $M^u_t$.
 By Moser's argument, $M_t^0=M_t^{s_1}$ is symplectic isotopic to $M:=M_t^1$ when $0<|t|< \delta$.
 
 It follows from the definition of $s$ that 
 for $x \in (\PP_\Delta \setminus N)$, we have 
$$x \in M \quad \Leftrightarrow \quad s_0(x)-ts(x)=0 \quad \Leftrightarrow \quad s_0(x)=0 \quad \Leftrightarrow \quad x \in \partial \PP_\Delta \setminus N.$$
This gives the assertion.\qed

An important consequence of Corollary \ref{c:firstApproximation} is that we can transfer the Lagrangian torus fiber bundle structure of 
$\partial \PP_\Delta \setminus (\partial \PP_\Delta)_{\Sing}$ to a Lagrangian torus fiber bundle structure in a large open subset of $M$, and hence a large open subset of $M_t^{s_1}$.
%This is helpful for constructing Lagrangian.
%The following sections will be focused on local models near $(\partial \PP_\Delta)_{\Sing}$.

\begin{lemma}\label{l:symplecticIsotopy}
 If $(s^u)_{u \in [0,1]}$ is a family of $s_1$-admissible sections such that, for some open subset $V \subset \PP_\Delta$, $s^u|_{\PP_\Delta \setminus V}$ is independent of $u$ 
 then there exists $\delta >0$ such that for all $0< |t|< \delta$, there is a symplectomorphism $\phi_{V,t}:M^{s^0}_t \to M^{s^1}_t$
 such that $\phi_{V,t}|_{M^{s^0}_t \setminus V}$ is the identity.
\end{lemma}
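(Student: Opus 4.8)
The plan is to deduce Lemma~\ref{l:symplecticIsotopy} from Lemma~\ref{l:GoodDeformation} by a Moser-type argument applied relative to the complement of $V$. First I would invoke Lemma~\ref{l:GoodDeformation} with the given family $(s^u)_{u\in[0,1]}$: since each $s^u$ is $s_1$-admissible, the hypotheses of Lemma~\ref{l:GoodDeformation} are satisfied, so there is a $\delta>0$ such that $M^u_t:=M^{s^u}_t$ is a smooth symplectic hypersurface in $\PP_\Delta$ for all $0<|t|<\delta$ and all $u\in[0,1]$. Fix such a $t$. Then $(M^u_t)_{u\in[0,1]}$ is a smooth isotopy of symplectic submanifolds of $\PP_\Delta$, and we equip each $M^u_t$ with the restricted symplectic form $\omega_u:=\omega_{\PP_\Delta}|_{M^u_t}$.

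The next step is to promote this isotopy of submanifolds to an ambient isotopy. Standard symplectic isotopy extension (the parametrized Moser/Weinstein argument for a smooth family of symplectic submanifolds in a fixed symplectic manifold; see e.g.\ \cite{McDuffSalamon}) produces a smooth family of diffeomorphisms $\Psi_u:M^0_t\to M^u_t$ with $\Psi_0=\id$ and $\Psi_u^*\omega_u=\omega_0$; setting $\phi_{V,t}:=\Psi_1$ gives the desired symplectomorphism $M^{s^0}_t\to M^{s^1}_t$. To arrange that $\phi_{V,t}$ is the identity away from $V$, I would carry out this construction relative to the region where the family is constant. Concretely, because $s^u|_{\PP_\Delta\setminus V}$ is independent of $u$, the submanifolds $M^u_t$ all agree outside (a neighborhood of the closure of) $V$: for $x\notin V$ we have $x\in M^u_t\iff s_0(x)=ts^u(x)=ts^0(x)$, independently of $u$. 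Hence the vector field $\xi_u:=\frac{d}{du}(M^u_t)$ generating the isotopy of submanifolds vanishes identically on $M^0_t\setminus \overline V$, and we may choose the Moser vector field (solving $\iota_{X_u}\omega_u=-\beta_u$ for a primitive $\beta_u$ of $\frac{d}{du}\omega_u$, which can itself be taken supported in $\overline V\cap M^u_t$) to be supported in $\overline V\cap M^u_t$ as well. Integrating this compactly-supported-in-$V$ time-dependent vector field yields $\phi_{V,t}$ with $\phi_{V,t}|_{M^{s^0}_t\setminus V}=\id$.

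A few technical points need care. First, one must know that the family $M^u_t$ moves smoothly as a submanifold of $\PP_\Delta$ — this follows from the implicit function theorem applied to $s_0-ts^u$ using that its vanishing locus is cut out transversally once $|t|<\delta$ (which is exactly what the proof of Lemma~\ref{l:GoodDeformation} establishes via the dominance of the leading term of the differential). Second, the primitive $\beta_u$ of $\tfrac{d}{du}\omega_u$ must exist on $M^u_t$ with support in $\overline V\cap M^u_t$; since $\tfrac{d}{du}\omega_u$ is itself supported in $\overline V\cap M^u_t$ (being $d\iota_{\xi_u}\omega_{\PP_\Delta}|_{M^u_t}$ with $\xi_u$ supported there), one can take $\beta_u=\iota_{\xi_u}\omega_{\PP_\Delta}|_{M^u_t}$, which is automatically supported in $\overline V\cap M^u_t$ and automatically a primitive, so no cohomological obstruction arises and no nontrivial choice is needed. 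This is the cleanest route and sidesteps the compact-support subtlety entirely. Third, $\overline V$ should be taken so that $\overline V\cap M^u_t$ is compact (e.g.\ $\overline V$ compact, or work inside a fixed compact regular neighborhood of $\partial\PP_\Delta$ which contains all $M^u_t$ for $|t|$ small by the $C^0$-convergence argument of Lemma~\ref{l:GoodDeformation}), so that the Moser flow integrates for all time $u\in[0,1]$.

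**The main obstacle** is purely bookkeeping rather than conceptual: one must verify carefully that the Moser vector field can be taken to vanish outside $V$, i.e.\ that the isotopy of submanifolds is stationary there and that the Moser correction term inherits this support property. Once the correction $1$-form is identified with $\iota_{\xi_u}\omega_{\PP_\Delta}|_{M^u_t}$ — which it can be, since $\xi_u$ is tangent to $\PP_\Delta$ and the ambient form is fixed — the support statement is immediate and the rest is the standard integration of a time-dependent vector field. There is no global obstruction to worry about because we never need a globally-defined primitive; the localization of the construction to $V$ is what makes everything go through.
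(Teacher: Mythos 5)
Your proposal is correct and follows essentially the same route as the paper's proof, which invokes Lemma~\ref{l:GoodDeformation} to get a family of symplectic hypersurfaces, notes that $M^{s^u}_t\cap(\PP_\Delta\setminus V)$ is independent of $u$, and concludes by ``a standard application of Moser's argument.'' You have simply unpacked what that standard Moser argument is — in particular, correctly identifying the primitive $\beta_u=\iota_{\xi_u}\omega_{\PP_\Delta}|_{M^u_t}$ supported in $\overline V$, which is exactly the detail the paper leaves implicit.
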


\begin{proof}
By Lemma \ref{l:GoodDeformation}, $M^{s^u}_t$ is a family of symplectic hypersurfaces for $0<|t|< \delta$.
By assumption, $M^{s^u}_t \cap (\PP_\Delta \setminus V)$ is independent of $u$.
The existence of $\phi_{V,t}$ follows from a standard application of Moser's argument.
\end{proof}

Outlook: recall $\gamma$, $W_{\epsilon}(\gamma)$ and $M_t$ from Theorem~\ref{t:Construction}.
In its proof, for all $\epsilon>0$, we will construct
a family of  admissible sections $(s^u)_{u \in [0,1]}$ such that $M^{s^0}_t=M_t$ and
$M^{s^1}_t \cap \pi_{\Delta}^{-1}(W_{\epsilon}(\gamma))$ contains a Lagrangian $L$ which is diffeomorphic to a Lagrangian lift of $\gamma$ for all $|t|>0$ small.
Moreover,  for $V:= \pi_{\Delta}^{-1}(W_{\epsilon}(\gamma))$, $s^u|_{\PP_\Delta \setminus V}$ will be independent of $u$.
We can apply Lemma \ref{l:symplecticIsotopy} to get a symplectomorphism $\phi_{V,t}: M_t \to M^{s^1}_t $ and $\phi_{V,t}^{-1}(L)$
will be our desired Lagrangian in $M_t \cap \pi_{\Delta}^{-1}(W_{\epsilon}(\gamma))$.
%The existence of $L$ implies Theorem \ref{t:Construction}.

\section{Away from discriminant}\label{s:AwayFromSing}

This section gives the construction of Lagrangians away from the discriminant.
In Subsection \ref{ss:StandardLag}, we give a local Lagrangian model and explain how to glue these Lagrangian models away from the discriminant.
We will complete our Lagrangian construction away from the discriminant after the discussion in Subsection \ref{ss:trivalent}, which  concerns trivalent vertices of a tropical curve.
We conclude the proof of Theorem \ref{t:Construction} in Subsection \ref{ss:Concluding}.
%In this section, we assume that the symplectic corner charts $U/K$ have $K=\{id\}$ (i.e. $\PP_{\Delta}$ is a toric manifold). The generalization to the orbifold case can be carried out by passing to the local cover as in the previous section without any difficulties, see Remark~\ref{rem-less-local}. 
For simplicity of notation, in the rest of the paper, we only consider $M^s_t$ for $t\in \RR$, $t>0$ instead of $t \in \mathbb{C}^*$.

\subsection{Standard Lagrangian model}\label{ss:StandardLag}

There are four tasks to be completed in this subsection, which will be accomplished in the subsequent four sub-subsections, respectively.
Firstly, points on a tropical curve $\gamma$ can lie in different strata of $\partial \Delta$ so we want to enumerate all possibilities and 
describe the neighborhood of points in different strata.
Then, for each point $x \in \gamma$ and a neighborhood $O_x \subset \Delta$ of $x$, we want to isotope $M^s_t \cap \pi^{-1}_{\Delta}(O_x)$
to a standard form for constructing a local Lagrangian in $M^s_t \cap \pi^{-1}_{\Delta}(O_x)$.
After that, we explain how to glue the 
local Lagrangians in $M^s_t \cap \pi^{-1}_{\Delta}(O_{x_1})$ and $M^s_t \cap \pi^{-1}_{\Delta}(O_{x_2})$ when $O_{x_1} \cap O_{x_2} \neq \emptyset$.
Finally, since the local Lagrangians are constructed with respect
to a symplectic corner chart, we will deal with the transition of symplectic corner charts so that all the local Lagrangian models in different symplectic corner charts can be glued together.

In sub-subsections \ref{sss:NbhdPointTrop}, \ref{sss:Local Lagrangian models at points in different strata} and \ref{sss:Gluing local Lagrangians}, we work inside a single symplectic corner chart $U/K$ with moment map image $\coDelta:= \pi_{\Delta}(U/K)$.
There is an induced moment map $\pi_{\wt{\Delta}}:U \to \mathbb{R}^4$ and we denote the image by $\wt{\coDelta}$.
Recall from Subsection \ref{section-orbifold-local-model} that the images $\coDelta$ and $\wt{\coDelta}$ are related by a rational linear affine transformation (in particular, a bijective map)  so there are corresponding subsets $\wt{x}$, $\wt{\gamma}$, $\wt{\shA}$, $\wt{O_x}$, etc in $\wt{\coDelta}$.
On the other hand, subsets in $U/K$ (e.g. $M^s_t \cap U/K$) can be lifted to $K$-invariant sets in $U$ (e.g. $\wt{M}^s_t$) that are compatible with the moment maps.
For the simplicity of notations, we omit all the $\wt{(-)}$ in Subsections \ref{sss:NbhdPointTrop}, \ref{sss:Local Lagrangian models at points in different strata} and \ref{sss:Gluing local Lagrangians} and work $K$-equivariantly in $U$
($U/K$ will also be denoted by $U$). 
By possibly adding a translation, we identify $\wt{\coDelta}$ with an open subset of $\mathbb{R}_{\ge 0}^4$ which contains the origin.

\subsubsection{Neighborhood of a point in a tropical curve}\label{sss:NbhdPointTrop}

We define a function $\Type: \partial \Delta \to \{0,1,2,3\}$ such that $\Type(x)=n$ if $x$ is in the interior of an $n$-cell.
In other words, $\Type$ specifies the stratum that $x$ lies in.
%For a symplectic corner chart $U/K$ and a point $x \in \widetilde{\Delta}$, we define $\Type(x):= \Type((\beta_{U/K}^*)^{-1}(x))$.

%Let $v$ be a vertex of $\Delta$ and $U/G$ be a symplectic corner chart at $v$.
%Let $\coDelta:= \pi_{\Delta}(U/G)$ which is the partial compactification of $\Delta^{\circ}$ by the facets containing $v$ (see Section \ref{section-corner-chart}).
%We have an induced moment map $\pi_{\wt{\Delta}}:U \to \mathbb{R}^4$ and we denote the image by $\wt{\coDelta}$.
%By an affine integral linear transformation, we identify $\wt{\coDelta}$ with an open subset of $\mathbb{R}_{\ge 0}^4$ which contains the origin.

For a neighborhood $O_0 \subset \coDelta \setminus \shA$ of the origin such that $O_0 \cap \partial \coDelta$ is contractible,
the integral affine structure on $O_0 \cap \partial \coDelta$ is inherited from the $C^0$-embedding
$O_0 \cap \partial\coDelta \hookrightarrow \coDelta/\RR (1,1,1,1)$ (see the definition of the chart $W_v$ in Section~\ref{section-affine-charts}). 
%This integral affine structure descends to an integral affine structure in $\Pi_{U/K}(O_0) \cap \partial \Delta$.

Let $x \in \partial \coDelta$ and $c(r):[-1,1] \to \partial \coDelta \backslash \shA$ be a straight line (regarded as a closed segment in a tropical curve $\gamma$ that contains $x$) in a small 
neighborhood of $x$ such that $c(0)=x$.
We have the following situations using that $\partial \Delta$ is simple.

\begin{enumerate}[label=(\Alph*)]
 \item 
If $\Type(x)=0$,
then $(\Type(c(-1)), \Type(c(0)),\Type(c(1)))$ can only take values (modulo the symmetry $r \mapsto -r$) $(2,0,2)$, $(1,0,3)$, $(2,0,3)$ and $(3,0,3)$.
%Explicitly, $\IIm(c)$ is given by (up to symmetry of the coordinate axes)
%\begin{enumerate}[label=A(\roman{*})]
% \item $\{(0,0,r,  r)|0 \le r \le b_1 \} \cup \{(-r,-r,0,0)|-b_0 \le r<0\}$ for some $b_0,b_1\in \RR_{>0}$, or
% \item $\{(0,0,0,  r)|0 \le r \le b_1 \} \cup \{(-r,-r,-r,0)|-b_0 \le r<0\}$ for some  $b_0,b_1\in \RR_{>0}$, or
% \item $\{(0,0,m_1r,  m_2r)|0 \le r \le b_1  \} \cup \{(-m_1r,-m_1r,0,(m_2-m_1)r)|-b_0 \le r<0\}$ for some positive integers $m_1>m_2$ and $b_0,b_1\in \RR_{>0}$, or
% \item $\{(0,m_1r,m_2r,m_3r)|0 \le r \le b_1 \} \cup \{(-m_1r,0,(m_2-m_1)r),(m_3-m_1)r)|-b_0 \le r<0\}$ for some positive integers $m_1>m_2,m_3$ and $b_0,b_1\in \RR_{>0}$
%\end{enumerate}
\item 
If $\Type(x)=1$,
then $(\Type(c(-1)), \Type(c(0)),\Type(c(1)))$ can only take values (modulo the symmetry $r \mapsto -r$) $(1,1,1)$, $(2,1,3)$ and $(3,1,3)$.
%Explicitly, $\IIm(c)$ is given by (up to symmetry of the coordinate axes)
%\begin{enumerate}[label=B(\roman{*})]
% \item $\{(0,0,0,  r)|b_0 \le r \le b_1 \} $ for some $0<b_0<b_1$, or
%\item $\{(0,0,m_1r, m_2r +a)| 0 \le r \le b_1 \} \cup \{(-m_1r,-m_1r,0,(m_2-m_1)r+a)|-b_0 \le r<0\}$ for some $m_1 \in \ZZ_{>0}$, 
%$ m_2 \in \ZZ$ and $a,b_0,b_1\in \RR_{>0}$, or
% \item $\{(0,m_1r,m_2r, m_3r +a)|0 \le r \le b_1\} \cup \{(-m_1r,0,(m_2-m_1)r,(m_3-m_1)r+a)|-b_0 \le r<0\}$ for some $m_1,m_2 \in \ZZ_{>0}$ such that $m_2 <m_1$, 
% $ m_3 \in \ZZ$ and $a,b_0,b_1\in \RR_{>0}$
%\end{enumerate}
\item
If $\Type(x)=2$,
then $(\Type(c(-1)), \Type(c(0)),\Type(c(1)))$ can only take values $(2,2,2)$ and $(3,2,3)$.
%Explicitly, $\IIm(c)$ is given by (up to symmetry of the coordinate axes)
%\begin{enumerate}[label=C(\roman{*})]
% \item $\{(0,0,m_1r+a_1,  m_2r+a_2)|b_0 \le r \le b_1 \} $ or some $m_1,m_2 \in \ZZ$, 
%with $ m_2 \le m_1$ and $a_1,a_2,b_0,b_1\in \RR_{>0}$, or
% \item $\{(0,m_1r,m_2r+a_2, m_3r +a_3)|0 \le r \le b_1  \} \cup \{(-m_1r,0,(m_2-m_1)r+a_2,(m_3-m_1)r+a_3)|-b_0 \le r<0\}$
% for some $m_1 \in \ZZ_{>0}$, $ m_2,m_3 \in \ZZ$ and $a_2,a_3,b_0,b_1\in \RR_{>0}$
%\end{enumerate}
\item
If $\Type(x)=3$, then $\Type(c(r))=3$ for all $r$.
\end{enumerate}

\begin{remark}\label{r:discontinuity}
From the enumeration above, we can see that 
for any straight line \linebreak[4]$c\colon[-1,1] \to \partial \Delta \backslash \shA$, if $r=r_0$ is a discontinuity of $\Type(c(r))$, then 
$\Type(c(r_0))<\Type(c(r))$ for all $r$ close to but not equal to $r_0$.
\end{remark}

\subsubsection{Local Lagrangian models at points in different strata}\label{sss:Local Lagrangian models at points in different strata}

For each point $x \in \gamma$ and each open subset $c \subset \gamma$ containing $x$, we want to isotope 
$M^s_t \cap \pi^{-1}_{\Delta}(O_x)$ to another $K$-invariant hypersurface so that we can build a $K$-invariant Lagrangian in $M^s_t \cap \pi^{-1}_{\Delta}(O_x)$ 
whose $\pi_{\Delta}$-image is close to $c$.
First we describe a class of symplectic manifolds in $\pi^{-1}_{\Delta}(O_x)$ to which we would like to isotope $M^s_t \cap \pi^{-1}_{\Delta}(O_x)$.

\begin{definition}\label{d:xStandard}
For a point $x=(x_1,\dots,x_4) \in (\partial \coDelta) \setminus \shA$ and an $s_1$-admissible section $s \in C^{\infty}(\PP_{\Delta},\shL)$,
we say that $M^s$ is {\bf $x$-standard with respect to $U$} if there is a neighborhood $O_{x} \subset \coDelta$ of $x$
that does not meet any facet that does not contain $x$, and furthermore
such that $ M_t^s \cap \pi_{\Delta}^{-1}(O_{x})$ is given by
\begin{align}\label{eq:SympStandardForm}
   \left(\prod_{j,x_j =0} \sqrt{p_j}\right)e^{i(q_1+q_2+q_3+q_4)}=tc
\end{align}
 for some constant $c \in \CC$.
 If $\Type(x)=0,1,2$, we require $c \neq 0$.
 
 For a point $x \in \partial \Delta \setminus \shA$, we say that $M^s$ is {\bf $x$-standard} if there is a symplectic corner chart $U$ such that $M^s$ is $x$-standard with respect to $U$.
\end{definition}

Since the $K$ action on $U$ acts only on the $q_i$ coordinates and the Gorenstein coordinate $q_1+q_2+q_3+q_4$ is invariant under the $K$ action, $M^s$ is $K$-invariant if $M^s$ is $x$-standard with respect to $U$.
To see that this is a sensible notion, we at least need to observe the following:

\begin{lemma}\label{l:noDisc}
 If $M^s$ is $x$-standard with respect to $U$, then $ (M_{t \neq 0}^s \cap \pi_{\Delta}^{-1}(O_{x})) \cap (\partial \mathbb{P}_{\Delta})_{\Sing}=\emptyset$.
 In other words, $M_t^s \cap \pi_{\Delta}^{-1}(O_{x})$ is disjoint from the discriminant for all $t >0$.
\end{lemma}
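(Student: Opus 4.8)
The plan is to carry out the check inside the symplectic corner chart, where $\Phi_U$ identifies $(\partial\PP_\Delta)_{\Sing}$ with the set of points of $\CC^4$ at which at least two of the coordinates $w_j$ vanish --- equivalently, the set at which at least two of the moment coordinates $p_j=\tfrac12|w_j|^2$ vanish. Since the set defined by \eqref{eq:SympStandardForm} is $K$-invariant (the Gorenstein angle $q_1+\dots+q_4$ is $K$-invariant, as noted after Definition~\ref{d:xStandard}), it suffices to prove that no point of $M^s_t\cap\pi_\Delta^{-1}(O_x)$ with $t\neq0$ has two vanishing $p_j$.

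First I would record two mechanisms that keep coordinates positive. By the clause of Definition~\ref{d:xStandard} that $O_x$ meets no facet of $\Delta$ not containing $x$, and since in the corner chart the facets through the relevant vertex are exactly the coordinate facets $\{p_j=0\}$ (such a facet containing $x$ precisely when $x_j=0$), we get $p_j>0$ on all of $\pi_\Delta^{-1}(O_x)$ for every index $j$ with $x_j\neq0$. On the other hand, taking absolute values in \eqref{eq:SympStandardForm} gives on $M^s_t\cap\pi_\Delta^{-1}(O_x)$ the identity $\prod_{j,\,x_j=0}\sqrt{p_j}=|tc|$, so whenever $c\neq0$ and $t\neq0$ we also get $p_j>0$ for every $j$ with $x_j=0$.

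Then I would finish with a short case check on $\Type(x)$, using that the number of indices with $x_j=0$ equals $4-\Type(x)$. If $\Type(x)\in\{0,1,2\}$ the definition requires $c\neq0$; then the absolute-value identity forces $p_j>0$ for the $4-\Type(x)$ indices with $x_j=0$, and the facet condition forces $p_j>0$ for the remaining $\Type(x)$ indices, so all four $p_j$ are positive, the point lies in the open torus orbit, and hence it is not in $(\partial\PP_\Delta)_{\Sing}$. If $\Type(x)=3$ there is a single index $j_0$ with $x_{j_0}=0$, and the other three $p_j$ are positive on $\pi_\Delta^{-1}(O_x)$ by the facet condition; a point of $M^s_t\cap\pi_\Delta^{-1}(O_x)$ with $t\neq0$ therefore has at most the one coordinate $p_{j_0}$ vanishing (and in fact $p_{j_0}>0$ as well if $c\neq0$), so it never has two vanishing coordinates and is not in $(\partial\PP_\Delta)_{\Sing}$. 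This covers all cases.

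I do not expect a real obstacle here; the whole proof is bookkeeping of which $p_j$ may vanish. The only point deserving attention is the observation that the clause ``$c\neq0$ when $\Type(x)\le2$'' in Definition~\ref{d:xStandard} is precisely calibrated to rule out vanishing of exactly those coordinates that the facet condition alone cannot control in the lower strata --- which is why that clause is part of the definition.
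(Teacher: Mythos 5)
Your proof is correct and follows essentially the same route as the paper's: both hinge on the same two facts, namely that $c\neq 0$ for $\Type(x)\le 2$ by definition and that $p_j>0$ on $\pi_\Delta^{-1}(O_x)$ whenever $x_j\neq 0$ by the facet condition, and both conclude by observing that the point then lies in the open torus orbit. The only cosmetic difference is bookkeeping: you take absolute values in \eqref{eq:SympStandardForm} directly, whereas the paper rewrites it as $z_1z_2z_3z_4 = 4tc\prod_{j,\,x_j\neq 0}\sqrt{p_j}$ and argues the right-hand side is nonvanishing; and for $\Type(x)=3$ the paper simply notes $O_x$ misses the codimension-two strata while you rederive this from the facet clause. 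Both are the same elementary observation.
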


\begin{proof}

Notice that, we can rewrite Equation \eqref{eq:SympStandardForm} as
\begin{align*}
 z_1z_2z_3z_4=\sqrt{16p_1p_2p_3p_4}e^{i(q_1+q_2+q_3+q_4)}=4tc\prod_{j,x_j \neq 0} \sqrt{p_j}.
\end{align*}
To prove the lemma, it suffices to show that the zero locus of $4tc\prod_{j,x_j \neq 0} \sqrt{p_j}$ does not intersect with 
$(\partial \mathbb{P}_\Delta)_{\Sing} \cap \pi_{\Delta}^{-1}(O_{x})$.
When $\Type(x)=0,1,2$, by Definition \ref{d:xStandard}, we have $c \neq 0$.
Moreover, inside $\pi_{\Delta}^{-1}(O_{x})$, we have $p_j >0$ when $x_j \neq 0$.
All together implies that $4tc\prod_{j,x_j \neq 0} \sqrt{p_j}$ never vanishes in $\pi_{\Delta}^{-1}(O_{x})$.

Since $(\partial \mathbb{P}_\Delta)_{Sing}=\pi^{-1}_{\Delta}(\hbox{codim-2-strata})$, when $\Type(x)=3$, 
$(\partial \mathbb{P}_\Delta)_{\Sing} \cap \pi_{\Delta}^{-1}(O_{x})= \emptyset$.
\end{proof}

The next lemma addresses that we can always isotope $ M_t^s \cap \pi_{\Delta}^{-1}(O_{x})$ to an $x$-standard one through admissible sections that are $K$-invariant in $U$.

\begin{lemma}\label{l:ExistenceStandardSymp}
Let $s$ be an  $s_1$-admissible section.
 Let $x \in \partial \Delta  \backslash \shA$ be a point and $N_{x}$ be a neighborhood of $x$ in $\Delta$ such that $N_{x} \cap \shA =\emptyset$.
 %such that $N_x \cap \shA= \emptyset$.
 Then there is a symplectic corner chart $U$ containing $x$ and a family of $s_1$-admissible section $(s^u)_{u \in [0,1]}$ such that $s^0=s$, for all $u$,
  $s^u=s$ outside $\pi_{\Delta}^{-1}(N_{x})$, $s^u$ is $K$-invariant in $U$ 
 and $M_t^{s^1}$ is $x$-standard with respect to $U$.
 %\footnote{Simplicity of $(\partial \Delta, \shA)$ is used. We may add this into the statement.}
\end{lemma}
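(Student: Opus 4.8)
The plan is to choose a symplectic corner chart adapted to $x$, bring $M_t^s$ near $x$ into a ``pre-standard'' shape by elementary manipulations with the complex/symplectic coordinate dictionary of Section~\ref{ss:toricBasic}, and then to interpolate the remaining defect to a genuine constant through $s_1$-admissible sections supported in $\pi_\Delta^{-1}(O_x)$; only the last step is non-formal. For the chart, let $\tau\subset\Delta$ be the smallest face containing $x$, so $\dim\tau=\Type(x)$. I would take $v=x$ if $\Type(x)=0$; otherwise, since $x\notin\shA$, the point $x$ lies in a single component of the complement of the (locally binomial, cf.\ Lemma~\ref{lem-admissible}) amoeba $\shA$ inside the stratum through $x$, and I would pick a vertex $v$ of $\tau$ lying in that same component. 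Let $U$ be the symplectic corner chart associated to $v$ with moment image $\coDelta=\pi_\Delta(U)$. Every facet $F$ with $v\notin F$ fails to contain $\tau$, hence fails to contain $x$, so $x\in\coDelta$; after the affine identification of Section~\ref{section-orbifold-local-model} the facets through $v$ become the coordinate hyperplanes $\{p_j=0\}$, and since all facets through $x$ pass through $v$ one may renumber so that the facets through $x$ are exactly $\{p_j=0\}$, $j\in S$, with $|S|=4-\Type(x)$, $x_j=0$ for $j\in S$ and $x_j>0$ for $j\notin S$.

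Next I would produce the pre-standard form. By Example~\ref{ex:Transforming hypersurfaces} and its orbifold version in Section~\ref{section-orbifold-local-model}, working $K$-equivariantly one may write $\widetilde M_t^s\cap U=\{w_1w_2w_3w_4=tf(w)\}$ for a $K$-invariant $f\in C^\infty(\CC^4,\CC)$, and passing to symplectic coordinates ($z_j=\sqrt{2p_j}e^{iq_j}$, $w_j=e^{1/2}z_j$) this reads $\big(\prod_{j=1}^4\sqrt{p_j}\big)e^{i(q_1+q_2+q_3+q_4)}=te^{-2}f$. I would choose a small box $O_x=\prod_{j\in S}[0,2\beta)\times\prod_{j\notin S}(x_j-\beta,x_j+\beta)\subset\coDelta$ with $\overline{O_x}\subset N_x$, $\beta<\min_{j\notin S}x_j$, meeting no facet off $x$; then $\pi_\Delta^{-1}(O_x)\subset U$ and $\prod_{j\notin S}\sqrt{p_j}$ is nowhere zero there, so that
\[
\widetilde M_t^s\cap\pi_\Delta^{-1}(O_x)=\Big\{\textstyle\prod_{j\in S}\sqrt{p_j}\,e^{i(q_1+q_2+q_3+q_4)}=t\,g\Big\},\qquad g:=\frac{e^{-2}f}{\prod_{j\notin S}\sqrt{p_j}},
\]
with $g$ a $K$-invariant smooth function near $\overline{O_x}$. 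Reaching the $x$-standard form then amounts to deforming $g$ to a constant $c$ (with $c\neq0$ if $\Type(x)\le2$) through an $s_1$-admissible family $(s^u)$ agreeing with $s$ off $\pi_\Delta^{-1}(O_x)$, bookkept via its effect $g^u$ on the equation.

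For the interpolation, note that since $s$ is $s_1$-admissible one has $\Disc(s)=\Disc(s_1)$, and $N_x\cap\shA=\emptyset$ forces $\Disc(s_1)\cap\pi_\Delta^{-1}(N_x)=\emptyset$. In coordinates $(\partial\PP_\Delta)_{\Sing}\cap\pi_\Delta^{-1}(O_x)=\pi_\Delta^{-1}(Z)$, $Z=\{\text{at least two }p_j,\ j\in S,\text{ vanish}\}\cap O_x$, and a family $(s^u)$ supported in $\pi_\Delta^{-1}(O_x)$ stays $s_1$-admissible exactly when $g^u\neq0$ on $\pi_\Delta^{-1}(Z)$ throughout (as in Lemma~\ref{l:noDisc}, $g\ne0$ there to start). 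If $\Type(x)=3$ then $Z=\emptyset$ and a cut-off linear homotopy $g^u=(1-u\rho)g+u\rho\,c$ ($\rho$ a bump equal to $1$ on $O_x$ and $0$ near $\partial N_x$, $c$ arbitrary) finishes the argument. If $\Type(x)\le2$, then $\pi_\Delta^{-1}(Z)$ is a box times the union of the loci $\{w_j=w_k=0\}$, with homotopy type $T^{\Type(x)}$ via the surviving circles in the $q_j$, $j\notin S$, directions (a point when $\Type(x)=0$); the winding of $g$ in those directions on $\pi_\Delta^{-1}(Z)$ is a homotopy invariant that must be matched by the winding-free standard model, and the point of choosing $v$ as in the first step is that the monomial $w^{m_v}$ trivializing $\shL$ in that chart is precisely the one dominating $s$ near $x$, so this winding vanishes. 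Granting this, $g|_{\pi_\Delta^{-1}(Z)}$ is homotopic through nowhere-vanishing functions to a nonzero constant $c$; I would extend this homotopy over $\pi_\Delta^{-1}(O_x)$, damp it with $\rho$, and keep $g^u=g$ outside, so that each $g^u$ is $K$-invariant, $s^u=s$ off $\pi_\Delta^{-1}(N_x)$, $\Disc(s^u)=\Disc(s_1)$, and $M_t^{s^1}$ is $x$-standard with respect to $U$.

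The hard part is the vanishing of this winding in the third step; the rest is the coordinate dictionary together with cut-off functions. This is also the only place where the hypothesis $x\notin\shA$ is genuinely used: it is what lets one place $x$ in a definite component of the complement of $\shA$ inside its stratum, hence choose the corner chart so that the ambient monomial dominating $s$ near $x$ is the one trivializing $\shL$ there — it is precisely this, together with the simplicity (local binomiality) of $(\partial\Delta,\shA)$, that kills the topological obstruction to reaching the standard form.
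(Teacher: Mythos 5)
Your chart selection (a corner chart at a vertex $v$ reachable from $x$ within the cell through $x$ without crossing $\shA$), your reduction to deforming the residual factor $g$ to a constant through a cut-off homotopy supported in $\pi_\Delta^{-1}(N_x)$, and your identification of the obstruction (vanishing of the winding of $g$ on the surviving torus directions) all coincide with the paper's proof. The gap is exactly at the step you flag with ``granting this'': you do not prove that the winding vanishes, and the heuristic you offer for it --- that ``the monomial $w^{m_v}$ trivializing $\shL$ in that chart is precisely the one dominating $s$ near $x$'' --- is neither made precise nor applicable as stated, because $s$ is an arbitrary $s_1$-admissible \emph{smooth} section: it need only agree with the holomorphic $s_1$ near $\Disc(s_1)$, which is disjoint from $\pi_\Delta^{-1}(N_x)$, so near $x$ it need not be a Laurent polynomial and has no Newton polytope or dominant monomial to speak of.

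What actually kills the winding is a short topological observation, with no monomial bookkeeping. Having found the path $\delta'$ from $x$ to the vertex $v$ inside the cell containing $x$ with $\delta'\cap\shA=\emptyset$ (the existence of such a $v$ is what simplicity of $(\partial\Delta,\shA)$ provides), take a small neighborhood $N_{\delta'}$ of $\delta'$: since $\pi_\Delta^{-1}(\delta')\subset(\partial\PP_\Delta)_{\Sing}$ is disjoint from $\Disc(s)=\Disc(s_1)$, the section $s$ is nonvanishing along $\pi_\Delta^{-1}(\delta')$, hence (shrinking $N_{\delta'}$) $f$ is $\CC^*$-valued on $\pi_\Delta^{-1}(N_{\delta'})$; and $\pi_\Delta^{-1}(N_{\delta'})$ is contractible, as it deformation retracts onto the point $\pi_\Delta^{-1}(v)$. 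So $f$ is automatically null-homotopic there, and restricting to $\pi_\Delta^{-1}(O_x)\subset\pi_\Delta^{-1}(N_{\delta'})$ gives $f_*=0$ on $\pi_1$ for free, uniformly in $\Type(x)$ and for general smooth admissible $s$. A secondary imprecision: making $g^1$ constant only on $\pi_\Delta^{-1}(Z)$ and then ``extending'' is not enough for the $x$-standard form, which requires constancy of $g^1$ on a full four-dimensional neighborhood of $x$; instead, homotope $g$ on all of $\pi_\Delta^{-1}(O_x)$ by writing $g=e^\phi$ there (possible once $O_x$ is shrunk so that $g$ is $\CC^*$-valued, which the same path argument justifies) and setting $g^u=e^{(1-u\rho)\phi+u\rho\log c}$ for a $K$-invariant cut-off $\rho$.
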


\begin{proof}
 If $\Type(x)=0$, then $x$ is a vertex and we take the symplectic corner chart $U$ associated to $x$.
 Since $x \notin \shA$, there exists a neighborhood $O_x \subset N_x$ of $x$ such that  $\pi_{\Delta}^{-1}(O_{x})$ is contractible
 and, by \eqref{eq-zf}, $M_t^{s} \cap \pi_{\Delta}^{-1}(O_{x})$ is given by
 \begin{align}
   z_1z_2z_3z_4=\sqrt{16p_1p_2p_3p_4}e^{i(q_1+q_2+q_3+q_4)}=tf \label{eq:CornerForm}
\end{align}
 for some $f \in C^{\infty}(\pi_{\Delta}^{-1}(O_{x}),\CC^*)$.
 Since $\pi_{\Delta}^{-1}(O_{x})$ is contractible, $f$ is null-homotopic.
 For any subset $O \subset O_x \setminus \{x\}$, since $f$ is null-homotopic, we know that $f|_{\pi_{\Delta}^{-1}(O)}$ 
is null-homotopic and it descends to a null-homotopic function in the quotient by $K$.
Therefore, for any neighborhood $O_{x}' \subset O_x$  of $x$, we can deform $f$, through $K$-invariant non-vanishing functions inside $\pi_{\Delta}^{-1}(O_{x})$,
 to a function which is constant in $\pi_{\Delta}^{-1}(O_{x}')$. 
 Moreover, the deformation can be chosen to be compactly supported.
 There is no new discriminant created during the deformation because it is through non-vanishing functions (cf. Lemma \ref{l:noDisc}).
 The deformation is constant near the discriminant $\shA$ because $N_x \cap \shA= \emptyset$.
 Since the deformation is compactly supported, it can patch with $f$ outside a compact set in $\pi_{\Delta}^{-1}(O_{x})$
 to give a family of $K$-invariant $s_1$-admissible sections with required properties.
 
 If $\Type(x)=1$, let $\delta$ be the $1$-cell in $\Delta$ containing $x$.
 By simplicity of $(\partial \Delta, \shA)$ (see introduction and \cite{GrossSiebert06}, Definition 1.60), there is a vertex $v$ in $\delta$ which can be connected to $x$ by a path $\delta'$ in $\delta$ that does not intersect with  $\shA$.
 Let $U$ be  the symplectic corner chart associated to $v$.
 Without loss of generality, we assume $x_1 \neq 0$ and $x_j =0$ for $j=2,3,4$.
 %, where $(\beta_{U/K}(x))_j$ is the $j^{th}$-coordinate of $\beta_{U/K}(x)$.
 Notice that $\delta' \cap \shA= \emptyset$ implies that there exists a neighborhood $N_{\delta'} \subset \partial \Delta$ of $\delta'$
 such that $\pi_{\Delta}^{-1}(N_{\delta'}) \cap \Disc(s_1)=\emptyset$, $\pi_{\Delta}^{-1}(N_{\delta'})$
 is contractible and $ \pi_{\Delta}^{-1}(N_{\delta'}) \cap M_t^{s}$ is given by
Equation \eqref{eq:CornerForm}
for some $f \in C^{\infty}(\pi_{\Delta}^{-1}(N_{\delta'}), \CC^*)$.
It implies that for a neighborhood $O_x \subset N_x \cap N_{\delta'}$ of $x$, $f|_{\pi_{\Delta}^{-1}(O_x)}$ is null-homotopic 
even though $\pi_{\Delta}^{-1}(O_x)$ is not contractible. In other words,
\begin{align}\label{eq:homotopyClass}
 (f|_{\pi_{\Delta}^{-1}(O_x)})_*:\pi_1(\pi_{\Delta}^{-1}(O_x)) \to \pi_1(\mathbb{C}^*)=\mathbb{Z} \text{ is zero}
\end{align}
and the same is true when $f|_{\pi_{\Delta}^{-1}(O_x)}$ is descended to the quotient by $K$.
On the other hand, for $O_x$ not containing $v$, $p_1>0$ in $\pi_{\Delta}^{-1}(O_x)$, 
so it gives a map  $\sqrt{p_1}|_{\pi_{\Delta}^{-1}(O_x)} \to \mathbb{R}_{>0} \subset \mathbb{C}^*$.
Moreover, $(\sqrt{p_1})_*:\pi_1(\pi_{\Delta}^{-1}(O_x)) \to \pi_1(\mathbb{C}^*)$ is also zero.
Therefore, there is no topological obstruction to deform $f|_{\pi_{\Delta}^{-1}(O_x)}$ to $\sqrt{p_1}|_{\pi_{\Delta}^{-1}(O_x)}$ inside $\pi_{\Delta}^{-1}(O_x)$ via $K$-invariant $\mathbb{C}^*$-valued functions.
Most notably, $\mathbb{C}^*$-valued functions are non-vanishing functions.
Similar to the previous case, we can assume the deformation is compactly supported and it gives a family of $s_1$-admissible sections with required properties by patching with 
$f$ outside $\pi_{\Delta}^{-1}(O_x)$.
 
If $\Type(x)=2$, we use simplicity of $(\partial \Delta, \shA)$ again to find a vertex $v$ and a path 
such that it lies inside a $2$-cell of $\partial \Delta$, connects $v$ and $x$, and does not intersect with $ \shA$.
Let $U$ be  the symplectic corner chart associated to $v$.
The equation of $M_t^{s}$ is again locally given by Equation \eqref{eq:CornerForm} for some $f$.
Moreover, $f$ is again null-homotopic.
If $x_1,x_2 \neq 0$ and $x_3=x_4=0$, we can deform $f$ to $\sqrt{p_1p_2}$ inside 
$\pi_{\Delta}^{-1}(O_x)$ for some small neighborhood $O_x$ of $x$.
It gives our desired family of $s_1$-admissible sections as in the previous case.

If $\Type(x)=3$, then we can take $O_x$ such that it does not intersect $0,1,2$-cells of $\Delta$.
Therefore, we can do any compactly supported deformation of the corresponding $f$ without creating/destroying discriminant loci 
(i.e.~we allow deformation of $f$ via functions that vanish somewhere). It is instructive to compare it with the proof of Lemma \ref{l:noDisc}.
The outcome is: the lemma is trivially true when $\Type(x)=3$.
\end{proof}

We are now ready to give the local Lagrangians in $ M_t^s \cap \pi_{\Delta}^{-1}(O_{x})$ when $M_t^s $ is $x$-standard.

\begin{proposition}[Standard Lagrangian model]\label{p:standardLagModel}
 Let $M^s \subset U$  be $x$-standard for some $x \in \partial \coDelta$ and $O_x$ be a neighborhood of $x$ such that \eqref{eq:SympStandardForm} holds.
 Let $W$ be a rationally generated $2$-dimensional affine plane in $\RR^4$ containing $x$ and $(1,1,1,1) \in TW$. 
 Let $c:=W \cap  O_x \cap \partial \coDelta$ (regarded as a straight line segment in $\partial \coDelta$).
 %Let $c:(-1,1) \to \partial \coDelta$ be a proper straight line in $O_{x} \cap \partial \coDelta$ such that $c(0)=x$.
 %Let $T_c:=\{\alpha \in \RR^4| \langle \alpha, c'(r) \rangle=\langle \alpha, (1,1,1,1)\rangle=0\}$, which is necessarily two dimensional because $c'(r)$ is not a multiple of $(1,1,1,1)$.
 %Let $\{v_a=(a_1,a_2,a_3,a_4), v_b=(b_1,b_2,b_3,b_4)\}$ be a basis of $T_c$.
 Then there is a family of proper $K$-invariant (possibly disconnected) Lagrangian submanifolds $L_t$ in $\pi_{\Delta}^{-1}(O_{x}) \cap M_t^s $, for $t > 0$, such that
 \begin{enumerate}[label=(\Roman{*})]
  \item $W^\perp \subset T_{(p,q)}L_t$ for all $(p,q) \in L_t$, and
  \item $\pi_{\Delta}(L_t) \subset W $.
 \end{enumerate}
Moreover, every family of proper $K$-invariant Lagrangian submanifolds $L'_t$ in $\pi_{\Delta}^{-1}(O_{x}) \cap M_t^s$ satisfying $(I),(II)$
can be given one of the following parametrizations (either Case A or Case B):

Let $W^\perp_T$ be the quotient of $W^\perp$ by the lattice $W^\perp \cap (2\pi\ZZ)^4$.
Under the natural identification between $W^\perp_T$ and the $T^2$ subgroup of $(\RR/2\pi\ZZ)^4$ in the $q$-variables, the cyclic group $K$ is either contained in $W^\perp_T$ or $K \cap W^\perp_T=\{0\}$.

{\bf Case A.} If $K$ is contained in $W^\perp_T$, then $L_t$ is connected and there
exists an $\RR^4$-valued function $P(r,t)$ and 
an $(\RR/2\pi\ZZ)^4$-valued function $Q(r,\theta_1,\theta_2,t)$, for $(r,\theta_1,\theta_2) \in (0,1) \times (\RR/2\pi\ZZ)^2$, parametrizing $L_t$.
%every proper Lagrangian submanifold in $\pi_{\Delta}^{-1}(O_{x}) \cap M_t^s $ satisfying $(I),(II)$
In this case, $L'_t$ is given by 
\begin{align}
 \{(p,q) \in \pi_{\Delta}^{-1}(O_{x}) \cap M_t^s | p=P(r,t),q=Q(r,\theta_1,\theta_2,t)+H(r)\} \label{eq:xStandardL}
\end{align}
for some $H(r) \in C^{\infty}((0,1),(\RR/2\pi\ZZ)^4)$ satisfying $\sum_j H_j(r)=0$ for all $r\in (0,1)$, where $H_j$ is the $j^{th}$ component of $H$.

{\bf Case B.} If $K \cap W^\perp_T=\{0\}$, then $L_t$ has $|K|$ connected components and there exists $P(r,t)$ and $Q(r,\theta_1,\theta_2,t)$
as above parametrizing one of the connected components so that the other connected components are parametrized by 
$p=P(r,t)$ and $q=Q(r,\theta_1,\theta_2,t)+\kappa$ for $\kappa \in K$.
%Moreover, every proper $K$-invariant Lagrangian submanifold in $\pi_{\Delta}^{-1}(O_{x}) \cap M_t^s $ satisfying $(I),(II)$
In this case, one of the components of $L'_t$ can be parametrized by  $p=P(r,t)$ and $q=Q(r,\theta_1,\theta_2,t)+H(r)$ for some $H(r)$ as above and the other components are obtained by adding $\kappa \in K$ in the $q$ coordinates.

Furthermore, in either Case A or Case B, the family of $\{P(r,t)|r \in (0,1)\}$ Hausdorff converges to $c$ when $t$ approaches $0$.
\end{proposition}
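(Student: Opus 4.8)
The plan is to turn the $x$-standard equation \eqref{eq:SympStandardForm} into two decoupled real equations, to build $L_t$ by hand as a torus bundle over an arc, and then to show every $L_t'$ with properties (I),(II) arises this way. Write $Z=\{j:x_j=0\}$ and $Q:=q_1+q_2+q_3+q_4$. For $t>0$, comparing moduli and arguments in \eqref{eq:SympStandardForm} shows $M^s_t\cap\pi_\Delta^{-1}(O_x)$ is cut out by
\[ \prod_{j\in Z}p_j=t^2|c|^2 \qquad\text{and}\qquad Q\equiv\arg c \pmod{2\pi}, \]
when $\Type(x)\le 2$ (so $c\neq 0$); the case $\Type(x)=3$ is the same with $Z$ a single index, and the degenerate subcase $c=0$ is handled as in Lemma~\ref{l:noDisc}. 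Thus $M^s_t\cap\pi_\Delta^{-1}(O_x)$ fibers over $\{\prod_{j\in Z}p_j=t^2|c|^2\}\cap O_x$ with fiber the $3$-torus $\{q:\sum_j q_j=\arg c\}$, and since $(1,1,1,1)\in TW$ every covector in $W^\perp$ has coordinate sum zero, so $W^\perp_T$ sits inside each fiber torus. I would pick a primitive lattice basis $u_1,u_2$ of $W^\perp$, use the implicit function theorem (shrinking $O_x$) to get a smooth embedding $r\mapsto P(r,t)$ of $(0,1)$ onto the arc $C_t:=W\cap\{\prod_{j\in Z}p_j=t^2|c|^2\}\cap O_x$ — a single smooth arc for small $t$ because $\prod_{j\in Z}p_j$ is positive in the interior of $\coDelta$ and vanishes on $W\cap\partial\coDelta$ — and set $Q(r,\theta_1,\theta_2,t):=Q_0+\theta_1 u_1+\theta_2 u_2$ for fixed $Q_0$ with $\sum_j(Q_0)_j=\arg c$. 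The resulting $L_t$ lies in $M^s_t$, satisfies $\pi_\Delta(L_t)\subset W$ and $W^\perp\subset TL_t$, and is Lagrangian: for $\omega=\sum_j dp_j\wedge dq_j$ the torus directions pair to zero among themselves, while the single radial direction $(\partial_r P,\partial_r Q)$ pairs with $w\in W^\perp$ to $\langle\partial_r P,w\rangle=0$ since $\partial_r P\in TW$. Symmetrizing over $K$ when $K\not\subset W^\perp_T$ gives the $K$-invariant family.

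For the classification, let $L_t'$ satisfy (I),(II). Property (I) makes the constant vector fields $\partial_w$, $w\in W^\perp$, everywhere tangent, so $L_t'$ is a union of orbits of the translation subgroup $W^\perp_T$; intersecting with a fiber $T^3$ of $M^s_t$ shows each slice is a closed $W^\perp_T$-invariant submanifold, hence a finite union of $W^\perp_T$-cosets. Property (II) forces $\pi_\Delta(L_t')\subset C_t$, and since $\dim L_t'=3$ the projection is onto an open sub-arc of $C_t$, which by properness is all of $C_t$, carrying a locally constant number of cosets. Parametrizing $C_t$ by $r$, the cosets are traced out by finitely many smooth maps $r\mapsto W^\perp_T+Q(r,0,0,t)+H(r)$ with $\sum_j H_j(r)\equiv 0$ (so they remain in the fibers), which is the form \eqref{eq:xStandardL}.

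It remains to establish the $K$-dichotomy, the component count, and the limit. Since $W^\perp$ is saturated, $W^\perp_T$ is a direct-summand subtorus and the quotient $T^4\to T^4/W^\perp_T$ restricts on each fiber $T^3$ to a projection onto $S^1$; the image of the cyclic $K$ is cyclic, and a short lattice computation using $\sum_j a_j\in k\ZZ$ together with $\gcd(a_j,k)=1$ shows this image is either trivial or all of $K$, i.e.\ $K\subset W^\perp_T$ or $K\cap W^\perp_T=\{0\}$. In the first case $K$ fixes every coset, so properness pins $L_t'$ to one strand — Case A; in the second $K$ permutes the $|K|$ cosets over each $r$ freely and transitively, so $L_t'$ is the union of $|K|$ cyclically permuted strands — Case B. Finally, as $t\to 0$ the level sets $\{\prod_{j\in Z}p_j=t^2|c|^2\}$ Hausdorff-converge, uniformly on the compact closure of $O_x$, to $\{\prod_{j\in Z}p_j=0\}\cap\overline{O_x}$, whose intersection with $W$ near $x$ is precisely $W\cap\partial\coDelta\cap O_x=c$; hence $\{P(r,t):r\in(0,1)\}=C_t$ Hausdorff-converges to $c$.

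\textbf{Main obstacle.} The construction of $L_t$ is soft; the work is in the classification — ruling out ``exotic'' Lagrangians, pinning the exact number of $W^\perp_T$-cosets (connectedness in Case A, exactly $|K|$ strands in Case B), and proving the all-or-nothing $K$-dichotomy — together with the uniform control on the arc $C_t$ and its Hausdorff limit near the ends of $c$, where $c$ crosses into lower-dimensional strata and $Z$ effectively shrinks.
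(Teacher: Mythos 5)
Your construction and classification follow the paper's proof almost step for step: you split the $x$-standard equation into a modulus equation $\prod_{j\in Z}p_j=t^2|c|^2$ and a phase equation $\sum_j q_j\equiv\arg c$, observe that $M^s_t\cap\pi_\Delta^{-1}(O_x)$ is a $T^3$-bundle over the hyperbola slice, take a $W^\perp$-parallel $2$-torus subbundle over the arc $P_t=W\cap\{\prod_{j\in Z}p_j=t^2|c|^2\}\cap O_x$, check the Lagrangian condition by pairing, and classify $K$-invariant competitors via the $H(r)$ freedom. Your one cosmetic departure is using the implicit function theorem where the paper verifies smoothness of the arc via strict monotonicity of $g_p(\lambda)=g(p+\lambda(1,1,1,1))$ along the $(1,1,1,1)$-direction; these amount to the same thing.

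The place to slow down is the $K$-dichotomy. You assert that a ``short lattice computation using $\sum_j a_j\in k\ZZ$ together with $\gcd(a_j,k)=1$'' shows the image of $K$ in $T^3/W^\perp_T$ is either trivial or all of $K$. This does not follow from those two hypotheses alone: with $k=6$, $a=(1,1,5,5)$ (Gorenstein and isolated) and $W^\perp=\langle(1,-1,0,0),(0,0,1,-1)\rangle$ (a saturated rank-two sublattice of $\{\sum_j q_j=0\}$, annihilating $(1,1,1,1)$ and $(1,1,-1,-1)$), the generator $\zeta=\tfrac{2\pi}{6}(1,1,5,5)$ is \emph{not} in $W^\perp_T$ (one would need $t_1\equiv\pi/3$ and simultaneously $t_1\equiv-\pi/3\bmod 2\pi$), while $\zeta^3=(\pi,\pi,\pi,\pi)$ \emph{is} (take $t_1=t_2=\pi$), so $K\cap W^\perp_T\cong\ZZ/2$ is a proper nontrivial subgroup. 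To be fair, the paper's own proof does not establish the dichotomy either --- it only handles the two cases that the statement asserts --- so either there is an extra constraint (specific to the $W^\perp$'s that actually arise from admissible tropical curves and the corner charts in this paper) that rules out the intermediate case, which you would then need to locate and invoke, or the classification must admit a third case in which $L_t$ has $|K|/|K\cap W^\perp_T|$ components. Either way, ``a short lattice computation'' is not a proof here, and you should not present it as one.
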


\begin{definition} \label{def-Lag-standard}
A proper Lagrangian submanifold $L_t$ in $\pi_{\Delta}^{-1}(O_{x}) \cap M_t^s $ satisfying Proposition \ref{p:standardLagModel} $(I)$, $(II)$
is called {\bf $c$-standard}.
\end{definition}
Before giving the proof, it would be helpful to have an intuitive understanding of what $L_t$ looks like.
For fixed $(r,t)$, $\{Q(r,\theta,\theta_2,t)| \theta, \theta_2 \in \RR/2\pi\ZZ\}$ is a $2$-torus lying inside $L_t$ with 
$p$-coordinates being $p=P(r,t)$ so when $K$ is contained in $W^\perp_T$, $L_t$ is a $2$-torus bundle over the curve $\{p=P(r,t)\}$ and when $K \cap W^\perp_T=\{0\}$, $L_t$ has $|K|$ connected components and each of them is a $2$-torus bundle over the curve.
Moreover, condition $(I)$ describes the tangent directions of the $2$-torus. 
Condition $(II)$ implies that the curve $\{p=P(r,t)\}$ is a subset of $W \cap O_x$, which Hausdorff converges to $c$ when $t$ approaches to $0$.

Also note that the function $H(r)$ in \eqref{eq:xStandardL} plays exactly the same role as $f$ in Remark~\ref{r:absorbAlpha}.

\begin{proof}
 We have enumerated the possibilities of $c$ in Section \ref{sss:NbhdPointTrop}.
 Existence of $L_t$ is a simple case by case calculation.
 %We are going to consider the cases of type $A$ and leave the remainings to the readers.
 %Without loss of generality, we assume $t>0$.

 For cases of type $A$, we have $x=(0,0,0,0)$ and 
 $\pi_{\Delta}^{-1}(O_{x}) \cap M_t^s $ is given by
\begin{align*}
   (\sqrt{p_1p_2p_3p_4})e^{i(q_1+q_2+q_3+q_4)}=tc_pe^{ic_q}
\end{align*}
for some constants $c_p>0$ and $c_q \in \RR/2\pi \ZZ$.
%Let $v_a=(m_1-m_2)\partial_{q_1}+m_2 \partial_{q_2}-m_1 \partial_{q_3}$
%and $v_b=(m_1-m_3) \partial_{q_1}+m_3 \partial_{q_2}-m_1 \partial_{q_4}$.
%For $u_1,u_2 \ge 0$, let $p_1=m_1u_2$, $p_2=m_1u_1$, $p_3=m_2u_1+(m_1-m_2)u_2$ and $p_4=m_3u_1+(m_1-m_3)u_2$ so that we have
%$\sum_j a_jp_j=\sum_j b_jp_j=0$.
%Since $p_1p_2p_3p_4=t^2c_p^2$ is a hyperbola, one can see (or show by direct calculation) that for each $t>0$, 
In particular, a point $(p,q) \in \pi_{\Delta}^{-1}(O_{x}) \cap M_t^s$ has to satisfies $p_1p_2p_3p_4=t^2c_p^2$.
Notice that, for each fixed $t>0$, $H_t:=\{p_1p_2p_3p_4=t^2c_p^2\}$ is a hyperbola so $H_t \cap  W \cap \coDelta$ is a smoothly embedded curve.
More rigorously, let $g:\RR_{\ge0}^4 \to \RR_{\ge 0}$ be $g(p)=p_1p_2p_3p_4$.
For each $p \in  W \cap (\partial \coDelta)$, the ray $\{p+\lambda(1,1,1,1)| \lambda \ge 0\}$ lies in $W$
and the function $g_p(\lambda):=g(p+\lambda(1,1,1,1))$ is a {\bf strictly} monotonic increasing function on the ray
because, for $\nu:=\sum_{j=1}^4 \partial_{p_j}$, we have $\nu(g)>0$ over $\RR_{>0}^4$.
Since $g_p(0)=0$, for each fixed $t>0$, there is exactly one $\lambda>0$ such that $g_p(\lambda)=t^2c_p^2$.
It means that for each $p \in W \cap \partial \coDelta$, there is at most one $p' \in W \cap \coDelta$
such that $p' \in H_t$ and $p'=p+\lambda(1,1,1,1)$ for some $\lambda>0$.
Since $W \cap \partial \coDelta$ is a continuous curve, $H_t \cap  W \cap \coDelta$ is a smoothing of it and hence a smoothly embedded curve.
We define $P_t := H_t \cap  W \cap O_x$, which is smooth because it is an open subset of a smooth curve.
%We call this family of curves $P(u,t)$.
It is clear that $P_t$ Hausdorff converges to $c$ when $t$ goes to $0$.

For each $t>0$ and $p \in P_t$, we can pick $2$-tori $Q_{p,t} \subset \pi^{-1}_{\Delta}(p)$ such that $Q_{p,t}$ varies smoothly with respect to $p$,
$Q_{p,t}$ is parallel to $W^{\perp}$ and 
$e^{i(q_1+q_2+q_3+q_4)}=e^{ic_q}$ for all $q \in Q_{p,t}$.
This family of $2$-tori gives a submanifold $L_t \subset \pi_{\Delta}^{-1}(O_{x}) \cap M_t^s$.
The fact that $P_t \subset W$ and $TQ_{p,t} = W^\perp$ for all $p \in P_t$ implies that  $L_t$ is a Lagrangian submanifold.

When $K \subset W^\perp_T$, it is easy to see that \eqref{eq:xStandardL} gives all proper $K$-invariant Lagrangian satisfying $(I),(II)$. 

On the other hand, when $K \cap W^\perp_T=\{0\}$, we replace $L_t$ by its $K$-orbit. It is also easy to see that any other proper $K$-invariant  Lagrangian satisfying $(I),(II)$ is given by adding a function $H(r)$ to the $q$-coordinates of all the components simultaneously.

%\begin{align}
% (m_1u_2)(m_1u_1)(m_2u_1+(m_1-m_2)u_2)(m_3u_1+(m_1-m_3)u_2)=t^2(c_p)^2 \label{eq:findP}
%\end{align}
%For a fixed $u_1$, there is a unique solution of $u_2$ in terms of $t$ for Equation \eqref{eq:findP}.
%Notice that, when $t$ approaches to $0$, $(p_1,p_2,p_3,p_4)$ approaches to $(0,m_1u_1,m_2u_1, m_3u_1)$, which is a subset of $\IIm(c)$.
%Let the solution be $u_2=S(u_1,t)$.

%We have a proper Lagrangian in $\pi_{\Delta}^{-1}(O_{x}) \cap M_t^s$ given by
%\begin{align*}
%L_t:=&\{(p,q) |p_1= m_1S(u_1,t), p_2=m_1u_1, p_3=m_2u_1+(m_1-m_2)S(u_1,t), \\
%&p_4=m_3u_1+(m_1-m_3)S(u_1,t), q_1=(m_1-m_2)\theta+(m_1-m_3)\theta_2+c_q, \\
%&q_2=m_2\theta+m_3\theta_2, q_3=-m_1\theta, q_4=-m_1\theta_2, \theta, \theta_2 \in \RR/2\pi\ZZ, u_1>0\} 
%\end{align*}
%Similarly, for a fixed $u_2$, there is a unique solution of $u_1$ in terms of $t$ for Equation \eqref{eq:findP}.
%This time, when $t$ approaches to $0$, $(p_1,p_2,p_3,p_4)$ approaches to $(m_1u_2,0,(m_1-m_2)u_2, (m_1-m_3)u_2)$, which is also a subset of $\IIm(c)$.

%If we let the solution be $u_1=S(u_2,t)$, then we can rewrite $L_t$ as
%\begin{align*}
%L_t:=&\{(p,q) |p_1= m_1u_2, p_2=m_1S(u_2,t), p_3=m_2S(u_2,t)+(m_1-m_2)u_2, \\
%&p_4=m_3S(u_2,t)+(m_1-m_3)u_2, q_1=(m_1-m_2)\theta+(m_1-m_3)\theta_2+c_q, q_2=m_2\theta+m_3\theta_2, \\ 
%&q_3=-m_1\theta, q_4=-m_1\theta_2, \theta, \theta_2 \in \RR/2\pi\ZZ, u>0\} 
%\end{align*}

For cases of type $B$, we have $x=(0,0,0,a)$ for some $a>0$ and we 
need to consider the set of $p \in W \cap O_x$ that solves $\sqrt{p_1p_2p_3}=tc_p$.
This time, we can take $g(p)=p_1p_2p_3$ for $p \in \RR_{\ge 0}^4$ and 
$g_p(\lambda)=g(p+\lambda(1,1,1,1))$ for $p \in W \cap \{p_1p_2p_3=0\}$, and $\lambda \ge 0$.
Let $\nu=\partial_{p_1}+\partial_{p_2}+\partial_{p_3}+\partial_{p_4}$ and we have $\nu(g)>0$ over $\RR_{> 0}^4$.
Similar to the previous case, it means that for each $p \in W \cap \{p_1p_2p_3=0\}$, there is at most one $p' \in W \cap \coDelta$
such that $p' \in H_t$ and $p'=p+\lambda(1,1,1,1)$ for some $\lambda>0$.
The rest of the argument is the same.

For cases of type $C$ or $\Type(x)=3$, we need to consider $p \in W \cap O_x$
that solves $\sqrt{p_1p_2}=tc_p$ and $\sqrt{p_1}=tc_p$, respectively.
The rest of the argument is the same.

\begin{comment}
For case $B(iii)$ , we have $x=(0,0,0,a)$ and  $\pi_{\Delta}^{-1}(O_{x}) \cap M_t^s $ is given by
\begin{align*}
   (\sqrt{p_1p_2p_3})e^{i(q_1+q_2+q_3+q_4)}=tc_pe^{ic_q}
\end{align*}
for some constants $c_p>0$ and $c_q \in \RR/2\pi \ZZ$
We can take the same $v_a,v_b$.
For $u_1,u_2 \ge 0$, let $p_1=m_1u_2$, $p_2=m_1u_1$, $p_3=m_2u_1+(m_1-m_2)u_2$ and $p_4=m_3u_1+(m_1-m_3)u_2+a$ so that we have
$\sum_j a_j(p_j-x_j)=\sum_j b_j(p_j-x_j)=0$.
The equation $\sqrt{p_1p_2p_3}=tc_p$ becomes
\begin{align}
 (m_1u_2)(m_1u_1)(m_2u_1+(m_1-m_2)u_2)=t^2(c_p)^2 \label{eq:findP2}
\end{align}
For a fixed $u_1$, there is a unique solution of $u_2$ in terms of $t$ for Equation \eqref{eq:findP2}.
Let the solution be $u_2=S(u_1,t)$.
We have a proper Lagrangian in $\pi_{\Delta}^{-1}(O_{x}) \cap M_t^s$ given by
\begin{align*}
L_t:=&\{(p,q) |p_1= m_1S(u_1,t), p_2=m_1u_1, p_3=m_2u_1+(m_1-m_2)S(u_1,t), \\
&p_4=m_3u_1+(m_1-m_3)S(u_1,t)+a, q_1=(m_1-m_2)\theta+(m_1-m_3)\theta_2+c_q, \\ 
&q_2=m_2\theta+m_3\theta_2, q_3=-m_1\theta, q_4=-m_1\theta_2, \theta, \theta_2 \in \RR/2\pi\ZZ, u_1>0\} 
\end{align*}
\end{comment}

%Now consider another Lagrangian $L$ satisfying $(I),(II)$.
% It is clear from the construction that the $p$-coordinates of $L$ are determined.
% Therefore, $L$ is a torus bundle over $\{P(u,t)|u \in (0,1)\}$ with respect to $\pi_{\Delta}|_L$ and the
% torus fibers are tangential to $W^\perp$.
% We have a freedom to parallel translate the torus fibers fiberwise, which is exactly given by adding the function $H(u)$.
\end{proof}

\subsubsection{Gluing local Lagrangians}\label{sss:Gluing local Lagrangians}

In the previous sub-subsection, we explained how to construct local Lagrangian when $M^s$ is $x$-standard.
Now, suppose $c:[-1,1] \to \partial \coDelta$ (again, $\im(c)$ is regarded as a closed segment of a tropical curve $\gamma$)
has the property that $\Type(c(r))$ is discontinuous at $r=0$ and $M^s$  is $c(0)$-standard with respect to $U$.
Then $M^s$  is not $c(r)$ standard with respect to $U$ for any $r$ close to but not equal to $0$.
Therefore, we need to generalize Proposition \ref{p:standardLagModel} and explain how to glue the local Lagrangian models together.

\begin{definition}\label{d:xTransitionStandard}
%\footnote{Picture for $f_p$}
Let $U$ be a symplectic corner chart and $\coDelta=\pi_{\Delta}(U)$.
Let $c^\circ \subset  \partial \coDelta \setminus \shA$ be an open straight line segment. 
Let $c:[0,1] \to \partial \coDelta \setminus \shA$ be a straight  line such that
$c((0,1))=c^{\circ}$ and $\Type(c(0)) \le \Type(c(1))=\Type(c(r))$ for $r \in (0,1]$.
Given an admissible section $s \in C^{\infty}(\PP_{\Delta},\shL)$,
we say that $M^s$ is {\bf $c^\circ$-transition-standard with respect to $U$} if
$M^s$ is $c(0)$-standard and $c(1)$-standard with respect to $U$, and there is 
%a neighborhood $O_{c(0)} \subset \coDelta$ of $c(0)$,
%a neighborhood $O_{c(1)} \subset \coDelta$ of $c(1)$ and
a neighborhood $O_{c^{\circ}} \subset \coDelta \setminus \shA$ of $c^{\circ}$ such that $c^{\circ}$ is proper inside $O_{c^{\circ}}$ and
$\pi_{\Delta}^{-1}(O_{c^{\circ}}) \cap M_t^s $ is given by
\begin{align}
   \left(\prod_{j,c(0)_j =0} \sqrt{p_j}\right)e^{i(q_1+q_2+q_3+q_4)}=tf_p\left(\prod_{j,c(0)_j =0, c(1)_j \neq 0} p_j\right)e^{if_q(p)} \label{eq:TranStandardSymp}
\end{align}
 for some function $f_q \in C^{\infty}(U,\RR/2\pi\ZZ)$ depending only on $p$ (in particular, $K$-invariant), and some $f_p \in C^{\infty}(\RR_{\ge 0},\RR_{>0})$ is 
 such that $\frac{u}{f_p^2(u)}$ is a monotonic increasing function 
 and $f_p$ is an interpolation from  $c_p$ to  $c_p'\sqrt{u}$ for some constants $c_p,c_p'>0$.
 In \eqref{eq:TranStandardSymp}, $c(k)_j$ is the $p_j$-coordinate of $c(k)$ for $k=0,1$, and, whenever $\Type(c(0))=\Type(c(1))$,
 $\prod_{j,c(0)_j =0, c(1)_j \neq 0} p_j$ (which is a product over the empty set) is interpreted as $1$.
 
 We say that $M^s$ is {\bf $c^\circ$-transition-standard} if $M^s$ is $c^\circ$-transition-standard with respect to some symplectic corner chart.
% For a point $x \in \coDelta$, we call $M^s$ {\bf $x$-transition-standard with respect to $U$} if $M^s$ is $c$-transition-standard with respect to $U$ for some $c$ such that
% $x \in \IIm(c)$.
% For a point $x \in \Delta$, we call $M^s$ $x$-transition-standard if there is a symplectic corner chart $U$ such that $M_t^s$ is 
% $x$-transition-standard with respect to $U$.
\end{definition}

\begin{remark}
Note, for $M^s$ to be $c(0)$-standard and $c(1)$-standard, simultaneously, it is necessary for $f_p$ to be an interpolation from $c_p$ to $c_p'\sqrt{u}$.
 The monotonicity of $\frac{u}{f_p^2(u)}$ is imposed to achieve Lemma \ref{l:standardTransitionModel} below.
\end{remark}

\begin{lemma}\label{l:ExistenceTranStandardSymp}
Let $s$ be an  $s_1$-admissible section.
 Let $c:[0,1] \to \partial \coDelta \backslash \shA$ be a straight line such that $\Type(c(0)) \le \Type(c(1))=\Type(c(r))$ for all $r \in (0,1]$.
 Let $c^{\circ}:=c((0,1))$ and $N_{c}$ be a neighborhood of $\im(c)$ in $\coDelta \backslash \shA$.
 Then there is a symplectic corner chart $U$ and a family of $s_1$-admissible section $(s^u)_{u \in [0,1]}$ such that $s^0=s$, for all $u$,
  $s^u=s$ outside $\pi_{\Delta}^{-1}(N_{c})$,  $s^u$ is $K$-invariant in $U$,
 and $M^{s^1}$ is $c^{\circ}$-transition-standard with respect to $U$.
 %\footnote{\red{ Simplicity of $(\partial \Delta, \shA)$ is used. We may add this into the statement.}}
\end{lemma}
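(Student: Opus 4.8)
The plan is to run the argument of Lemma~\ref{l:ExistenceStandardSymp} simultaneously along the whole segment $\im(c)$, interpolating between the $c(0)$-standard form near $c(0)$ and the $c(1)$-standard form near $c(1)$. Write $J_k:=\{j\mid c(k)_j=0\}$, so $J_1\subseteq J_0\subseteq\{1,2,3,4\}$ and $|J_k|=4-\Type(c(k))$. First I would choose the chart exactly as in Lemma~\ref{l:ExistenceStandardSymp}: simplicity of $(\partial\Delta,\shA)$ yields a vertex $v$ lying in the closure of the cell through $c(1)$ that is joined to $c(0)$ by a path $\delta'$ inside the cell through $c(0)$ and disjoint from $\shA$; let $U$ be the symplectic corner chart at $v$, normalized so the facets of $\coDelta$ through $v$ are $\{p_j=0\}$. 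Taking a neighborhood $N\subset\coDelta\setminus\shA$ of $\delta'\cup\im(c)$ that is cone-like with respect to $v$, the set $\pi_\Delta^{-1}(N)$ is contractible, disjoint from $\Disc(s_1)$, and contains the unique $K$-fixed point $0\in\CC^4$. By Example~\ref{ex:Transforming hypersurfaces} (equation \eqref{eq-zf}), $M_t^s\cap\pi_\Delta^{-1}(N)$ is cut out by $\prod_{j=1}^4 z_j=tf$ for a $K$-invariant $f\in C^\infty(\pi_\Delta^{-1}(N),\CC^*)$, and contractibility forces $f$ to be null-homotopic; write $f=\rho\,e^{ig}$ with $\rho=|f|>0$ and a smooth real lift $g$ of $\arg f$, which is automatically $K$-invariant because $\pi_\Delta^{-1}(N)$ is connected and $0$ is $K$-fixed.

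Next I would write down the target function. Fix $c_p,c_p'\in\RR_{>0}$ and $c_q,c_q'\in\RR/2\pi\ZZ$ (with $c_p=c_p'$ when $J_0=J_1$), a function $f_p\in C^\infty(\RR_{\ge 0},\RR_{>0})$ which equals $c_p$ near $0$, equals $c_p'\sqrt{u}$ for $u$ bounded away from $0$, and has $u\mapsto u/f_p^2(u)$ nondecreasing — an elementary interpolation, feasible after adjusting the constants — and a $p$-dependent $f_q\in C^\infty(\coDelta,\RR/2\pi\ZZ)$ equal to $c_q$ near $c(0)$ and $c_q'$ near $c(1)$. Set
\[
F(p):=\Big(\prod_{j\notin J_0}\sqrt{p_j}\Big)\,f_p\Big(\prod_{j\in J_0\setminus J_1}p_j\Big)\,e^{if_q(p)},
\]
a nowhere-vanishing $K$-invariant function of $p$ alone, with the empty product read as $1$. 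Using $z_j=\sqrt{2p_j}\,e^{iq_j}$, a substitution shows that, after absorbing a fixed positive constant, $\prod_j z_j=tF$ is equivalent to $\big(\prod_{j\in J_0}\sqrt{p_j}\big)e^{i(q_1+q_2+q_3+q_4)}=t\,f_p\big(\prod_{j\in J_0\setminus J_1}p_j\big)e^{if_q(p)}$. Over a tapered tubular neighborhood $O_{c^\circ}\subset\coDelta\setminus\shA$ of $c^\circ$ in which $c^\circ$ is proper and $p_j>0$ for $j\notin J_0$ this is exactly \eqref{eq:TranStandardSymp}; over a small neighborhood of $c(0)$ it collapses (using $f_p\equiv c_p$, $f_q\equiv c_q$ there) to $\prod_{j\in J_0}\sqrt{p_j}\,e^{i\sum q_j}=tc_pe^{ic_q}$, i.e.\ $c(0)$-standard; over a small neighborhood of $c(1)$ it collapses (using $f_p\equiv c_p'\sqrt{\cdot}$, $f_q\equiv c_q'$, and $\{j\notin J_1\}=\{j\notin J_0\}\sqcup(J_0\setminus J_1)$) to $\prod_{j\in J_1}\sqrt{p_j}\,e^{i\sum q_j}=tc_p'e^{ic_q'}$, i.e.\ $c(1)$-standard.

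Finally I would deform $f$ into $F$ through $K$-invariant non-vanishing functions, changing nothing outside $\pi_\Delta^{-1}(N_c)$. Pick a neighborhood $O_c\subset N\cap N_c$ of $\im(c)$ containing $O_{c^\circ}$ and the two endpoint neighborhoods. Averaging $\rho$ and $g$ over the $T^4$-action on $\pi_\Delta^{-1}(O_c)$ gives $K$-invariant $\bar\rho>0$ and $\bar g$ depending only on $p$; the straight-line homotopies $(1-a)\rho+a\bar\rho$ and $(1-a)g+a\bar g$ are non-vanishing and $K$-invariant (both $g$ and $\bar g$ are), and carry $f$ to $\bar\rho\,e^{i\bar g}$; since $\bar\rho\,e^{i\bar g}$ and $F$ are both nowhere-vanishing functions of $p$ alone, a further straight-line homotopy in magnitude and in a real lift of the phase carries $\bar\rho\,e^{i\bar g}$ to $F$. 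Composing the two homotopies and multiplying the parameter by a cutoff $\chi\in C^\infty(\coDelta)$ that is $1$ on a neighborhood of $\im(c)$ containing $O_{c^\circ}$ and the endpoint neighborhoods and $0$ near $\partial O_c$ yields the desired family $(s^u)_{u\in[0,1]}$: it is $K$-invariant, equals $s$ outside $\pi_\Delta^{-1}(N_c)$, equals $s_1$ near $\Disc(s_1)$ (which lies over $\shA$, away from $N_c$), is $s_1$-admissible since every function in the homotopy is non-vanishing over $O_c$ so $\Disc(s^u)=\Disc(s_1)$, and $M^{s^1}$ is $c^\circ$-transition-standard with respect to $U$ by the previous paragraph. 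I expect the main obstacle to be bookkeeping rather than substance: arranging the nested neighborhoods $O_{c^\circ}\subset O_c\subset N\cap N_c$ so that the transition equation, the two endpoint standard forms, and the requirement $s^u=s$ outside $\pi_\Delta^{-1}(N_c)$ all hold together, and keeping the whole homotopy $K$-equivariant — which is precisely what having the $K$-fixed point inside the contractible set $\pi_\Delta^{-1}(N)$ provides. Constructing $f_p$ with $u/f_p^2(u)$ nondecreasing is routine.
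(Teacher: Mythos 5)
Your proof is correct and follows essentially the same line as the paper's: pick the corner chart $U$ at a vertex connected to $c(0)$ by a discriminant-avoiding path inside the cell of $c(0)$, observe that the defining function $f$ of $M_t^s$ in the corresponding chart is null-homotopic on a contractible $K$-invariant neighbourhood, and deform $f$ through $K$-invariant non-vanishing functions to a target function of $p$ only whose form yields \eqref{eq:TranStandardSymp}. The paper's proof is deliberately terse — it only spells out the case $\Type(c(0))=1<\Type(c(1))$ and records that there is "no topological obstruction" to the deformation — whereas you treat all cases at once via the index sets $J_k$ and make the deformation fully explicit (polar split $f=\rho e^{ig}$, $T^4$-averaging, straight-line homotopies, cutoff), which is sound. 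Two small remarks: (i) the $K$-invariance of the real lift $g$ doesn't actually need the $K$-fixed point — since $K$ is finite and $g\circ\zeta-g$ is a constant integer multiple of $2\pi$, iterating $\zeta$ to order $|K|$ forces that constant to vanish on each connected component — though your fixed-point argument is also valid; (ii) the constant $4=\prod_j\sqrt{2}$ you absorb when passing from $\prod_jz_j=tF$ to \eqref{eq:TranStandardSymp} is harmless exactly as you say, but worth keeping in mind when matching $c_p,c_p'$ to the constants appearing in Definition~\ref{d:xStandard}.
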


\begin{proof}
The proof is in parallel to Lemma \ref{l:ExistenceStandardSymp}.
We give the details when $\Type(c(0))=1 < \Type(c(1))$ and leave the remaining to the readers.

Let $x=c(0)$.
We pick a vertex $v$ and the corresponding symplectic corner chart $U$ as in the proof of  Lemma \ref{l:ExistenceStandardSymp}.
We can find a neighborhood $O_c$ of $\im(c)$ such that
$ M_t^s \cap \pi_{\Delta}^{-1}(O_c)$ is given by \eqref{eq:CornerForm}
for some $f \in C^{\infty}(\pi_{\Delta}^{-1}(O_c),\CC^*)$ and $(f)_*:\pi_1(\pi_{\Delta}^{-1}(O_c)) \to \pi_1(\CC^*)$ is the zero map (see \eqref{eq:homotopyClass}).
Say $x_j \neq 0$ exactly when $j=1$ and $c(1)_j \neq 0$ exactly when $j=1, \dots, n_c$ (here $n_c \in \{2,3\}$).
Let $g(r)=\prod_{j,c(0)_j =0, c(1)_j \neq 0} c(r)_j=c(r)_2 \dots c(r)_{n_c}$, where $c(r)_j$ is the $j^{th}$-coordinate of $c(r)$ for $r \in [0,1]$.
Note that, $g(0)=0$ and $g(r)$ is strictly increasing.

Inside $\pi_{\Delta}^{-1}(O_c)$, there is no topological obstruction to deform $f$ through $K$-invariant non-vanishing functions, 
to $\sqrt{p_1} f_p(p_2 \dots p_{n_c})e^{if_q(p)}$ for some $f_p\in C^{\infty}(\RR_{\ge 0},\RR_{>0})$  such that $\frac{u}{f_p^2(u)}$ is a monotonic increasing
 function, and there are constants $c_p,c_p'>0$ such that $f_p(u)=c_p$ near $u=0$
 and $f_p(u)=c_p'\sqrt{u}$ near $u=g(1)$. 
The conditions on $f_p$ near $u=0$ and $u=g(1)$ imply that $M^s$ is $c(0)$-standard and $c(1)$-standard simultaneously.
 Moreover, there exists $O_{c^{\circ}} \subset O_c$ such that $c^\circ$ is proper inside $O_{c^{\circ}}$ and 
 $\pi_{\Delta}^{-1}(O_{c^{\circ}}) \cap M_t^s $ satisfies \eqref{eq:TranStandardSymp}.
 Therefore, the result follows.

\end{proof}

A simple but crucial observation is that we can extend the `standard region' by a further isotopy without destroying the previously established standard region in the following sense.

\begin{lemma}\label{l:gluing2lines}
Let $c_1,c_2:[0,1] \to \partial \coDelta \backslash \shA$ be two straight lines
as in Lemma \ref{l:ExistenceTranStandardSymp} such that $c_1(0)=c_2(0)$ or $c_1(0)=c_2(1)$ or $c_1(1)=c_2(1)$.
Suppose we have applied Lemma \ref{l:ExistenceTranStandardSymp} to $c_1$ and denote the resulting $s^1$ as $s$.
Let $N_2$ be a neighborhood of $\im(c_2)$ in $\coDelta \backslash \shA$.
Then there is a family of $s_1$-admissible sections  $(s^u)_{u \in [0,1]}$ such that $s^0=s$, for all $u$,
  $s^u=s$ outside $\pi_{\Delta}^{-1}(N_2)$,  $s^u$ is $K$-invariant in $U$,
 and $M^{s^1}$ is simultaneously $c_1^{\circ}$-transition-standard 
and $c_2^{\circ}$-transition-standard with respect to $U$.
\end{lemma}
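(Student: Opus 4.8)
The plan is to apply the construction in the proof of Lemma~\ref{l:ExistenceTranStandardSymp} to $c_2$ \emph{inside the same symplectic corner chart $U$}, but to arrange the supporting isotopy so that it does not disturb the part of $M^s$ that has already been put into standard form for $c_1$. The two ``standard'' regions are forced to overlap only near the shared endpoint, and there $M^s$ is already in a shape that serves $c_1$ and $c_2$ simultaneously, so no further modification near it is needed.

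Write $x_0$ for the common endpoint; since the three listed cases are handled the same way (one only relabels which end of $c_i$ plays the role of the low-type ($u\to0$) end of the interpolation in Definition~\ref{d:xTransitionStandard}), I describe the argument for $c_1(0)=c_2(0)=x_0$. First I would note that, because $M^s$ is $c_1^\circ$-transition-standard with respect to $U$, it is in particular $x_0$-standard with respect to $U$ (Definition~\ref{d:xTransitionStandard} includes $c_1(0)$-standardness), so on a neighbourhood $O_{x_0}\subset\coDelta$ of $x_0$ the hypersurface $M_t^s$ is cut out by \eqref{eq:SympStandardForm}. Since the only extra condition in \eqref{eq:SympStandardForm} beyond the corner form is that the right-hand constant be nonzero when $\Type(x_0)\le2$, and this depends only on $\Type(x_0)$, being $x_0$-standard is exactly the endpoint requirement that $c_2^\circ$-transition-standardness imposes at $x_0$. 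Hence I keep $M^s$ unchanged on $\pi_\Delta^{-1}(O_{x_0})$.

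Next I would run the deformation of Lemma~\ref{l:ExistenceTranStandardSymp} for $c_2$. Since $\im(c_2)\subset\coDelta\setminus\shA$ by hypothesis and the vertex $v$ defining $U$ is joined to $x_0\in\im(c_1)$ by a path avoiding $\shA$, concatenating that path with $c_1$ and then $c_2$ joins $v$ to $\im(c_2)$ by a path avoiding $\shA$; so there is a neighbourhood $O_{c_2}\subset N_2$ of $\im(c_2)$ with $\pi_\Delta^{-1}(O_{c_2})$ meeting neither $\Disc(s_1)$ nor (as a subset of the base) $\shA$, on which $M_t^s$ has the corner form \eqref{eq:CornerForm} $z_1z_2z_3z_4=tf$ with $f$ nowhere zero, $K$-invariant, and of zero winding along every loop in $\pi_\Delta^{-1}(O_{c_2})$, and with $f$ already equal to the $x_0$-standard expression $c\cdot\sqrt{16\prod_{j:(c_2(0))_j\neq0}p_j}$ on $O_{x_0}\cap O_{c_2}$. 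I then take the target
\[
f^1\;=\;\sqrt{\textstyle 16\prod_{j:(c_2(0))_j\neq0}p_j}\;\cdot\;f_p\!\Big(\textstyle\prod_{j:(c_2(0))_j=0,\ (c_2(1))_j\neq0}p_j\Big)\,e^{if_q(p)},
\]
with $f_p\in C^\infty(\RR_{\ge0},\RR_{>0})$ and $f_q\in C^\infty(U,\RR/2\pi\ZZ)$ chosen so that $u/f_p^2(u)$ is monotone increasing, $f_pe^{if_q}$ equals the constant $c$ for $p$ near $x_0$ (so that $f^1=f$ on $O_{x_0}\cap O_{c_2}$), and $f_p(u)=c_p'\sqrt u$ with $f_q$ constant for $p$ near $c_2(1)$ (so that $M$ becomes $c_2(1)$-standard there); substituting $f^1$ into \eqref{eq:CornerForm} reproduces exactly \eqref{eq:TranStandardSymp}. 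Exactly as in Lemma~\ref{l:ExistenceTranStandardSymp}, and because $f=f^1$ near $x_0$, there is no obstruction to homotoping $f$ to $f^1$ through nowhere-zero $K$-invariant functions by a homotopy stationary on $\pi_\Delta^{-1}(O_{x_0})$ and compactly supported in $\pi_\Delta^{-1}(O_{c_2})$; patching it with $f$ outside yields the family $(s^u)_{u\in[0,1]}$. Shrinking $O_{c_2}$ so that $O_{c_2}\setminus O_{x_0}$ is disjoint from $\im(c_1)$ — possible because $\im(c_1)\cap\im(c_2)=\{x_0\}$, the $c_i$ being straight segments meeting only at the common vertex — the isotopy changes $M^s$ only away from $\im(c_1)$ and, on $\pi_\Delta^{-1}(O_{x_0})$, not at all, so $M^{s^1}$ is still $c_1^\circ$-transition-standard, while by construction it is $c_2^\circ$-transition-standard with respect to $U$. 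Being through nowhere-zero functions supported in a region disjoint from $\Disc(s_1)$ whose image avoids $\shA$, the isotopy creates no new discriminant (cf.\ Lemma~\ref{l:noDisc}), leaves $\Disc(s_1)$ and the orbifold points untouched, and keeps each $s^u$ $K$-invariant in $U$; hence every $s^u$ is $s_1$-admissible.

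The main obstacle is precisely the interaction at the shared endpoint: one has to recognise that the standardization already carried out for $c_1$ need not be undone in order to standardize $c_2$, i.e.\ that $x_0$-standardness of $M^s$ \emph{is} the boundary value at $x_0$ of the interpolating normal form \eqref{eq:TranStandardSymp} that the $c_2$-construction demands. Once this bookkeeping across the three endpoint-matching cases is in place, everything else is a localized, obstruction-free homotopy of nowhere-vanishing functions of the kind already used in Lemmas~\ref{l:ExistenceStandardSymp} and~\ref{l:ExistenceTranStandardSymp}.
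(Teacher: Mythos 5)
Your proof is correct and follows essentially the same strategy as the paper's (very brief) argument: apply the deformation of Lemma~\ref{l:ExistenceTranStandardSymp} to $c_2$ while arranging it to be trivial on a neighborhood of $\im(c_1)$, which is possible precisely because being $x_0$-standard is the shared boundary condition at the common endpoint. The paper states the triviality condition directly for a neighborhood of $\im(c_1)$; you spell out why that is unobstructed (splitting the neighborhood into $O_{x_0}$, where $f$ already agrees with the target, and the rest, which can be chosen disjoint from $\im(c_1)$), which is a useful elaboration but not a different route.
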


\begin{proof}
We want to apply (the proof) of Lemma \ref{l:ExistenceTranStandardSymp} to $c_2$.
The key point is that, inside $\pi_{\Delta}^{-1}(O_{c_2})$, there is 
 no topological obstruction to deform $f$ through $K$-invariant non-vanishing functions to a function as in Lemma \ref{l:ExistenceTranStandardSymp}, and in addition to that, we are free to choose the deformation to be trivial 
inside $\pi_{\Delta}^{-1}(O_{c_1})$ for some small neighborhood $O_{c_1}$ of $\im(c_1)$.
In this case, the corresponding section $s^1$ will make 
$M^{s^1}$ simultaneously $c_1^{\circ}$-transition-standard 
and $c_2^{\circ}$-transition-standard.
\end{proof}

\begin{lemma}\label{l:standardTransitionModel}
 Let $M^s$  be $c^{\circ}$-transition-standard with respect to $U$. 
 Let $O_{c^{\circ}} \subset \coDelta$ be a neighborhood  of $c^\circ$ such that \eqref{eq:TranStandardSymp} holds.
 Let $W$ be the rationally generated $2$-dimensional plane in $\RR^4$ that contains $c^\circ$ and $(1,1,1,1) \in TW$.
 Then there exists a family of proper $K$-invariant (possibly disconnected) Lagrangian submanifold $L_t$ in $\pi_{\Delta}^{-1}(O_{c^\circ}) \cap M_t^s $, for $t>0$, such that
 \begin{enumerate}[label=(\Roman{*})]
  \item $W^\perp \subset T_{(p,q)}L_t$ for all $(p,q) \in L_t$, and
  \item $\pi_\Delta(L_t) \subset W$.
 \end{enumerate}
 %where $a_j,b_j$ are defined as in Proposition \ref{p:standardLagModel}
 Moreover, $\pi_{\Delta}(L_t)$ Hausdorff converges to $c^\circ$ when $t$ approaches $0$.
\end{lemma}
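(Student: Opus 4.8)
My plan is to run the torus–bundle construction of Proposition~\ref{p:standardLagModel} essentially verbatim, the only new ingredient being that the $2$-tori parallel to $W^{\perp}$ must now sit over a whole family of base curves spread along the transition segment $c^{\circ}$, and I must control that family. First I set up notation. Write $Z_0:=\{j:c(0)_j=0\}$ and $Z_1:=\{j:c(1)_j=0\}$; since $\Type(c(0))\le\Type(c(1))$ and $c$ is a straight segment one has $Z_1\subseteq Z_0$, along $c^{\circ}$ exactly the coordinates $p_j$ with $j\in Z_1$ vanish, and $p_j>0$ for $j\in Z_0\setminus Z_1$. Shrinking $O_{c^{\circ}}$ if necessary so that it meets only the facets $\{p_j=0\}$, $j\in Z_1$, equation~\eqref{eq:TranStandardSymp} says that $\pi_{\Delta}^{-1}(O_{c^{\circ}})\cap M^s_t$ is cut out by the modulus equation
\[
G(p):=\Big(\textstyle\prod_{j\in Z_1}p_j\Big)\,\frac{\prod_{j\in Z_0\setminus Z_1}p_j}{f_p\big(\prod_{j\in Z_0\setminus Z_1}p_j\big)^{2}}=t^{2}
\]
together with the argument equation $q_1+q_2+q_3+q_4\equiv f_q(p)\pmod{2\pi}$. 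As in Proposition~\ref{p:standardLagModel}, $L_t$ will be assembled from $2$-tori parallel to $W^{\perp}$, one over each point of the curve $P_t:=\{G=t^{2}\}\cap W\cap O_{c^{\circ}}$.

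The heart of the argument is Step~2: showing that $P_t$ is a smooth, properly embedded curve that Hausdorff converges to $c^{\circ}$ as $t\to0^{+}$. The line through $c^{\circ}$ inside $W$ has a direction $d$ with $d_j=0$ on $Z_1$, and $(1,1,1,1)\in TW$, so $TW=\Span\{(1,1,1,1),d\}$ and $W$ is swept out by the lines $\ell_{p_0}=\{p_0+\lambda(1,1,1,1)\}$ as $p_0$ ranges over the line through $c^{\circ}$. Along such a line, with $u(\lambda):=\prod_{j\in Z_0\setminus Z_1}(p_{0,j}+\lambda)>0$, one computes $G(p_0+\lambda(1,1,1,1))=\lambda^{|Z_1|}\,\psi(u(\lambda))$ where $\psi(u):=u/f_p(u)^{2}$. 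Here I use the hypothesis of Definition~\ref{d:xTransitionStandard}: $\psi$ is non-decreasing and $\psi>0$ on $(0,\infty)$, while $u(\lambda)$ is strictly increasing; hence $\lambda\mapsto G$ vanishes at $\lambda=0$ and is strictly increasing for $\lambda\ge0$, with $\partial_\lambda G\neq0$ on $\{G=t^{2}\}$. Consequently, for each small $t>0$ and each line $\ell_{p_0}$ there is a unique $\lambda_t(p_0)>0$ with $G=t^{2}$, and $\lambda_t\to0$ uniformly on compacta. Since $(1,1,1,1)\in TW$, this makes $G|_W$ a submersion near $P_t$, so $P_t$ is a smooth $1$-manifold, properly embedded in $O_{c^{\circ}}$ (using that $c^{\circ}$ is proper there), realized as a graph in the $(1,1,1,1)$-direction over $c^{\circ}$, and therefore Hausdorff converging to $c^{\circ}$. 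I expect this monotonicity control of $P_t$ to be the only real obstacle (it is exactly what the condition on $f_p$ in Definition~\ref{d:xTransitionStandard} was designed for, cf.\ the Remark after it); once $P_t$ is in hand everything else is formal and parallels Proposition~\ref{p:standardLagModel}.

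For Step~3 I build the Lagrangian: over each $p\in P_t$ pick the $2$-torus $Q_p=q(p)+W^{\perp}_T\subset\{p\}\times T^4$ with $q(p)$ depending smoothly on $p$ and chosen with $\sum_j q(p)_j\equiv f_q(p)\pmod{2\pi}$ — possible because $(1,1,1,1)\in TW$ forces $\sum_j\xi_j=0$ for $\xi\in W^{\perp}$, so $\sum_j q_j$ is constant on $Q_p$ and only one linear condition on the base point remains, met by a smooth section. Set $L_t:=\bigcup_{p\in P_t}Q_p$, replaced by its $K$-orbit when $K\cap W^{\perp}_T=\{0\}$ (this produces the $|K|$ disjoint components, hence the ``possibly disconnected'' in the statement; the dichotomy $K\subseteq W^{\perp}_T$ or $K\cap W^{\perp}_T=\{0\}$ is the one already noted in Proposition~\ref{p:standardLagModel} for the same plane $W$, and $K$-invariance of the argument condition follows from $\sum_j\kappa_j\equiv0$, the Gorenstein condition). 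Then $L_t\subset\pi_{\Delta}^{-1}(O_{c^{\circ}})\cap M^s_t$, it is $K$-invariant and proper (compact fibers over a proper curve), and it is Lagrangian: the fibers are isotropic since $dp\equiv0$ on them, and for $v\in TP_t\subset TW$ and $\xi\in W^{\perp}$ one has $\omega(v,\xi)=\langle dp(v),\xi\rangle=0$, while $\dim L_t=3=\tfrac12\dim M^s_t$. Properties (I) and (II) are then immediate ($TQ_p=W^{\perp}$, $\pi_{\Delta}(L_t)=P_t\subset W$), and the Hausdorff convergence of $\pi_{\Delta}(L_t)=P_t$ to $c^{\circ}$ comes from Step~2. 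Finally I would record that near the $c(0)$-end (resp.\ the $c(1)$-end) $L_t$ coincides with a $c(0)$-standard (resp.\ $c(1)$-standard) Lagrangian from Proposition~\ref{p:standardLagModel}, since there $f_p$ is constant (resp.\ $c_p'\sqrt{u}$) and $G$ reduces to $\prod_{j\in Z_0}p_j/c_p^{2}$ (resp.\ $\prod_{j\in Z_1}p_j/(c_p')^{2}$); this compatibility is what the subsequent gluing will use.
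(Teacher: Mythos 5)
Your proof is correct and takes essentially the same route as the paper: both boil the lemma down to showing that the modulus level set $P_t=\{G=t^2\}\cap W\cap O_{c^\circ}$ is a smooth curve, and both derive this from the hypothesis that $u\mapsto u/f_p(u)^2$ is non-decreasing, which forces $G$ to be strictly increasing in the $(1,1,1,1)$-direction (you phrase this via the explicit parametrization $G(p_0+\lambda\mathbf 1)=\lambda^{|Z_1|}\psi(u(\lambda))$, whereas the paper checks $\nu(G)>0$ directly for the single illustrative case $\Type(c(0))=0,\Type(c(1))=3$ and declares the rest analogous). Your write-up is a bit more explicit about the general $(Z_0,Z_1)$ structure, the graph description of $P_t$, the Lagrangian verification, and the $K$-dichotomy, but there is no genuinely new idea beyond what the paper's proof contains.
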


\begin{proof}
 Similar to Proposition \ref{p:standardLagModel}, since $f_q$ only depends on $p$, it suffices to show that for each $t>0$,
 the set of $p \in W \cap O_{c^\circ}$ that solves
 \begin{align}
  \prod_{j,c(0)_j =0} \sqrt{p_j}=tf_p\left(\prod_{j,c(0)_j =0, c(1)_j \neq 0} p_j\right) \label{eq:EmbeddedCurve}
 \end{align}
 is an open subset of a smoothly embedded curve.
 
 We only consider case that $\Type(c(0))=0$ and $\Type(c(1))=3$. The other cases can be dealt similarly.
 In this case
 $\pi_{\Delta}^{-1}(O_{c}) \cap M_t^s $ is given by
\begin{align*}
   (\sqrt{p_1p_2p_3p_4})e^{i(q_1+q_2+q_3+q_4)}=tf_p(p_2p_3p_4)e^{if_q(p)}
\end{align*}
Notice that $\nu:=\sum_{j=1}^4 \partial_{p_j}$ satisfies $\nu(\frac{p_1p_2p_3p_4}{f_p^2(p_2p_3p_4)})>0$
for all $p \in \coDelta \backslash \partial \coDelta$ because we assumed that $\frac{u}{f_p^2(u)}$ is monotonic increasing
and $\partial_{p_1}(\frac{p_1p_2p_3p_4}{f_p^2(p_2p_3p_4)})= \frac{p_2p_3p_4}{f_p^2(p_2p_3p_4)}>0 $ for all $p \in \coDelta \backslash \partial \coDelta$.
The rest of the argument is the same.
\end{proof}

\begin{definition} \label{def-Lag-tran-standard}
A proper $K$-invariant Lagrangian submanifold $L_t$ in $\pi_{\Delta}^{-1}(O_{c^{\circ}}) \cap M_t^s $ satisfying Lemma \ref{l:standardTransitionModel} $(I)$, $(II)$
is called {\bf $c^{\circ}$-transition-standard}.
\end{definition}

We summarize the steps taken so far.

\begin{proposition}\label{p:GluingModels}
 Let $c:[0,1] \to \partial \coDelta \backslash \shA$ be a straight line and $N_{c}$ be a neighborhood of $\im(c)$ in $\coDelta \setminus \shA$.
 Let $c^{\circ}=c((0,1))$.
 Then for any $s_1$-admissible section $s$, there is a family of $s_1$-admissible section $(s^u)_{u \in [0,1]}$ such that $s^0=s$, for all $u$,  
  $s^u=s$ outside $\pi_{\Delta}^{-1}(N_{c})$, $s^u$ is $K$-invariant,
 and for all $x \in \im(c)$, $M^{s^1}$ is either $x$-standard with respect to $U$ or there exists an open line segment 
 $c^{\circ}_x \subset c^{\circ}$ containing $x$ such that $M^{s^1}$ is $c^{\circ}_x$-transition-standard with respect to $U$.
 
 Moreover, there is a neighborhood $O_{c^{\circ}} \subset  \coDelta \backslash \shA$ of $c^\circ$ and a family of proper $K$-invariant
 Lagrangian $L_t$ in $M_t^{s^1} \cap \pi_{\Delta}^{-1}(O_{c^{\circ}})$, for $t>0$, such that 
 $c^\circ$ is proper inside $O_{c^{\circ}}$, $L_t$  is a $2$-torus bundle (or union of $|K|$ disjoint $2$-torus bundles) with respect to $\pi_{\Delta}$ 
 and $\pi_{\Delta}(L_t)$ Hausdorff converges to $c^{\circ}$ as $t$ goes to $0$.
\end{proposition}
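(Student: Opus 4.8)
The plan is to produce the family $(s^u)$ by running the standard-form isotopies of Lemmas~\ref{l:ExistenceStandardSymp}, \ref{l:ExistenceTranStandardSymp} and~\ref{l:gluing2lines} along $\im(c)$ one piece at a time, and then to build $L_t$ by gluing the local Lagrangian models of Proposition~\ref{p:standardLagModel} and Lemma~\ref{l:standardTransitionModel}. First I would subdivide the parameter interval: since a straight line meets the finite polyhedral stratification of $\partial\Delta$ in only finitely many relatively open subintervals, choose a partition $0=t_0<t_1<\dots<t_N=1$ such that $\Type(c(r))$ is constant on every open interval $(t_{j-1},t_j)$ and such that each closed subinterval $[t_{j-1},t_j]$ has at most one endpoint at which $\Type\circ c$ is discontinuous; the latter is arranged, whenever a constant-type piece has both endpoints of strictly smaller type, by inserting one extra partition point in its interior (recall from Remark~\ref{r:discontinuity} that the type strictly drops at a discontinuity). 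After reversing the parametrization where needed, each subsegment $c_j:=c|_{[t_{j-1},t_j]}$, reparametrized to $[0,1]$, satisfies the hypothesis of Lemma~\ref{l:ExistenceTranStandardSymp} that $\Type(c_j(0))\le\Type(c_j(1))=\Type(c_j(r))$ for $r\in(0,1]$; write $c_j^\circ:=c((t_{j-1},t_j))$.

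Next I would apply Lemma~\ref{l:ExistenceTranStandardSymp} to $c_1$ with the deformation supported in $\pi_\Delta^{-1}(N_c)$, which furnishes a symplectic corner chart $U$, and then apply Lemma~\ref{l:gluing2lines} successively to $c_2,\dots,c_N$ (consecutive subsegments share an endpoint), at the $j$-th step using a $K$-invariant, $s_1$-admissible deformation supported in a small neighbourhood of $\im(c_j)$ inside $N_c$ and trivial near $\im(c_1)\cup\dots\cup\im(c_{j-1})$. Concatenating and reparametrizing these finitely many families to $u\in[0,1]$ gives $(s^u)$: it is unchanged outside $\pi_\Delta^{-1}(N_c)$, which avoids $\shA$ and hence the discriminant $\Disc(s_1)$, so every $s^u$ stays $s_1$-admissible; and $M^{s^1}$ is $c_j^\circ$-transition-standard with respect to $U$ for every $j$, in particular $c(t_j)$-standard at each partition point. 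This yields the first assertion: for $x\in\im(c)$, either $x=c(t_j)$ and $M^{s^1}$ is $x$-standard with respect to $U$, or $x\in c_j^\circ\subset c^\circ$ for a unique $j$ and $M^{s^1}$ is $c_j^\circ$-transition-standard with respect to $U$.

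For the Lagrangian, let $W$ be the rationally generated $2$-plane through $\im(c)$ with $(1,1,1,1)\in TW$; every $c_j^\circ$ and every $c(t_j)$ spans this same $W$ together with $(1,1,1,1)$, so $W$ fixes once and for all whether we are in Case~A ($K\subset W^\perp_T$) or Case~B. On each $x$-standard neighbourhood $O_{c(t_j)}$ I would invoke Proposition~\ref{p:standardLagModel} with this $W$, and on each transition-standard neighbourhood $O_{c_j^\circ}$ I would invoke Lemma~\ref{l:standardTransitionModel} with the same $W$; each yields a proper $K$-invariant $2$-torus bundle over a smoothly embedded arc of $W$ (or a union of $|K|$ such bundles in Case~B), with $W^\perp\subset TL_t$ and $\pi_\Delta(L_t)\subset W$, whose $\pi_\Delta$-image Hausdorff-converges to the corresponding subarc of $c^\circ$ as $t\to0$. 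On an overlap of two of these neighbourhoods the equation of $M^{s^1}$ is in $x$-standard form, so by the parametrization statement of Proposition~\ref{p:standardLagModel} the two local Lagrangians there differ only by a fibrewise translation of the $q$-coordinates by a function $H(r)$ with $\sum_j H_j(r)=0$; interpolating these translations by a partition of unity subordinate to the good cover of $c^\circ$ by the above neighbourhoods (the freedom of Remark~\ref{r:absorbAlpha}) glues all the pieces into a single proper $K$-invariant Lagrangian $L_t\subset M_t^{s^1}\cap\pi_\Delta^{-1}(O_{c^\circ})$, where $O_{c^\circ}$ is the union of these neighbourhoods. Since each $O_{c_j^\circ}$ already has $c_j^\circ$ proper inside it and we only adjoin the $O_{c(t_j)}$ at interior partition points, $O_{c^\circ}$ excludes $c(0)$ and $c(1)$, so $c^\circ$ is proper inside it; and $L_t$ is a $2$-torus bundle, or a union of $|K|$ disjoint ones, over $\pi_\Delta(L_t)\subset W$, with $\pi_\Delta(L_t)$ Hausdorff-converging to $c^\circ$ since each of its pieces does.

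I expect the main obstacle to be this last gluing step: one must verify that the local standard models, constructed at points of different $\Type$ and along the transition segments in a priori different normal forms, are mutually compatible enough to patch smoothly --- which is precisely what the parametrization part of Proposition~\ref{p:standardLagModel} is designed to guarantee --- and one must keep careful track, throughout the inductive isotopy, that the successive deformations remain $K$-invariant and $s_1$-admissible, can be chosen trivial near the already-treated subsegments, and have total support inside $\pi_\Delta^{-1}(N_c)$.
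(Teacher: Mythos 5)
Your proposal follows the same route as the paper's proof: you partition $[0,1]$ into subintervals on which $\Type\circ c$ is constant with at most one lower-type endpoint (the paper's $c_j^\pm$ pieces between discontinuity points $r_j$ and intermediate points $d_j$), run Lemmas~\ref{l:ExistenceTranStandardSymp} and~\ref{l:gluing2lines} inductively to make $M^{s^1}$ transition-standard over each piece, invoke Lemma~\ref{l:standardTransitionModel} for the local Lagrangians, and glue on $x$-standard overlaps by interpolating the $H(r)$ freedom from Proposition~\ref{p:standardLagModel}. The argument is correct and essentially identical to the paper's, modulo cosmetic differences in how the subdivision is described.
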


\begin{proof}
 The function $\Type(c(r))$ is discontinuous at finitely many points, say at $0 \le r_1< \dots < r_k \le 1$.
 By extending $c$ slightly, we assume $r_1>0$ and $r_k <1$.
 Pick a $d_j \in (r_j,r_{j+1})$ for $j=1,\dots,k-1$.
 Let $d_0=0$ and $d_k=1$.
 For $j=1,\dots,k$, let $c_{j}^+(r)=c|_{[r_j,d_j]}(r)$ and $c_{j}^-(-r)=c|_{[d_{j-1},r_j]}(r)$.
 By re-parametrizing the domain of $c_j^{\pm}$, we can assume that they satisfy the assumption of Lemma \ref{l:ExistenceTranStandardSymp}.
 We can apply Lemma \ref{l:ExistenceTranStandardSymp} and \ref{l:gluing2lines} to the neighborhoods of $\{\IIm(c_j^{\pm})\}_{j=1}^k$ to find
 a family of $s_1$-admissible section $(s^u)_{u \in [0,1]}$ such that $s^0=s$, for all $u$,
  $s^u=s$ outside $\pi_{\Delta}^{-1}(N_{c})$, $s^u$ is $K$-invariant,
 and $M^{s^1}$ is $(c_j^{\pm})^\circ$-transition-standard with respect to $U$ for all $(c_j^{\pm})^\circ$, where $(c_j^{\pm})^\circ$ is the set of interior points of 
 $\im(c_j^{\pm})$.
 
 For each $c_j^{\pm}$, we obtain a $K$-invariant Lagrangian $(L_t)_{c_j^{\pm}}$ by Lemma \ref{l:standardTransitionModel} such that $(I)$ and $(II)$ are satisfied.
 By definition of transition-standard, $M^{s^1}$ is $x$-standard with respect to $U$ for $x=d_0,\dots,d_k, r_1,\dots,r_k$.
 Therefore, by Proposition \ref{p:standardLagModel}, there exists neighborhoods $O_{d_j}$ of $d_j$ such that 
 $(L_t)_{c_j^{+}} \cap \pi_\Delta^{-1}(O_x)$ and $(L_t)_{c_{j+1}^{-}} \cap \pi_\Delta^{-1}(O_x)$ are given by \eqref{eq:xStandardL} for some appropriate $P,Q,H$.
 By interpolating the $H$, we can concatenate the $K$-invariant Lagrangians $(L_t)_{c_j^{+}} \cap \pi_\Delta^{-1}(O_x)$ and $(L_t)_{c_{j+1}^{-}} \cap \pi_\Delta^{-1}(O_x)$ 
 (for all $j=1,\dots,k-1$), so the result follows.

\end{proof}

\subsubsection{Transition between symplectic corner charts}
 
Since tropical curves considered in Theorem \ref{t:Construction} are not necessarily contained in a single $\coDelta$, we now want to explain the transition between different symplectic corner charts and then how the Lagrangians from  different 
symplectic corner charts can be glued together.
The key conclusion we want to draw is that being $x$-standard is independent of choice of symplectic corner 
charts when $x$ is suitably far away from $\shA$ (see Corollary \ref{c:pStandardIndep}).

Let $U_0$, $U_1$ be symplectic corner charts at vertices $v_0$ and $v_1$ of $\Delta$, respectively.
We assume that $v_0$ and $v_1$ are connected by a $1$-cell $\delta$ in $\Delta$. 
Recall that $\Delta=\Delta_X+\Delta'$ and thus both $v_0$ and $v_1$ decompose accordingly, say as
$v_0=v_0^X+v_0'$ and $v_1=v_1^X+v_1'$. 
From the description of the monodromy in \cite[Proposition 3.15]{MGross05} which involves the vector $v_0^X-v_1^X$, we see that
$v_0^X=v_1^X$ holds if and only if $\delta$ does not meet the discriminant $\shA$.
Let us assume that the reflexive polytope $\Delta_X$ has been translated so that its unique interior lattice point coincides with the origin.
By the Gorenstein assumption, the monoid $\big(\RR_{\ge 0}(\Delta-v_0)\big) \cap\ZZ^4$ is Gorenstein, i.e.~the ideal of integral points in its interior is generated by a single element and this element is $-v^X_0$ (the Gorenstein character). 
A similar statement holds if we replace $v_0$ by $v_1$. We conclude the following consequence from this observation.
\begin{lemma} 
\label{lem-character-is-indep}
Let $U_0/K_0$ and $U_1/K_1$ be corner charts at $v_0$ and $v_1$ respectively connected by an edge $\delta$ so that $\delta\cap \shA=\emptyset$, then the Gorenstein characters of $U_0/K_0$ and $U_1/K_1$ agree on the overlap $U_1/K_1\cap U_0/K_0$.
\end{lemma}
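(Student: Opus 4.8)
The plan is to unwind the definitions so that the claimed equality of Gorenstein characters becomes a statement purely about the lattice $\ZZ^4$ and the two cones $\cone_0 = \RR_{\ge 0}(\Delta - v_0)$ and $\cone_1 = \RR_{\ge 0}(\Delta - v_1)$. Recall from Section~\ref{section-orbifold-local-model} that each corner chart $U_i/K_i$ is the toric orbifold associated to the cone $\cone_i$ with respect to the ambient lattice $\ZZ^4$; its structure-group character is the cocharacter recording the action of $K_i = (\ZZ^4)^*/N_i$ on the Gorenstein monomial, which by the Gorenstein hypothesis is the unique interior lattice generator of the monoid $\cone_i \cap \ZZ^4$. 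So the first step is to identify, for each $i$, that Gorenstein character with an explicit lattice vector.

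Next I would exploit the Minkowski decomposition $\Delta = \Delta_X + \Delta'$ and the induced decompositions $v_i = v_i^X + v_i'$. By the standard fact (used in \cite{MGross05}, and recalled around Lemma~\ref{lema-isolated-orbi} via the reflexivity of $\Delta_X$) that the Gorenstein character of the cone over $\Delta$ at the vertex $v_i$ is $-v_i^X$, once $\Delta_X$ has been translated to have $0$ as its unique interior lattice point, I get that the Gorenstein character of $U_i/K_i$ \emph{is} the lattice vector $-v_i^X$. This is the place where the hypothesis $\delta \cap \shA = \emptyset$ enters: the cited description of the monodromy around $\shA$ in \cite[Proposition~3.15]{MGross05} is governed by the vector $v_0^X - v_1^X$, and vanishing of the monodromy along $\delta$ (equivalently $\delta \cap \shA = \emptyset$, as stated just above the lemma) forces $v_0^X = v_1^X$. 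Hence the two Gorenstein characters are literally the same element $-v_0^X = -v_1^X$ of $\ZZ^4$.

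Finally I would check that ``agree on the overlap $U_0/K_0 \cap U_1/K_1$'' is exactly the statement that the two characters, viewed as $S^1$-subgroup-valued (or cocharacter-valued) data on the common open torus orbit $\pi_\Delta^{-1}(\Int\delta)$-neighbourhood, coincide; since a character of a torus is determined by its underlying lattice vector, equality of the vectors $-v_0^X$ and $-v_1^X$ gives this immediately. One must be slightly careful that the identification of the overlap region inside $U_0$ with the corresponding region inside $U_1$ is the one coming from the moment map (as set up in Section~\ref{section-orbifold-local-model}, where the cone becomes standard with respect to the appropriate lattice $M_i$), so that the Gorenstein monomials are genuinely being compared on the same chart; this is where I would be most careful, but it is bookkeeping rather than a genuine difficulty.

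The main (minor) obstacle is not a hard estimate but rather pinning down the bookkeeping: correctly matching the abstract cone-and-lattice description of the corner charts with the geometric overlap inside $\PP_\Delta$, and quoting the precise form of the monodromy formula from \cite[Proposition~3.15]{MGross05} so that $\delta \cap \shA = \emptyset \iff v_0^X = v_1^X$ is applied verbatim. Once that identification is in place, the proof is a one-line consequence: both Gorenstein characters equal $-v_0^X$.
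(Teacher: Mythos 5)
Your proposal is correct and follows essentially the same route as the paper: identify the Gorenstein character of each corner chart with $-v_i^X$ via the Minkowski decomposition $\Delta = \Delta_X + \Delta'$ (with $\Delta_X$ translated so its interior lattice point is the origin), then invoke the monodromy description from \cite[Proposition~3.15]{MGross05} to deduce $v_0^X = v_1^X$ from $\delta\cap\shA=\emptyset$. One small terminological slip: the Gorenstein character is an element of the character lattice rather than a cocharacter, but your identification of it as the generator of the ideal of interior lattice points of $\RR_{\ge 0}(\Delta - v_i) \cap \ZZ^4$, hence $-v_i^X$, is exactly what the paper uses.
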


\begin{lemma}
\label{c:pStandardIndep}
Let $x \in \partial \Delta$ and $\Pi$ be the cell of $\partial \Delta$ whose interior contains $x$.
Let $v_0,v_1$ be vertices of $\Pi$ such that there exists a union of $1$-cells $\{\delta_i\}_i$ in $\Pi$ 
connecting $v_0$ and $v_1$ and $\delta_i \cap \shA=\emptyset$ for all $i$.
Let $U_0/K_0, U_1/K_1$ be the symplectic corner charts at $v_0$ and $v_1$, respectively.
If $M^s$ is $x$-standard with respect to $U_0$ then $M^s$ is $x$-standard with respect to $U_1$.
\end{lemma}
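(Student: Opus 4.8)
The plan is to show that, over the common domain of the two corner charts, the defining equation \eqref{eq:SympStandardForm} written in the coordinates of $U_1$ is \emph{literally the same} equation as the one in the coordinates of $U_0$, so that no further isotopy is needed. First I would reduce to the case where $v_0,v_1$ are the endpoints of a single edge $\delta$ of $\Pi$ with $\delta\cap\shA=\emptyset$: the hypothesis gives a chain $\delta_1,\dots,\delta_k$ of edges of $\Pi$, each disjoint from $\shA$, with successive endpoints $v_0=w_0,w_1,\dots,w_k=v_1$ vertices of $\Pi$, so once the one-edge case is known, $x$-standardness propagates along the chain. For this to make sense one must first note that all charts along the chain are simultaneously defined over the region where standardness is asserted: let $O_x$ be the neighbourhood furnished by the hypothesis, so $O_x$ meets no facet that misses $x$. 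Since $x$ lies in the relative interior of $\Pi$, the facets through $x$ are exactly those containing $\Pi$, and each of them passes through every vertex of $\Pi$; hence any facet missing $w_i$ also misses $x$, so $O_x$ avoids all such facets, i.e.\ $O_x\subset\pi_\Delta(U_{w_i}/K_{w_i})$ for all $i$, and in particular $\pi_\Delta^{-1}(O_x)$ lies in the overlap of any two of these charts.

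The heart of the matter is to recognise the left-hand side of \eqref{eq:SympStandardForm} as an intrinsic, chart-independent quantity. In the corner chart at a vertex $v$, the $j$-th symplectic coordinate $p_j$ is the affine function $l_F$ of the $j$-th facet $F$ through $v$ (Example~\ref{ex:Transforming hypersurfaces}), so $p_j(x)=0$ iff $x\in F$, i.e.\ (as $x\in\Int\Pi$) iff $F\supseteq\Pi$. Because the set of facets containing $\Pi$ is one and the same $(4-\dim\Pi)$-element set for both vertices, the amplitude $\prod_{j,\,x_j=0}\sqrt{p_j}$ equals $\prod_{F\supseteq\Pi}\sqrt{l_F}$ in \emph{either} chart, hence is the same function on $\pi_\Delta^{-1}(O_x)$. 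Similarly, from $w_j=e^{u_j+iq_j}$ one gets $q_1+q_2+q_3+q_4=\arg(w_1w_2w_3w_4)$, the argument of the Gorenstein monomial of that chart.

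It remains to compare the two Gorenstein monomials. Since $\delta\cap\shA=\emptyset$ (resp., in the unreduced picture, each $\delta_i\cap\shA=\emptyset$), Lemma~\ref{lem-character-is-indep} asserts that the Gorenstein characters of $U_0/K_0$ and $U_1/K_1$ agree on the overlap; equivalently $w_1^{(0)}w_2^{(0)}w_3^{(0)}w_4^{(0)}=w_1^{(1)}w_2^{(1)}w_3^{(1)}w_4^{(1)}$ there, so the phases $q_1^{(0)}+\dots+q_4^{(0)}$ and $q_1^{(1)}+\dots+q_4^{(1)}$ coincide mod $2\pi$ on $\pi_\Delta^{-1}(O_x)$. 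Combining this with the previous paragraph, the function $\bigl(\prod_{j,x_j=0}\sqrt{p_j}\bigr)e^{i(q_1+q_2+q_3+q_4)}$ computed in the $U_1$-coordinates agrees on $\pi_\Delta^{-1}(O_x)$ with the one computed in the $U_0$-coordinates. As the latter cuts out $M_t^s\cap\pi_\Delta^{-1}(O_x)$ by the equation ``$=tc$'' (our hypothesis), so does the former, with the \emph{same} $O_x$ and the \emph{same} constant $c$ — in particular $c\ne 0$ is preserved, handling the $\Type(x)\in\{0,1,2\}$ clause — which is precisely the statement that $M^s$ is $x$-standard with respect to $U_1$. All quantities involved are $K_0$- resp.\ $K_1$-invariant (the amplitude depends only on $p$; the Gorenstein monomial is $K$-invariant by the Gorenstein hypothesis), so the computation on the covers $U_0,U_1$ descends to the quotients, as required.

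The main obstacle is exactly the bookkeeping of the second paragraph: identifying the non-holomorphic amplitude $\prod_{j,x_j=0}\sqrt{p_j}$ with the chart-free expression $\prod_{F\supseteq\Pi}\sqrt{l_F}$, and the phase with the argument of the Gorenstein monomial. Once these two identifications are set up, the lemma is a one-line consequence of Lemma~\ref{lem-character-is-indep}; the sole role of the hypothesis $\delta_i\cap\shA=\emptyset$ is to license that lemma, i.e.\ to ensure that the monodromy of $\Lambda$ — hence the Gorenstein character $v^X$, hence the Gorenstein monomial — does not change between $v_0$ and $v_1$.
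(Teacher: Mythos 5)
Your proof takes essentially the same route as the paper's: reduce to a single edge $\delta$ with $\delta\cap\shA=\emptyset$, use Lemma~\ref{lem-character-is-indep} to match the Gorenstein phases, and observe that the ``amplitude'' factor is controlled by the facets through $\Pi$, which are the same set for either chart. The only point where you are a touch too strong is the claim that $\prod_{j,x_j=0}\sqrt{p_j}$ is \emph{literally} the same function in the two charts and hence that the constant $c$ is unchanged. The chart coordinate $p_j$ at $v_0$ and the one at $v_1$ are each obtained by pairing with an inward normal to the same facet, but the identification of the two corner charts is only an affine $\QQ$-linear map (in the orbifold case), so the two $p_j$'s that vanish on a common facet need only be \emph{constant multiples} of one another; correspondingly the constant $c$ in the two presentations can differ by a nonzero factor. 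This is what the paper records, and it is harmless: $x$-standardness only requires \emph{some} nonzero constant $c$, which is preserved under multiplication by a nonzero scalar. With that minor caveat your argument is correct and matches the paper's.
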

\begin{proof} 
It suffices to assume that $v_0$ and $v_1$ are the endpoints of a single edge $\delta$ with $\delta\cap\shA=\emptyset$. 
By the standardness-assumption on $U_0$, there is a neighbourhood $O_{x}$ of $x$ such that the hypersurface $M_t^s \cap \pi_{\Delta}^{-1}(O_{x})$ in the coordinates of $U_0$ is given by
\begin{align} \label{standard-for-U0}
  \left(\prod_{j,x_j =0} \sqrt{p_j}\right)e^{i(q_1+q_2+q_3+q_4)}=tc.
\end{align}
The Gorenstein character $e^{i(q_1+q_2+q_3+q_4)}$ descends to $U_0/K_0$ and by Lemma~\ref{c:pStandardIndep}, it agrees with the one in $U_1$.
The coordinate $p_j$ measures the distance from the corresponding facet of $\Delta$ that contains $x$. In other words, the map $p\mapsto p_j$ is given by pairing with a dual vector that is an inward normal to the facet.
This is true for both $U_1$ and $U_0$. 
Note that the interior of the facet corresponding to $x_j=0$ is necessarily contained in both $\pi_{\Delta}(U_0/K_0)$ and $\pi_{\Delta}(U_1/K_1)$ because both charts contain $x$ and $x$ lies in that facet.
Since the coordinate transformation of the $p$-coordinates from $U_0$ to $U_1$ is affine $\QQ$-linear and identifies the respective placements of the polytope, it follows that, if $x_j=0$, the respective coordinates $p_j$ for $U_0$ and $U_1$ are constant multiples of one another. If we transform \eqref{standard-for-U0} from the coordinates of $U_0$ to the coordinates of $U_1$, the left hand side takes the same shape up to multiplication by a constant which we can absorb into the constant $c$ on the right, so we see that $M^s$ is also $x$-standard with respect to $U_1$.
\end{proof}

When $\Type(x)=3$, we can remove the assumption that $v_0$ and $v_1$ are connected by a union of $1$-cells, in the following sense:

\begin{lemma}\label{l:pStandardIndepType3}
 Let $\Pi$ and $x$ be as in Lemma~\ref{c:pStandardIndep} but we assume that $\Type(x)=3$ (so $\dim(\Pi)=3$).
 Let $v_0,v_1$ be vertices of $\Pi$ and $U_0,U_1$ be the corresponding corner charts.
 If $M^s$ is $x$-standard such that the equation \eqref{eq:SympStandardForm} holds for $c=0$ with respect to $U_0$, then 
 the same is true with respect to $U_1$.
\end{lemma}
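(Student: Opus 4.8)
The plan is to observe that, for $c=0$, being $x$-standard admits a reformulation that does not mention any chart, and then to transfer this reformulation from $U_0$ to $U_1$.

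First I would unwind Definition~\ref{d:xStandard} under the hypothesis $\Type(x)=3$, so that $\Pi$ is a facet of $\Delta$; write $D\subset\PP_\Delta$ for the associated toric prime divisor, which is smooth near $\pi_\Delta^{-1}(x)$ and satisfies $\pi_\Delta^{-1}(\Int\Pi)\subset D$. In a corner chart at any vertex of $\Pi$, the facet $\Pi$ is one of the four coordinate hyperplanes through that vertex, say $\{p_j=0\}$, and then $j$ is the unique index with $x_j=0$. Since $p_j\ge 0$, the set cut out by \eqref{eq:SympStandardForm} with $c=0$, i.e.\ $\{\sqrt{p_j}\,e^{i(q_1+q_2+q_3+q_4)}=0\}$, is simply $\{p_j=0\}$, and if $O_x$ is small enough to meet no facet of $\Delta$ other than $\Pi$ then $\{p_j=0\}\cap\pi_\Delta^{-1}(O_x)=D\cap\pi_\Delta^{-1}(O_x)=\pi_\Delta^{-1}(\Pi\cap O_x)$. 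Hence ``$M^s$ is $x$-standard with $c=0$ with respect to $U_0$'' is equivalent to the chart-free statement that there is a neighbourhood $O_x\subset\pi_\Delta(U_0/K_0)$ of $x$, meeting no facet of $\Delta$ other than $\Pi$, with $M_t^s\cap\pi_\Delta^{-1}(O_x)=\pi_\Delta^{-1}(\Pi\cap O_x)$.

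Next I would check that $U_1/K_1$ also covers a neighbourhood of $x$: by \eqref{eq:cornerPolyhedron}, $\pi_\Delta(U_1/K_1)=\Delta\setminus\bigcup_{v_1\notin F}F$, and since $x$ lies in the relative interior of $\Pi$ the only facet of $\Delta$ containing $x$ is $\Pi$, which contains $v_1$ by hypothesis, so $x$ is not removed. I would then shrink the $O_x$ produced above so that in addition $O_x\subset\pi_\Delta(U_1/K_1)$; it is still a neighbourhood of $x$ meeting only the facet $\Pi$, and the identity $M_t^s\cap\pi_\Delta^{-1}(O_x)=\pi_\Delta^{-1}(\Pi\cap O_x)$ is unchanged. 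Finally I would read this identity in the symplectic coordinates of $U_1$: there $\Pi$ is again one of the coordinate hyperplanes through $v_1$, say $\{p_1=0\}$ after relabelling (and $x_1=0$ since $x\in\Pi$, while $x_2,x_3,x_4>0$ since $x\in\Int\Pi$), so the identity becomes $M_t^s\cap\pi_\Delta^{-1}(O_x)=\{p_1=0\}\cap\pi_\Delta^{-1}(O_x)$, which is precisely \eqref{eq:SympStandardForm} with $c=0$ in the chart $U_1$. This proves the lemma.

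The reason the edge-connectivity-away-from-$\shA$ hypothesis of Lemma~\ref{c:pStandardIndep} can be dropped is that the only coordinate datum whose comparison between charts required the monodromy control of Lemma~\ref{lem-character-is-indep}, namely the Gorenstein character $e^{i(q_1+\cdots+q_4)}$, disappears from the defining equation once $c=0$. I therefore expect no serious obstacle here; the only point that needs care is the bookkeeping showing that $U_1$ genuinely covers a neighbourhood of $x$, which is immediate from $v_1\in\Pi$ and $x\in\Int\Pi$.
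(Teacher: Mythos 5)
Your proof is correct and is essentially the paper's own argument, just spelled out more carefully: the $c=0$ condition reduces \eqref{eq:SympStandardForm} to $M_t^s\cap\pi_\Delta^{-1}(O_x)=\pi_\Delta^{-1}(\Pi\cap O_x)$, which is a chart-free statement, and the remaining checks (that $\pi_\Delta(U_1/K_1)$ contains a neighbourhood of $x$ and that $\Pi$ is a coordinate hyperplane in the $U_1$-chart) are exactly the bookkeeping the paper leaves implicit.
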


\begin{proof}
 The equation \eqref{eq:SympStandardForm} holds for $c=0$ implies that $M^s_t 
 \cap \pi_{\Delta}^{-1}(O_{x})$ coincides with $\pi_{\Delta}^{-1}(O_{x} 
 \cap \Pi)$, which is independent of coordinates.
 Therefore, it is true with respect to $U_0$ if and only if it is true with respect to $U_1$.
\end{proof}

\subsection{Trivalent vertex}\label{ss:trivalent}

In this subsection, we construct a local Lagrangian modeled on a trivalent vertex of a tropical curve $\gamma$.
Near the trivalent vertex, $\gamma$ is contained in a $2$-dimensional plane so we start our construction in $T^*T^2$.

\begin{lemma}\label{l:resolvingConormal}
 In $T^*T^2$, there is a Lagrangian pair of pants $L$ such that outside a compact set, $L$ coincides with the union of the negative co-normal bundles of a $(1,0)$ and $(0,1)$ curve, and
 the positive co-normal bundle of a $(1,1)$ curve.
\end{lemma}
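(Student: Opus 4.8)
The goal is to produce a single Lagrangian surface $L$ inside $T^*T^2$ (with its standard symplectic form $dp\wedge dq$, writing $T^2=\RR^2/(2\pi\ZZ)^2$ with coordinates $q=(q_1,q_2)$ and cotangent fibre coordinates $p=(p_1,p_2)$) that is diffeomorphic to a pair of pants and that, far from the zero section, agrees with three specified conormal pieces: the negative conormals of the $(1,0)$-curve $\{q_2=0\}$ and the $(0,1)$-curve $\{q_1=0\}$, and the positive conormal of the $(1,1)$-curve $\{q_1+q_2=\pi\}$. First I would pin down exactly what these conormal pieces are as subsets of $T^*T^2$: the conormal bundle of the circle $\{q_2=0\}$ is $\{q_2=0,\ p_1=0\}$, an annulus; restricting to $p_2\le 0$ gives the ``negative'' half, a half-open annulus. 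Similarly the negative conormal of $\{q_1=0\}$ is $\{q_1=0,\ p_2=0,\ p_1\le 0\}$, and the positive conormal of the antidiagonal $\{q_1+q_2=\pi\}$ is the half-annulus $\{q_1+q_2=\pi,\ p=\lambda(1,1),\ \lambda\ge 0\}$. These are precisely (up to the sign bookkeeping) the three Lagrangian cylinders $L_{e_i}$ over the outgoing edges $e_1=(1,0)$, $e_2=(0,1)$, $e_3=(-1,-1)$ of a trivalent vertex that appeared in Section~\ref{ss:LagrangianLift}; the $q_3$-circle there becomes a trivial extra factor, so I can — and would — first do the construction in $T^*T^1$ for the corresponding one-dimensional picture and then cross with $S^1$, or equivalently work directly in $T^*T^2$ but exploit the translation symmetry.

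The core of the argument is a local model at the ``corner'' where the three legs meet. Consider the three rays in the $q$-plane emanating from the points where the legs are anchored; the combinatorics is exactly that of the two triangles in \eqref{eq:2triangles}. The plan is to realize $L$ as a graph over a suitable region: over the compact ``pair of pants'' region $\Sigma$ in the base $T^2$ (the complement of three small discs around the relevant points, or rather the region bounded by the three circles $C_1=\{q_1=0\}$, $C_2=\{q_2=0\}$, $C_3=\{q_1+q_2=\pi\}$), take $L$ to be the graph of $dh$ for a smooth function $h:\Sigma\to\RR$ chosen so that near $\partial\Sigma$ the graph matches the three prescribed conormal half-annuli. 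A graph of an exact one-form is automatically Lagrangian, so the only issue is the boundary matching: near the circle $C_2=\{q_2=0\}$ we need $dh$ to approach the covector pointing in the $-p_2$ direction with $p_1=0$, i.e.\ $\partial h/\partial q_1\to 0$ and $\partial h/\partial q_2\to -\infty$ (the conormal is a full half-line in the fibre, so $h$ must blow up like $-\log(\operatorname{dist})$ or similar as we approach $C_2$); analogously at $C_1$ and at $C_3$ (with the opposite sign so as to get the \emph{positive} conormal there, which is why $C_3$ carries the $a_3=\pi$ shift and the flipped orientation). One checks that a convex-type potential with logarithmic singularities along the three boundary circles, with the singular directions being respectively $(0,-1)$, $(-1,0)$ and $(1,1)$ (note $(0,-1)+(-1,0)+(1,1)=0$, the balancing condition — this is what makes a consistent global $h$ possible), does the job; the standard model is the ``tropical pair of pants'' / the real part of $\log$-coordinates, i.e.\ $h=-\sum \epsilon_i \log|\ell_i(q)|$ for affine functions $\ell_i$ cutting out the $C_i$, smoothed in the interior, and one adjusts signs/constants so the asymptotics hit the three conormals exactly.

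The main obstacle, and where I would spend most of the care, is the \textbf{global consistency and the exact matching of the conormal legs}: (i) making sure the three logarithmic singularities can be packaged into one smooth exact $1$-form on the open pair of pants $\Sigma^\circ$ — this needs the balancing $e_1+e_2+e_3=0$ and is where the saturated-sublattice condition from Definition~\ref{def-trop-curve}(5) is implicitly used, so that the three conormal directions form a consistent ``fan''; (ii) checking that the graph of $dh$, which a priori only gives the conormal \emph{directions} asymptotically, actually coincides with the full half-annuli (not just is asymptotic to them) outside a compact set — this is arranged by choosing $h$ to be \emph{exactly} equal to $-\log$ of the distance to $C_i$ (times the right constant) in a collar of each boundary circle, and smoothly interpolating in the interior, which one can always do since there is no obstruction to extending a function from a collar of $\partial\Sigma$ over a surface with boundary; (iii) verifying the resulting surface is embedded and has the pair-of-pants topology — the embeddedness is clear because it is a graph, and the topology is that of $\Sigma$, which is a pair of pants by construction. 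I would also double-check the sign conventions so that the two ``negative'' conormals and one ``positive'' conormal arise (this is the discrepancy between the naive union $\tilde L_v\cup\bigcup L_{e_i}$ in Section~\ref{ss:LagrangianLift} — which is only a cell complex — and the smooth $L$ here, the point being that the smoothing by a graph automatically resolves the corners), and then state that crossing with the trivial $q_3$-circle, or rather noting the construction already takes place inside the $2$-torus while the tropical vertex lives in a $2$-plane, gives the Lagrangian pair of pants (times circle, in the threefold) needed for $L_v$. The routine parts I would omit: the explicit smoothing of the logarithmic potential, the Hessian-positivity computations, and the verification that the interpolation region can be taken arbitrarily small.
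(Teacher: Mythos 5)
Your plan realizes $L$ as the graph of $dh$ over a region $\Sigma$ in the base $T^2$, with $h$ blowing up logarithmically along the three circles $C_1,C_2,C_3$. The decisive problem is topological: a graph of a $1$-form is diffeomorphic to its domain, so you would need $\Sigma$ to be an embedded pair of pants in $T^2$ whose three boundary circles lie near $C_1$, $C_2$, $C_3$ respectively --- and no such $\Sigma$ exists. The complement $T^2\setminus(C_1\cup C_2\cup C_3)$ is a disjoint union of three open disks (the two triangles of \eqref{eq:2triangles} together with one hexagonal face), so no component of the complement works. More generally, there is no embedded pair of pants in $T^2$ at all with boundary circles in the homotopy classes of $C_1,C_2,C_3$: since $\chi(\Sigma)=-1$ forces $\chi(T^2\setminus\Sigma^\circ)=1$, and a compact orientable surface with exactly three boundary circles and total Euler characteristic $1$ must contain a disk component, at least one boundary circle of $\Sigma$ would have to be null-homotopic in $T^2$; but $[C_1]=(0,1)$, $[C_2]=(1,0)$, $[C_3]=(1,-1)$ are all nonzero. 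This is also visible from the paper's own $L$, which is \emph{not} a graph over the base: its projection to $(\theta_1,\theta_2)$ is a bijection over the interiors of the two triangles, but over each of the three shared corners the defining equations degenerate and the fibre is a whole segment (over $(0,0)$ they reduce to $r_1+r_2=1$). Those arcs are precisely what join the two triangular sheets into a single pair of pants, and the graph of a $1$-form can never exhibit one-dimensional fibres.

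There is a secondary gap you half-acknowledge in point (ii) but do not close. A graph with $h\approx -\log\operatorname{dist}(\cdot,C_i)$ near $\partial\Sigma$ is only \emph{asymptotic} to the conormal of $C_i$; it can never coincide with it outside a compact set, since such a graph projects onto an open neighbourhood of $C_i$ minus $C_i$ while the conormal projects onto the circle $C_i$ itself, a set of measure zero. Taking $h$ exactly equal to $-\log\operatorname{dist}$ in a collar does not change this. The paper faces the same issue and resolves it in a genuine (not routine) final step: it first produces $L$ by hyperk\"ahler rotation of the holomorphic pair of pants $\{z_1+z_2=1\}\subset(\CC^*)^2$, verifies directly that $L$ is Lagrangian and that the legs of its amoeba are asymptotic to the three conormal directions, and then uses the exactness of $\alpha|_L$ to run a compactly supported Hamiltonian isotopy, leg by leg, straightening each end onto the corresponding conormal bundle.
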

\begin{proof}
 Let $r_i,\theta_i$ be the polar coordinates of $\mathbb{R}^2 \backslash \{0\}$ for $i=1,2$. For a symplectic form on $(\mathbb{R}^2 \backslash \{0\})^2$ we use $\omega:=\sum_i d(\log(r_i)) \wedge d\theta_i$.
 Now, $T^*T^2$ is symplectomorphic to $((\mathbb{R}^2 \backslash \{0\})^2,\omega)$ by the identification $(p_i,q_i)=(\log(r_i),\theta_i)$, where the $q_i$ are the base coordinates of $T^*T^2$.
 In the complex coordinate $z_j=r_j e^{i\theta_j}$, the holomorphic pair of pants $H=\{(z_1,z_2)\mid z_1+z_2=1\}$ is given by
 $$ r_1\cos(\theta_1)+r_2\cos(\theta_2)=1,\quad r_1\sin(\theta_1)+r_2\sin(\theta_2)=0.$$
 To obtain a Lagrangian pair of pants, we use Hyperk\"ahler rotation. 
 Concretely, by transforming $\theta_1\mapsto \theta_2, \theta_2\mapsto-\theta_1$ keeping $r_1,r_2$ fixed, we know that
 $$L:=\left\{(r_1,\theta_1,r_2,\theta_2) \in (\mathbb{R}^2 \backslash \{0\})^2\,\left|\, {r_1 \cos(\theta_2)+r_2 \cos(\theta_1)=1,}\atop{-r_2 \sin(\theta_1)+r_1 \sin(\theta_2)=0}\right.\right\}$$
 is diffeomorphic to a pair of pants. 
 The three punctures corresponds to $r_1=0$, $r_2=0$ and $r_1=r_2=\infty$ respectively. We next check that $L$ is Lagrangian.
 
 The tangent space of $L$ is spanned by
 \begin{align}\label{e:tangent1}
  &\cos(\theta_1)\partial_{r_1}-\frac{\sin(\theta_1)}{r_1}\partial_{\theta_2}-\cos(\theta_2)\partial_{r_2}+\frac{\sin(\theta_2)}{r_2}\partial_{\theta_1},\\ \label{e:tangent2}
  &\sin(\theta_1)\partial_{r_1}+\frac{\cos(\theta_1)}{r_1}\partial_{\theta_2}+\sin(\theta_2)\partial_{r_2}+\frac{\cos(\theta_2)}{r_2}\partial_{\theta_1}
 \end{align}
 as can be checked by applying these to the defining equations of $L$. Computing $\omega(\eqref{e:tangent1},\eqref{e:tangent2})$ gives zero, hence $L$ is Lagrangian.

 Let $\pi:T^*T^2 \to \mathbb{R}^2$ be the projection $\pi(p_i,q_i)=(p_1,p_2)$ which is a Lagrangian torus fiber bundle. 
 Note that $\pi(L)=\pi(H)$ which is an amoeba with three legs asymptotic to the negative $p_1$ axis, the negative $p_2$ axis and the line
 $\{p_1=p_2|p_1>0\}$.
 More precisely, when $r_1 >0$ is sufficiently small, $\theta_1$ is close to $0$ and $r_2$ is close to $1$.
 The situation is similar when $r_2>0$ is sufficiently small.
 When $r_1,r_2$ are sufficiently large, we consider the equation $r_1^2+r_2^2+2r_1r_2\cos(\theta_1+\theta_2)=1$ obtained by sum of squares of two defining equations of $L$.
 It implies that $1 \ge (r_1-r_2)^2$ and $\cos(\theta_1+\theta_2)$ is close to $-1$ when $r_1,r_2$ large, which in turn implies $\frac{r_1}{r_2}$ is close to $1$
 and $\theta_1+\theta_2$ is close to $-\pi$. 
 To complete the proof, it suffices to deform $L$ to another Lagrangian $L'$ such that the three legs of $\pi(L')$ completely coincide with the asymptotic lines outside a compact set.
 
 We now explain the deformation procedure.
 One can check that $\alpha:=p_idq_i$ is exact when restricted to $L$ by showing that $\int_{c_i} \alpha=0$, where $c_i$ are simple closed loops
 wrapping around the asymptotes $r_i=0$ for $i=1,2$.
 Define 
 $$E_1:=\left\{(p_1,q_1,p_2,q_2)\left|q_1=0, p_2=0\right.\right\}$$
 which is the co-normal bundle of $\{q_1=p_1=p_2=0\} \subset \{p_1=p_2=0\}$ when we identify $\{p_1=p_2=0\}$ with the zero section of $T^*T^2$.
 In particular $E_1$ is a Lagrangian.
 The projection $\pi_1:L \to E_1$ defined by $\pi_1(p_1,q_1,p_2,q_2)=(p_1,0,0,q_2)$ is injective and submersive near the end corresponding to $p_1=-\infty$.
 By locally identifying a neighborhood of the zero section of $T^*E_1$ with an open subset of $T^*T^2$,
 $L$ can be identified as a section of $T^*E_1\ra E_1$ near $p_1=-\infty$. 
 Since we checked that $L$ is exact for $\alpha$, one can find a Hamiltonian isotopy to move this end of $L$ to $E_1$.
 For the end of $L$ corresponding to $p_2=-\infty$ and $p_1=p_2=\infty$, we can take $E_2:=\{(p_1,q_1,p_2,q_2)|q_2=0, p_1=0\}$
 and $E_3:=\{(p_1,q_1,p_2,q_2)|p_1=p_2,q_1=-\pi-q_2\}$ to substitute $E_1$,
 and $\pi_2(p_1,q_1,p_2,q_2)=(0,q_1,p_2,0)$ and $\pi_3(p_1,q_1,p_2,q_2)=(p_1,-\pi-q_2,p_1,q_2)$ to substitute $\pi_1$, respectively. 
 This completes the proof.
\end{proof}

\begin{comment}
It is instructive to see how $L$ in the previous lemma intersects the toric fibers.
For fixed $(r_1,r_2)$, we want to solve 
\begin{align*}
 &r_1 \cos(\theta_2)+r_2 \sin(\theta_1)=1\\
 & r_2 \cos(\theta_1)+r_1 \sin(\theta_2)=0
\end{align*}
A routine calculation gives
\begin{align*}
 &r_1 \cos(\theta_2)=\frac{r_1^2-r_2^2+1}{2}\\
 &r_2 \sin(\theta_1)=\frac{r_2^2-r_1^2+1}{2}\\
 & r_2 \cos(\theta_1)=\pm \sqrt{r_1^2-(\frac{r_1^2-r_2^2+1}{2})^2} \\
 &r_1 \sin(\theta_2)=\mp \sqrt{r_1^2-(\frac{r_1^2-r_2^2+1}{2})^2} 
\end{align*}
\end{comment}

By multiplying Lemma \ref{l:resolvingConormal} with a trivial $T^*S^1$ factor, we have the following.

\begin{corollary}\label{c:PoP}
  In $T^*T^3$, there is a Lagrangian pair of pants times circle $L$ such that outside a 
  compact set, $L$ coincides with the union of the negative co-normal bundles of a $(1,0,0)$-curve times a $(0,0,1)$-curve, of a $(0,1,0)$-curve times a $(0,0,1)$-curve, and
 the positive co-normal bundle of a $(1,1,0)$ curve times a $(0,0,1)$-curve.

 By applying backward Liouville flow for the standard Liouville structure on $T^*T^3$, we can assume $L$ to lie inside a small open neighborhood of the union of the zero section $T^3$ and the negative/positive co-normal bundles, and the neighborhood is as small as we want.
 \end{corollary}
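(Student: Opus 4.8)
The plan is to obtain both assertions from Lemma~\ref{l:resolvingConormal} by an essentially formal argument: a product construction for the first sentence, and a rescaling along the Liouville flow for the second, so no genuinely new geometric input is required.

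First I would write $T^*T^3=T^*T^2\times T^*S^1$ with its product symplectic form equal to the standard one on $T^*T^3$, where the $S^1$-factor carries the $(0,0,1)$-coordinate. Let $L_0\subset T^*T^2$ be the Lagrangian pair of pants furnished by Lemma~\ref{l:resolvingConormal} and let $\zeta\subset T^*S^1$ denote the zero section (a $(0,0,1)$-curve). Then $L:=L_0\times\zeta$ is a Lagrangian submanifold of $T^*T^3$ diffeomorphic to a pair of pants times a circle. The one elementary fact to record is that, for a submanifold $N\subset T^2$, the conormal bundle in $T^*T^3$ of $N\times S^1\subset T^2\times S^1=T^3$ equals the product of the conormal bundle of $N$ in $T^*T^2$ with the zero section of $T^*S^1$, and likewise for the ``positive/negative'' halves. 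Applying this with $N$ the $(1,0)$-, $(0,1)$- and $(1,1)$-curves appearing in Lemma~\ref{l:resolvingConormal} shows that outside a compact set $L$ is the union of the negative conormal bundles of a $(1,0,0)$- and a $(0,1,0)$-curve times a $(0,0,1)$-curve together with the positive conormal bundle of a $(1,1,0)$-curve times a $(0,0,1)$-curve, which is the first sentence.

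For the second sentence I would use the flow $\phi^s(q,p)=(q,e^{s}p)$ of the Liouville vector field $Z=\sum_i p_i\,\partial_{p_i}$ of $\lambda=\sum_i p_i\,dq_i$; since $(\phi^s)^*\omega=e^{s}\omega$, each $\phi^s$ carries Lagrangians to Lagrangians and $\phi^{-s}$ ($s>0$) is the backward Liouville flow. Writing $E_1,E_2,E_3$ for the three (half-)conormal bundles with which $L$ agrees outside a compact set, these are conical, hence $\phi^{-s}$-invariant, so $\phi^{-s}$ maps the non-compact part of $L$ back into $E_1\cup E_2\cup E_3$, while it contracts the compact core of $L$ (which lies in $\{|p|\le R\}$ for some $R$) into $\{|p|\le e^{-s}R\}$, an arbitrarily small neighbourhood of the zero section $T^3$. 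Thus for $s$ large enough $\phi^{-s}(L)$ lies inside any prescribed neighbourhood of $T^3\cup E_1\cup E_2\cup E_3$, and the neighbourhood can be taken as thin as we wish; replacing $L$ by $\phi^{-s}(L)$ — still a Lagrangian pair of pants times a circle with the same behaviour at infinity, since $\phi^{-s}$ is a diffeomorphism fixing the $E_j$ — completes the proof.

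The argument is routine, so the closest thing to an obstacle is of a bookkeeping nature: one must (i) check the sign/orientation conventions so that the conormal pieces of $L_0\times\zeta$ really correspond to the stated $(1,0,0)$-, $(0,1,0)$-, $(1,1,0)$-curves-times-$(0,0,1)$-curve with the claimed positive/negative halves, and (ii) verify that $\phi^{-s}$ preserves the conical ends $E_j$ \emph{exactly}, not merely up to isotopy, so that the phrase ``coincides with'' survives the flow.
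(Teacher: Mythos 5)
Your proposal is correct and follows the same route the paper takes: the paper's proof of Corollary~\ref{c:PoP} is exactly the one-line observation that one multiplies Lemma~\ref{l:resolvingConormal} by a trivial $T^*S^1$ factor, and the backward Liouville flow argument you spell out is precisely what the second sentence of the corollary already refers to. You have simply written out the two elementary facts the paper leaves implicit (compatibility of (half-)conormals with the product decomposition $T^*T^3=T^*T^2\times T^*S^1$, and conical invariance of the $E_j$ under the Liouville rescaling), both of which check out.
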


\begin{lemma}\label{l:Gluing3Legs}
Let $M^s$ be $x$-standard for some $x \in \partial \coDelta \backslash \shA$ so that 
$ \pi_{\Delta}^{-1}(O_x') \cap M_t^s$ is given by Equation \eqref{eq:SympStandardForm} for
a small neighborhood $O_x'$ of $x$.
Let $c_j:[0,1) \to O_x' \cap \partial \coDelta$ for $j=1,2,3$ be proper straight lines such that $c_j(0)=x$ for all $j$. 
Assume the directions of $c_j$ is integral linearly equivalent to $\{e_1,e_2,-e_1-e_2\}$
 with respect to the integral affine structure on $\partial \coDelta \backslash \shA$.
 Then there exists a small neighborhood $O_x \subset O_x'$ of $x$, small  neighborhoods $O_{c_j} \subset O_x'$ of $\im(c_j)$ and
 a family of proper Lagrangian pair of pants times circle $L_t \subset \pi_{\Delta}^{-1}(O_x') \cap M_t^s$, for $t>0$,
such that $L_t \cap \pi_{\Delta}^{-1}(O_{c_j})$ is $\im(c_j)$-standard outside  $\pi_{\Delta}^{-1}(O_x)$ for $j=1,2,3$.
\end{lemma}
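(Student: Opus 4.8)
The idea is to identify $M^s_t$ in a neighborhood of the trivalent vertex $x$ with (a $K$-quotient of) an open subset of $T^*T^3$ in such a way that the three legs $\im(c_1),\im(c_2),\im(c_3)$ become the conormal configuration appearing in Corollary~\ref{c:PoP}, and then to transplant the Lagrangian pair of pants times circle produced there. First I would put $M^s_t$ into normal form: by the $x$-standard hypothesis there is a symplectic corner chart $U$ in which, over a neighborhood $O_x'$ of $x$, the hypersurface $M^s_t\cap\pi_{\Delta}^{-1}(O_x')$ is cut out by \eqref{eq:SympStandardForm}. I will treat the case $\Type(x)=0$ in detail (so $x=(0,0,0,0)$, all coordinates appear under the root, and $c\neq 0$); the cases $\Type(x)=1,2,3$ are handled the same way, with each coordinate $p_j$ for which $x_j\neq 0$ contributing a disk (annulus) factor, i.e.\ a $T^*S^1$ factor, exactly as in the passage from Lemma~\ref{l:resolvingConormal} to Corollary~\ref{c:PoP}. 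Solving \eqref{eq:SympStandardForm} over $O_x'$ exhibits $M^s_t\cap\pi_{\Delta}^{-1}(O_x')$ as an open subset of $\{z_1z_2z_3z_4=\text{const}\}\cong(\CC^*)^3$; passing to $\log$-polar coordinates $(\log r_i,\theta_i)$ as in the proof of Lemma~\ref{l:resolvingConormal}, this is an open subset of $T^*T^3$ with its standard symplectic (indeed Liouville) structure. Since the $K$-action lives in the $q$-coordinates and preserves the Gorenstein character $\sum_j q_j$ (cf.\ Section~\ref{section-orbifold-local-model}), it descends to this model and the whole construction below can be carried out $K$-equivariantly.

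The next step is leg matching. In the identification above the Lagrangian $T^3$-fibration on $M^s_t$ near $x$ has fibers $\{\sum_j q_j=\mathrm{const}\}$, and for an edge $c_j$ of primitive direction $v_j\in\Lambda_x$ the $c_j$-standard Lagrangian of Proposition~\ref{p:standardLagModel}, namely a $T^2$-bundle over the amoeba curve $W\cap\coDelta$ with fiber $W^\perp_T$ (where $W=\RR v_j+\RR(1,1,1,1)$), corresponds to a small neighborhood of the conormal bundle of the $2$-torus spanned by $v_j$ and the common $q$-direction perpendicular to the plane of the three edges. Using Definition~\ref{def-trop-curve}(5) — the directions are integrally equivalent to $e_1,e_2,-e_1-e_2$ with $e_1+e_2+(-e_1-e_2)=0$ — these three $2$-tori are precisely $(1,0,0)\times(0,0,1)$, $(0,1,0)\times(0,0,1)$ and $(1,1,0)\times(0,0,1)$, with appropriate co-orientations. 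This is the bookkeeping step where I would track $W$, $W^\perp$ and the shared circle factor carefully, and verify compatibility with $\pi_\Delta$ so that the amoeba legs run out along the $O_{c_j}$.

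With the model set up I would invoke Corollary~\ref{c:PoP} to obtain a Lagrangian pair of pants times circle $L\subset T^*T^3$ coinciding outside a compact set with this conormal configuration, and use the clause there (backward Liouville flow) to shrink $L$ into the open subset identified with $M^s_t\cap\pi_{\Delta}^{-1}(O_x')$, with the three legs escaping along the $O_{c_j}$; $L$ may be taken $K$-invariant. Pulling $L$ back gives the desired family $L_t\subset\pi_{\Delta}^{-1}(O_x')\cap M^s_t$, and choosing $O_x\subset O_x'$ to contain the compact ``pants body'', $L_t\cap\pi_{\Delta}^{-1}(O_{c_j})$ equals the conormal configuration outside $\pi_{\Delta}^{-1}(O_x)$, hence is a $c_j$-standard Lagrangian; the parametrization \eqref{eq:xStandardL} together with the freedom in the function $H(r)$ lets me match it to $\im(c_j)$-standard exactly. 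Smooth dependence on $t>0$ is automatic, as both the normal-form identification and the transplanted $L$ vary smoothly with $t$, and the amoeba legs $\pi_{\Delta}(L_t\cap\pi_{\Delta}^{-1}(O_{c_j}))$ Hausdorff converge to $\im(c_j)$ as $t\to 0$.

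The main obstacle I expect is the first two steps taken together: producing an honest symplectomorphism from the neighborhood of $x$ in $M^s_t$ onto an open subset of $T^*T^3$ that is simultaneously compatible enough with $\pi_\Delta$ that ``$c_j$-standard outside $\pi_{\Delta}^{-1}(O_x)$'' is meaningful, and fine enough that the three legs land on the precise conormal configuration of Corollary~\ref{c:PoP} with the correct common circle factor — all while remaining $K$-equivariant at orbifold vertices, where one must also check (as in the type-$B$ analysis of Proposition~\ref{p:standardLagModel}) that $K$ is contained in the relevant subtorus so that $L$ can indeed be chosen $K$-invariant.
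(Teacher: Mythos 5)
Your overall strategy — reduce to a $T^*T^3$ model and invoke Corollary~\ref{c:PoP} — is the same as the paper's, and your leg-matching bookkeeping (identifying the three $2$-tori as $(1,0,0)$, $(0,1,0)$, $(1,1,0)$ times the common $(0,0,1)$ circle) agrees with theirs. You also correctly flag the normal-form step as the main obstacle, and indeed that is where your proposed argument breaks: the $\log$-polar coordinates $(\log r_i,\theta_i)$ do \emph{not} give a symplectomorphism from $M^s_t\cap\pi_\Delta^{-1}(O_x')$ (with the restriction of the toric form $\sum dp_j\wedge dq_j$, $p_j=r_j^2/2$) onto an open subset of $T^*T^3$ with its canonical form. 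Lemma~\ref{l:resolvingConormal} works with the cotangent form $\sum d(\log r_i)\wedge d\theta_i$, but the induced form on the hypersurface $\{z_1\cdots z_4=\mathrm{const}\}$ inside $(\CC^*)^4$ is a different thing; one can check that in coordinates $(p_1,p_2,p_3,q_1,q_2,q_3)$ it becomes $\sum_{i=1}^3 d(p_i-p_4(p))\wedge dq_i$, so a correct action-angle normal form exists but is not the one you wrote.

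The paper bypasses the need for any explicit global normal form. Instead, it first builds the three $\im(c_j)$-standard Lagrangians $L_j$ separately via Proposition~\ref{p:standardLagModel}, observes (using balancing, $v_1+v_2+v_3=0$) that the moment images $P_j$ all pass through the unique point $p^*$ on the diagonal ray $x+\RR_{>0}(1,1,1,1)$ lying on the hypersurface, and hence that all three $L_j$ meet the single Lagrangian torus fiber $T_{p^*}=\pi_\Delta^{-1}(p^*)\cap M^s_t$ cleanly in $2$-tori of slopes $(1,0,0)$, $(0,1,0)$, $(1,1,0)$ (times the common circle). One then takes a Weinstein neighborhood of $T_{p^*}$ — which \emph{automatically} gives the identification with a neighborhood of the zero section in $T^*T^3$, no coordinate computation needed — Hamiltonian-isotopes each $L_j$ to coincide there with the corresponding conormal, uses the $H(r)$-translation freedom of Proposition~\ref{p:standardLagModel} to line up the affine shifts, and finally splices in the pair-of-pants-times-circle from Corollary~\ref{c:PoP} (after shrinking it by backward Liouville flow to fit inside the Weinstein neighborhood). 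This is essentially your plan, but localized to one torus fiber; that localization is exactly what makes the normal-form step trivial and avoids the error above. If you want to repair your version you should replace the explicit $\log$-polar map by either the action-angle normal form or (better) by the Weinstein neighborhood argument, and also note that you only need the identification to hold near $T_{p^*}$, not over all of $O_x'$, since Corollary~\ref{c:PoP} only modifies the conormal configuration near the zero section.

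Two smaller remarks. First, your claim that ``$L$ may be taken $K$-invariant'' is correct in spirit but needs the observation that the Hamiltonian perturbations and the pair-of-pants insertion can all be done $K$-equivariantly since $K\subset T^4$ acts on the $q$-coordinates only; in the paper this is implicit because all ingredients ($L_j$, $T_{p^*}$, the Weinstein neighborhood) are already $K$-invariant. Second, you do not need the cases $\Type(x)=0,1,2,3$ to be treated separately once one works near $T_{p^*}$: the central torus fiber and its Weinstein neighborhood take the same form regardless of the stratum containing $x$.
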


\begin{proof}
By Proposition \ref{p:standardLagModel}, we can construct
$\im(c_j)$-standard Lagrangian $L_j$ in $\pi_{\Delta}^{-1}(O_x') \cap M_t^s$.
The set of $p$-coordinates of $L_j$ are determined by condition $(II)$ in  Proposition \ref{p:standardLagModel}.
Let the set of $p$-coordinates of $L_j$ be $P_j$.
Notice that $\cap_{j=1,2,3} P_j$ is a singleton given by the unique element in $\pi_{\Delta}(M^s_t)$ such that $p_1-x_1=\dots=p_4-x_4$.
Let $p^*$ be the unique element in $\cap_{j=1,2,3} P_j$ and $T_{p^*}:= \pi_{\Delta}^{-1}(p^*) \cap M_t^s$ be the Lagrangian $T^3$ in $M_t^s$.

The assumptions of the directions of $c_j$ implies that, for some choice of coordinates in $T_{p^*}$, the intersection pattern of $L_j$ with $T_{p^*}$
is exactly given by
$(1,0,0)$-curve times $(0,0,1)$-curve, $(0,1,0)$-curve times $(0,0,1)$-curve and $(1,1,0)$ curve times $(0,0,1)$-curve.
We can do a Hamiltonian perturbation of $L_j$ such that, with respect to a choice of Weinstein neighborhood of $T_{p^*}$, $L_j$ coincides with the 
negative co-normal bundles of a $(1,0,0)$-curve times $(0,0,1)$-curve, $(0,1,0)$-curve times $(0,0,1)$-curve, and
 the positive co-normal bundle of a $(1,1,0)$ curve times $(0,0,1)$-curve.

We can also adjust $T_{p^*} \cap L_j$ by parallel translate the $2$-tori using $H(u)$ in Proposition \ref{p:standardLagModel} if necessary.
Therefore, we can apply Corollary \ref{c:PoP} to glue the $L_j$ together and obtain a proper Lagrangian pair of pants times circle $L_t$.
It is clear that $L_t \cap \pi_{\Delta}^{-1}(O_{c_j})$ is $\im(c_j)$-standard outside  $\pi_{\Delta}^{-1}(O_x)$ for some small neighborhood $O_x$ of $x$.
\end{proof}

\subsection{Assembling local Lagrangian pieces away from the discriminant}

We apply the results in the previous two subsections and conclude the construction of the Lagrangian away from the discriminant.

\begin{terminology}\label{term:SolidTori}
 A solid torus is a manifold diffeomorphic to  $S^1 \times \{z\in \CC| |z| \le 1\}$.
 An open solid torus is a manifold diffeomorphic to the interior of a solid torus.
\end{terminology}

Let $\gamma$ be an admissible tropical curve (see the assumption of Theorem \ref{t:Construction}).
Let $N$ be a neighborhood of $\gamma$ and $B' \subset B \subset N$ be small open tubular neighborhoods of the ends of $\gamma$ such that the closure $\overline{B}'$ of $B'$ lies inside $B$.
In particular, we can write $B=\cup_e B_e$ and $B'= \cup_e B'_e$ where the union is taken over all the ends $e$ of $\gamma$ and $B_e$, $B_e'$ are small topological balls containing $e$.

\begin{proposition}\label{p:assemble}

Suppose there exists an $s_1$-admissible section $s$ and, for all $t>0$ small and for each end $e$,
a Lagrangian open solid torus $L_t^e$ in $\pi_{\Delta}^{-1}(B) \cap M^s_t$ 
such that $L_t^e$ is $(B_e \setminus \overline{B}'_e) \cap \gamma$-standard,
and the directions of the meridian and longitude of $L_t^e$ with respect to the integral affine structure 
are as in $L_v$ in Section \ref{ss:LagrangianLift}.
Then, for all $t>0$ sufficiently small, there is a closed Lagrangian  $L_t \subset M^s_t$
such that $L_t$ is diffeomorphic to a Lagrangian lift of $\gamma$ and $\pi_{\Delta}(L_t) \subset N$.
Moreover, we have $w(L_t)=\mult(\gamma)$.

\end{proposition}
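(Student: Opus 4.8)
The plan is to build the closed Lagrangian $L_t$ by assembling the local pieces produced in the previous subsections, using a partition-of-$\gamma$ along its edges and vertices, and then to run the \v{C}ech computation of Subsection~\ref{ss:MultiplicityWeight} to identify $w(L_t)$ with $\mult(\gamma)$. First I would cover the tropical curve $\gamma$ by the open sets $\{U_j\}$: for each trivalent vertex $v$ a neighborhood $O_v$, for each univalent vertex $e$ the ball $B_e$ from the hypothesis, and for each edge a tubular neighborhood of its interior. Using Proposition~\ref{p:GluingModels} along each edge (and its proof, concatenating $c$-standard Lagrangians via interpolation of the angular function $H$), Lemma~\ref{l:Gluing3Legs} at each trivalent vertex, and the given $L_t^e$ at each univalent vertex, I obtain, for $t>0$ small, a family of $s_1$-admissible sections $(s^u)$ deforming $s$ to some $s^1$ such that $M^{s^1}_t\cap\pi_\Delta^{-1}(N)$ contains the desired local Lagrangians. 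The point of Corollary~\ref{c:pStandardIndep} and Lemma~\ref{l:pStandardIndepType3} is that ``$x$-standard'' is independent of the choice of symplectic corner chart along edges that avoid $\shA$, so the pieces coming from different charts $U$ are literally given by the same local normal form on overlaps and glue without further work.

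The gluing on the overlaps of adjacent $U_j$'s is then the content of Remark~\ref{r:absorbAlpha} and the ``interpolate $H$'' device: over the interior of each edge, two pieces that are both $c$-standard differ only by a compactly supported fiberwise $2$-torus translation $H(r)$ with $\sum_j H_j=0$, and we choose the $H$'s consistently so that near each vertex they match the normal form used by $L_v$ or $L_t^e$. Thus one obtains a closed embedded Lagrangian $L_t$ that is, cell by cell, exactly $L_e$ over edges, $L_v$ (a Lagrangian pair of pants times circle) over trivalent vertices, and a Lagrangian solid torus over univalent vertices, with meridian/longitude framing as prescribed in Section~\ref{ss:LagrangianLift}. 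By Definition~\ref{d:LagLift}, the resulting closed $3$-manifold is by construction a Lagrangian lift of $\gamma$, and $\pi_\Delta(L_t)\subset N$ since each local piece projects into its respective $U_j\cap\partial\Delta$. Finally, to transfer this back into $M^s_t=M_t$ one applies Lemma~\ref{l:symplecticIsotopy} with $V:=\pi_\Delta^{-1}(N)$ (recall $s^u=s$ outside $V$), producing a symplectomorphism $\phi_{V,t}\colon M^s_t\to M^{s^1}_t$ whose inverse carries $L_t$ into $M^s_t$ with image still in $\pi_\Delta^{-1}(N)$.

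For the weight statement, $L_t$ is homotopy equivalent to the cell complex $\widetilde L_\gamma$ obtained by using $\widetilde L_v$ (the two glued triangles times circle) instead of $L_v$ at trivalent vertices, exactly as in Subsection~\ref{ss:MultiplicityWeight}. The covering $\{U_j\}$ is admissible in the sense of that subsection (triple overlaps empty, each $U_j\cap\gamma$ contains one vertex, pairwise overlaps contain no vertex), and from the explicit local descriptions of $L_v$ and $L_e$ in Section~\ref{ss:LagrangianLift} one reads off $H^1(\pi_\Delta^{-1}(U_i\cap U_j)\cap L_\gamma,\ZZ)\cong N/\ZZ v$ on edge overlaps, $H^1(\pi_\Delta^{-1}(U_i)\cap L_\gamma,\ZZ)\cong N$ at trivalent vertices, and $\cong w^\perp/\ZZ v$ at univalent vertices; together with the universal coefficient identification $(H_1(L_\gamma,\ZZ))_{\mathrm{tor}}=(H^2(L_\gamma,\ZZ))_{\mathrm{tor}}$ this is precisely the setup of Proposition~\ref{p:multiplicity}. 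Hence the \v{C}ech coboundary $\Phi_\gamma$ is quasi-isomorphic to the multiplicity map $\Phi$ of \cite[Equation (13)]{MR16}, giving $w(L_t)=|\coker\Phi_\gamma|_{\mathrm{tor}}=\mult(\gamma)$.

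The main obstacle is the global consistency of the gluing data: I must check that the local $H$-functions and torus-framings chosen edge-by-edge and vertex-by-vertex can be matched simultaneously, i.e.\ that the angular ambiguities of Remark~\ref{r:absorbAlpha} do not obstruct closing up the Lagrangian, and that near each univalent vertex the given $L_t^e$ meets the edge-piece in the \emph{same} $2$-torus (with the meridian a $\ker(T_\nu-\id)^\perp$ circle, as required so that capping off with a solid torus yields the prescribed diffeomorphism type). This is where the admissibility of $\gamma$ enters, via Lemma~\ref{lemma-simple} pinning down the monodromy and hence the correct meridian direction. Everything else — the symplectic isotopies, the chart transitions, the \v{C}ech bookkeeping — is a routine assembly of results already established above.
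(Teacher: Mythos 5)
Your proposal is correct and follows essentially the same route as the paper: subdivide $\gamma$, isotope $M_t^s$ to transition-standard form via Lemma~\ref{l:gluing2lines}/Proposition~\ref{p:GluingModels}, use Lemma~\ref{l:Gluing3Legs} at trivalent vertices and the given solid tori at univalent ones, check chart-independence with Corollary~\ref{c:pStandardIndep} and Lemma~\ref{l:pStandardIndepType3}, transfer back with Lemma~\ref{l:symplecticIsotopy}, and read off the weight from Proposition~\ref{p:multiplicity}. The only difference is presentational (you phrase the decomposition directly as an admissible \v{C}ech cover rather than as a finite subdivision $D\subset\gamma$), and the ``global consistency of $H$'' concern you flag is handled exactly as in the paper's proof of Proposition~\ref{p:GluingModels}: the angular interpolation freedom in \eqref{eq:xStandardL} lets one match the $H$-functions vertex by vertex.
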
 

\begin{proof}

We first explain the construction of $L_t$ and proof concept is the same as for Proposition \ref{p:GluingModels}.
Let $D:=\{d_i\}_{i=1}^K \subset \gamma \setminus B$ be a finite collection of points such that it contains all the trivalent points of $\gamma$ and all the points in $\partial \overline{B} \cap \gamma$.
By adding more points to $D$ if necessary, we can assume that every point $x$ on $\gamma$ is contained in the image of a curve $c:[0,1] \to \coDelta \setminus \shA$, for some $\coDelta$,
such that $\Type(c(0)) \le \Type(c(r))=\Type(c(1))$ for all $r \in (0,1]$.
In particular, it implies that the interval between two adjacent points $d,d'$ of $D$ (adjacent with respect to the topology on $\gamma$) is the image of such a curve $c$.
We denote the open (resp. closed) interval between two adjacent points $d,d'$ by $(d,d')$ (resp. $[d,d']$).

By repeatedly applying Lemma \ref{l:gluing2lines}, we get a new $s_1$-admissible section $s'$
such that $M^{s'}$ is $(d,d')$-transition-standard for all adjacent points $d,d' \in D$ and $s'=s$ outside $\pi_{\Delta}^{-1}(N)$.
Moreover, since $M^s$ is standard for points in $(B \setminus \overline{B}') \cap \gamma$ a priori, when we apply Lemma \ref{l:gluing2lines}, we can assume that the outcome $s'$ equals $s$ inside $\pi_{\Delta}^{-1}(B)$.

As a consequence of $M^{s'}$ being $(d,d')$-transition-standard, $M^{s'}$ is $x$-standard for all $x \in D$ (here, we use Corollary~\ref{c:pStandardIndep} and Lemma~\ref{l:pStandardIndepType3} to guarantee that being $x$-standard is independent of corner charts: if $y \in (d,d')$ has $\Type(y)=3$, we apply Lemma~\ref{l:pStandardIndepType3}; 
if $y \in (d,d')$ has $\Type(y)<3$, the assumption of Corollary~\ref{c:pStandardIndep} will be satisfied so we can apply Corollary~\ref{c:pStandardIndep}).
In particular, $M^{s'}$ is $x$-standard for all trivalent points $x$ of $\gamma$.
Let $x$ be a trivalent point of $\gamma$ and  let $d_{i_1}, d_{i_2}, d_{i_3} \in D$
be the three adjacent points of $x$ on the three incident edges of $x$, respectively.
We can apply Lemma \ref{l:Gluing3Legs} at $x$.
The result is a point $b_{i_k} \in (x,d_{i_k})$ for each $k=1,2,3$
such that, for all $t>0$ small, there exists a Lagrangian pairs of pants times circle  $L^x_t \subset M^{s'}$
such that $L^x_t $ is $(x,b_{i_k})$-standard outside the preimage of a small neighborhood $O_x$ of $x$ under $\pi_{\Delta}$.
Since $M^{s'}$ is $(x,d_{i_k})$-transition standard for all $k=1,2,3$, we can apply Lemma \ref{l:standardTransitionModel} to extend $L^x_t$
so that it becomes  $(x,d_{i_k})$-transition standard outside $\pi_{\Delta}^{-1}(O_x)$.

Now, as in the proof of Proposition \ref{p:GluingModels}, for all adjacent $d,d' \in D$ such that $d,d'$ are not trivalent points of $\gamma$, we can also construct Lagrangian local pieces in $M^{s'}$ that are $(d,d')$-transition-standard.
Moreover, we can glue these local pieces together smoothly to get, for all $t>0$ small, a closed Lagrangian $L_t$.

Since $s$ and $s'$ are interpolated by a family of $s_1$-admissible sections that is unchanged outside $\pi_{\Delta}^{-1}(N)$,
we can apply Lemma \ref{l:symplecticIsotopy} to conclude that  $L_t \subset M^{s'}_t$
can be brought back, via a symplectic isotopy, to a closed embedded Lagrangian inside $M^s_t \cap \pi_{\Delta}^{-1}(N)$.

Finally, for the diffeomorphism type and topology of $L_t$, it is clear from the construction that the diffeomorphism type of $L_t$ is governed by $\gamma$ and coincides with Definition \ref{d:LagLift}.
In particular, for a rigid $\gamma$ of genus zero, $L_t$ is a rational homology sphere and $w(L)=\mult(\gamma)$.
\end{proof}

\section{Near the discriminant}\label{s:NearSingLoci}

In this section, we explain the construction of a local Lagrangian solid torus that serves as capping off the Lagrangian 3-folds near the discriminant.
We first explain the case where $\PP_\Delta$ is a toric manifold; subsection \ref{ss:OrbifoldConstruction} reduces the more general orbifold situation to the manifold case.

Let $U$ be a symplectic corner chart.
As explained in Section \ref{ss:toricBasic} (see \eqref{eq:Transition}), we have an explicit diffeomorphism $ \Phi_U:U \to \mathbb{C}^4$ given by
\begin{align}
w_j=\exp(u_j+iv_j)=\Phi_{U,j}(z)=\exp\left(\frac{\partial f_J(p)}{\partial p_j}\right)\exp(iq_j) \label{eq:Phi}
\end{align}
where $(w_1,\dots,w_4) \in \mathbb{C}^4$, $z=(z_1,\dots,z_4)$, $z_j=\sqrt{2p_j}\exp(iq_j)$ and $\Phi_U=(\Phi_{U,1}, \dots,\Phi_{U,4})$.

Let $s_1 \in H^0(\PP_\Delta,\shL)_{\Reg}$ and $M_t:=M_t^{s_1}$ (see \eqref{eq:RegSection}).
For the purpose of capping off the Lagrangian, we will make an assumption on the shape of the discriminant near the ending. %This assumption will automatically be satisfied for all mirror quintics. 
Say the piece of the discriminant that we want to cap off the Lagrangian at is contained in the complex two-dimensional stratum $T=\{w_1=w_2=0,w_3w_4\neq 0\}$.

\begin{assumption}\label{a:singularModel}
In this section, we assume 
\begin{align}\label{eq:singularModel}
M_t \cap U=\Phi_U^{-1}(\{w_1w_2w_3w_4=tg(w)\}) 
\end{align}
where $g(w):=c(b-w_3)+w_1h_1(w)+w_2h_2(w)$ for some polynomial functions $h_1,h_2:\CC^4 \to \mathbb{C}$ and constants $b, c \in \mathbb{C}^*$.
In other words, the restriction of $g$ to $T$ is constant in $w_4$ and degree one in $w_3$.
\end{assumption}

\begin{remark}\label{r:justifyA6}
As a consequence of Lemma~\ref{lem-admissible}, the situation of Assumption~\ref{a:singularModel} is equivalent to the corner chart $U$ being based at a vertex with an adjacent two-cell that deforms to a one-cell in $\Delta_{\check X}$.
Furthermore, $g$ being locally of this form is equivalent to its amoeba $\shA$ locally being one-dimensional. By the admissibility assumption on the tropical curves that we build Lagrangians for, its univalent vertices permit a nearby vertex of $\Delta$ that is contained in a $2$-cell so that the associated chart $\Phi_U$ gives the hypersurface the form of \eqref{eq:singularModel} for $T$ the toric two-stratum associated to the two-cell in $\Delta$ that contains the univalent vertex of $\gamma$.
$\shA$ is locally of dimension one if and only if \eqref{eq:singularModel} holds.
\end{remark}

\begin{remark}[Trivalent vertex]\label{r:trivalentVertex1}
It is natural to ask whether Theorem \ref{t:Construction} can be generalized to tropical curves whose univalent vertices 
end at a codimension one part of $\shA$.
In this case, the local model is 
\begin{align}\label{eq:singularModel2}
M_t \cap U=\Phi_U^{-1}(\{w_1w_2w_3w_4=tg(w)\}) 
\end{align}
and $g(w)=c(b-b_3w_3-b_4w_4)+w_1h_1(w)+w_2h_2(w)$ for some polynomial functions $h_1,h_2:\CC^4 \to \mathbb{C}$ and constants $b, c, b_3,b_4 \in \mathbb{C}^*$.

The key difficulty for this generalization is whether one can straighten the discriminant as in Proposition \ref{p:correctSingModel}.
More details will be explained in Remark \ref{r:trivalentVertex2}.

\end{remark}

Let $g_0:=g|_T$.
Since $g_0(w_3,w_4)=g(0,0,w_3,w_4)=c(b-w_3)$, the discriminant $\Disc(s_1)$ intersected with the stratum $T$ is
\begin{align}
\Disc^0(s_1):=\Disc(s_1) \cap T =\{g_0=0\}&= \{w_3=b\}\cap T \label{eq:disc0}.
\end{align}
Let $\pi:U\ra \coDelta$ be the moment map restricted to $U$ and $\shA:=\pi(\Disc^0(s_1))$.

\begin{lemma}\label{l:fiberBundle}
 $\pi|_{\Disc^0(s_1)}$ is an $S^1$ fiber bundle over $\shA$ and the tangent space of each $S^1$-fiber is generated by $\partial_{q_4}=\partial_{v_4}$.
 Moreover, $\shA$ is an open embedded curve inside the two cell $\{p_1=p_2=0\} \subset \coDelta$
 such that $\shA$ is transverse to the slices $\{p_4=const\}$.
\end{lemma}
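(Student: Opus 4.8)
The statement is a local coordinate computation, so the plan is to work entirely inside the corner chart $U$ with coordinates $(p,q)$ and the explicit diffeomorphism $\Phi_U$ of \eqref{eq:Phi}. The key point is to express $\Disc^0(s_1)=\{w_1=w_2=0,\ w_3=b\}$ in symplectic coordinates and read off the moment map image. First I would describe the stratum $T=\{w_1=w_2=0\}$ in $(p,q)$-coordinates: here $p_1=p_2=0$ and $(p_3,q_3,p_4,q_4)$ are free, giving $T\cong\{p_1=p_2=0\}\times T^2_{q_3,q_4}$, a symplectic submanifold on which $\pi$ restricts to $(p_3,p_4)$. On $T$ we have $w_3=\exp\!\big(\tfrac{\partial f_J}{\partial p_3}\big)\exp(iq_3)$, so the equation $w_3=b$ is equivalent to the two real equations $\exp\!\big(\tfrac{\partial f_J}{\partial p_3}\big)=|b|$ and $q_3\equiv \arg b \pmod{2\pi}$.

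Next I would analyze each of these two equations. The second one just fixes $q_3$. For the first, note that along a slice $\{p_4=\text{const}\}$ inside $\{p_1=p_2=0\}$ the function $p_3\mapsto \tfrac{\partial f_J}{\partial p_3}(0,0,p_3,p_4)$ is strictly increasing, because $f_J$ is strictly convex (its Hessian $F_J$ is positive definite on $X^\circ$), hence $\partial^2 f_J/\partial p_3^2>0$; moreover as $p_3$ ranges over the admissible interval this derivative sweeps all of $\RR$ (this is the standard behaviour of the Guillemin potential near the boundary facets — $\tfrac{\partial f_J}{\partial p_3}\to-\infty$ as $p_3\to 0$ and $\to+\infty$ at the opposite facet). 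Therefore for each fixed $p_4$ there is a unique $p_3=\psi(p_4)$ solving $\tfrac{\partial f_J}{\partial p_3}(0,0,p_3,p_4)=\log|b|$, and by the implicit function theorem $\psi$ is smooth. This shows $\shA=\pi(\Disc^0(s_1))=\{(p_3,p_4):p_3=\psi(p_4),\ (0,0,\psi(p_4),p_4)\in\coDelta\}$, which is the graph of a smooth function over the $p_4$-axis; hence $\shA$ is an open embedded curve in the two-cell $\{p_1=p_2=0\}$, transverse to every slice $\{p_4=\text{const}\}$ (a graph over $p_4$ meets each such slice once, transversally).

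For the fiber bundle claim, observe that over a point $(p_3,p_4)=(\psi(p_4),p_4)\in\shA$ the fiber $\pi^{-1}(p_3,p_4)\cap\Disc^0(s_1)$ consists of those $(q_3,q_4)\in T^2$ with $q_3\equiv\arg b$; this is precisely the circle $\{q_3=\arg b\}\times\{q_4\in\RR/2\pi\ZZ\}$, whose tangent space is spanned by $\partial_{q_4}=\partial_{v_4}$. Local triviality is immediate: over a chart of $\shA$ parametrized by $p_4$, the map $(p_4,q_4)\mapsto(0,0,\psi(p_4),\arg b,p_4,q_4)$ is a diffeomorphism onto the preimage, so $\pi|_{\Disc^0(s_1)}$ is an $S^1$-bundle over $\shA$. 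Assembling these three observations — the $q_3$-constraint, the strict convexity giving the graph $\psi$, and the trivialization — yields the lemma.

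\textbf{Main obstacle.} The only non-formal input is the monotonicity-and-surjectivity of $p_3\mapsto\partial f_J/\partial p_3$ along the boundary slice, i.e.\ that $f_{J,can}$ restricted to $\{p_1=p_2=0\}$ still behaves like a Guillemin potential in the remaining variables with the correct boundary asymptotics. I would justify this by noting that the facets of $\coDelta$ meeting the two-cell $\{p_1=p_2=0\}$ restrict to facets of that two-cell, so the sum $\tfrac12\sum_r l_r\log l_r$ restricts to the corresponding canonical potential of the two-dimensional polytope (plus smooth terms from the facets $\{p_1=0\}$, $\{p_2=0\}$, which contribute $\tfrac12 p_1\log p_1+\tfrac12 p_2\log p_2$ and vanish with all $p_3,p_4$-derivatives on the slice), and invoke Theorem~\ref{t:Guillemin}/Example~\ref{ex:standard} for the convexity and boundary behaviour. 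Everything else is bookkeeping with \eqref{eq:Phi} and \eqref{eq:disc0}.
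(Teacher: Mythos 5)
Your proof is correct, but it takes a route that differs from the paper's main argument; it in fact matches the alternative proof that the anonymous referee suggests in the remark immediately following the lemma. The paper's own proof first translates $w_3=b$ into the pair of real conditions $u_3=\mathrm{const}$, $v_3=\mathrm{const}$ to get the $S^1$-invariance (as you also do), then merely observes ``since $f_J$ is smooth, $\shA$ is a smooth connected curve,'' and finally establishes transversality by parametrizing $\shA$ as $(0,0,f_1(r),f_2(r))$, noting that $\Disc^0(s_1)$ is a symplectic submanifold of $T$ with tangent space spanned by $\{f_1'\partial_{p_3}+f_2'\partial_{p_4},\ \partial_{q_4}\}$, and computing $\omega(f_1'\partial_{p_3}+f_2'\partial_{p_4},\partial_{q_4})=f_2'\neq 0$. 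Your approach instead invokes the positive-definiteness of $\Hess_p(f_J)$ (in particular $\partial^2 f_J/\partial p_3^2>0$ on the slice $\{p_1=p_2=0\}$) and the implicit function theorem to show directly that $\shA$ is a graph $p_3=\psi(p_4)$, from which both the embedded-curve and transversality claims are automatic. The two routes buy different things: the paper's symplecticity argument is an elegant global consequence of the complex geometry of $\Disc^0(s_1)\subset T$ and avoids discussing boundary behaviour of the potential, while yours is more self-contained at the cost of requiring a brief justification (which you give) that the restricted potential on the two-cell still has $\partial^2 f_J/\partial p_3^2>0$ there; your route also tightens the paper's slightly terse claim that smoothness of $f_J$ alone implies $\shA$ is a curve, since smoothness of the level-defining function without a nondegeneracy hypothesis would not give that — the Hessian entry is exactly the needed regularity, and the remark in the paper acknowledges this.

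One small caution: the surjectivity of $p_3\mapsto\partial f_J/\partial p_3$ onto all of $\RR$ for each fixed $p_4$ is more than you strictly need (and may require a little care since $\coDelta$ is a truncation of $\Delta$ that is open on the far side). For the lemma it suffices that $\shA$ is nonempty (which it is, since $\Disc^0(s_1)\neq\emptyset$) and that wherever it is nonempty the implicit function theorem applies; uniqueness of $p_3$ for each admissible $p_4$ then follows from strict monotonicity alone, and the conclusion is unchanged.
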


\begin{proof}
Since $\Disc(s_1) \cap T$ is connected, so is its projection $\shA$.
Inserting $w_j=e^{u_j+iv_j}$ into $g_0(w_3,w_4)=g(0,0,w_3,w_4)$ and taking logarithm yields that $\Disc(s_1)\cap T$ is given by
$u_3=const$ and $v_3=const$, so it is invariant under the
subtorus action $\{(0,0,0, \vartheta) \in T^4| \vartheta \in S^1\}$. 
This proves the first statement of the lemma.

The curve $\shA$ in $p$-coordinates is found by inserting $u_3=\frac{\partial f_J(p)}{\partial p_3}$ into $u_3=const$ and since $f_J$ is a smooth function,  $\shA$ is a smooth connected curve.
Let $p=(0,0,f_1(r),f_2(r))$ be a parametrization of $\shA$.
Note that 
$\Disc(s_1)\cap T$ is symplectic with tangent space generated by 
$\{f_1'(r) \partial_{p_3}+f'_2(r) \partial_{p_4}, \partial_{q_4}\}$.
It means that $\omega(f_1'(r) \partial_{p_3}+f'_2(r) \partial_{p_4},\partial_{q_4})=f_2'(r) \neq 0$ for all $r$, so 
$\shA$ is transverse to the slices $\{p_4=const\}$.
\end{proof}

\begin{remark}
An alternative proof of Lemma \ref{l:fiberBundle} suggested by an anonymous referee is as follow:
the Hessian of $f_J$ is positive definite so $\frac{\partial^2 f_J(p)}{\partial p_3^2} >0$, and therefore 
 $\shA=\{u_3=const\}$ is a curve as claimed.
\end{remark}

We consider a straight line segment $\gamma(r)=(0,0,r,R) \in \coDelta$ for some fixed $R \in \RR_{>0}$ parametrized by $r\in (r_0, r_1]$, 
inside the $2$-cell $\{p_1=p_2=0\}=\pi(T)$, such that $0<r_0<r_1$ and 
$\gamma(r) \in \shA\iff r=r_1$. (see Figure \ref{fig:TropicalNearSing}).

\begin{figure}[htb]
\begin{tikzpicture}[scale=1.2]
\begin{axis}[
  width=1.8cm,
  height=4cm,
  hide axis,
  axis lines=left,
  clip=false,
  xticklabels=\empty,
  yticklabels=\empty,
]
%\fill[fill=black!10,rounded corners=0.5mm] (axis cs: 1, 1) rectangle (axis cs: 5, 7);
\draw[black!10,fill=black!10] (axis cs: 3.25,4.2) ellipse (1cm and .9cm);
\node [align = center, anchor = west, black!35] at (axis cs: 0.8,7.8) {$N$};
\fill[fill=black!16,rounded corners=0.6mm] (axis cs: 2, 2) rectangle (axis cs: 4.5, 6);
\node [align = center, anchor = west, black!45] at (axis cs: 1.6,6.8) {$N'$};
%\fill[fill=black!22,rounded corners=0.7mm] (axis cs: 2.5, 3) rectangle (axis cs: 3.5, 5);
%\node [align = center, anchor = west, black!55] at (axis cs: 2.4,5.8) {$N''$};

\addplot+[thick,mark=none,samples=200,domain=0:10] ({-1/5*sqrt(4*x^2 - 40*x + 475) + 8},{x});
\node [align = center, anchor = west, blue] at (axis cs: 3.8,8.5) {$\mathcal{A}$};
\draw [draw=black] (axis cs: 0,0) -- (axis cs: 5,0);
\draw [draw=black] (axis cs: 7,5) -- (axis cs: 5,0);
\draw [draw=black] (axis cs: 7,5) -- (axis cs: 5,10);
\draw [draw=black] (axis cs: 0,10) -- (axis cs: 5,10);
\draw [draw=black] (axis cs: 0,10) -- (axis cs: 0,0);
\draw [thick, draw=red] (axis cs: 0,4) -- (axis cs: 4.106415533,4);
\node [align = center, anchor = west, red] at (axis cs: 0,3) {$\gamma$};
\end{axis}
\end{tikzpicture}
\qquad
\begin{tikzpicture}[scale=1.2]
\begin{axis}[
  width=1.8cm,
  height=4cm,
  hide axis,
  axis lines=left,
  clip=false,
  xticklabels=\empty,
  yticklabels=\empty,
]
\draw[black!10,fill=black!10] (axis cs: 3.25,4.2) ellipse (1cm and .9cm);
\node [align = center, anchor = west, black!35] at (axis cs: 0.8,7.8) {$N$};
\fill[fill=black!16,rounded corners=0.6mm] (axis cs: 2, 2) rectangle (axis cs: 4.5, 6);
\node [align = center, anchor = west, black!45] at (axis cs: 1.6,6.8) {$N'$};
%\fill[fill=black!22,rounded corners=0.7mm] (axis cs: 2.5, 3) rectangle (axis cs: 3.5, 5);
%\node [align = center, anchor = west, black!55] at (axis cs: 2.4,5.8) {$N''$};

\addplot+[thick,mark=none,samples=200,domain=0:10] ({-1/5*sqrt(4*x^2 - 40*x + 475) + 8},{x});
\node [align = center, anchor = west, blue] at (axis cs: 3.8,8.5) {$\mathcal{A}$};
\path [fill,fill=black!16] (axis cs: 3.8,2) rectangle (axis cs: 4.4,6);
\draw [thick,draw=blue] (axis cs: 4.106415533,2.6) -- (axis cs: 4.106415533,5.4);
\draw [thick,draw=blue] (axis cs: 4.106415533,2.6) .. controls (axis cs: 4.1,2.45) and (axis cs: 3.9,2.3)  .. (axis cs: 3.94537301345734,2);
\draw [thick,draw=blue] (axis cs: 4.106415533,5.4) .. controls (axis cs: 4.16,5.7) .. (axis cs: 4.10641553321364,6);
\draw [draw=black] (axis cs: 0,0) -- (axis cs: 5,0);
\draw [draw=black] (axis cs: 7,5) -- (axis cs: 5,0);
\draw [draw=black] (axis cs: 7,5) -- (axis cs: 5,10);
\draw [draw=black] (axis cs: 0,10) -- (axis cs: 5,10);
\draw [draw=black] (axis cs: 0,10) -- (axis cs: 0,0);
\draw [thick, draw=red] (axis cs: 0,4) -- (axis cs: 4.106415533,4);
\node [align = center, anchor = west, red] at (axis cs: 0,3) {$\gamma$};
\end{axis}
\end{tikzpicture}
 \caption{The straight line segment $\gamma$ inside the two cell $\{p_1=p_2=0\}$. The transition from the left to the right image is addressed in Section~\ref{ss:Correcting discriminant} below.}
 \label{fig:TropicalNearSing}
\end{figure}
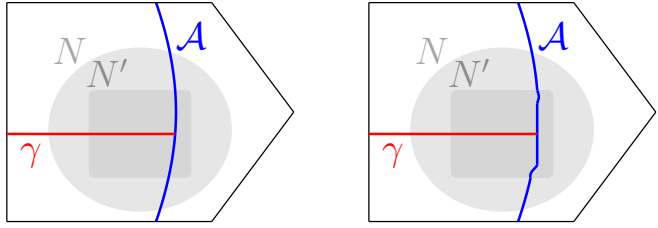

The main result we want to prove in this section is:

\begin{theorem}[Lagrangian solid tori]\label{t:LagrangianSolidTori}
Let $s$ be an $s_1$-admissible section.
 For any neighborhood $N \subset \coDelta$ of $\gamma(r_1)$, there exist $r'<r''<r_1$ with $\gamma([r',r_1])\subset N$,
 and a family of $s_1$-admissible section $(s^u)_{u \in [0,1]}$
 such that $s^0=s$, for all $u$, $s^u=s$ outside $\pi_{\Delta}^{-1}(N)$ and $M^{s^1}$ is $x$-standard with respect to $U$ for all $x \in \gamma([r',r''])$.

Moreover there exists a neighborhood $N' \subset N$ of $\gamma((r',r_1])$
such that $\gamma((r',r_1])$ is proper in $N'$ and there exists
a family of proper Lagrangian open solid tori $L_t \subset (M^{s^1}_t \cap \pi^{-1}(N'))$, for all $t>0$ sufficiently small,
such that $L_t$ is $(\gamma((r',r'')))$-standard (see Definition~\ref{def-Lag-standard} and Terminology \ref{term:SolidTori}). 
\end{theorem}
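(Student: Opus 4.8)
The plan is to split the statement into a \emph{deformation-to-normal-form} part and a \emph{filling} part, the latter following the usual philosophy: isolate an exact symplectic domain $V$ inside $M^{s^1}_t$ near $\gamma(r_1)$ which fibers over an annulus by open $4$-balls, and fill a Legendrian torus in $\partial V$ by a family of Lagrangian disks over the core circle. \textbf{Step 1 (deformation and normal form).} Using Assumption~\ref{a:singularModel} and the explicit Kähler potential, first produce a symplectic normal form for the \emph{fixed} model $M^{s_1}_t$ in a small tube around $\gamma(r_1)$: a symplectomorphism straightening $\Disc^0(s_1)$ and bringing the hypersurface to the model $w_1w_2w_3w_4=tc(b-w_3)$ near $T$ (this is the content of the forthcoming Proposition~\ref{p:correctSingModel}; note $s_1$-admissibility forbids changing $s$ near $\Disc(s_1)$, so this straightening must be a symplectomorphism, not a deformation). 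Independently, since $\gamma([r',r''])$ is disjoint from $\shA$, run the argument of Lemma~\ref{l:ExistenceStandardSymp} to absorb the nonvanishing factor of the defining equation, producing a family $(s^u)_{u\in[0,1]}$ of $s_1$-admissible sections supported in $\pi_\Delta^{-1}(N)$ with $s^0=s$ and $M^{s^1}$ $x$-standard for all $x\in\gamma([r',r''])$, and interpolate along the neck $\gamma((r'',r_1))$ so that $M^{s^1}_t$ matches the normal form near $\gamma(r_1)$. Shrinking, pick $r'<r''<r_1$ with $\gamma([r',r_1])\subset N$.

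\textbf{Step 2 (the ball fibration with contact boundary).} Choose a thin tube $N'\subset N$ around $\gamma((r',r_1])$ and let $V$ be the exact symplectic domain cut from $M^{s^1}_t\cap\pi^{-1}(N')$ by removing a neighbourhood of the $2$-torus-bundle region (Proposition~\ref{p:fibrationAfterGoodDeform}). Using the normal form near $\gamma(r_1)$ together with the $x$-standardness near the opposite end, exhibit a projection $V\to\mathcal A$ onto an annulus $\mathcal A$ whose fibers are exact symplectic fillings of the \emph{standard} contact $S^3$, with complete control along $\partial V$. By Gromov's theorem (Theorem~\ref{t:filling}) each fiber is symplectomorphic to the open $4$-ball $B^4\subset\CC^2$, and the control along $\partial V$ makes these identifications coherent over $\mathcal A$, so $V\to\mathcal A$ is a standard $B^4$-bundle near $\partial V$.

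\textbf{Step 3 (Legendrian torus and Lagrangian solid torus).} The boundary $2$-torus $\Sigma$ of the $(\gamma((r',r'')))$-standard Lagrangian torus bundle constructed in Section~\ref{s:AwayFromSing} (via Proposition~\ref{p:standardLagModel}) lies in $\partial V$; after a $C^\infty$-small Hamiltonian perturbation — using that $V\to\mathcal A$ is a standard ball bundle near $\partial V$ — arrange $\Sigma$ to be Legendrian and to meet each fiber-sphere $S^3$ in a standard Legendrian circle, so that $\Sigma$ is an $S^1$-bundle over the core circle of $\mathcal A$. In each $B^4$-fiber the standard Legendrian circle in $S^3$ bounds an explicit Lagrangian disk (a soft construction requiring no holomorphic curves, the content of Proposition~\ref{p:LagrangianConstructionSingLoci}); performing this in a family over the core circle of $\mathcal A$ yields a proper Lagrangian open solid torus $L_t\subset V\subset M^{s^1}_t\cap\pi^{-1}(N')$ for all small $t>0$. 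By construction $L_t$ agrees with the torus bundle near the $\gamma((r',r''))$-end, hence is $(\gamma((r',r'')))$-standard in the sense of Definition~\ref{def-Lag-standard} and Terminology~\ref{term:SolidTori}; moreover its meridian is the collapsing circle of the fibration near $\Disc^0(s_1)$, characterized through the monodromy as in Lemma~\ref{l:fiberBundle}, which is exactly the meridian prescribed for $L_v$ in Section~\ref{ss:LagrangianLift} — the form in which this theorem is later consumed by Proposition~\ref{p:assemble}. Finally, applying Lemma~\ref{l:symplecticIsotopy} transports $L_t$ back to $M^s_t$ if desired.

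\textbf{Main obstacle.} The crux lies entirely in Steps~1--2: straightening the discriminant (Proposition~\ref{p:correctSingModel}) and then extracting from $V$ a fibration by standard $4$-balls with enough control along $\partial V$ to identify the boundary spheres and invoke Gromov (Proposition~\ref{p:fibrationAfterGoodDeform}). As flagged in Remark~\ref{r:trivalentVertex1}, it is precisely the straightening step that resists generalization to univalent vertices ending on a codimension-one part of $\shA$; by contrast everything in Step~3 is soft.
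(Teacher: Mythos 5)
Your outline reproduces the paper's overall strategy faithfully: straighten the discriminant via a Hamiltonian diffeomorphism (Proposition~\ref{p:correctSingModel}), deform the section to normal form and to $x$-standardness on $\gamma([r',r''])$ (Lemma~\ref{l:tran}, Proposition~\ref{p:fibrationAfterGoodDeform}), exhibit the ball fibration over an annulus and invoke Theorem~\ref{t:filling}, and then fill a Legendrian torus in $\partial V$. But Step~3 contains a genuine gap. You describe Proposition~\ref{p:LagrangianConstructionSingLoci} as the fibre-wise fact that a standard Legendrian unknot in a fibre $S^3$ bounds a Lagrangian disk, and then assert that ``performing this in a family over the core circle of $\mathcal A$ yields a proper Lagrangian open solid torus.'' This does not follow. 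A fibre-wise family of Lagrangian disks is \emph{not} a Lagrangian in the total space; to obtain a Lagrangian you must build the solid torus by \emph{symplectic parallel transport} of a single disk around the core circle, and for that to close up you need the symplectic monodromy of $\pi$ around the circle to be trivial. Your control near $\partial V$ (symplectic triviality near the horizontal boundary) says nothing about monodromy acting on the interior of the ball fibres. The technical heart of Proposition~\ref{p:LagrangianConstructionSingLoci} is precisely this: one passes to the symplectic completion $\Comp(M_t\cap U_C)\to\CC^*$, invokes Gromov's theorem that $\Sympl_c(\RR^4,\omega_{\std})$ is contractible, and performs a compactly supported exact symplectic deformation that kills the monodromy before parallel-transporting the disk; at the end one runs the Liouville flow backward to land inside the original domain. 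Without this step the ``family of disks'' need not be Lagrangian, and the solid torus need not exist. (It also means the step is not ``no holomorphic curves'': Gromov's contractibility result is hard.)

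Two lesser points. The paper inserts an integral linear transform $\Psi$ (\S\ref{ss:Integral linear transform}) between Assumption~\ref{a:singularModel} and the straightening $\phi_H$, which decouples the Gorenstein coordinate and makes the hypersurface equation \eqref{Psi-transform-of-pencil} depend on $z_4$ only through the lower-order terms; you skip this, and the subsequent computations (in particular the identification of the meridian and of the plane $W$) are awkward without it. And to establish that $\pi\colon M_t\cap U_C\to A$ is in fact a symplectic fibration you need the Donaldson-type estimate (Propositions~\ref{p:Donaldson}, \ref{p:fibration}): near $\Disc(s_1)$ one cannot make the hypersurface literally $z_4$-independent, so one must argue that removing the $z_4$-derivative of $g_U$ from $DG^t$ keeps the kernel symplectic; citing Proposition~\ref{p:fibrationAfterGoodDeform} alone hides this. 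Finally, the isotopy returning the Lagrangian from $\hat M_t^{s^1_{\tran}}$ to $M_t^{s}$ involves \emph{both} undoing $\phi_H$ (which is not a deformation of sections and is not covered by Lemma~\ref{l:symplecticIsotopy}) and the section-isotopy of Lemma~\ref{l:symplecticIsotopy}; you mention only the latter.
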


Note that, in Theorem \ref{t:LagrangianSolidTori}, $L_t$ being $(\gamma((r',r'')))$-standard and proper in $(M^{s^1}_t \cap \pi^{-1}(N'))$
implies that the infinite end of $L_t$ is contained in $\pi^{-1}(\gamma((r',r'')))$.
We will use this property to glue $L_t$ with the standard Lagrangian models constructed in Section \ref{s:AwayFromSing}
to conclude  the proof of Theorem \ref{t:Construction}, eventually.

\subsection{Lagrangian construction near the discriminant under assumptions}\label{ss:CloseUp}

In this section, we give the construction of a Lagrangian solid torus under two additional assumptions on $M_t$ near $\Disc(s_1)\cap T$, and we later show how to reduce the general case to this case.
%This section is devoted to various lemmata building towards the Lagrangian construction near discriminant.
%More precisely, we will construct a Lagrangian solid torus in $(M_t^1)_\beta$ with Legendrian torus boundary (see Proposition \ref{p:LagrangianConstructionSingLoci}).
%Together with Proposition \ref{p:fibrationAfterGoodDeform}, we will have a Lagrangian solid torus inside  $\{\widehat{s}_0=ts^1\}$ 
%whose $\pi_\Delta$-image lies inside a small neighborhood of the tropical curve $\widehat{\gamma}_U$.
We start with some preliminaries about contact geometry and Legendrian submanifolds.

\subsubsection{Digression into contact geometry}
Let $(P,\omega)$ be a compact symplectic manifold with boundary.
A Liouville structure on $(P,\omega)$ is a choice of $\alpha \in \Omega^1(P)$ such that $d\alpha=\omega$
and that the vector field $Z$, $\omega$-dual to $\alpha$ (i.e.~$\iota_Z\omega=\alpha$), points outward along $\partial P$.
The triple $(P,\omega,\alpha)$ is called a Liouville domain.

\begin{example}\label{ex:ball}
 Let $(B^{2n},\sum r_jdr_j \wedge d\theta_j)$ be the standard symplectic closed ball.
 We can pick $\alpha=\sum\frac{r_j^2}{2}d\theta_j$.
 In this case, $Z=\sum\frac{r_j}{2}\partial_{r_j}$ points outward along $\partial B^{2n}$.
\end{example}

Given a Liouville domain $(P,\omega,\alpha)$, $(\partial P, \ker(\alpha|_{\partial P}))$ is a contact manifold (see e.g.~\cite{GeigesBook}, \cite{McDuffSalamon})
and we call it the contact boundary of $(P,\omega,\alpha)$.
The contact boundary of the Liouville domain in Example \ref{ex:ball} is called the standard contact sphere $(S^{2n-1},\xi_{std})$.
In general, there are many contact structures one can put on an odd-dimensional manifold even if one restricts to those that arise as the contact boundary of a Liouville domain.
In contrast, there is a unique contact structure on the $3$-dimensional sphere (up to contactomorphisms) which can be the contact boundary of a Liouville domain, namely, the standard one (see \cite{Eliashberg3Sphere}).

\begin{theorem}[see \cite{Eliashberg90}, and also Theorem $1.7$ of \cite{McDuff90}]\label{t:filling}
 If $(P,\omega,\alpha)$ is a Liouville domain with its contact boundary being the standard contact $3$-sphere,
 then $(P,\omega,\alpha)$ is symplectic deformation equivalent to the standard symplectic closed $4$-ball.
\end{theorem}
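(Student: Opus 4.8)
This is Gromov's theorem that the standard contact three-sphere admits an essentially unique symplectic filling, in the refined form of Eliashberg and McDuff; the plan is to follow the pseudoholomorphic-curve route of \cite{Eliashberg90}, \cite{McDuff90}. First I would close up $(P,\omega,\alpha)$: attach the positive symplectization $([1,\infty)\times\partial P,\,d(r\,\alpha|_{\partial P}))$ to obtain a complete symplectic manifold whose end is a standard symplectization of $(S^3,\xi_{std})$. Since $(S^3,\xi_{std})$ is also the concave contact boundary of a symplectic neighbourhood $N_C$ of a complex line $C$ in $\CC\PP^2$ (equivalently, of $\CC\PP^2$ with an open Darboux ball removed), and since the standard structure is the unique tight contact structure on $S^3$ \cite{Eliashberg3Sphere}, I would glue $P$ to $N_C$ along this contact hypersurface — invoking the standard neighbourhood theorem for symplectic spheres to make $N_C$ unambiguous — producing a closed symplectic four-manifold $(X,\omega_X)$ that contains the symplectically embedded sphere $C$ with $[C]^2=+1$ and in which $P=X\setminus N_C$.

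Next I would analyse $X$ by holomorphic spheres. I would pick an $\omega_X$-tame almost complex structure $J$ for which $C$ is $J$-holomorphic. In real dimension four, automatic regularity, positivity of intersections, and Gromov compactness (no bubbling is possible in the minimal-area class $[C]$) imply that the moduli space of $J$-holomorphic spheres homologous to $C$ is a smooth closed surface whose members foliate $X$ away from finitely many nodal fibres; this presents $(X,\omega_X)$ as a symplectic blow-up of $\CC\PP^2$ at $k\ge 0$ points, with the exceptional spheres $J$-holomorphic and disjoint from $C$, hence contained in $P$ after shrinking $N_C$. Here exactness is decisive: $\omega_X|_P=d\alpha$, so Stokes forbids any closed symplectic surface in $P$, in particular any symplectically embedded exceptional sphere; therefore $k=0$ and $(X,\omega_X)$ is symplectomorphic to $(\CC\PP^2,\lambda\,\omega_{FS})$ with $C$ a line. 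Consequently $P=X\setminus N_C$ is diffeomorphic to the closed ball, and the identification can be taken through a path of symplectic forms (the space of Liouville fillings of $(S^3,\xi_{std})$, equivalently of symplectic forms on $\RR^4$ that are standard near infinity, is connected), which gives the claimed symplectic deformation equivalence.

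The technical heart — and the step I expect to be the main obstacle if one wanted a self-contained argument — is the holomorphic-sphere package in the second paragraph: controlling bubbling via Gromov compactness in the non-exact closed manifold $X$ and extracting the foliation, which is precisely the content of \cite{McDuff90}. A secondary point requiring care is the gluing in the first paragraph, namely that a neighbourhood of a symplectic sphere of self-intersection $+1$ is symplectically standard, so that the cap $N_C$, and hence $X$, are well defined up to symplectomorphism.
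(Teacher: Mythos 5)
The paper does not prove this statement: it records it as a known theorem and points to \cite{Eliashberg90} and to Theorem~1.7 of \cite{McDuff90} for the argument. So there is no ``paper's own proof'' to compare against. Your sketch is a correct account of precisely that cited argument: cap $P$ with the concave filling $N_C=\CC\PP^2\setminus \mathrm{int}(B^4)$ of $(S^3,\xi_{std})$ to get a closed symplectic $4$-manifold containing a $+1$-sphere $C$; run McDuff's pseudoholomorphic-sphere analysis in the class $[C]$ to identify the result with a blow-up of $\CC\PP^2$; observe that exactness of $\omega$ on $P$ rules out symplectic exceptional spheres, so there are no blow-ups and $P$ is the complement of $N_C$ in $\CC\PP^2$, i.e.\ a ball; and get deformation equivalence (rather than just symplectomorphism after rescaling) from the connectedness of the space of fillings, which ultimately rests on Gromov's theorem that $\mathrm{Symp}_c(\RR^4,\omega_{std})$ is contractible. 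One cosmetic remark: in your first paragraph you both complete $P$ by an infinite symplectization and then glue $N_C$; you need only one of these. It is cleaner to glue $(P,\partial P)$ directly to $(N_C,\partial N_C)$ along the common contact hypersurface $(S^3,\xi_{std})$, using the collar from the Liouville structure on the convex side and the symplectic neighbourhood theorem for the $+1$-sphere on the concave side. With that adjustment, your proposal faithfully reproduces the standard proof from the references the paper cites.
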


A knot $K$ in $(S^{3},\xi_{std})$ is called Legendrian if $T_pK \subset \xi_{std}$ for every point $p \in K$.
A Legendrian unknot is a Legendrian knot such that its underlying smooth knot type is an unknot.

\begin{example}\label{ex:StandardUnknot}
 Let $K \subset (S^3,\xi_{std}) \subset \RR^4$ be the intersection of $(S^3,\xi_{std})$ with a Lagrangian vector subspace of $(\RR^4,\omega_{std})$.
 Then $K$ is a Legendrian unknot and we call it a standard Legendrian unknot.
\end{example}

The Legendrian isotopy type of a Legendrian unknot is classified by its Thurston-Bennequin number and rotation number (see \cite{EliashbergFraser}, and also \cite[Section $5$]{Etnyre05} for more about these background materials).
There is exactly one Legendrian unknot with Thurston-Bennequin number $-1$ up to Legendrian isotopy and it is realized by the standard Legendrian unknot.
By the Thurston-Bennequin inequality, a Legendrian unknot can bound an embedded Lagrangian disk in $(B^4,\omega_{std})$
only if its Thurston-Bennequin number is $-1$.
The converse is also well-known to be true:

\begin{lemma}[Bounding a Lagrangian disk]\label{l:closing}
 Let $(P,\omega,\alpha)$ be a Liouville domain with contact boundary $(S^{3},\xi_{std})$.
 If $\Lambda \subset (\partial P,\ker(\alpha))$ is Legendrian isotopic to the standard Legendrian unknot, then there is an embedded Lagrangian disk $D \subset (P,\omega)$
 such that $\partial D =D \cap \partial P= \Lambda$
\end{lemma}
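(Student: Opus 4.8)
The strategy is to combine Gromov's filling theorem (Theorem~\ref{t:filling}) with a soft "isotopy extension" argument for Legendrian unknots. By Theorem~\ref{t:filling}, $(P,\omega,\alpha)$ is symplectic deformation equivalent to the standard closed $4$-ball $(B^4,\omega_{\std},\alpha_{\std})$; in particular there is a symplectomorphism $\Psi\colon P\to B^4$. Composing with $\Psi$, we may assume from the outset that $(P,\omega,\alpha)=(B^4,\omega_{\std},\alpha_{\std})$ and $\Lambda\subset(S^3,\xi_{\std})$ is Legendrian isotopic to the standard Legendrian unknot $\Lambda_0$ of Example~\ref{ex:StandardUnknot}. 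For $\Lambda_0$ itself the conclusion is immediate: $\Lambda_0=S^3\cap V$ for a Lagrangian $2$-plane $V\subset\RR^4$, so $D_0:=B^4\cap V$ is an embedded Lagrangian disk with $\partial D_0=D_0\cap S^3=\Lambda_0$. It remains to transport this disk along the given Legendrian isotopy.

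The key step is to realize a Legendrian isotopy by an ambient contact isotopy of $(S^3,\xi_{\std})$ and then promote it to an ambient symplectic isotopy of $(B^4,\omega_{\std})$. First, by the Legendrian isotopy extension theorem (a standard consequence of Gray stability / the contact isotopy extension theorem; see e.g.\ \cite{GeigesBook}), the Legendrian isotopy from $\Lambda_0$ to $\Lambda$ extends to a contact isotopy $\psi_t\colon S^3\to S^3$ with $\psi_0=\id$ and $\psi_1(\Lambda_0)=\Lambda$. A contact isotopy of the boundary of a Liouville domain extends to a symplectic isotopy of the domain: one uses the collar neighbourhood $(1-\epsilon,1]\times S^3$ of $\partial B^4$ on which $\alpha_{\std}=e^s\alpha|_{S^3}$ (Liouville coordinate $s$), extends $\psi_t$ to this collar by the obvious $\RR$-equivariant formula, and cuts off the generating (time-dependent) Hamiltonian vector field away from the collar using a bump function supported near $\partial B^4$. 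This yields a symplectic isotopy $\Psi_t\colon B^4\to B^4$, $\Psi_0=\id$, which restricts to $\psi_t$ on a neighbourhood of $\partial B^4$. Then $D:=\Psi_1(D_0)$ is the desired embedded Lagrangian disk: it is Lagrangian because $\Psi_1$ is a symplectomorphism, it is embedded because $\Psi_1$ is a diffeomorphism, and near the boundary $\Psi_1=\psi_1$ so $\partial D=\psi_1(\partial D_0)=\psi_1(\Lambda_0)=\Lambda$ and $D\cap\partial B^4=\Lambda$ (using that $\psi_1$ preserves $S^3$ and that $D_0$ meets $S^3$ transversally only along $\Lambda_0$).

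\textbf{Main obstacle.} The only genuinely substantive input is Theorem~\ref{t:filling}, which we are allowed to assume; modulo that, the argument is entirely soft. The point requiring mild care is the extension of the contact isotopy of $S^3$ to a \emph{compactly supported} symplectic isotopy of $B^4$ that is the identity away from a collar of the boundary — this is where one needs the Liouville (conical) structure near $\partial B^4$ and a cutoff, rather than just an abstract symplectomorphism. One should also check the orientation/transversality claim that $D_0\cap S^3=\Lambda_0$ as sets (not just $\partial D_0\subseteq\Lambda_0$), which is clear since $B^4\cap V$ is a round disk in the plane $V$ whose topological boundary is exactly the great circle $S^3\cap V$. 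No deeper holomorphic-curve input beyond Theorem~\ref{t:filling} is needed.
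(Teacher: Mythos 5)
Your strategy of replacing the cobordism result of Chantraine (used in the paper's own proof) by the Legendrian isotopy extension theorem and a boundary-supported symplectic isotopy is sound in outline and is a genuinely different, softer route. However, your reduction step contains a real error. Theorem~\ref{t:filling} asserts that $(P,\omega,\alpha)$ is \emph{symplectic deformation equivalent} to the standard closed $4$-ball; this does \emph{not} yield a symplectomorphism $P\to B^4$. What one can conclude (and what the paper actually uses) is that $P$ is symplectomorphic to a \emph{star-shaped domain} in $(\RR^4,\omega_{\std})$, and distinct star-shaped domains (an ellipsoid $E(1,2)$, say) are in general not symplectomorphic to a round ball, as symplectic capacities show. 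So "Composing with $\Psi$, we may assume from the outset that $(P,\omega,\alpha)=(B^4,\omega_{\std},\alpha_{\std})$" is unjustified, and the flat disk $D_0=B^4\cap V$ is not available verbatim.

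The fix is small and keeps your argument's structure intact: reduce only to $P$ star-shaped. Then take a Lagrangian $2$-plane $V\subset\RR^4$ through the origin and set $D_0:=P\cap V$, $\Lambda_0:=\partial P\cap V$. Since $V$ is conical, $\alpha|_V=0$, so $D_0$ is an exact Lagrangian disk and $\Lambda_0$ is Legendrian in $\partial P$. It is a Legendrian unknot of Thurston--Bennequin number $-1$ and rotation number $0$ (because it bounds an embedded orientable Lagrangian disk), hence by the Eliashberg--Fraser classification it is Legendrian isotopic to the standard Legendrian unknot, and therefore to $\Lambda$. Now run your isotopy extension argument verbatim inside $P$: extend the Legendrian isotopy $\Lambda_0\leadsto\Lambda$ to a contact isotopy $\psi_t$ of $\partial P$; lift its contact Hamiltonian $h_t$ to $\chi(r)\,r\,h_t$ on the Liouville collar $(1-\epsilon,1]\times\partial P$, with $\chi\equiv 1$ near $r=1$ and vanishing for $r\le 1-\epsilon$; the resulting Hamiltonian isotopy $\Psi_t$ of $P$ preserves $\partial P$, restricts to $\psi_t$ there, and is the identity away from the collar; then $D:=\Psi_1(D_0)$ is the required Lagrangian disk. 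This corrected version is a legitimate alternative to the paper's proof via \cite[Theorem~1.2]{Chantraine10}: the paper produces a Lagrangian concordance from the standard unknot to $\Lambda$ and caps it with a flat disk, while you instead produce the disk directly by an ambient Hamiltonian isotopy.
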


\begin{proof}
 By Theorem \ref{t:filling}, it suffices to assume that $(P,\omega,\alpha)$ is a star-shaped domain in $(\mathbb{R}^4,\omega_{std})$.
 By \cite[Theorem $1.2$]{Chantraine10}, there is a small Darboux ball $B^4 \subset P$
 and an embedded Lagrangian $L \subset P \setminus \Int(B^4)$ such that $L \cap \partial P=\Lambda$
 and $L \cap \partial B^4$ is a standard Legendrian unknot.
 Moreover, we can assume that $L$ is invariant with respect to radial direction near $\partial B^4$.
 Therefore, we can close up $L$ by a Lagrangian plane in $B^4$ by Example \ref{ex:StandardUnknot}.
 \end{proof}

Let $M_t':=\{(z_1,z_2,z_3)\in \mathbb{C}^3| z_1z_2=tz_3\}$ which is a complex and hence symplectic hypersurface for all $t \neq 0$.
With positive $\epsilon$, let $Y_{\epsilon}:=\{z \in \mathbb{C}^3||z_1|^2+|z_2|^2+|z_3|^2=\epsilon\}$ be the $5$-sphere equipped with the standard contact structure 
and contact form $\alpha|_{Y_{\epsilon}}=\sum \frac{r_i^2}{2}d\theta_i$ (see Example \ref{ex:ball}). 

\begin{lemma}\label{l:3sphere}
 For $t\in\RR_{>0}$, the contact form $\alpha|_{Y_{\epsilon}}$ restricts to a contact form on $Y_{\epsilon,t}:=M_t' \cap Y_{\epsilon}$ 
 such that $(Y_{\epsilon,t}, \ker(\alpha|_{Y_{\epsilon,t}}))$ is contactomorphic to the standard contact $3$-sphere.
\end{lemma}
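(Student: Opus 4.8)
The plan is to recognize $M_t'=\{z_1z_2=tz_3\}\subset\CC^3$ as (the graph of $z_3=z_1z_2/t$ over) $\CC^2$ with coordinates $(z_1,z_2)$, and to transport the contact hypersurface $Y_{\epsilon,t}=M_t'\cap Y_\epsilon$ to a star-shaped hypersurface in $(\CC^2,\omega_{\std})$ via this identification. Concretely, let $\psi\colon\CC^2\to M_t'$, $\psi(z_1,z_2)=(z_1,z_2,z_1z_2/t)$. First I would check that $\psi^*\omega_{\std}$ is a symplectic form on $\CC^2$ that tames the standard complex structure (since $M_t'$ is a complex submanifold, the induced form is just the K\"ahler form of the induced metric, hence tamed), and that $\psi^*\alpha|_{M_t'}$ is a primitive of it whose Liouville vector field is complete and has no zeros outside a compact set. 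Then $Y_{\epsilon,t}$ pulls back to the smooth hypersurface $S_{\epsilon,t}:=\{(z_1,z_2)\mid |z_1|^2+|z_2|^2+|z_1z_2|^2/t^2=\epsilon\}$, which bounds the compact domain $P_{\epsilon,t}:=\{|z_1|^2+|z_2|^2+|z_1|^2|z_2|^2/t^2\le\epsilon\}$.

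The key point is that $P_{\epsilon,t}$ is a \emph{star-shaped} domain in $(\CC^2,\omega_{\std})$ with respect to the origin: the function $\rho(z_1,z_2)=|z_1|^2+|z_2|^2+|z_1|^2|z_2|^2/t^2$ is strictly increasing along radial rays $r\mapsto (rz_1,rz_2)$ for any fixed $(z_1,z_2)\neq 0$, since $\rho(rz_1,rz_2)=r^2(|z_1|^2+|z_2|^2)+r^4|z_1|^2|z_2|^2/t^2$ is strictly increasing in $r\ge 0$. Hence each ray meets $S_{\epsilon,t}$ exactly once transversally, so $P_{\epsilon,t}$ is diffeomorphic to $B^4$ and the radial (Liouville) vector field $Z=\tfrac12\sum r_i\partial_{r_i}$ of the standard $\alpha_{\std}=\tfrac12\sum r_i^2d\theta_i$ on $\CC^2$ points strictly outward along $S_{\epsilon,t}$. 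Therefore $(P_{\epsilon,t},\omega_{\std},\alpha_{\std})$ is a Liouville domain whose contact boundary is a star-shaped hypersurface in $\CC^2$, which is standard: the contact form $\alpha_{\std}|_{S_{\epsilon,t}}$ is obtained from $\alpha_{\std}|_{S^3}$ by the diffeomorphism $S^3\to S_{\epsilon,t}$ sending $v\mapsto \lambda(v)v$ with $\lambda(v)$ the unique positive radial rescaling, and this multiplies the contact form by a positive function, hence preserves the contact structure up to contactomorphism. Thus $(S_{\epsilon,t},\ker\alpha_{\std})\cong(S^3,\xi_{\std})$.

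It remains to reconcile the two contact forms: on $Y_{\epsilon,t}$ we use $\alpha|_{Y_\epsilon}=\sum\tfrac{r_i^2}{2}d\theta_i$ restricted from $\CC^3$, whereas the star-shaped picture uses $\alpha_{\std}$ on $\CC^2$. I would show $\psi^*\big(\alpha|_{Y_\epsilon}\big)=\alpha_{\std}+$(an exact form), or more simply that both are Liouville forms for the same symplectic form $\psi^*\omega_{\std}$ on a neighborhood of $P_{\epsilon,t}$ with Liouville fields that are positively transverse to $S_{\epsilon,t}$; since $H^1(P_{\epsilon,t})=0$ they differ by $df$, and a Moser/Gray-stability argument along the convex combination of the two transverse Liouville fields produces a diffeomorphism of $S_{\epsilon,t}$ carrying one induced contact structure to the other. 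Composing all identifications yields a contactomorphism $(Y_{\epsilon,t},\ker(\alpha|_{Y_{\epsilon,t}}))\cong(S^3,\xi_{\std})$, which is the claim.

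The main obstacle I anticipate is the bookkeeping in the last paragraph: making sure that the induced $1$-form $\psi^*(\alpha|_{Y_\epsilon})$ really is a Liouville form for $\psi^*\omega_{\std}$ with a globally outward-pointing Liouville vector field on the relevant region (the extra $z_3=z_1z_2/t$ term contributes to both $\omega$ and $\alpha$, and one must check the dual vector field does not develop zeros or inward-pointing behavior near $S_{\epsilon,t}$). An alternative, perhaps cleaner route that avoids this is to note that $M_t'$ with the restricted $(\omega_{\std},\alpha|_{Y_\epsilon})$ is an exact symplectic manifold, that $\{|z|^2\le\epsilon\}\cap M_t'$ is a Liouville domain (the Liouville field of $\alpha|_{Y_\epsilon}$ on $\CC^3$ is $\tfrac12\sum r_i\partial_{r_i}$, which is tangent to the $\CC^*$-invariant... — rather, one checks directly it points outward along $Y_{\epsilon,t}$ since $\rho=|z|^2$ and $M_t'$ is invariant under real rescaling), and then invoke that this $4$-dimensional Liouville domain has boundary a homology $3$-sphere which is simply connected, so by Eliashberg's classification (Theorem~\ref{t:filling} together with \cite{Eliashberg3Sphere}) its contact boundary must be the standard $(S^3,\xi_{\std})$; the only thing needed beyond softness is that the boundary is diffeomorphic to $S^3$, which follows from the star-shapedness computation above.
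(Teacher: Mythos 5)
Your overall strategy — exhibit $M_t'\cap\{|z|\le\epsilon\}$ as a Liouville domain with boundary diffeomorphic to $S^3$, then invoke Eliashberg's uniqueness of tight/fillable contact $S^3$ — is exactly the strategy the paper uses. However, both of your routes skip over the one non-trivial step, namely verifying that the Liouville vector field of $\alpha|_{M_t'}$ actually points outward along $Y_{\epsilon,t}$, and in doing so each introduces an error. In the alternative route, the justification ``the Liouville field $\tfrac12\sum r_i\partial_{r_i}$ on $\CC^3$ \dots points outward \dots since \dots $M_t'$ is invariant under real rescaling'' is false: $(\lambda z_1)(\lambda z_2)=\lambda^2 t z_3\neq t(\lambda z_3)$, so $M_t'$ is not preserved by real dilations and the ambient radial field is not tangent to $M_t'$. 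The Liouville vector field of $\alpha|_{M_t'}$ on $M_t'$ is the $\omega|_{M_t'}$-dual of $\alpha|_{M_t'}$, which is not the restriction of the ambient Liouville field; it must be computed (the paper does this in the chart $\Phi(s_1,\vartheta_1,s_2,\vartheta_2)=(s_1\sqrt t\,e^{i\vartheta_1},s_2\sqrt t\,e^{i\vartheta_2},s_1s_2e^{i(\vartheta_1+\vartheta_2)})$, getting $Z_t=\tfrac{1}{2(t+s_1^2+s_2^2)}(s_1(t+s_1^2)\partial_{s_1}+s_2(t+s_2^2)\partial_{s_2})$, whose outward transversality to $\{|z|=\epsilon\}$ is then visible).

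The first route has a related gap in the ``reconciliation'' step. You write that $\psi^*(\alpha|_{Y_\epsilon})$ and $\alpha_{\std}$ on $\CC^2$ are ``both Liouville forms for the same symplectic form $\psi^*\omega_{\std}$,'' but that is not so: $\alpha_{\std,\CC^2}$ is a primitive of $\omega_{\std,\CC^2}$ while $\psi^*(\alpha|_{\CC^3})$ is a primitive of $\psi^*\omega_{\CC^3}=\omega_{\std,\CC^2}+\tfrac{1}{t^2}d(\Re(z_1z_2))\wedge d(\Im(z_1z_2))$, a genuinely different (though cohomologous, in fact equal in $H^2=0$, but pointwise distinct) symplectic form. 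So they do \emph{not} differ by $df$, and the Moser/Gray step needs to be run along a path of Liouville pairs $(\omega_s,\alpha_s)$ for which one must still check transversality of the Liouville field to $S_{\epsilon,t}$ at every $s$ — which is again precisely the computation being avoided. Your star-shapedness observation that $S_{\epsilon,t}$ (hence $Y_{\epsilon,t}$) is diffeomorphic to $S^3$ is correct and is a tidy alternative to the paper's Heegaard-decomposition computation; but on its own it only gives the smooth topology, not the contact structure. Filling the gap requires either the explicit $Z_t$ computation as in the paper, or a genuine Gray-stability argument verifying the whole interpolating family of $1$-forms remains contact on $S_{\epsilon,t}$.
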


\begin{proof}
This result is well-known (see Remark \ref{r:link3sphere}) but we still want to give some details.
 Without loss of generality, we assume $t$ is real positive. Note that
 $Y_{\epsilon,t}$ is the union of $Y_{\epsilon,t} \setminus \{z_1=0\}$ and $Y_{\epsilon,t} \setminus \{z_2=0\}$.
 We parametrize $Y_{\epsilon,t} \setminus \{z_1=0\}$ and $Y_{\epsilon,t} \setminus \{z_2=0\}$ by
 \resizebox{\linewidth}{!}{
 \begin{minipage}{\linewidth}
 \begin{align*}
  \Big\{(r_1,\theta_1,r_2,\theta_2,r_3,\theta_3)&=\Big(r,\theta_1,\frac{\rho(\epsilon,t,r)t}{r},\theta_2,\rho(\epsilon,t,r),\theta_1+\theta_2\Big)\Big| r \in (0,\sqrt{\epsilon}], \theta_1,\theta_2 \in \mathbb{R}/2\pi \mathbb{Z}\Big\}, \\
  \Big\{(r_1,\theta_1,r_2,\theta_2,r_3,\theta_3)&=\Big(\frac{\rho(\epsilon,t,r)t}{r},\theta_1,r,\theta_2,\rho(\epsilon,t,r),\theta_1+\theta_2\Big)\Big| r \in (0,\sqrt{\epsilon}], \theta_1,\theta_2 \in \mathbb{R}/2\pi \mathbb{Z}\Big\}
 \end{align*}
 \end{minipage}
}\\
 where $\rho(\epsilon,t,r):=\sqrt{\frac{r^2(\epsilon-r^2)}{r^2+t^2}}$, so when $r=\sqrt{\epsilon}$, we have $\rho(\epsilon,t,\sqrt{\epsilon})=0$
 and the corresponding angular variable (i.e.~$\theta_2$ for the first equation and $\theta_1$ for the second equation) collapses.
 In particular, the parametrizations of $Y_{\epsilon,t} \setminus \{z_1=0\}$ and $Y_{\epsilon,t} \setminus \{z_2=0\}$
 exactly give a Heegaard decomposition of $Y_{\epsilon,t}$.
 The collapsing circles at the ends have intersection pairing one in the Heegaard surface (a $2$-torus) so $Y_{\epsilon,t}=S^3$.
 
 Let $\Phi(s_1,\vartheta_1,s_2,\vartheta_2):=(s_1 \sqrt{t}\exp(i \vartheta_1),s_2 \sqrt{t} \exp(i \vartheta_2),s_1s_2\exp(i(\vartheta_1+\vartheta_2)))$
 be a chart for $M_t'$ and let $\alpha:=\sum \frac{r_j^2}{2}d\theta_j|_{M_t}$ and recall that $\omega=\sum r_jdr_j\wedge\theta_j$.
 Then we have
 \begin{align*}
\Phi^*\alpha&=\frac{s_1^2}{2}(t+s_2^2)d\vartheta_1+\frac{s_2^2}{2}(t+s_1^2)d\vartheta_2,\\
\Phi^*\omega&=s_1(t+s_2^2)ds_1 \wedge d\vartheta_1+s_1^2s_2 ds_2 \wedge d\vartheta_1+s_1s_2^2 ds_1 \wedge d\vartheta_2 +s_2(t+s_1^2)ds_2 \wedge d\vartheta_2, \hbox{ so}\\
 Z_t&=\frac{1}{2(t+s_1^2+s_2^2)}(s_1(t+s_1^2)\partial_{s_1}+s_2(t+s_2^2)\partial_{s_2})
 \end{align*}
is checked to be the dual of $\Phi^*\alpha$ with respect to $\Phi^*\omega|_{M_t}$.
In particular, the Liouville vector field $Z_t$ points outward along $\partial (M_t' \cap \{|z| \le \epsilon\})$. 
Therefore, $M_t' \cap \{|z| \le \epsilon\}$ is a Liouville domain with contact  boundary $(Y_{\epsilon,t}, \ker(\alpha|_{Y_{\epsilon,t}}))$.
Since the standard contact $3$-sphere is the only contact $3$-sphere that arises as the boundary of a Liouville domain, the result follows.
\end{proof}

\begin{remark}\label{r:link3sphere}
 $Y_{\epsilon,t}$ is called the link of the `singularity' of $M'_t$ at the origin.
 Since $M'_t$ is smooth at the origin for $t \neq 0$, the link of the origin is contactomorphic to the standard contact $3$-sphere.
\end{remark}

By translating the $z_3$ coordinate, we know that
 $Y_{a,\epsilon,t}:=\{z \in \mathbb{C}^3| z_1z_2=t(z_3-a)\} \cap \{z \in \CC||z_1|^2+|z_2|^2+|z_3-a|^2=\epsilon\}$ 
 is naturally equipped with a contact structure making it a standard contact $3$-sphere for $a \in \CC$ and $\epsilon>0$.

\begin{lemma}\label{l:LegendrianUnknot} 
With $t\in\RR_{>0}$, the following is a Lagrangian disk in $M_{a,t}':=\{z_1z_2=t(z_3-a)\}$,
\begin{align}
 L:=\{(re^{i\theta},re^{-i\theta},\frac{r^2}{t}+a)|r \in [0,\infty), \theta \in \mathbb{R}/2\pi \mathbb{Z}\} \label{eq:LAGDisk}
\end{align}
Moreover, $\partial L :=L \cap Y_{a,\epsilon,t}$ is a Legendrian and has the Legendrian isotopy type of a standard Legendrian unknot in $Y_{a,\epsilon,t}$.
 
 \end{lemma}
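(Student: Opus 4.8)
The plan is to reduce to $a=0$ via the symplectomorphism $z_3\mapsto z_3+a$, which carries $M'_{0,t}$, $Y_{0,\epsilon,t}$ and the Liouville form $\alpha=\sum_j\frac{r_j^2}{2}\,d\theta_j$ to $M'_{a,t}$, $Y_{a,\epsilon,t}$ and the corresponding objects centred at $a$. So it suffices to treat $M'_t=\{z_1z_2=tz_3\}$, $Y_{\epsilon,t}=M'_t\cap\{|z|^2=\epsilon\}$ and $L=\{(re^{i\theta},re^{-i\theta},r^2/t)\mid r\ge0,\ \theta\in\RR/2\pi\ZZ\}$. One first notes that $L=\{(z_1,\bar z_1,|z_1|^2/t)\mid z_1\in\CC\}$: the map $z_1\mapsto(z_1,\bar z_1,|z_1|^2/t)$ is a proper smooth embedding of $\CC$ with image in $M'_t$ (since $z_1\bar z_1=t\cdot|z_1|^2/t$), so $L$ is a smoothly embedded $2$-disk in the $4$-manifold $M'_t$.

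Next I would establish the stronger statement $\alpha|_L\equiv0$: in polar coordinates on $L$ one has $(r_1,\theta_1)=(r,\theta)$, $(r_2,\theta_2)=(r,-\theta)$, $(r_3,\theta_3)=(r^2/t,0)$, hence $\alpha|_L=\frac{r^2}{2}\,d\theta-\frac{r^2}{2}\,d\theta+0=0$. Since $\omega_{std}=d\alpha$, this gives $\omega_{std}|_L=0$, and as $L$ is half-dimensional in $M'_t$ it is Lagrangian, which is the first assertion. Because $\partial L:=L\cap Y_{\epsilon,t}\subset L$, we also get $\alpha|_{\partial L}=0$, so $\partial L$ is Legendrian. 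Concretely, on $L$ one has $|z|^2=2r^2+r^4/t^2$, strictly increasing in $r\ge0$, so $\partial L$ is the single circle $\{r=r_\epsilon\}$ where $r_\epsilon>0$ is the unique root of $2r^2+r^4/t^2=\epsilon$, and $D:=L\cap\{|z|^2\le\epsilon\}$ is an embedded Lagrangian disk with $\partial D=\partial L$ in the Liouville domain $P:=M'_t\cap\{|z|^2\le\epsilon\}$, with $\alpha|_D=0$.

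Finally I would identify the Legendrian isotopy type of $\partial L$. Smooth unknottedness of $\partial L$ in $Y_{\epsilon,t}\cong S^3$ is clear from the parametrization: $\partial L$ sits on the torus $\{|z_1|=|z_2|=r_\epsilon\}$ as the $(1,-1)$-curve with respect to the two meridians of the Heegaard decomposition of $Y_{\epsilon,t}$ coming from the proof of Lemma~\ref{l:3sphere}, hence is an unknot. By Lemma~\ref{l:3sphere} the contact boundary of $P$ is the standard contact $3$-sphere, and by Theorem~\ref{t:filling} $P$ is symplectic deformation equivalent to the standard symplectic $4$-ball; transporting $D$ along this equivalence exhibits $\partial L$ as a Legendrian unknot in $(S^3,\xi_{std})$ bounding an embedded Lagrangian disk in $(B^4,\omega_{std})$. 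By the Thurston--Bennequin inequality recalled above, the Thurston--Bennequin number of $\partial L$ equals $-1$, and since the standard Legendrian unknot is the unique Legendrian unknot with that invariant, $\partial L$ is Legendrian isotopic to it.

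The step I expect to be the main obstacle is the last one: making rigorous the passage from the abstract Liouville domain $P$ --- which Theorem~\ref{t:filling} identifies with the standard $4$-ball only up to symplectic deformation equivalence --- to an honest embedded Lagrangian disk with Legendrian boundary in $(B^4,\omega_{std})$, so that the $B^4$-version of the Thurston--Bennequin inequality applies verbatim. An alternative bypassing Theorem~\ref{t:filling} is to compute the Thurston--Bennequin number of $\partial L$ directly as the linking number in $Y_{\epsilon,t}$ of $\partial L$ with a push-off along a vector field transverse to $\ker\alpha$ (e.g.\ the Reeb field), using the explicit Heegaard coordinates of Lemma~\ref{l:3sphere}; this is concrete but somewhat fiddly. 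The rotation number needs no separate treatment, being determined once the Thurston--Bennequin number is $-1$.
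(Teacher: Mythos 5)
Your proof is correct and reaches the conclusion by essentially the same route as the paper, but there are two small points where you are more explicit and which are worth comparing. For the Lagrangian and Legendrian claims, the paper verifies that the Liouville vector field $Z_t$ (computed in the Heegaard chart $\Phi$ from Lemma~\ref{l:3sphere}) is tangent to $\Phi^{-1}(L)$, which for a Lagrangian $L$ is equivalent to $\alpha|_L\equiv 0$; you instead compute $\alpha|_L=\tfrac{r^2}{2}d\theta-\tfrac{r^2}{2}d\theta+0=0$ directly in the ambient polar coordinates, which gives both the Lagrangian property ($\omega|_L=d(\alpha|_L)=0$) and the Legendrian property of $\partial L$ in one stroke, without needing the chart $\Phi$ or the formula for $Z_t$. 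This is a genuine (if modest) simplification. For the identification of the Legendrian isotopy type, the paper simply asserts that the only Legendrian isotopy type bounding a Lagrangian disk is the standard Legendrian unknot; you spell out that (i) $\partial L$ is a smooth unknot because it is a $(1,-1)$-curve on the Heegaard torus $\{|z_1|=|z_2|=r_\epsilon\}$ used in Lemma~\ref{l:3sphere}, and (ii) the Lagrangian disk filling then pins down $tb(\partial L)=-1$ (by Chantraine's theorem on exact Lagrangian fillings, which the paper loosely attributes to the ``Thurston--Bennequin inequality''), whence Eliashberg--Fraser classification finishes the job. Filling in step (i) is worthwhile, since the TB computation alone only rules out Legendrian isotopy types once the smooth knot type is known. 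Your worry about transporting the Lagrangian disk across the symplectic deformation equivalence of Theorem~\ref{t:filling} applies equally to the paper's argument (and to Lemma~\ref{l:closing}); one standard way to discharge it is to invoke Moser's trick on the ball, where $H^2=0$ ensures the deformation is realized by an ambient isotopy, so the disk can be carried along honestly.
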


\begin{proof} 
 Being a Lagrangian disk is an easy check.
 Using the chart $\Phi$ for $M_t'$ from above, shifted by $(0,0,a)$, we have 
 \begin{align*}
 \Phi^{-1}(L)=\left\{\left(\frac{r}{\sqrt{t}},\theta,\frac{r}{\sqrt{t}},-\theta\right)\right\}. 
 \end{align*}
 By the proof of Lemma \ref{l:3sphere}, $Z_t=\frac{1}{2(t+s_1^2+s_2^2)}(s_1(t+s_1^2)\partial_{s_1}+s_2(t+s_2^2)\partial_{s_2})$, so we have $Z_t|_x \in T_x(\Phi^{-1}(L))$
 for all $x \in \Phi^{-1}(L)$.
 Therefore, $\partial L$ is a Legendrian.
 The only Legendrian isotopy type that can bound a Lagrangian disk is the standard one so $\partial L$ is Legendrian isotopic to the standard Legendrian unknot.
\end{proof}

\begin{remark}\label{r:FlexibleDisk}
 For every $\phi \in \RR/2\pi \ZZ$, there is a symplectomorphism $M_{a,t}' \to M_{a,t}'$
 given by 
 $z_1 \mapsto e^{i \phi}z_1$, $z_2 \mapsto e^{i \phi}z_2$, $z_3 \mapsto e^{i (2\phi)}(z_3-a)+a$.
 Therefore, if the domain of $r$ in \eqref{eq:LAGDisk} is replaced by $e^{i \phi}[0,\infty)$ for some $\phi \in \RR/2\pi \ZZ$, Lemma \ref{l:LegendrianUnknot} still holds.
\end{remark}
Having reviewed some basic contact geometry,
now we explain the construction of Lagrangian solid tori under the Assumption \ref{a:Disc} and \ref{a:TubularNbhd} below.

\subsubsection{Overview of the construction}
\begin{wrapfigure}[11]{r}{0.26\textwidth}
\begin{center}
{\includegraphics[width=0.22\textwidth]{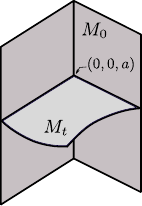}}
\end{center}
%\caption{caption}
%\label{Fig}
\end{wrapfigure}
In one dimension lower like the situation we just considered, 
let $z_1,z_2,z_3$ be symplectic coordinates of $\CC^3$ and suppose we have a family $M_t$ of hypersurfaces in $\CC^3$ with $M_0=\{z_1z_2=0\}$,
$M_t$ a symplectic submanifold for $t\neq 0$, for all $t\neq 0$ the discriminant $M_t\cap\Sing(M_0)$ equals $\{(0,0,a)\}$ for fixed $a\in\CC^*$. Say we have two balls $V, U$ centered at $\{(0,0,a)\}$ with $\bar V\subset U$ so that
\begin{align}
 &M_t \cap (U \setminus V)=\{z_1z_2=t(z_3-a)\} \label{eq:3spheres}, \\
 &M_t \cap U \text{ is a Liouville domain} \label{eq:LD}.
\end{align}
By \eqref{eq:3spheres} and Lemma~\ref{l:3sphere}, we know that $\partial(M_t\cap U)$ is the standard contact $3$-sphere.
By Lemma~\ref{l:LegendrianUnknot}, we have a Legendrian unknot
\begin{align}
 \Lambda_{r}:=\left\{\left.\Big(re^{i\theta},re^{-i\theta},\frac{r^2}{t}+a\Big)\right|\theta \in \mathbb{R}/2\pi \mathbb{Z}\right\}
\end{align}
inside $\partial(M_t\cap U)$ for some appropriate $r$.
Furthermore, by \eqref{eq:LD} and Theorem \ref{t:filling}, we know that $M_t\cap U$ is symplectic deformation equivalent to the standard symplectic ball when $t \neq 0$.
Moreover, by Lemma \ref{l:closing}, we know that we can fill $\Lambda_{r}$ by a Lagrangian disk in $M_t\cap U$.
This Lagrangian disk will generally allow us to construct closed Lagrangian surfaces for a tropical curve ending at the discriminant with such a disk closing up the ending.

In the situation that interests us one dimension higher, the ending needs to be given by a solid 3-torus.
This situation is considerably harder for the following reason.
Ideally, we would like to have a product situation locally.
It means that there is a symplectic annulus $(A,\omega_A)$ such that the family is simply given by $M'_t\times A$ where $M'_t$ is as above, the discriminant is then $\{(0,0,a)\}\times A$, we obtain a Lagrangian disk $D$ in the first factor $M'_t$ as before and then for any circle $C$ in $A$ that generates the fundamental group of $A$, we find $D\times C$ as the desired solid torus in $M'_t\times A$.
However, it is very hard to understand the symplectic form near the discriminant, not to mention to try to deform it to a product situation, so this easy setup won't be achievable for us. The next weaker concept from a product is a fibration which is what we will be using instead, as follows.

Let $z_1,z_2,z_3,z_4$ be symplectic coordinates of $\CC^4$, $M_t$ a family of hypersurfaces in $\CC^4$ with $M_0=\{z_1z_2=0\}$,
$M_t$ a symplectic submanifold for $t\neq 0$, and for all $t\neq 0$ the discriminant $M_t\cap\Sing(M_0)$ equals $\{(0,0,a)\}\times A$ for fixed $a\in\CC^*$ and some $0$-centered annulus $A\subset\CC$. 
Again, say we have two balls $V', U'\subset\CC^3$ centered at $\{(0,0,a)\}$ with $\bar V' \subset U'$ so that setting $U=U'\times A, V=V'\times A$,
\begin{align}
 &M_t \cap (U \setminus V)=\{z_1z_2=t(z_3-a)\} \label{eq2:3spheres}, \\
 &M_t \cap U \text{ is a Liouville domain} \label{eq2:LD}.
\end{align}

We will show below that the restriction of the projection $U \to A$ to $M_t$ gives a ``nice'' exact symplectic fibration $\pi:M_t\cap U \to A$. 
Every fiber of $\pi$ is the lower-dimensional situation as above. 
After symplectic completion, we get an exact symplectic fibration $\Comp(\pi):\Comp(M_t\cap U) \to T^*S^1$
such that fibers are standard symplectic $\RR^4$. 
Since the compactly supported symplectomorphism group of standard $\RR^4$ is trivial, we can find
a compactly supported exact symplectic deformation from $\Comp(M_t\cap U)$ to $\Comp(M_t\cap U)'$ such that $\Comp(\pi)$ is still an exact symplectic fibration
and the symplectic monodromy around a simple loop $C\subset A$ is the identity.
Therefore, we can construct a Lagrangian disk as above in a fiber of a point of $C$ and apply symplectic parallel transport along $C$ to get a Lagrangian solid torus in $\Comp(M_t\cap U)'$. Since $\Comp(M_t\cap U)'$ and $\Comp(M_t\cap U)$ are related by a compactly supported exact symplectic deformation, we get a corresponding Lagrangian torus in $\Comp(M_t\cap U)$ and we can apply the backward Liouville flow to obtain a Lagrangian solid torus in $M_t\cap U$.

In the sections below, we will explain this construction in more details.

\subsubsection{Main construction}
Let $U$ be a symplectic corner chart such that Assumption \ref{a:singularModel} holds. 
Let $s$ be an $s_1$-admissible section and let $T,\shA,\gamma$ and $\pi:U\ra\coDelta$ be the ones from the beginning of the chapter.
By Lemma \ref{l:fiberBundle}, the $q_3$-coordinate of $\Disc(s)\cap T$ is a constant and we denote it simply by $q$.
We are interested in the circle in the discriminant that lies above the point where $\gamma$ hits its amoeba image $\shA$.
Also by Lemma~\ref{l:fiberBundle}, we find $C:=\pi^{-1}(\gamma(r_1)) \cap \Disc(s_1)\cap T$ to be a circle with constant radial coordinate, say given by $|p_4|=R\iff |z_4|=\sqrt{2R}$.
So in $z$-coordinates, by setting $a=\sqrt{2r_1}e^{iq}$, the circle $C$ is given by 
\begin{align}
C=\left\{z=(0,0,a,z_4)\left| |z_4|=\sqrt{2R}\right.\right\}.
\end{align}

We will construct a Lagrangian solid torus inside $M_t \cap U_C$ for an appropriate closed neighborhood $U_C$ of $C$.
%Therefore, we need to first introduce some notations to describe closed neighborhoods of $C$.
\begin{comment}
\begin{notation}\label{n:BAD}
We will use the following notations
\begin{align}
 B^2_r &= \{z \in \mathbb{C}| |z| \le \sqrt{2r}\} \\
 A^2_{a_0,a_1} &= \{(p,q) \in \RR \times \RR/2\pi\ZZ| a_0 \le p \le a_1\} \\
 D^2_{(p_0,q_0), \epsilon} &= \{(p,q) \in \RR \times \RR/2\pi\ZZ| |p-p_0|, |q-q_0| \le \epsilon\}
\end{align}
where $a_0< a_1$, $\epsilon<2\pi$ and $(p_0,q_0) \in \RR \times \RR/2\pi\ZZ$.
We use $B^2$, $A^2$ and $D^2_{(p_0,q_0)}$ respectively
to denote $B^2_r$, $A^2_{a_0,a_1}$ and $D^2_{(p_0,q_0), \epsilon}$ 
for some appropriate choices of $r,a_0,a_1,\epsilon$ that are not specified.
%With these notations, a symplectic corner chart is given by $U=(B^2)^n$ where the $B^2$ in different factors of $U$ can have different sizes. 
\end{notation}
\end{comment}
From now on, every tubular neighborhood of $C$ that we choose will be {\bf closed} and of the form
\begin{align}\label{qe-U_C}
 U_C:=B\times B\times D\times A \subset (\RR^2)^4
%\{((p_1,q_1), \dots,(p_4,q_4)) \in B^2_{r_1} \times B^2_{r_2} \times D^2_{a,r_3} \times A^2_{a_0,a_1}\} \subset U \label{Ubeta0}
\end{align}
for $B$ a $0$-centered disk, $D$ an $a$-centered disk and $A$ a $0$-centered annulus (shrinking and then taking closure of the one we had before) so that the circle of radius $\sqrt{2R}$ is contained in $A$.
We will make two further assumptions for which we will show in later sections how these can be achieved.
The first assumption is that $\Disc(s)\cap T$ depends only on the $z_4$-coordinate near $C$ as illustrated on the right in Figure~\ref{fig:TropicalNearSing}.

\begin{assumption}\label{a:Disc}
 There exists a tubular neighborhood $U_C$ of $C$ such that
\begin{align}
 \Disc(s)\cap T \cap U_C=\{(0,0,a, z_4) \in U_C | z_4\in A\} \label{eq:Disc0Std}
\end{align} 
for $A$ a shrinking of the previous annulus $A$ still containing the circle of radius $\sqrt{2R}$.
In particular, $\shA \cap \pi(U_C)=\{(0,0,r_1)\}\times I$ where $I$ is a straight line segment in the affine $p$-coordinates of $\coDelta$ and $I$ is given by projecting the radial part of $A$.
\end{assumption}

To construct the Lagrangian solid torus, we also need to make an assumption on the restriction of $\pi$ to $U_C\cap M_t$ and for that we introduce the following notion.

\begin{definition}\label{d:exactSymFib0}
Let $(E,\omega_E)$ be a symplectic manifold with corners and $(\Sigma,\omega_\Sigma)$ be a symplectic surface with boundary.
Let  $\pi:E \to \Sigma$ be a symplectic fibration.
The {\bf vertical boundary} of $\pi$ is $\partial^vE:=\pi^{-1}(\partial \Sigma)$.
The {\bf horizontal boundary}  $\partial^hE$ of $\pi$ is the closure of $\partial E \setminus \partial^vE$.
The fibration $\pi$ is called a {\bf smoothly trivial exact symplectic fibration} if 
 \begin{enumerate}
  \item $\pi$ is a smoothly trivial fiber bundle,
  \item there is a one form $\alpha_E$ such that $d\alpha_E=\omega_E$ and the induced Liouville vector field points outward along  $\partial^vE$ and $\partial^hE$,
  \item there exists a neighborhood $N$ of $\partial^hE$ and a symplectic manifold $(F,\omega_F)$ with smooth boundary
  such that there is a symplectomorphism $\Psi:(N,\omega_E|_N) \simeq (F \times \Sigma, \omega_F \oplus \omega_\Sigma)$ 
  and $\pi_\Sigma \circ \Psi=\pi|_N$, where $\pi_\Sigma:F \times \Sigma \to \Sigma$ is the projection to the second factor.
 \end{enumerate}
The last condition is also referred to as $\pi$ being \emph{symplectically trivial near the horizontal boundary}.
\end{definition}

\begin{remark}
 A smoothly trivial exact symplectic fibration is a strictly more restrictive notion than that of an exact symplectic fibration as given in \cite[Section $15$]{SeidelBook}.
\end{remark}

\begin{assumption}\label{a:TubularNbhd}
 There exist tubular neighborhoods 
 \begin{align}
U_C:=B\times B\times D\times A
\quad\text{ and }\quad
V_C:=B'\times B'\times D'\times A  \label{eq:Disc0Std2}
 \end{align}
with $U_C$ as in \eqref{qe-U_C} and $B'\subset B$ a $0$-centered closed disk of smaller radius and $D'\subset D$ an $a$-centered closed disk of smaller radius but $U_C$ and $V_C$ have notably the same $A$-factors such that
 \begin{align}\label{eq:localModel}
  \left\{
  \begin{array}{ll}
   M^s_t \cap (U_C \backslash V_C)= \{z_1z_2=t(z_3-a)\} \text{ and}\\
   \pi: M^s_t \cap U_C \to A \text{ is a smoothly trivial exact symplectic fibration}
  \end{array}
\right.
 \end{align}
where, as usual, $z_j=\sqrt{2 p_j}e^{iq_j}=x_j+iy_j$ for $j=1,2,3,4$.
\end{assumption}

We next carry out the Lagrangian solid torus construction under Assumption~\ref{a:Disc} and \ref{a:TubularNbhd} 
(in fact, we only use Assumption \ref{a:TubularNbhd} for the Lagrangian construction and we will see in Sections \ref{ss:ASymFib}-\ref{ss:aGoodDeformation} that 
Assumption~\ref{a:Disc} is used to obtain Assumption \ref{a:TubularNbhd}).
By Lemma \ref{l:3sphere}, the contact boundary of fibers of the projection $\pi: M_t\cap U_C\ra A$ are contactomorphic to the standard contact $3$-sphere.
It implies that $\pi$ is actually a symplectic $4$-ball bundle over $A$ by Theorem \ref{t:filling}.
Moreover, by Lemma \ref{l:LegendrianUnknot}, for every $z_4 \in A$, there is a unique $r>0$ such that $r^2+a\in\partial D$ for $D$ the third factor of $U_C$. When $t\in\RR_{>0}$ is small,
\begin{align}
\Lambda_{z_4,r}:=\left\{(\left.\sqrt{t}re^{i\theta},\sqrt{t}re^{-i\theta},r^2+a,z_4)\right|\theta \in \mathbb{R}/2\pi \mathbb{Z}\right\} \label{eq:LegUnknot}
\end{align}
is a Legendrian unknot in $\partial (\pi^{-1}(z_4))$. (By Remark \ref{r:FlexibleDisk}, we can also take $r \in \CC^*$ with non-zero argument.)
Recall that $R$ is the $p_4$-coordinate of $\gamma(r_1)$.
Since $\pi$ is assumed to be symplectically trivial near the horizontal boundary, it is clear that 
\begin{align}\label{eq:boundaryLeg}
\Lambda_{C,r}:=\bigcup_{z_4:|z_4|=\sqrt{2R}} \Lambda_{z_4,r}  
\end{align}
is a Legendrian torus in the contact boundary
of $M_t \cap U_C$ (after rounding corners to be able to call $M_t \cap U_C$ a Liouville domain even though $\Lambda_{C,r}$ doesn't meet any corners since it projects to the interior of $A$).

\begin{proposition}\label{p:LagrangianConstructionSingLoci}
The Legendrian torus $\Lambda_{C,r}$ bounds an embedded Lagrangian solid torus $L_{C,r}$ in $M_t \cap U_C$ such that every $\Lambda_{z_4,r}$ is a meridian.
\end{proposition}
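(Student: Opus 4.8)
\textbf{Proof plan for Proposition \ref{p:LagrangianConstructionSingLoci}.}

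The plan is to build the Lagrangian solid torus fibrewise over the annulus $A$, using the ball-bundle structure of $\pi:M_t\cap U_C\to A$ together with the triviality of the compactly supported symplectomorphism group of $\RR^4$. First I would pass to the symplectic completion: by Assumption \ref{a:TubularNbhd}, $\pi:M_t\cap U_C\to A$ is a smoothly trivial exact symplectic fibration which is symplectically trivial near the horizontal boundary $\partial^h$, so it admits a completion $\Comp(\pi):\Comp(M_t\cap U_C)\to T^*S^1$ (attaching the symplectization ends along $\partial^v$ and $\partial^h$). Lemma \ref{l:3sphere} shows each fibre of $\pi$ has contact boundary the standard contact $S^3$, so by Theorem \ref{t:filling} each fibre is symplectic deformation equivalent to the standard $B^4$, and hence each completed fibre is symplectomorphic to standard $\RR^4$.

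Next I would kill the symplectic monodromy. Let $C\subset A$ be the circle of radius $\sqrt{2R}$; the symplectic parallel transport around $C$ defines a compactly supported symplectomorphism $\phi$ of the completed fibre $\cong\RR^4$. Since $\pi_0\operatorname{Symp}_c(\RR^4)$ is trivial (Gromov), $\phi$ is isotopic to the identity through compactly supported symplectomorphisms, and this isotopy can be used, via the standard argument for exact symplectic fibrations (cf.\ \cite[Section 15]{SeidelBook}), to produce a compactly supported exact symplectic deformation from $\Comp(M_t\cap U_C)$ to a fibration $\Comp(\pi)'$ with trivial monodromy around $C$, unchanged near the horizontal boundary. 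Over $C$, the fibration $\Comp(\pi)'$ is then the trivial bundle $\RR^4\times C$ up to the monodromy being identity, so I can choose a horizontal Lagrangian disk family: take the Lagrangian disk $L$ of Lemma \ref{l:LegendrianUnknot} in one fibre (the fibre being $M'_{a,t}=\{z_1z_2=t(z_3-a)\}$ near its boundary by \eqref{eq:localModel}), whose boundary $\partial L=\Lambda_{z_4,r}$ with $r^2+a\in\partial D$ is a standard Legendrian unknot, and parallel transport it around $C$. Because the monodromy is the identity, the transported disk closes up to an embedded Lagrangian $D^2\times S^1$, i.e.\ a Lagrangian solid torus, whose boundary is exactly $\Lambda_{C,r}=\bigcup_{z_4}\Lambda_{z_4,r}$ from \eqref{eq:boundaryLeg}, and each $\Lambda_{z_4,r}$ is a meridian disk boundary, hence a meridian. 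Transporting back through the compactly supported deformation and then applying the backward Liouville flow of $M_t\cap U_C$ yields $L_{C,r}\subset M_t\cap U_C$ with the required properties.

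The main obstacle is the step of actually killing the monodromy within the \emph{exact} symplectic fibration framework: one must ensure that the compactly supported isotopy of $\phi$ to the identity can be upgraded to an \emph{exact} symplectic deformation of the total space that (i) keeps $\Comp(\pi)'$ a smoothly trivial exact symplectic fibration, (ii) is supported away from the horizontal boundary so the gluing to the rest of $M_t$ later is unaffected, and (iii) is compatible with completing/uncompleting so that the backward Liouville flow brings $L_{C,r}$ back into the compact piece $M_t\cap U_C$. This requires the careful bookkeeping of Liouville forms and the primitive of the deformation; I expect it to be the technical heart, and it is the reason the overview subsection sets up the exact symplectic fibration language so precisely. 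A secondary point requiring care is that $\Lambda_{z_4,r}$ should be made to lie in the interior of the $A$-direction (so it avoids corners of $M_t\cap U_C$), which is handled by the observation after \eqref{eq:boundaryLeg} about rounding corners, and that $r$ may need a nonzero argument fibrewise as allowed by Remark \ref{r:FlexibleDisk} to match the smooth trivialization near $\partial^h$.
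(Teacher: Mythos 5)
Your overall approach — complete the fibration, use Gromov's $\pi_0\Sympl_c(\RR^4)=0$ to kill the monodromy, fill a Legendrian unknot in one fiber by a Lagrangian disk, parallel transport, and undo the deformation and completion via backward Liouville flow — is exactly the paper's approach (the paper cites \cite[Lemma 15.3]{SeidelBook} for the monodromy-killing step, the same standard argument you gesture at).

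There is one small but genuine gap worth flagging: you write that you will ``take the Lagrangian disk $L$ of Lemma~\ref{l:LegendrianUnknot} in one fibre'' and then parallel-transport that disk. But the explicit disk of Lemma~\ref{l:LegendrianUnknot} lives in the model hypersurface $\{z_1z_2 = t(z_3-a)\}$, which agrees with the actual fiber $\pi^{-1}(z_4)$ only \emph{near} the contact boundary (on $U_C\setminus V_C$, by Assumption~\ref{a:TubularNbhd}); inside $V_C$ the fiber is a different, uncontrolled Liouville domain. So the disk of Lemma~\ref{l:LegendrianUnknot} simply does not lie in the fiber. What one has instead is the \emph{Legendrian boundary} $\Lambda_{z_4,r}$ (and a collar $[\rho_0,\infty)\times\Lambda_{z_4,r}$ after completion), which is a standard Legendrian unknot in the $S^3$-boundary of the completed fiber; one must then invoke Lemma~\ref{l:closing} to fill this Legendrian by an embedded Lagrangian disk inside the fiber, choosing $\rho_0$ large enough that the cylindrical collar lies outside the compact support of the monodromy-killing deformation. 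With that substitution (Lemma~\ref{l:closing} in place of the explicit disk of Lemma~\ref{l:LegendrianUnknot}) and the attendant care about $\rho_0$, your plan reproduces the paper's proof.
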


\begin{proof}
We use the notation $M'_t:=M_t \cap U_C$ in this proof.
Let $\Comp(M'_t)$ be the symplectic completion of $M'_t$.
In other words,
\begin{align}
 \Comp(M'_t):=M'_t \cup_{\partial M'_t } ([1,\infty) \times \partial M'_t )
\end{align}
and the symplectic form on $([1,\infty) \times \partial M'_t )$ is given by $d(\rho \alpha|_{\partial M'_t})$ for $\rho$ the linear coordinate on $[1,\infty)$ and 
$\alpha$ the one-form on $M'_t$ defining its Liouville structure.
Since $\pi$ is trivial near the horizontal boundary (third item of Definition \ref{d:exactSymFib0}), $\Comp(M'_t)$ can be obtained by first performing symplectic completion along the fibers of $\pi$ and 
then completing along the base direction.
Therefore, we have a symplectic $\RR^4$-bundle $\Comp(\pi):\Comp(M'_t) \to \CC^*$ extended from $\pi$.
We also have a Lagrangian submanifold $[1,\infty) \times \Lambda_{C,r} \subset [1,\infty) \times \partial M'_t \subset \Comp(M'_t)$ fibering over 
the circle $\{|z_4|=\sqrt{2R}\}$ with respect to $\Comp(\pi)$.

Gromov showed that the compactly supported symplectomorphism group of $(\RR^4,\omega_{std})$ is contractible \cite{Gromov}.
Therefore, there exists an exact symplectic deformation $\Comp(M'_t)'$ of $\Comp(M'_t)$ supported inside a compact set 
$K \subset \Comp(M'_t)$ such that after the deformation, 
$\Comp(\pi):\Comp(M'_t)' \to \CC^*$ is still a symplectic $\RR^4$-bundle and
the symplectic monodromy along $\{|z_4|=\sqrt{2R}\}$ defined by symplectic parallel transport becomes the identity (see \cite[Lemma $15.3$]{SeidelBook}).

Pick a point $z_4 \in A$ such that $|z_4|=\sqrt{2R}$.
There exists $\rho_0>1$ sufficiently large such that $[\rho_0,\infty) \times \Lambda_{z_4,r} \subset [1,\infty) \times \partial M'_t$ is disjoint from $K$.
Since $\{\rho_0\} \times \Lambda_{z_4,r}$ is a Legendrian isotopic to the standard Legendrian unknot
in the relevant contact hypersurface $S^3$ of $\Comp(\pi)^{-1}(z_4)=(\RR^4,\omega_{std})$, the proper annulus
$[\rho_0,\infty) \times \Lambda_{z_4,r}$ can be extended to a smooth proper Lagrangian disk $L_{z_4,r}$ in $\Comp(\pi)^{-1}(z_4)$, by 
Lemma \ref{l:closing} (note that when a Legendrian is Lagrangian fillable, one can always perturb the Lagrangian filling near the Legendrian boundary to get another Lagrangian filling that is cylindrical near its Legendrian boundary, therefore the Lagrangian disk $L_{z_4,r}$ can be made to be smooth).
We engage $L_{z_4,r}$ in symplectic parallel transport along $\{|z_4|=\sqrt{2R}\}$.
The fact that the monodromy is the identity implies that the trace of $L_{z_4,r}$ is an embedded proper Lagrangian open solid torus, denoted by $L_{C,r}'$, with a cylindrical end 
$[\rho_0,\infty) \times \Lambda_{C,r}$.
Since $\{\rho_0\} \times \Lambda_{z_4,r}$ bounds a disk in $L_{z_4,r}$, it is a meridian of $L_{C,r}'$.

Finally, since $\Comp(M'_t)'$ is a compactly supported
exact symplectic deformation of $\Comp(M'_t)$, there is also an embedded proper Lagrangian solid torus $L_{C,r}'' \subset \Comp(M'_t)$ with the cylindrical end $[\rho_1,\infty) \times \Lambda_{C,r}$
for some sufficiently large $\rho_1$.
Therefore, one can argue using backward Liouville flow as in the proof of Lemma \ref{l:closing} to rescale $L_{C,r}''$ and make its cylindrical part as long as we like.
We consequently obtain a Lagrangian filling $L_{C,r}$ of $\Lambda_{C,r}$ inside $M'_t$ with the properties required in the proposition.
\end{proof}

\subsubsection{Plan for the remaining part}
In the following subsections, we will generalize Proposition \ref{p:LagrangianConstructionSingLoci}.
In Section \ref{ss:Integral linear transform} and \ref{ss:Correcting discriminant}, we explain how to isotope the discriminant of $s$ so that Assumption \ref{a:Disc} holds.
In Section  \ref{ss:ASymFib}, \ref{ss:Liouville} and \ref{ss:aGoodDeformation}, we construct a smoothly trivial exact symplectic fibration such that Assumption \ref{a:TubularNbhd} holds.
We conclude the proof of Theorem \ref{t:LagrangianSolidTori} in Section \ref{ss:ProofSolidTori}.
The proof of Theorem \ref{t:Construction} is given in Section \ref{ss:Concluding}.

\subsection{Integral linear transform}\label{ss:Integral linear transform}
We go back to the general setup in Theorem \ref{t:LagrangianSolidTori}.
In particular, we have a parametrized straight line segment $\gamma$ with $\gamma(r_1)\in\shA$.
In this section, we want to apply an integral linear transformation to transform the $(p,q)$-coordinates
to obtain new $(\hat{p},\hat{q})$-coordinates so that $\sum_{i=1}^4 q_i=\hat{q}_1+\hat{q}_2$.
This will help us to get rid of the fourth-coordinate in the defining equation of $M_t^s \cap U_C$ later on.
Define
 \[
   \hat{A}:=
  \left[ {\begin{array}{cccc}
   1 & 0 & 0 & 0\\
   0 & 1 & 0 & 0\\
   0 & -1& 1 & 0\\
   0 & -1 & 0 & 1
  \end{array} } \right]\hbox{, so the inverse transpose}\quad
   \hat{A}^{-T}=
  \left[ {\begin{array}{cccc}
   1 & 0 & 0 & 0\\
   0 & 1 & 1 & 1\\
   0 & 0 & 1 & 0\\
   0 & 0 & 0 & 1
  \end{array} } \right].
\]
Consider a change of symplectic coordinates $\Psi^\circ:U^\circ=U \setminus \{p_1\dots p_4=0\} \to \hat{U}^{\circ}:=\im(\Psi^\circ) \subset \RR^4 \times \RR^4/2\pi\ZZ^4$
given by the integral linear transform 
$\Psi^\circ(p,q)=(\hat{p},\hat{q})=(\hat{A}p,\hat{A}^{-T}q)$. 
Note that, for $j=1,2$, we have $\hat{p}_j=p_j$ so we can define $\hat{z}_j:=\sqrt{2 \hat{p}_j}e^{i\hat{q}_j}$
and partially compactify $\hat{U}^{\circ}$ to $\hat{U}$ by  allowing $\hat{z}_j=0$ for $j=1,2$.
We can smoothly extend $\Psi^\circ$ to $\Psi:U \setminus \{p_3p_4=0\} \to \hat{U}=\im(\Psi) \subset \CC^2 \times \RR^2 \times  \RR^2/2\pi\ZZ^2$.
More explicitly, 
\begin{align}
\Psi(z)=
(\sqrt{2p_1}e^{iq_1}, \sqrt{2p_2}e^{i(q_2+q_3+q_4)}, 
-p_2+p_3, -p_2+p_4, q_3,q_4). \label{eq:Psi}
\end{align}
Note that $\Psi|_{\{z_1=z_2=0\}}$ is the identity map.
Therefore, just like before, by Lemma \ref{l:fiberBundle}, $\Disc(s_1)\cap T$ has the constant $\hat{q}_3$-coordinate $\arg(a)$ and $\shA$
is transverse to the slices $\{\hat{p}_4=const\}$.
The straight line $\hat{\gamma}(r):=\Psi(\gamma(r))$ is still given by $(0,0,r,R)$ for $r_0<r \le r_1$, and 
$C=\{\hat{z}=(0,0,a,\hat{z}_4) |\, |\hat{z}_4|=\sqrt{2R}\}$.

For use in the next section, we now apply the transformation to the pencil.
Observe that we achieved $\sum_{i=1}^4 q_i=\hat{q}_1+\hat{q}_2$ and have
\begin{equation} \label{eq-phat-pnohat}
p_3=\hat p_2+\hat p_3, \qquad p_4=\hat p_2+\hat p_4.
\end{equation}
Inserting this and more broadly $z_j=\Psi^{-1}(\hat z_j)$ into Equation~\eqref{eq-nonvanishing-factor-in-p} in Example \ref{ex:Transforming hypersurfaces} yields
\begin{equation}
 2\hat z_1\hat z_2 \sqrt{(\hat p_2+\hat p_3)(\hat p_2+\hat p_4)} = th(\hat p)g(w(\hat p,\hat q)).
\label{Psi-transform-of-pencil}
\end{equation}

\subsection{Straightening the discriminant}\label{ss:Correcting discriminant}
We assume from now until Section \ref{ss:ProofSolidTori} that we have performed the transformation $\Psi$ given in the previous subsection Section \ref{ss:Integral linear transform}.
For better readability, we will use the notation $p$ instead of $\hat{p}$, $z$ for $\hat z$ and so forth.

Our next step is to apply a compactly supported Hamiltonian diffeomorphism to deform $\Disc(s_1)\cap T$ 
such that the ${p}_3$-coordinate of $\Disc(s_1)\cap T$ becomes independent of the ${p}_4$-coordinate near $C$.
In other words, we want that Assumption \ref{a:Disc} holds after deforming $\Disc(s_1)\cap T$.

Similar to \eqref{qe-U_C}, we use a tubular neighborhood of $C$ of the form
\begin{align}
 U_C:=\{(({p}_1,{q}_1), \dots,({p}_4,{q}_4)) \in B\times B\times D \times A \subset {U} \label{Ubeta}
\end{align}
and taken small enough so that ${p}_3$ and ${p}_4$ take positive values in $U_C$ which works by \eqref{eq:Psi}.
It is then sensible to define ${z}_j=\sqrt{2{p}_j}e^{i{q}_j}={x}_j+i{y}_j$ in $U_C$ for $j=1,2,3,4$.

\begin{proposition}\label{p:correctSingModel}
 For any tubular neighborhood $N$ of $C$, there is a Hamiltonian diffeomorphism $\phi_H:\PP_\Delta \to \PP_\Delta$
 supported inside $N$ and a tubular neighborhood $U_C \subset N$ of $C$ given by \eqref{Ubeta} such that 
 \begin{itemize}
  \item $\phi_H$ preserves all the toric strata of $ \PP_\Delta$ setwise,
  \item $\phi_H(\Disc(s_1)\cap T) \cap U_C=\{(0,0,a,{z}_4)|{z}_4 \in A\}$, and
  \item ${z}_j= {z}_j \circ \phi_H^{-1}$ inside $U_C$ for $j=1,2$.
 \end{itemize}
\end{proposition}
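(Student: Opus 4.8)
The plan is to straighten the discriminant curve $\Disc(s_1)\cap T$ near the circle $C$ by a Hamiltonian diffeomorphism that moves the ${p}_3$-coordinate of the discriminant so that it becomes constant, using that, after the transformation $\Psi$ of Section~\ref{ss:Integral linear transform}, the discriminant already has constant ${q}_3$-coordinate (equal to $\arg(a)$) by Lemma~\ref{l:fiberBundle}, and is transverse to the slices $\{{p}_4=const\}$. So near $C$ the locus $\Disc(s_1)\cap T$ is cut out inside the stratum $T=\{{z}_1={z}_2=0\}$ by the two equations ${q}_3=\arg(a)$ and ${p}_3=\psi({p}_4)$ for a smooth function $\psi$ with $\psi(R)=r_1$; our goal is to replace $\psi$ by the constant $r_1$ on a neighborhood of the circle $|{z}_4|=\sqrt{2R}$ while keeping ${z}_1,{z}_2$ unchanged and preserving all toric strata.

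First I would work entirely inside the stratum $T$, which is symplectically $\{({p}_3,{q}_3,{p}_4,{q}_4)\}$ (an open subset of $\RR^2\times(\RR/2\pi\ZZ)^2$ in the $({p}_3,{q}_3)$ and $({p}_4,{q}_4)$ coordinates), and construct a compactly supported Hamiltonian isotopy of $T$ carrying the graph-like curve $\{{q}_3=\arg(a),\ {p}_3=\psi({p}_4)\}$ to $\{{q}_3=\arg(a),\ {p}_3=r_1\}$ on a neighborhood of $C$. The time-dependent Hamiltonian can be taken of the form $K_t({p}_3,{q}_3,{p}_4,{q}_4)=\chi({p}_3,{p}_4)\,\big(r_1-\psi({p}_4)\big)\,\sin({q}_3-\arg(a))\cdot(\text{cutoff})$ or, more cleanly, one uses the standard fact that any compactly supported isotopy of embeddings of a curve into a symplectic surface-times-surface which preserves the required constancy conditions is generated by a compactly supported Hamiltonian; concretely, the vector field $X_t = \big(r_1-\psi({p}_4)\big)\partial_{{p}_3}$ (suitably cut off in ${p}_3,{p}_4$ and time) is Hamiltonian for $\omega=d{p}_3\wedge d{q}_3+d{p}_4\wedge d{q}_4$ with generating function independent of ${q}_4$, hence $({S}^1)_{{q}_4}$-invariant, so its flow $\phi^T_H$ preserves the ${q}_4$-circle fibration and moves the discriminant as desired. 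Then I would extend $\phi^T_H$ from $T$ to a Hamiltonian diffeomorphism $\phi_H$ of $\PP_\Delta$: pull back the generating Hamiltonian by a $(S^1)^4$-invariant bump function supported in a tubular neighborhood $N$ of $C$ that is constant in the directions transverse to $T$ (i.e.~in ${p}_1,{p}_2,{q}_1,{q}_2$ near ${p}_1={p}_2=0$), so that $\phi_H$ is the identity in the ${z}_1,{z}_2$ directions; since the Hamiltonian is independent of ${q}_1,\dots,{q}_4$ in the relevant directions, its flow preserves each toric stratum setwise.

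The key points to verify are: (i) the resulting $\phi_H$ is supported inside $N$ — ensured by the cutoffs; (ii) $\phi_H$ preserves all toric strata — ensured by choosing the Hamiltonian to be a sum of a $T$-internal piece and pieces that depend only on the moment coordinates ${p}_i$ transverse to $T$, so the induced vector field is tangent to every coordinate subspace $\{{z}_{i}=0\}$; (iii) $\phi_H(\Disc(s_1)\cap T)\cap U_C$ is exactly $\{(0,0,a,{z}_4)\}$ for a possibly smaller tubular neighborhood $U_C\subset N$ of the form \eqref{Ubeta} — this is where one shrinks: the straightening is only arranged to be exact on the region where the cutoff equals $1$, and one takes $U_C$ inside that region and small enough that ${p}_3,{p}_4>0$ there; (iv) ${z}_j={z}_j\circ\phi_H^{-1}$ for $j=1,2$ in $U_C$ — immediate since the Hamiltonian does not involve ${p}_1,{p}_2,{q}_1,{q}_2$ near $C$, so ${z}_1,{z}_2$ are preserved. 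One should also check that $\Disc(s_1)\cap T$ is genuinely a graph ${p}_3=\psi({p}_4)$ near $C$, which follows from the transversality to $\{{p}_4=const\}$ in Lemma~\ref{l:fiberBundle} together with the fact that $\shA$ is an embedded curve by admissibility (Remark~\ref{r:justifyA6}).

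I expect the main obstacle to be bookkeeping rather than anything deep: one must arrange simultaneously that the Hamiltonian (a) straightens the curve on a genuine neighborhood of $C$, (b) is compactly supported inside the prescribed $N$, (c) leaves ${z}_1,{z}_2$ untouched, and (d) is compatible with the toric stratification — and these constraints interact, forcing one to first fix $N$, then construct the straightening Hamiltonian on a slightly larger set, then shrink to $U_C$. The cleanest way to manage this is to build the Hamiltonian as a product of the ``linear'' function $({p}_3\mapsto$ translation amount$)$ with an $(S^1)^4$-invariant cutoff $\beta$ equal to $1$ on a neighborhood of $C$ and supported in $N$, and to carry out the whole construction inside a single corner chart $U$ where everything is given by explicit coordinates; the transversality statement from Lemma~\ref{l:fiberBundle} then guarantees that the time-$1$ flow indeed pushes the discriminant onto the straight model on $U_C$.
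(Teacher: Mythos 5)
Your approach matches the paper's exactly: straighten the discriminant curve first in the reduced $(p_3,q_3,p_4,q_4)$-picture via a compactly supported Hamiltonian isotopy, thicken to $U_C$ by multiplying the generating Hamiltonian by a bump equal to $1$ near $z_1=z_2=0$, and finally shrink $U_C$ to the region where the cutoff is trivial. The paper packages these steps as Lemmas~\ref{l:correctSingModel2cell} and~\ref{l:correctSingModel4cell}.

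There is, however, a genuine error in your ``cleaner'' formulation. The vector field $X_t=(r_1-\psi(p_4))\partial_{p_3}$ is not Hamiltonian --- in fact it is not even symplectic unless $\psi$ is constant, since
$d(\iota_{X_t}\omega)=d\bigl[(r_1-\psi(p_4))\,dq_3\bigr]=-\psi'(p_4)\,dp_4\wedge dq_3$.
To realize a translation of $p_3$ by an amount depending on $p_4$, the generating Hamiltonian must involve $q_3$, say $H=(\psi(p_4)-r_1)(q_3-\arg(a))$, which is the paper's Hamiltonian; but then $\partial_{p_4}H\neq0$ forces $X_H$ to acquire a $\partial_{q_4}$-component $\psi'(p_4)(q_3-\arg(a))\partial_{q_4}$. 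This does not spoil the conclusion: $\partial_{q_4}H=0$, so the $p_4$-levels (hence the $q_4$-circle fibration) are preserved as sets; moreover the extra $\partial_{q_4}$-term vanishes along $\Disc(s_1)\cap T$ (where $q_3=\arg(a)$) and, in particular, along $C$. What is true is only that the \emph{restriction} of $X_H$ to the discriminant equals your $X_t$ --- you have confused this restriction with a globally defined Hamiltonian vector field, and had you flowed the ambient manifold with $X_t$ you would not have produced a symplectomorphism, let alone a Hamiltonian one. Your first candidate $K_t$ with the $\sin(q_3-\arg(a))$ factor has the correct shape (up to sign), so you should work with it; alternatively, linearize as the paper does with the factor $q_3-\arg(a)$, which is legitimate because $q_3$ is bounded on $D$ (an $a$-centered disk missing the origin). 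With this correction, the rest of your argument --- the $(S^1)$-invariant bump constant near $z_1=z_2=0$, tangency of $X_H$ to each toric stratum, and shrinking to obtain $U_C$ --- goes through as in the paper, though you should still spell out the stratum-preservation check corresponding to the paper's equations for $X_H$ restricted to the various coordinate subspaces.
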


After establishing  Proposition \ref{p:correctSingModel}, we push-forward all the data and define
\begin{align}
\hat{J}_\Delta&:=(\phi_H)_*J_\Delta  \label{eq-hat-first}\\
\hat{s}_i&:=s_i \circ \phi_H^{-1} \\
\hat{\shL}&:=(\phi_H^{-1})^*\shL \\
\Disc(\hat{s}_1)&:=(\hat{s}_1)^{-1}(0) \cap (\partial \PP_\Delta)_{\Sing}=\phi_H(\Disc(s_1)) \\
\hat{M}_t^s&:=\{\hat{s}_0=ts\} \text{ for }s \in C^{\infty}(\PP_\Delta,\hat{\shL}) \\
\hat{M}_t&:=\hat{M}_t^{\hat{s}_1}= \phi_H(M_t) \label{eq-hat-last}
\end{align}
In particular, $\hat{J}_\Delta$ is a complex structure on $\PP_\Delta$, and $\hat{s}_i$ are holomorphic sections of the holomorphic bundle $\hat{\shL}$.
Therefore, it makes sense to talk about $\hat{s}_1$-admissible sections (which are the same as $s_1$-admissible sections precomposed by $\phi_H^{-1}$). 
Most notably, by the second bullet of Proposition \ref{p:correctSingModel}, $\Disc(\hat{s}_1)\cap T \cap U_C$ satisfies Assumption~\ref{a:Disc} in $({p}, {q})$-coordinates.

\begin{remark}[Trivalent vertex]\label{r:trivalentVertex2}
As mentioned in Remark \ref{r:trivalentVertex1}, the key difficulty to generalize Theorem \ref{t:Construction}
to tropical curves with ends on a codimension one part of $\shA$ is whether one can establish the corresponding result of Proposition \ref{p:correctSingModel}.

More precisely, supposed we are given the local model \eqref{eq:singularModel2}
and a straight line segment $\gamma(r)=(0,0,r,R)$ parametrized by $r\in (r_0,r_1]$
such that $\gamma(r) \in \shA$ if and only if $r=r_1$.
Let $(0,0,a,b) \in \Disc(s_1)$ such that $\pi_{\Delta}(0,0,a,b)=\gamma(r_1)$
and let $C=\{(0,0,a,z_4):|z_4|=|b|\}$. We define neighborhood $U_C$ of $C$ as above.
If Proposition \ref{p:correctSingModel} is true in this setup, which means that it is
true for all the ends of a tropical curve, then the Lagrangian construction in Theorem \ref{t:Construction} applies to the tropical curve.

With that said, it is tempting to try to mimic the proof of Proposition \ref{p:correctSingModel} below to make 
$\shA$ to be very close to a trivalent graph and if $\gamma(r_1)$ is not the trivalent point of the graph, we would be able
to get a Hamiltonian $\phi_H$ satisfying all the three bullets of  Proposition \ref{p:correctSingModel}.
However, such a $\phi_H$ is not supported inside $N$.
For $\phi_H$ to be supported inside $N$, we can only perturb $\Disc(s_1)$ in $N$ and hence cannot shrink $\shA$ to a trivalent graph.

If one uses a $\phi_H$ that is not supported inside $N$ to run the rest of the argument, one can still get a closed 
Lagrangian that is diffeomorphic to a Lagrangian lift of the tropical curve but one cannot control the $\pi_{\Delta}$-image of the Lagrangian to be in a small neighborhood of the tropical curve.

It is very possible that Proposition \ref{p:correctSingModel} for appropriate $\gamma(r)$ is true in this setup.
Even though it is a very explicit local question, we are not able to write down a clean condition 
on $\gamma$ for it to work, especially when $\gamma(r_1)$ is very close to `the trivalent point of $\shA$'.

\end{remark}

Before giving the proof of Proposition \ref{p:correctSingModel}, we first conclude the resulting local model of $\hat{M}_t \cap U_C$. We remind the reader that $\hat{z}$ and $\hat{p}$ in the previous section are denoted by $z$ and $p$ in this section.

\begin{lemma}\label{l:localModelAfterCorrection}
 Let $\phi_H$ and $U_C$ be chosen as in Proposition \ref{p:correctSingModel}.
 Then we have
 \begin{align*}
  \hat{M}_t \cap U_C=\{{z}_1{z}_2=t{g}_U({z})\}
 \end{align*}
for some smooth function ${g}_U:U_C \to \mathbb{C}$ such that
\begin{itemize}
\item ${g}_U=g$ up to a change of coordinates and a multiplication by a non-vanishing function: more precisely,
${g}_U=\rho({z})g(\Phi_U \circ \Psi^{-1}(\phi_H^{-1}({z})))$ for some  $\rho({z}):U_C \to \mathbb{C}^*$,
 \item the zero locus of $({g}_U)_0:={g}_U|_{\{{z}_1={z}_2=0\}}$ is given by $\Disc(\hat{s}_1)\cap T \cap U_C$,
 \item $({g}_U)_0$ is submersive (i.e.~$D({g}_U)_0$ surjective) near $\Disc(\hat{s}_1)\cap T \cap U_C$,
 \item $({g}_U)_0|_{(D \setminus \{a\}) \times A}$ is homotopic to $({z}_3,{z}_4)  \mapsto {z}_3-a$ as $\CC^*$-valued functions.
 \end{itemize}
\end{lemma}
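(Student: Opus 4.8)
The statement is a bookkeeping consequence of Proposition~\ref{p:correctSingModel} combined with the coordinate change $\Psi$ from Section~\ref{ss:Integral linear transform} and the discussion in Example~\ref{ex:Transforming hypersurfaces}. First I would record what the pencil looks like in the corner chart \emph{before} applying $\phi_H$. By Assumption~\ref{a:singularModel} and the transformation carried out in Section~\ref{ss:Integral linear transform}, the pencil in $\hat U$-coordinates (which we now call $({z},{p})$) takes the form \eqref{Psi-transform-of-pencil}, i.e. up to the non-vanishing prefactor $h(p)$ and the Jacobian $\sqrt{(\hat p_2+\hat p_3)(\hat p_2+\hat p_4)}$ it is $z_1z_2 = t\,(\text{non-vanishing function})\cdot g(w(p,q))$. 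Dividing by these non-vanishing factors, we may write $\hat M_t\cap U_C$ before the Hamiltonian correction as $\{z_1z_2 = t\,g'(z)\}$ with $g' = \rho_1(z)\,g(\Phi_U\circ\Psi^{-1}(z))$ for an explicit $\mathbb{C}^*$-valued $\rho_1$ coming from $h$ and the Jacobian; the key point, used repeatedly, is that $\rho_1$ is nowhere zero on $U_C$, because $\hat p_2+\hat p_3=p_3>0$ and $\hat p_2+\hat p_4=p_4>0$ there by \eqref{eq:Psi}.

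Next I would apply $\phi_H$. Since by the third bullet of Proposition~\ref{p:correctSingModel} we have $z_j = z_j\circ\phi_H^{-1}$ on $U_C$ for $j=1,2$, the defining equation $\{\hat s_0 = t s\}$ pulls back so that $\hat M_t\cap U_C = \phi_H(M_t)\cap U_C = \{z_1z_2 = t\,g_U(z)\}$ where $g_U(z) := \rho(z)\,g(\Phi_U\circ\Psi^{-1}(\phi_H^{-1}(z)))$ and $\rho(z) = \rho_1(\phi_H^{-1}(z))\cdot(\text{Jacobian/trivialization factors of }\phi_H)$; again $\rho$ is $\mathbb{C}^*$-valued on $U_C$ since $\phi_H$ is a diffeomorphism preserving toric strata and $\rho_1$ is non-vanishing. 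This gives the first bullet verbatim. For the second bullet, restrict to $\{z_1=z_2=0\}$: on this stratum $\phi_H$ restricts (by the first bullet of Proposition~\ref{p:correctSingModel}) to a self-map of $T$, the prefactor $\rho$ is still non-vanishing, so the zero locus of $(g_U)_0$ is exactly $\phi_H\big(\{g_0\circ(\text{old coords})=0\}\big)\cap U_C = \phi_H(\Disc(s_1)\cap T)\cap U_C = \Disc(\hat s_1)\cap T\cap U_C$ by definition of $\Disc(\hat s_1)$.

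The third bullet — submersivity of $(g_U)_0$ near $\Disc(\hat s_1)\cap T\cap U_C$ — follows because submersivity is preserved under composing with the diffeomorphism $\phi_H|_T$ and multiplying by the non-vanishing $\rho$, and because the original $g_0$ was submersive along $\Disc^0(s_1)$: indeed $g_0 = c(b-w_3)$ from \eqref{eq:disc0} has $dg_0 = -c\,dw_3\neq 0$ everywhere, in particular along the discriminant. The fourth bullet is a homotopy statement about $\mathbb{C}^*$-valued functions on $(D\setminus\{a\})\times A$: the factor $\rho$ contributes nothing to the homotopy class because it extends to a non-vanishing function on all of $U_C$, hence is null-homotopic on $(D\setminus\{a\})\times A$; and $g\circ(\Phi_U\circ\Psi^{-1}\circ\phi_H^{-1})$ restricted to $\{z_1=z_2=0\}$ equals $(g_U)_0$, whose zero set is the straightened discriminant $\{z_3=a\}$ (after the correction), a codimension-two submanifold meeting each slice $\{z_4=\text{const}\}$ in the single point $z_3=a$; the winding number of $(g_U)_0$ around a small loop encircling $z_3=a$ is $\pm1$ by the submersivity/transversality just established, matching that of $z_3\mapsto z_3-a$, so the two maps are homotopic on $(D\setminus\{a\})\times A$ (using that $A$ is homotopy equivalent to a circle and $(g_U)_0$ is $z_4$-independent up to homotopy after straightening). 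The main obstacle is purely organizational: one must keep straight three successive changes of description — the passage to symplectic corner coordinates in Example~\ref{ex:Transforming hypersurfaces} (contributing $h(p)$), the integral-linear change $\Psi$ of Section~\ref{ss:Integral linear transform} (contributing the Jacobian $\sqrt{(\hat p_2+\hat p_3)(\hat p_2+\hat p_4)}$), and the Hamiltonian straightening $\phi_H$ — and verify at each stage that the accumulated prefactor stays in $\mathbb{C}^*$ on $U_C$; no genuinely hard analysis is involved.
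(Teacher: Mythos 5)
Your argument for the first three bullets matches the paper's proof essentially line for line: divide the transformed pencil equation \eqref{Psi-transform-of-pencil} by the non-vanishing $p$-dependent factor, precompose by $\phi_H^{-1}$, identify $\rho=\tilde h\circ\phi_H^{-1}$, and read off the second and third bullets from the explicit form $g_0=c(b-w_3)$. One small conceptual slip: there are no ``Jacobian/trivialization factors of $\phi_H$'' entering $\rho$. Applying a diffeomorphism $\phi_H$ to a hypersurface $\{F=0\}$ yields $\{F\circ\phi_H^{-1}=0\}$, a pure precomposition with no extra prefactor; the paper correctly sets $\rho(z)=\tilde h(\phi_H^{-1}(z))$ with nothing additional. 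Your argument happens not to suffer from this because the conclusion (that $\rho$ is $\CC^*$-valued) is unaffected, but the phrasing suggests a misunderstanding worth correcting.

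For the fourth bullet your route diverges from the paper's and has two genuine gaps. The target space of homotopy classes is $[(D\setminus\{a\})\times A,\,\CC^*]\cong H^1((D\setminus\{a\})\times A;\ZZ)\cong\ZZ^2$, detected by the windings around the $(D\setminus\{a\})$-circle and around $A$. (a) Submersivity of $(g_U)_0$ along $\{z_3=a\}$ only forces the winding around the first circle to be $\pm1$; it does not determine the sign, and if the sign were $-1$ the map would \emph{not} be homotopic to $z_3-a$. Pinning down $+1$ requires tracing back to the holomorphic $g_0=c(b-w_3)$ and observing that the intervening transformations ($\Phi_U\circ\Psi^{-1}$, multiplication by non-vanishing $\rho$, and the $\phi_H^{-1}$ isotopic to the identity) preserve the degree. (b) You assert without proof that ``$(g_U)_0$ is $z_4$-independent up to homotopy after straightening,'' i.e.\ that the winding around the $A$-circle vanishes. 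Straightening via $\phi_H$ only straightens the \emph{zero locus}; it gives no direct control on the function's $z_4$-dependence. The actual reason the $A$-winding is zero is that $g|_T$ is independent of $w_4$, that $\Psi$ preserves the fourth angular coordinate, and that $\phi_H$ is isotopic to the identity; the paper's proof handles both (a) and (b) at once by directly recording the homotopy class of $g|_T$ as $(w_3,w_4)\mapsto w_3-b$ and tracking it through the coordinate changes, rather than appealing to submersivity, which is cleaner precisely because it carries the sign and $w_4$-independence for free.
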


\begin{proof}
By Assumption \ref{a:singularModel} and Equation~\eqref{Psi-transform-of-pencil}, we have
\begin{align*}
M_t \cap U_C =\{2z_1z_2\sqrt{( p_2+ p_3)( p_2+p_4)} = t h(p)g(w(p,q))\}.
\end{align*}
Since $(p_2+p_3)(p_2+p_4)>0$, we can rearrange the terms to get
 \begin{align*}
  M_t \cap U_C =\{z_1z_2 = t\tilde h(p)g(w(p,q))\}
 \end{align*}
where $\tilde h(p)=\frac{h(p)}{2\sqrt{( p_2+ p_3)( p_2+p_4)}}$.
Note that $\tilde h(p)$ is a non-vanishing positive function because the numerator and denominator are both positive.
On the other hand, by tracing back the definitions, we have $w(p,q)=\Phi_U \circ \Psi^{-1}(p,q)$.
Applying $\phi_H$ to $M_t$ corresponds to precomposing the coordinates in the defining equation by $\phi_H^{-1}$ so we have
\begin{align}
 \hat{M}_t \cap U_C =\{\tilde{z}_1\tilde{z}_2=t\tilde h(\phi_H^{-1}({z}))g(\Phi_U \circ \Psi^{-1}(\phi_H^{-1}({z})))\} \label{eq:locG}
\end{align}
where $\tilde{z}_i:={z}_i \circ \phi_H^{-1}$ for $i=1,2$.
If we define $\rho( z):=\tilde h(\phi_H^{-1}({z})) $, then by the third bullet of Proposition \ref{p:correctSingModel}, we get the first bullet of this lemma.

In $U_C$, from the first bullet of Proposition \ref{p:correctSingModel} and the discussion above, it is clear that $({g}_U)_0=g|_T$ up to a 
 change of coordinates and a multiplication by a non-vanishing function.
Therefore, $({g}_U)_0^{-1}(0)=\phi_H \circ \Psi \circ \Phi_U^{-1}((g|_T)^{-1}(0)) \cap U_C = \Disc(\hat{s}_1)\cap T \cap U_C$ which is exactly the second bullet.

We now consider the third bullet.
Since $\Phi_U$, $\Psi$ and $\phi_H$ are diffeomorphisms, it suffices to check that 
$D(g|_T)$ is submersive near $\Disc(s_1)\cap T$.
We can check it in the complex chart where $g|_T=a(b-w_3)$ and $\Disc(\hat{s}_1)\cap T=\{b=w_3,w_1=w_2=0,w_4 \neq 0\}$.
Therefore, the third bullet follows.

Finally, since $\phi_H$ is isotopic to the identity, in order to understand the homotopy class of $({g}_U)_0|_{(D\backslash \{a\}) \times A}$, in view of \eqref{eq:locG},
it suffices to understand the homotopy class of
\begin{align}
 \tilde h(p) g(\Phi_U \circ \Psi^{-1}({z}))|_{\{0\} \times \{0\} \times (D \backslash \{a\}) \times A} \label{eq:LongExp}
\end{align}
It is clear that $\tilde h(p)$ is null-homotopic
because on one hand, it is well-defined and non-vanishing on the whole $D$ factor, and on the other it is independent of the ${q}_4$-coordinate.
The homotopy class of the remaining term, $g(\Phi_U \circ \Psi^{-1}({z}))$, can be understood
by combining the fact that, away from the zero locus, $g|_T$ is homotopic to $(w_3,w_4) \mapsto w_3-b \in \CC^*$
and $q_3= v_3$ is preserved under $\Psi$ (see \eqref{eq:Psi}).
\end{proof}

\subsubsection{Proof of Proposition \ref{p:correctSingModel}}
Let $N\subset U_C$ be a tubular neighborhood of $C$ of a similar form as $U_C$. Under abuse of repeating notation, $N$ is thus given by
$$
 N=\{(({p}_1,{q}_1), \dots,({p}_4,{q}_4)) \in B \times B \times D \times A\} 
$$
Let $a_0,a_1$ be the radii of $A$ in the $p_4$-coordinate with $a_0<R<a_1$ for $R$ the radius of $C$.
By Lemma \ref{l:fiberBundle}, there exists a smooth $p_{\Disc}:[a_0,a_1] \to \RR_{>0}$ and a constant $q_{\Disc} \in [0,2\pi)$ such that 
$$
 \Disc(s_1)\cap T \cap N =\{{z}=(0,0,\sqrt{2p_{\Disc}({p}_4)}e^{i q_{\Disc}},{z}_4)|{z}_4 \in A\}
$$
In particular, by $C \subset \Disc(s_1)\cap T \cap N $, we have $\sqrt{2p_{\Disc}(R)}=|a|$ and $q_{\Disc}=\arg(a)$.

By ignoring the first two factors, we can view $\Disc(s_1)\cap T \cap N$ as a symplectic section of the projection $\pi:D \times A \to A$.
In the following lemma, we explain how to deform this symplectic section (denoted by $Z$ in the lemma) to another symplectic section that 
is locally constant near $\pi(C)$.
After that, we will explain in Lemma \ref{l:correctSingModel4cell} how to thicken this Hamiltonian isotopy inside $T$ to be a Hamiltonian isotopy
in $U_C$ to achieve Proposition \ref{p:correctSingModel}.

\begin{lemma}\label{l:correctSingModel2cell}
 Let $Z\subset D\times A$ be the image of the symplectic section 
$$A\ra D\times A, \quad z_4\mapsto (\sqrt{2p_{\Disc}(p_4)}e^{iq_{\Disc}},z_4)$$
of $\pi$.
There exists a Hamiltonian $H:D \times A \to \mathbb{R}$, 
 supported inside the interior of the domain $D \times A$, and a neighborhood $W$ of 
 $\pi(C)=\{p_4=R\}$ in $A$
 such that $\pi^{-1}(W) \cap \phi_H(Z)=\{(a,{z}_4)| {z}_4 \in W\}$ (see Figure \ref{fig:deformSection}).
\end{lemma}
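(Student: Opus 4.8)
The statement is a soft symplectic fact: I want to move the symplectic section $Z$ of the trivial bundle $\pi:D\times A\to A$ to a section that is \emph{constant} (equal to the point $a\in D$) over a neighbourhood $W$ of the circle $\{p_4=R\}\subset A$, using only a compactly-supported Hamiltonian isotopy of the total space $D\times A$. The key structural point is that a \emph{symplectic} section of $\pi$ is the same thing as a closed Lagrangian-type graph, but more usefully: two symplectic sections that are isotopic through symplectic sections and agree near $\partial A$ are related by a fibre-preserving Hamiltonian isotopy (Moser/Banyaga-type argument for families of area forms on a disc). So the plan has two halves: first produce the right target section by an honest isotopy of sections, then invoke the parametrized Moser trick to realize that isotopy by a Hamiltonian on $D\times A$ supported in the interior.

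\textbf{First I would} set up coordinates. Write $A=\{a_0\le p_4\le a_1\}\times(\RR/2\pi\ZZ)$ with area form $dp_4\wedge dq_4$, and $D$ the $a$-centred disc with its standard area form. The given section is $\sigma_0(z_4)=(\sqrt{2p_{\Disc}(p_4)}e^{iq_{\Disc}},z_4)$; note it is independent of $q_4$, so it is determined by the curve $p_4\mapsto c(p_4):=\sqrt{2p_{\Disc}(p_4)}e^{iq_{\Disc}}$ in $D$, a radial ray-segment at angle $q_{\Disc}$, passing through $a=\sqrt{2p_{\Disc}(R)}e^{i q_{\Disc}}$ at $p_4=R$. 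I choose a smooth family $\{c_u\}_{u\in[0,1]}$ of embedded curves $c_u:[a_0,a_1]\to \operatorname{int}D$ with $c_0=c$, with $c_u=c$ near $p_4=a_0,a_1$ for all $u$, and with $c_1\equiv a$ on a subinterval $[R-\eta,R+\eta]$ — this is elementary since $\operatorname{int}D$ is connected and $c(R)=a$. This yields a family of sections $\sigma_u(p_4,q_4)=(c_u(p_4),p_4,q_4)$, each symplectic (the section is symplectic iff its image is transverse to fibres and inherits an area form from $\omega$, which holds automatically for a graph), agreeing with $\sigma_0$ near $\partial A$, and with $\sigma_1$ constant $=a$ over $W:=\{|p_4-R|<\eta\}$.

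\textbf{Then I would} convert this isotopy of sections into a Hamiltonian isotopy of $D\times A$. The images $\Sigma_u:=\sigma_u(A)$ form a smooth isotopy of surfaces, each a symplectic section; the time-dependent vector field $X_u$ generating $u\mapsto\Sigma_u$ (transverse to $\Sigma_u$, vanishing near $\pi^{-1}(\partial A)$) can be extended to a compactly-supported vector field on $D\times A$ and then corrected by the Moser argument so that $\iota_{Y_u}\omega$ is exact with compactly supported primitive — concretely, one works fibrewise: on each fibre $D$ the isotopy moves one interior point, and a point-displacement in a surface is always Hamiltonian with compactly supported Hamiltonian, depending smoothly on the base parameter and on $u$, and one glues these into a single $H:D\times A\to\RR$ supported in the interior. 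Integrating $X_{H}$ gives $\phi_H$ with $\phi_H(Z)=\phi_H(\Sigma_0)=\Sigma_1$, hence $\pi^{-1}(W)\cap\phi_H(Z)=\{(a,z_4)\mid z_4\in W\}$, as required.

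\textbf{The hard part} is bookkeeping rather than conceptual: one must make sure the Hamiltonian $H$ is genuinely supported in the \emph{interior} of $D\times A$ (so it extends by zero to all of $\PP_\Delta$ later, and so it doesn't disturb the region $U_C\setminus V_C$ where the pencil already has its standard form), which forces the isotopy $\{c_u\}$ to be stationary near $\partial A$ and the fibrewise Moser correction to be chosen with support in a fixed compact subset of $\operatorname{int}D$ independent of $p_4$. The $q_4$-independence of $\sigma_0$ makes the fibrewise construction uniform in $q_4$, so no monodromy obstruction arises; the only thing to watch is smoothness of $H$ across $W$ where $c_u$ becomes locally constant, which is handled by choosing the family $\{c_u\}$ smooth to begin with. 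This is exactly the kind of soft argument the paper elsewhere refers to as "a standard application of Moser's argument," and I would present it at that level of detail, with the fibrewise point-displacement-is-Hamiltonian observation spelled out.
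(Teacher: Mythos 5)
Your proof is correct and reaches the same conclusion, but by a genuinely softer route than the paper's. The paper simply writes down the Hamiltonian explicitly --- $H=(p_{\Disc}(p_4)-p_{\Disc}(R))(q_3-q_{\Disc})$ --- computes $X_H$ and its time-$1$ flow in closed form, checks that the flow has the required effect on $Z$ (here it matters decisively that $q_3\equiv q_{\Disc}$ on $Z$, which kills the $\partial_{q_4}$-component of $X_H$ along $Z$), and finally makes $H$ compactly supported via a cutoff $\rho_1(p_4)\rho_2(p_3,q_3)$, shrinking $W$ so the conclusion survives. You instead choose a smooth family of curves $c_u$ in $\operatorname{int}D$ ending at one that is locally constant near $p_4=R$, assemble the corresponding $q_4$-independent graphs $\Sigma_u$, and realise the isotopy by gluing fibrewise compactly supported displacement Hamiltonians on the disc factor. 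Both approaches work. The explicit formula is shorter to verify and hands you the Hamiltonian for free; your argument is conceptually more robust --- it would apply verbatim if the discriminant did not have the special form $q_3\equiv q_{\Disc}$ --- but it trades that for heavier bookkeeping, which you correctly flag as the hard part.

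Three small imprecisions worth tightening. First, the parenthetical claim that the graph of any map $A\to D$ is automatically a symplectic section is false: for a $q_4$-dependent graph $f(p_4,q_4)$ the pulled-back form is $(1+\omega_D(\partial_{p_4}f,\partial_{q_4}f))\,dp_4\wedge dq_4$, which can vanish. It is however true for $q_4$-independent graphs, which are the only ones you use, so just state it that way. Second, the passage from fibrewise Hamiltonians to a global $H$ deserves one more half-sentence: even for $q_4$-independent $H$, the vector field $X_H$ has a $\partial_{q_4}$-component equal to $-\partial_{p_4}H$, so $\phi_H$ does \emph{not} preserve the $D$-slices $\{(p_4,q_4)=\text{const}\}$; the identity $\phi_H(\Sigma_0)=\Sigma_1$ holds only because $\Sigma_1$ is $q_4$-invariant, exactly as in the paper's explicit computation. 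Third, the requirement that the $c_u$ be \emph{embedded} curves in $D$ is both unnecessary (the sections $\sigma_u$ are embedded in $D\times A$ regardless) and potentially unsatisfiable for $c_0=c$, since nothing in Lemma~\ref{l:fiberBundle} guarantees $p_{\Disc}$ is injective; drop it.
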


\begin{proof}
  Consider the Hamiltonian 
$$
  H=(p_{\Disc}({p}_4)-p_{\Disc}(R))({q}_3-q_{\Disc}) \in C^{\infty}(D \times A).
$$
This is well-defined because the $q_3$-coordinate on $D$ is bounded in an interval for $D$ is $a$-centered with $a\neq 0$ and $D$ doesn't meet $z_3=0$.
The corresponding Hamiltonian vector field is (with the sign convention $dH=-\iota_{X_H} \omega$) given by
$$
 X_H= -(p_{\Disc}({p}_4)-p_{\Disc}(R)) \partial_{{p}_3}+ p_{\Disc}'({p}_4)({q}_3-q_{\Disc}) \partial_{{q}_4}.
$$
In particular, $X_H|_{C}=0$ and when the time $1$ flow $\phi_H$ is well-defined, we have
$$
 \phi_H({p}_3,{q}_3,{p}_4,{q}_4)=({p}_3+p_{\Disc}(R)-p_{\Disc}({p}_4),{q}_3,{p}_4,{q}_4+p_{\Disc}'({p}_4)({q}_3-q_{\Disc}))
$$
so $\phi_H(C)=C$
 and 
$\phi_H(Z)$ is a section over $A$ with $({p}_3,{q}_3)$-coordinates equal to $(p_{\Disc}(R),q_{\Disc})$.

Note that $H$ is not compactly supported (and $\phi_H$ is not everywhere well-defined).
In order to get a compactly supported Hamiltonian $\tilde H$, we need to multiply a cutoff function to $H$
of the form $\rho_1({p}_4) \rho_2({p}_3,{q}_3)$ such that $\rho_1({p}_4):A \to \RR$ equals to $1$ near $R$ and 
$\rho_2({p}_3,{q}_3): D \to \RR$ equals to $1$ near $(p_{\Disc}(R),q_{\Disc})$.
Now, for $\tilde H=\rho_1({p}_4) \rho_2({p}_3,{q}_3)H$,
it follows that for a sufficiently small neighborhood  $W \subset A$ of $\{p_4=R\}$, we will get
$\pi^{-1}(W) \cap \phi_{\tilde H}(Z)=\{(a,{z}_4)| {z}_4 \in W\}$.
\end{proof}

\begin{figure}
 \includegraphics{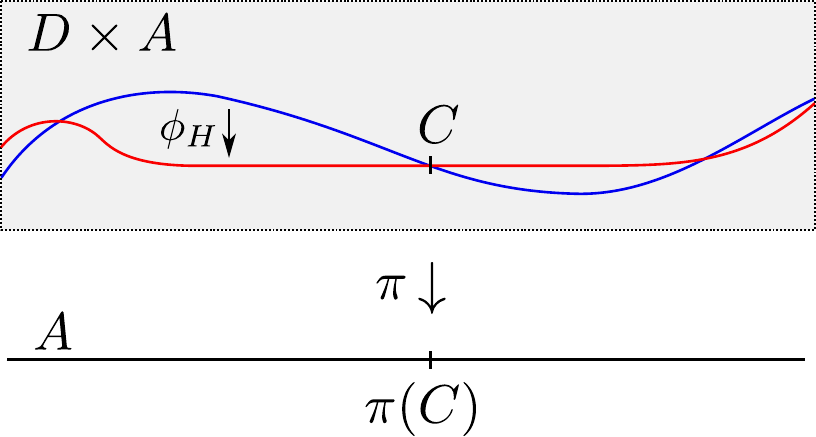}
 \caption{The symplectic section $Z$ (blue) is deformed to a section locally constant near $\pi(C)$ (red) after a compactly supported Hamiltonian diffeomorphism.}
\label{fig:deformSection}
 \end{figure}

We can thicken the constructed Hamiltonian as follows.

\begin{lemma}\label{l:correctSingModel4cell}
 As before, except with two extra ball factors $B$, let $Z:=\Disc(s_1) \cap N$ be the symplectic section of the fiber bundle $\pi:N \to A$ given by projection and $C=\{(0,0,a,{z}_4))||{z}_4|=\sqrt{2R}\}$.
 There exists a Hamiltonian $H:N \to \mathbb{R}$, supported inside the interior of $N$,
 and a neighborhood $W$ of  $\pi(C)$ in $A$
 such that $\pi^{-1}(W) \cap \phi_H(Z)=\{(0,0,a,{z}_4)| {z}_4 \in W\}$.
 Moreover, $\phi_H$ preserves $\{0\} \times B\times D \times A$,
$B \times \{0\}\times D \times A$ and $\{0\} \times \{0\}\times D \times A$
setwise.
\end{lemma}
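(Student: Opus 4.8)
The plan is to lift the Hamiltonian $\tilde H$ of Lemma~\ref{l:correctSingModel2cell} from the two-dimensional stratum to all of $N$ and then cut it off, arranging the cutoff so that compact support is achieved without disturbing the toric strata. Recall that in the corner chart, after the integral linear change of Section~\ref{ss:Integral linear transform}, the symplectic form is the standard product form $\sum_{i=1}^{4} dp_i\wedge dq_i$, so $N=B\times B\times D\times A$ is a symplectic product and the projection $\pr\colon N\to D\times A$ onto the last two factors is a symplectic fibration with fibers $B\times B$. First I would set $H_0:=\pr^{*}\tilde H$. Since $H_0$ depends only on $(p_3,q_3,p_4,q_4)$, its Hamiltonian vector field $X_{H_0}=-\partial_{q_3}H_0\,\partial_{p_3}+\partial_{p_3}H_0\,\partial_{q_3}-\partial_{q_4}H_0\,\partial_{p_4}+\partial_{p_4}H_0\,\partial_{q_4}$ has no $\partial_{p_1},\partial_{q_1},\partial_{p_2},\partial_{q_2}$-component; hence its flow leaves the $z_1$- and $z_2$-coordinates unchanged, leaves the loci $\{z_1=0\}$, $\{z_2=0\}$, $\{z_1=z_2=0\}$ invariant, and restricts on $\{z_1=z_2=0\}\cap N\cong D\times A$ to $\phi_{\tilde H}$.

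The obstacle is that $H_0$ is not supported in the interior of $N$: its support reaches $\partial B\times\partial B$ in the first two factors, so $\phi_{H_0}$ would not glue to the identity outside $N$. To repair this I would multiply by a cutoff $\chi=\chi_1(p_1)\chi_2(p_2)$ depending only on the two radial coordinates, with each $\chi_j\equiv 1$ for $p_j$ near $0$ and $\chi_j\equiv 0$ for $p_j$ near the outer radius of $B$, and set $H:=\chi H_0$. Then $H$ is supported in the interior of $N$, combining the support of $\tilde H$ (which sits in the interior of $D\times A$) with the support of $\chi$ (in the interiors of the $B$-factors); so its time-one flow $\phi_H$ is a well-defined Hamiltonian diffeomorphism, extended by the identity to $\PP_\Delta$.

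I would then verify the three asserted properties, which is exactly where the shape of $\chi$ matters. Writing $X_H=\chi X_{H_0}+H_0 X_\chi$ with $X_\chi=\partial_{p_1}\chi\,\partial_{q_1}+\partial_{p_2}\chi\,\partial_{q_2}$, one sees that $X_H$ still has no $\partial_{p_1},\partial_{p_2}$-component, so $\phi_H$ preserves $|z_1|$ and $|z_2|$; and since $\partial_{p_j}\chi$ vanishes wherever $p_j$ is near $0$, the vector field $X_H$ extends smoothly across and is tangent to $\{z_j=0\}$, whence $\phi_H$ preserves $\{0\}\times B\times D\times A$, $B\times\{0\}\times D\times A$ and $\{0\}\times\{0\}\times D\times A$ setwise. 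Finally, on the neighbourhood of $\{z_1=z_2=0\}$ where $\chi\equiv 1$ one has $H=H_0$, so $X_H$ restricted to the invariant set $\{z_1=z_2=0\}\cap N$ equals $X_{\tilde H}$; therefore $\phi_H(Z)=\phi_{\tilde H}(Z)$, and Lemma~\ref{l:correctSingModel2cell} gives $\pi^{-1}(W)\cap\phi_H(Z)=\{(0,0,a,z_4)\mid z_4\in W\}$ for the same neighbourhood $W$ of $\pi(C)$. The only delicate point is the smoothness and tangency of $X_H$ along the loci $z_1=0$ and $z_2=0$, which is precisely why $\chi$ is taken to depend only on the $p_j$ and to be locally constant near $p_j=0$; everything else is bookkeeping.
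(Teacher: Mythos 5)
Your proposal is correct and follows essentially the same route as the paper: the paper's Hamiltonian is $H = h(z_1)h(z_2)H^0(z_3,z_4)$ with $h\colon B\to\RR$ compactly supported and $\equiv 1$ near the origin, while yours replaces the factor $h(z_1)h(z_2)$ by the radial cutoff $\chi_1(p_1)\chi_2(p_2)$. The Leibniz-rule decomposition of $X_H$, the observation that the cutoff factors are locally constant near the respective strata so that $X_H$ is smooth and tangent there, and the use of $\phi_H|_{\{z_1=z_2=0\}}=\phi_{\tilde H}$ are the same as in the paper's verification.
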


\begin{proof}
 Let $h:B \to \mathbb{R}$ be a function supported inside the interior of $B$ such that $h\equiv 1$ near the origin.
 Let $H^0:D \times A\ra\RR$ be the Hamiltonian obtained via Lemma~\ref{l:correctSingModel2cell}.
 We define $H=h({z}_1)h({z}_2)H^0({z}_3,{z}_4)$, so $H$ is supported inside the interior of $N$.
 Moreover, the Hamiltonian vector field satisfies
 \begin{align}
  X_H|_{\{0\} \times \{0\} \times D \times A}&=X_{H^0} \label{eq:HamVectorField} \\
  X_H|_{\{0\} \times B \times D \times A}&=H^0({z}_3,{z}_4)X_{h_2}+h({z}_2)X_{H^0} \label{eq:HamVectorField2}\\
  X_H|_{B \times \{0\} \times D \times A}&=H^0({z}_3,{z}_4)X_{h_1}+h({z}_1)X_{H^0} \label{eq:HamVectorField3}
 \end{align}
where $X_{h_i}$ denotes the unique vector field that pushes down to $X_h$ in the $i$th $B$-factor and trivial to the other factors. We conclude the assertion.
\end{proof}

\begin{proof}[Proof of Proposition \ref{p:correctSingModel}]
Given a tubular neighborhood $N$ of $C$, we can apply Lemma \ref{l:correctSingModel4cell} to get a Hamiltonian diffeomorphism $\phi_H:\PP_\Delta \to \PP_\Delta$ supported inside $N$
such that $\phi_H$ preserves all tori strata setwise (so the first bullet of Proposition \ref{p:correctSingModel} holds).

If $U_C$ is a small tubular neighborhood of $C$ such that $\pi(U_C) \subset W$, where $W$ is obtained in Lemma \ref{l:correctSingModel4cell}, 
then the second bullet of Proposition \ref{p:correctSingModel} holds.

Finally, a simple but crucial observation is that Equation \eqref{eq:HamVectorField} is true \emph{near} $\{0\} \times \{0\} \times D \times A$.
Therefore, $\tilde{z}_i={z}_i $ near $\{0\} \times \{0\} \times D \times A$ for $i=1,2$.
By shrinking $U_C$, we obtain the third bullet of Proposition \ref{p:correctSingModel}. 
\end{proof}

\subsection{A symplectic fibration}\label{ss:ASymFib}

We assume from now until Section \ref{ss:ProofSolidTori} that we have applied the diffeomorphism $\phi_H$ given in
Proposition \ref{p:correctSingModel}.
For better readability, we will drop the `hat' notations.

In this subsection and the next two, we want to equip $\pi:{M}_t^s \cap U_C \to A$ with a smoothly trivial exact symplectic fibration structure for some appropriate
${s}_1$-admissible section $s$. After that, Assumption \ref{a:TubularNbhd} will be justified and we can apply Proposition~\ref{p:LagrangianConstructionSingLoci}
to get some Lagrangian solid torus.
As a first step towards this, we consider $s={s}_1$ and equip ${M}_t \cap U_C$ with a 
symplectic fiber bundle structure over $A$ (see Proposition \ref{p:fibration} below).
The main tool is the following linear algebra observation first made by Simon Donaldson (and known by the slogan ``almost holomorphic implies symplectic'').

\begin{proposition}[\cite{Donaldson}, Proposition $3$]\label{p:Donaldson}
 Let $\alpha:\mathbb{C}^n \to \mathbb{C}$ be an $\mathbb{R}$-linear map.
 Let $\alpha^{1,0}$ and $\alpha^{0,1}$ be the complex linear and the anti-complex linear parts of $\alpha$ respectively.
 If $|\alpha^{0,1}|<|\alpha^{1,0}|$, then $\ker(\alpha)$ is symplectic of rank $2n-2$ in $\mathbb{C}^n$.
\end{proposition}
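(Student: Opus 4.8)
This is a pointwise linear-algebra statement, so the proof is a direct computation at a single point; no analysis or global geometry is involved. The plan is to write $\alpha = \alpha^{1,0} + \alpha^{0,1}$ with $\alpha^{1,0}$ complex-linear and $\alpha^{0,1}$ conjugate-linear, and to show that $K:=\ker\alpha$ is a real codimension-two subspace on which the standard symplectic form $\omega_{\std}$ (equivalently, the standard Hermitian form) is nondegenerate. First I would dispose of the degenerate case: if $\alpha \equiv 0$ the hypothesis $|\alpha^{0,1}| < |\alpha^{1,0}|$ fails, so we may assume $\alpha^{1,0}\neq 0$, and then $\alpha$ is surjective onto $\CC$ (already $\alpha^{1,0}$ is), hence $\dim_\RR K = 2n-2$.

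The core step is to check $K$ is symplectic. I would use the criterion that a subspace $W$ of $(\CC^n,\omega_{\std})$ is symplectic if and only if $W \cap W^{\perp_\omega} = 0$, where $W^{\perp_\omega}$ is the symplectic orthogonal. Equivalently, and more convenient here, I would argue via the Hermitian inner product $\langle v,w\rangle = \sum v_j\bar w_j$, whose imaginary part is (a multiple of) $\omega_{\std}$: a real subspace $W$ is symplectic precisely when it contains no vector $v\neq 0$ that is Hermitian-orthogonal to all of $W$ — but one must be slightly careful since real-symplectic-orthogonal is not the same as Hermitian-orthogonal. The clean route: suppose $v \in K$ lies in the $\omega$-orthogonal complement of $K$. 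Since $K$ has real codimension two and $\omega_{\std}$ is nondegenerate on $\CC^n$, the $\omega$-orthogonal complement $K^{\perp_\omega}$ is two-dimensional; I claim it is spanned over $\RR$ by the two vectors $g_1, g_2$ defined by representing $\alpha^{1,0}(w) = \langle w, a\rangle$ and $\alpha^{0,1}(w) = \overline{\langle w, b\rangle}$ for suitable $a,b\in\CC^n$. Concretely, $w\in K$ iff $\langle w,a\rangle + \overline{\langle w,b\rangle} = 0$; writing this out, $K$ is cut out by the two real equations $\re\langle w, a+b\rangle = 0$ and $\im\langle w, a - b\rangle = 0$, so $K^{\perp_\omega}$ is spanned by $a+b$ and $i(a-b)$ (using that $\omega(w,v) = \im\langle w,v\rangle$ and the real-linear functionals $w\mapsto \re\langle w,u\rangle = \omega(w, iu)$, $w\mapsto \im\langle w,u\rangle = \omega(w,-u)$). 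Then $K$ is symplectic iff the restriction of $\omega$ to $\Span_\RR\{a+b, i(a-b)\}$ is nondegenerate, i.e.\ iff $\omega(a+b, i(a-b)) \neq 0$. Compute $\omega(a+b, i(a-b)) = \re\langle a+b, a-b\rangle = |a|^2 - |b|^2 + \re(\langle b,a\rangle - \langle a,b\rangle) = |a|^2 - |b|^2$. Since $|a| = |\alpha^{1,0}|$ and $|b| = |\alpha^{0,1}|$ (operator norms of rank-one maps equal the norms of the representing vectors), the hypothesis $|\alpha^{0,1}| < |\alpha^{1,0}|$ gives $|a|^2 - |b|^2 > 0 \neq 0$, so $K^{\perp_\omega}$ is symplectic, hence so is $K = (K^{\perp_\omega})^{\perp_\omega}$.

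The only genuinely fiddly point — and the one I would be most careful about — is bookkeeping the correspondence between the real-linear functionals defining $K$ and the spanning vectors of $K^{\perp_\omega}$, including getting the factors of $i$ and signs right so that the final determinant is exactly $|a|^2 - |b|^2$ rather than something with a spurious cross term; a short alternative that sidesteps this is to diagonalize: after a unitary change of coordinates one can assume $a = \lambda e_1$, and then $\alpha^{0,1}$ is a conjugate-linear functional of norm $\mu < \lambda$, and one checks $K$ is symplectic by an explicit $2\times 2$ computation in the $e_1$-plane (all other coordinates lie in $K$ and are manifestly symplectically paired). I would likely present this diagonalized version as it makes the inequality $\mu<\lambda$ enter transparently. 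Either way the proof is a few lines once the setup is fixed.
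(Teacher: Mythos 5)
The paper simply cites this as Proposition~3 of \cite{Donaldson} and gives no proof, so there is no in-paper argument to compare against; I assess your proof on its own merits. It is correct. The substantive steps all hold: representing $\alpha^{1,0}(w)=\langle w,a\rangle$ and $\alpha^{0,1}(w)=\overline{\langle w,b\rangle}$ gives $|a|=|\alpha^{1,0}|$, $|b|=|\alpha^{0,1}|$ by Cauchy--Schwarz; $\ker\alpha$ is the common zero set of the real functionals $\re\langle\cdot,a+b\rangle$ and $\im\langle\cdot,a-b\rangle$ (and the hypothesis $|a|>|b|$ forces $a\pm b\neq 0$); writing these functionals as $\omega(\cdot,u_1)$, $\omega(\cdot,u_2)$ exhibits $\ker\alpha$ as the symplectic annihilator of $\Span_\RR\{u_1,u_2\}$; and on that plane $\omega$ evaluates to $\pm(|a|^2-|b|^2)\neq 0$, the cross term $\langle b,a\rangle-\langle a,b\rangle$ being purely imaginary and so dropping out of the real part exactly as you say. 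Two small bookkeeping slips, both of which you preemptively flag as ``the fiddly point'' and neither of which affects the outcome. First, with your convention $\omega(w,v)=\im\langle w,v\rangle$ one actually gets $\re\langle w,u\rangle=\omega(w,-iu)$ and $\im\langle w,u\rangle=\omega(w,u)$, and with either sign choice the annihilator of $\ker\alpha$ comes out as $\Span_\RR\{i(a+b),\,a-b\}$ rather than $\Span_\RR\{a+b,\,i(a-b)\}$ as written; since $\omega$ is $J$-compatible, $\omega(i(a+b),a-b)=-\omega(a+b,i(a-b))$, so both pairings equal $\pm(|a|^2-|b|^2)$ and nondegeneracy is unaffected. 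Second, the aside that $\alpha$ is surjective ``(already $\alpha^{1,0}$ is)'' is not by itself a reason: for $\alpha(w)=\langle w,e_1\rangle+\overline{\langle w,e_1\rangle}=2\re\langle w,e_1\rangle$ the part $\alpha^{1,0}$ is surjective yet $\alpha$ has real rank $1$. What makes the rank equal to $2$ is the strict inequality $|a|>|b|$, which your determinant computation establishes in any case, so nothing is actually missing. The diagonalized variant you sketch is a clean presentation of the same idea.
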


Let 
%\begin{align}
$ G^t:={z}_1{z}_2-t{g}_U({z}): U_C \to \mathbb{C}$
%\end{align}
where ${g}_U$ and $U_C$ are obtained in Lemma \ref{l:localModelAfterCorrection} and Proposition \ref{p:correctSingModel}.
Since the tangent space of ${M}_t \cap U_C$ is given by $\ker(DG^t)$ for $t\neq 0$, analyzing $DG^t$
and how it is related to the projection $\pi$ will be the heart of this subsection.

In $({x},{y})$-coordinates (see the paragraph after \eqref{Ubeta}), we have 
\begin{align*}
 D{g}_U=[\partial_{{x}_1} {g}_U({x},{y}), \partial_{{y}_1} {g}_U({x},{y})
 , \dots, \partial_{{x}_4} {g}_U({x},{y}), \partial_{{y}_4} {g}_U({x},{y})]
\end{align*}
Let $D_3{g}_U:=[0,\dots,0,\partial_{{x}_3} {g}_U(z), \partial_{{y}_3} {g}_U(z),0,0]$ and
$D_4{g}_U:=[0,\dots,0,\partial_{{x}_4} {g}_U(z), \partial_{{y}_4} {g}_U(z)]$, which taken together form a $2 \times 8$ real matrix-valued function on $U_C$.
For $t\neq 0$, we know that ${M}_t \cap U_C$ is symplectic, or equivalently, $\ker(DG^t)$ is symplectic at all points
where $G^t=0$, because ${M}_t$ is a holomorphic submanifold.
If $D_4{g}_U\equiv 0$, then $G^t$ is independent of the ${z}_4$-coordinate, so factors as $U_C\ra B\times B\times D \stackrel{(G^t)'}{\longrightarrow} \CC$. If $N_t$ denotes the fiber of $(G^t)'$ over $0$ then
$M_t\cap U_C=N_t\times A$, a symplectic product.
While there is no reason to have $D_4{g}_U\equiv 0$, we are in fact going to show that if we ``remove'' the term $tD_4{g}_U$ from $DG^t$, then 
$\ker (DG^t+tD_4{g}_U)$ is still symplectic near $C$, and we show this implies that $\pi$ is a symplectic fiber bundle for $U_C$ sufficiently small.

\begin{lemma}\label{l:TiltingTangent}
 There exists a  tubular neighborhood $U_C' \subset U_C$ of $C$
 such that $\ker ((DG^t+tD_4{g}_U)|_{T_{{z}} U_C'})$ is symplectic of rank $6$ for all ${z} \in {M}_t\cap U_C'$ and all $t >0$.
\end{lemma}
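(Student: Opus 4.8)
\emph{Strategy and reduction.} The plan is to verify the hypothesis of Donaldson's criterion (Proposition~\ref{p:Donaldson}) for the $\RR$-linear functional
\[
\Theta^t_z:=(DG^t+tD_4g_U)|_{T_z U_C'}\colon\CC^4\longrightarrow\CC ,\qquad z\in M_t\cap U_C' .
\]
Since in the $(x,y)$-coordinates on $U_C$ the symplectic form is the standard $\sum_j dx_j\wedge dy_j$ (Example~\ref{ex:standard}) and $\ker(DG^t)=T_zM_t$ is already symplectic for $t>0$ (as $M_t$ is a complex submanifold), it suffices to show that, with respect to the standard complex structure $z_j=x_j+iy_j$, the $(0,1)$-part of $\Theta^t_z$ is strictly shorter than its $(1,0)$-part at every point of $M_t\cap U_C'$; then $\ker\Theta^t_z$ is symplectic of real dimension $8-2=6$. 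Because $d(z_1z_2)=z_2\,dz_1+z_1\,dz_2$ is of type $(1,0)$ and adding $tD_4g_U$ cancels the $dz_4$- and $d\bar z_4$-components of $-t\,dg_U$, one computes
\begin{align*}
(\Theta^t_z)^{1,0}&=(z_2-t\partial_{z_1}g_U)\,dz_1+(z_1-t\partial_{z_2}g_U)\,dz_2-t\,\partial_{z_3}g_U\,dz_3 ,\\
(\Theta^t_z)^{0,1}&=-t\big(\partial_{\bar z_1}g_U\,d\bar z_1+\partial_{\bar z_2}g_U\,d\bar z_2+\partial_{\bar z_3}g_U\,d\bar z_3\big).
\end{align*}
Dropping the non-negative $dz_1,dz_2$ contributions, the required inequality follows from the $t$-independent pointwise estimate $\sum_{j=1}^3|\partial_{\bar z_j}g_U(z)|^2<|\partial_{z_3}g_U(z)|^2$; uniformity in $t>0$ is then automatic, and this estimate also forces $\partial_{z_3}g_U\neq0$, so that $\Theta^t_z\neq0$ and the kernel has the asserted codimension.

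\emph{The key estimate on $C$.} The heart of the proof is to establish $\sum_{j=1}^3|\partial_{\bar z_j}g_U|^2<|\partial_{z_3}g_U|^2$ along the circle $C$. There one has $z_1=z_2=0$, and $g_U=0$ since $C\subset\Disc(\hat s_1)$. Writing $g_U=\rho(z)\,g(w(z))$ with $g|_T=a(b-w_3)$ (Assumption~\ref{a:singularModel}) and unwinding the composite coordinate change $w=\Phi_U\circ\Psi^{-1}\circ\phi_H^{-1}$, one finds that near $C$ each of $w_1,w_2,w_3$ has the form $w_j=\psi^{(j)}(p_1,\dots,p_4)\,z_j$ with $\psi^{(j)}$ a positive smooth function of the moment coordinates $p_k=\tfrac12|z_k|^2$ (this is the transition factor of Example~\ref{ex:Transforming hypersurfaces}, unaffected by the monomial map $\Psi$ and the strata-preserving $\phi_H$). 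Consequently, on $C$, the terms $w_1h_1(w)$, $w_2h_2(w)$ and the $\partial\rho,\bar\partial\rho$ contributions all vanish, each carrying a factor $z_1$, $z_2$ or $g(w(z))$; what survives is $\partial_{\bar z_1}g_U|_C=\partial_{\bar z_2}g_U|_C=0$, together with
\[
\partial_{\bar z_3}g_U|_C=-\tfrac12\,\rho\,c\,z_3^2\,\partial_{p_3}\psi^{(3)} ,\qquad
\partial_{z_3}g_U|_C=-\rho\,c\,\big(\psi^{(3)}+\tfrac12|z_3|^2\,\partial_{p_3}\psi^{(3)}\big).
\]
Thus on $C$ the estimate reduces to $\tfrac12|z_3|^2|\partial_{p_3}\psi^{(3)}|<\big|\psi^{(3)}+\tfrac12|z_3|^2\partial_{p_3}\psi^{(3)}\big|$. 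The decisive input is the sign $\partial_{p_3}\psi^{(3)}|_C=\psi^{(3)}|_C\cdot(\Hess R)_{33}|_C\ge 0$, which comes from convexity of the Guillemin potential: in the corner chart $f_{J,can}=\tfrac12\sum_r p_r\log p_r+R$ with $R$ a sum of convex terms $\tfrac12 l_r\log l_r$, so $\Hess R$ is positive semidefinite, and the $\tfrac1{2p_3}$ coming from the normalising factor $\sqrt{2p_3}$ cancels against the corresponding diagonal entry of $\Hess f_{J,can}$. Since also $\psi^{(3)}>0$, the right side exceeds the left by $\psi^{(3)}$, and the strict inequality holds on all of $C$.

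\emph{Passage to $U_C'$ and the main obstacle.} Since all first partial derivatives of the $C^\infty$-function $g_U$ are continuous on $\bar U_C$ and $C$ is compact, the strict inequality $\sum_{j=1}^3|\partial_{\bar z_j}g_U|^2<|\partial_{z_3}g_U|^2$ persists on an open tubular neighbourhood $U_C'\subset U_C$ of $C$, which is exactly the $U_C'$ claimed in the lemma. The main obstacle is the bookkeeping in the middle step: patiently propagating the holomorphic and anti-holomorphic derivatives of $g_U$ through the three coordinate changes $\Phi_U$ (symplectic-to-complex), $\Psi$ (integral-linear/monomial) and $\phi_H$ (Hamiltonian straightening), checking that on $C$ the only surviving contributions to both sides stem from the $w_3$-dependence, and identifying the pivotal sign with a diagonal Hessian entry of the convex potential $R$. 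The reduction to Donaldson's inequality in the first step and the continuity--compactness extension in the last step are routine.
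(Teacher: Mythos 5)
Your proposal diverges from the paper's proof in a fundamental way, and the divergence introduces a genuine gap.

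\textbf{Different route.} The paper never attempts to verify a pointwise Donaldson inequality with respect to the standard complex structure $J_0$ (defined by $z_j = x_j + iy_j$). It uses the toric complex structure $\hat J_\Delta$ on the corner chart, for which $M_t = \{s_0 = ts_1\}$ is an honest complex hypersurface, so that $DG^t$ is automatically of type $(1,0)$ at every point of $M_t$ --- i.e.\ $(DG^t)^{0,1}_{\hat J_\Delta} = 0$, with no estimate needed. It then observes that for an arbitrary perturbation $A$ with $\|A\| < c_1\|DG^t\|$ one gets $\|(DG^t+A)^{0,1}\| < \|(DG^t+A)^{1,0}\|$ for a uniform $c_0c_1 < \tfrac12$, and that $A = tD_4 g_U$ satisfies this smallness bound near $C$ because $D_4g_U$ vanishes and $D_3 g_U$ does not on $\Disc(\hat s_1)$ (Lemma~\ref{l:localModelAfterCorrection}, bullets 2 and 3). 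Your approach instead works with $J_0$, for which $DG^t$ is \emph{not} complex-linear on $M_t$, and thus requires a nontrivial pointwise inequality $\sum_j|\partial_{\bar z_j}g_U|^2 < |\partial_{z_3}g_U|^2$ on $C$.

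\textbf{The gap.} To verify this inequality you rely on the structural claim that $w_j = \psi^{(j)}(p)z_j$ with $\psi^{(j)}$ depending only on the moment coordinates $p$, asserting that this form is ``unaffected by \ldots the strata-preserving $\phi_H$.'' This is not correct. Lemma~\ref{l:TiltingTangent} is stated \emph{after} the straightening Hamiltonian $\phi_H$ from Proposition~\ref{p:correctSingModel} has been applied (see the first paragraph of Section~\ref{ss:ASymFib}). The flow $\phi_H^{-1}$ near $C$ acts as a compactly supported symplectomorphism of the $(z_3,z_4)$-plane that fixes $C$ pointwise but is \emph{not} $J_0$-holomorphic: its linearization $D\phi_H^{-1}|_c$ lies in $Sp(4,\RR)$ but generally not in $U(2)$, and moreover it does not commute with the torus action (the Hamiltonian $H^0$ in Lemma~\ref{l:correctSingModel2cell} depends explicitly on $q_3$). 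Consequently $w_3\circ\phi_H^{-1}$ is \emph{not} of the form $\psi(p)z_3$, your closed-form expressions for $\partial_{z_3}g_U|_C$ and $\partial_{\bar z_3}g_U|_C$ in terms of $\partial_{p_3}\psi^{(3)}$ do not apply, and the convexity of the Guillemin potential $R$ --- your ``decisive input'' --- is no longer the relevant quantity. Since $D\phi_H^{-1}|_c$ can transport a form that is predominantly $(1,0)$ for $J_0$ to one whose $(0,1)$-part is as large as one likes (relative to the $(1,0)$-part), there is no a~priori reason for your inequality to persist. The paper sidesteps this entirely: $\phi_H$ is a symplectomorphism, so it pushes $J_\Delta$ forward to $\hat J_\Delta$ and \emph{preserves} $(\hat J_\Delta, J_\CC)$-complex-linearity of $DG^t$ on $M_t$; only the boundedness of $\|\hat J_\Delta\|$ enters, which is automatic since $\phi_H$ is compactly supported. (Your computation that $\partial_{\bar z_1}g_U|_C = \partial_{\bar z_2}g_U|_C = 0$ is fine --- those vanishings hold because every term carries a factor $z_1$, $z_2$, or $g$, regardless of what $\phi_H$ does --- but the $j=3$ estimate is the crux, and it is exactly where the argument breaks.)
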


\begin{proof}
 With the transformation after Proposition \ref{p:correctSingModel} implicit, we denote $\hat J_\Delta$ just by $J_\Delta$ etc. in the following. 
We have ${M}_t= \{{s}_0=t{s}_1\}$ and both ${s}_0$ and ${s}_1$ are holomorphic section and thus
 \begin{align*}
  DG^t \circ {J}_\Delta|_{T_{{z}} U_C}= J_{\CC} \circ DG^t|_{T_{{z}} U_C}
 \end{align*}
 for all ${z} \in {M}_t$ and all $t>0$, where $J_{\CC}$ is the standard complex structure of $\CC$.
 
 As a result, for ${z} \in {M}_t$ and any $\RR$-linear matrix
 $A: T_{{z}}U_C \to \mathbb{C}$, we have
 \begin{align*}
  (DG^t+A)^{1,0}&=DG^t+A^{1,0} \\
  (DG^t+A)^{0,1}&=A^{0,1}
 \end{align*}
 where superscripts $(1,0)$ and $(0,1)$ are the $({J}_\Delta,J_{\CC})$ complex linear part and anti-complex linear part, respectively, so
\begin{equation}
2A^{1,0}=A-J_{\CC}AJ_{\Delta}\quad\hbox{ and }\quad 2A^{0,1}=A+J_{\CC}AJ_{\Delta}
\label{eq-10-01}
\end{equation}
 Using the fact that $\|{J}_\Delta\|$ is uniformly bounded, applying triangle inequality to \eqref{eq-10-01} gives a $c_0>0$ such that 
 $\|A^{1,0}\|, \|A^{0,1}\| <c_0 \|A\|$ for every $\RR$-linear matrix $A$.
 Now assume additionally that $\|A\|<c_1\|DG^t({z})\|$ for some $c_1 >0$, then 
 \begin{align*}
  \| (DG^t+A)^{1,0}\| & \ge \| DG^t\| -\|A^{1,0}\|  \\
  &> \| DG^t\| -c_0\|A\| \\
  &> \| DG^t\| -c_0c_1\| DG^t\| \\
  &> \frac{(1-c_0c_1)}{c_0c_1}\|A^{0,1}\|  \\
  &=\frac{(1-c_0c_1)}{c_0c_1}\|(DG^t+A)^{0,1}\|
 \end{align*}
Hence, given $c_1>0$ (independent of $t$) such that $c_0c_1<\frac{1}{2}$, we have for all $A$ satisfying $\|A\|<c_1\|DG^t({z})\|$ for all $t$ that
$\| (DG^t+A)^{1,0}\| >\|(DG^t+A)^{0,1}\|$. In this case, $\ker(DG^t+A)$ is symplectic of rank $6$ for all $t$ by Proposition \ref{p:Donaldson}.
 
  By the second and third bullet of Lemma \ref{l:localModelAfterCorrection}, we know that $\partial_{{x}_4} {g}_U({z})=\partial_{{y}_4} {g}_U({z})=0$,
 and $\partial_{{x}_3} {g}_U({z}),\partial_{{y}_3} {g}_U({z}) \neq 0$ for ${z} \in \Disc({s}_1)$.
 Therefore, for any $c_1>0$ such that $c_0c_1<\frac{1}{2}$, there exist small neighborhood $U_C'$ of $C \subset \Disc({s}_1)$ such that
  \begin{align*}
  \|D_4{g}_U\| < c_1\|D_3{g}_U\|
 \end{align*}
 for all ${z} \in U_C'$.
 As a result, we have $c_1\|DG^t({z})\| \ge c_1\|tD_3{g}_U\| > \|tD_4{g}_U\|$ so $\ker(DG^t+tD_4{g}_U)$ is symplectic
 for all ${z} \in {M}_t\cap U_C'$ and for all $t > 0$.
 \end{proof}

 By shrinking the $U_C$ we chose in Proposition \ref{p:correctSingModel} if necessary, we can assume $U_C$ is small enough such that Lemma \ref{l:TiltingTangent} is satisfied and we will do so in the following.
 
\begin{proposition}\label{p:fibration}
 Let $C$ and $U_C$ be as before and let $\pi:{M}_t \cap U_C \to A$ be the restriction of the projection $\pi_4:U_C \to A$.
 We find that $\pi$ is a symplectic fibration without singularities.
\end{proposition}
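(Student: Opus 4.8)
The plan is to show that $\pi\colon M_t\cap U_C\to A$ is a proper submersion with symplectic fibers and then invoke Ehresmann's theorem (in its symplectic-fibration form, \cite[Section 15]{SeidelBook}) to conclude it is a locally trivial symplectic fiber bundle; since the base $A$ is an annulus, local triviality over a contractible cover plus the fact that $M_t\cap U_C$ is already topologically a product (by Assumption~\ref{a:TubularNbhd}'s ambient form $U_C=B\times B\times D\times A$) gives global triviality as a fibration. The real content is to verify that for all $z\in M_t\cap U_C$ the differential $d\pi|_{T_zM_t}\colon T_zM_t\to T_{\pi(z)}A$ is surjective and that $\ker\big(d\pi|_{T_zM_t}\big)$, the vertical tangent space, is symplectic.

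First I would set up coordinates: writing $T_zM_t=\ker(DG^t)$ with $G^t=z_1z_2-tg_U(z)$ as in Section~\ref{ss:ASymFib}, and $\pi=\pi_4$ the projection onto the $A$-factor $(x_4,y_4)$, the vertical tangent space is
\[
\ker\big(d\pi|_{T_zM_t}\big)=\ker(DG^t)\cap\ker(D\pi_4),
\]
where $D\pi_4$ is the $2\times 8$ matrix extracting the last two coordinates. The key identity is that on $\ker(D\pi_4)$ (i.e.\ for tangent vectors with vanishing $z_4$-component), the operator $DG^t$ restricts to the same linear functional as $DG^t+tD_4g_U$, because $D_4g_U$ acts only through the $z_4$-coordinates and hence annihilates $\ker(D\pi_4)$. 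Therefore
\[
\ker(DG^t)\cap\ker(D\pi_4)=\ker(DG^t+tD_4g_U)\cap\ker(D\pi_4).
\]
By Lemma~\ref{l:TiltingTangent}, after shrinking to $U_C'$ (absorbed into $U_C$), the subspace $\ker(DG^t+tD_4g_U)\subset T_zU_C\cong\CC^4$ is symplectic of real rank $6$; it is the tangent space to a $J_\Delta$-\emph{almost} complex hypersurface, but what matters is only that it is a symplectic subspace. Intersecting a rank-$6$ symplectic subspace with $\ker(D\pi_4)=\CC^3\times\{0\}$ (which is a coordinate symplectic subspace of $\CC^4$): I would argue that, for $U_C$ small enough, this intersection is transverse of the expected dimension $6+6-8=4$ and is itself symplectic. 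Transversality follows because $\ker(DG^t+tD_4g_U)$ is $C^0$-close, uniformly in $t$, to the tangent space of $\{z_1z_2=0\}$-type model along $C$ (where the $z_1,z_2$-directions dominate by the second and third bullets of Lemma~\ref{l:localModelAfterCorrection}), and in that model the intersection with $\CC^3\times\{0\}$ is manifestly a transverse rank-$4$ symplectic subspace; openness of the symplectic (and transversality) conditions among linear subspaces, together with the uniform bounds from the proof of Lemma~\ref{l:TiltingTangent}, propagates this to all of $M_t\cap U_C$ for $t>0$ small and $U_C$ chosen accordingly. Surjectivity of $d\pi|_{T_zM_t}$ is then automatic from the dimension count, since $\dim T_zM_t=6$ and the vertical space has dimension $4$.

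The main obstacle I anticipate is making the transversality/symplecticity of the intersection $\ker(DG^t+tD_4g_U)\cap(\CC^3\times\{0\})$ precise and uniform: one must control, simultaneously for all $z\in M_t\cap U_C$ and all small $t>0$, both that the $z_4$-component map $\ker(DG^t+tD_4g_U)\to\CC$ (restriction of $D\pi_4$) is surjective (transversality) and that the restriction of $\omega_{\CC^4}$ to the kernel is nondegenerate. The first is where one genuinely uses that $g_U$ is submersive along the discriminant in the $z_3$-directions — so that the ``removed'' functional $DG^t+tD_4g_U$ has its $z_3$-part dominating, forcing its kernel to surject onto the $z_4$-line; the second follows once transversality holds, by the same Donaldson-type inequality argument applied to the further restriction, or more cheaply by noting a transverse intersection of a symplectic subspace with a \emph{coordinate} symplectic subspace along which the complement is also coordinate-symplectic is symplectic. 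I would isolate this as a short linear-algebra lemma (a relative version of Proposition~\ref{p:Donaldson}) and then feed the uniform estimates of Lemma~\ref{l:TiltingTangent} into it; once that lemma is in place, the conclusion that $\pi$ is a symplectic fibration without singularities is immediate.
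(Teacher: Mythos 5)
Your setup is right: you correctly reduce the question to linear algebra, identify the vertical tangent space as $\ker(DG^t)\cap\ker(D\pi_4)$, and observe the key identity that on $\ker(D\pi_4)$ the functional $DG^t$ agrees with the ``tilted'' one $DG^t+tD_4 g_U$, whose kernel is symplectic of rank $6$ by Lemma~\ref{l:TiltingTangent}. But both routes you offer to finish the argument have real gaps.

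The ``cheap'' claim --- that a transverse intersection of a symplectic subspace with a coordinate symplectic subspace (with coordinate-symplectic complement) is again symplectic --- is false. In $\RR^8$ with coordinates $(x_1,y_1,\dots,x_4,y_4)$ and $\omega=\sum dx_i\wedge dy_i$, the $6$-plane $V=\operatorname{span}\{\partial_{x_1},\partial_{y_1},\partial_{x_2},\partial_{x_3},\partial_{x_4}+\partial_{y_2},\partial_{y_4}+\partial_{y_3}\}$ is symplectic and transverse to $\{x_4=y_4=0\}$, but the intersection $\operatorname{span}\{\partial_{x_1},\partial_{y_1},\partial_{x_2},\partial_{x_3}\}$ has $\omega$-rank $2$ and is degenerate. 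The alternative --- a ``relative Donaldson inequality'' --- you do not state or prove, and it is not clear how to invoke Proposition~\ref{p:Donaldson} for an intersection of a rank-$6$ kernel with a coordinate plane rather than for the kernel of a single functional. Your transversality argument via $C^0$-closeness to the tangent space of $\{z_1z_2=0\}$ also cannot be made uniform near the discriminant, where $\{z_1z_2=0\}$ is singular and $M_t$ is a hyperbola family rather than close to either coordinate hyperplane.

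What is missing is a single exact observation that makes everything immediate and avoids all closeness/openness estimates. The functional $DG^t+tD_4 g_U = D(z_1z_2)-t(D_1g_U+D_2g_U+D_3g_U)$ has, by construction, \emph{no dependence at all} on $(v_7,v_8)$: adding $tD_4 g_U$ subtracts off exactly the $z_4$-part of $tDg_U$, and $D(z_1z_2)$ never involves $z_4$. Hence $\RR\langle v_7,v_8\rangle\subset\ker(DG^t+tD_4 g_U)$ exactly, for every point and every $t$. Since this coordinate $2$-plane is symplectic, $\omega$-orthogonal to $\{v_7=v_8=0\}$, and contained in the symplectic $6$-plane $\ker(DG^t+tD_4 g_U)$, one gets the $\omega$-orthogonal direct sum decomposition $\ker(DG^t+tD_4 g_U)=\ker(DH^t)\oplus\RR\langle v_7,v_8\rangle$; the dimension count $6-2=4$ gives smoothness of the fibers and submersivity of $\pi$, and $\ker(DH^t)$ is the symplectic orthogonal complement of a symplectic subspace inside a symplectic space, hence symplectic. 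You come close by noting that removing $tD_4g_U$ kills the $z_4$-dependence, but you do not extract the crucial consequence that $\RR\langle v_7,v_8\rangle$ lies in the kernel, which is the hinge of the argument; without it, your transversality and symplecticity steps do not go through.
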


\begin{proof}
Let $H^t({z})=(G^t({z}),\pi_4({z})): U_C\ra \CC\times A$. For all ${z}_4 \in A$ we get $F_{{z}_4}:=(H^t)^{-1}(0,{z}_4)=\pi^{-1}({z}_4)$.
Along $F_{{z}_4}$, we have 
$$\ker (DH^t|_{F_{{z}_4}})=\{v \in \ker(DG^t|_{F_{{z}_4}})\,|\,v_7=v_8=0\} \subset \ker((DG^t+tD_4{g}_U)|_{F_{{z}_4}})$$ 
where $v_7,v_8$ are the $7^{th}$ and $8^{th}$ entries of the vector $v$ respectively.
 Notice that $\ker((DG^t+tD_4{g}_U)_{F_{{z}_4}})=\ker (DH^t|_{F_{{z}_4}}) \oplus \mathbb{R}\langle v_7,v_8 \rangle$ 
and the left hand side has rank 6 by Lemma~\ref{l:TiltingTangent}, hence $\dim_\RR(\ker (DH^t|_{F_{{z}_4}}))=4$ and therefore $\pi$ has smooth fibers.
 
 Moreover, $\ker((DG^t+tD_4{g}_U)_{F_{{z}_4}})$ is symplectic by Lemma \ref{l:TiltingTangent}.
 It is clear that $\mathbb{R}\langle v_7,v_8 \rangle$ is symplectic and its symplectic orthogonal complement is $\ker (DH^t|_{F_{{z}_4}})$ so $\ker (DH^t|_{F_{{z}_4}})$ is also symplectic.
 \end{proof}

 \subsection{Liouville vector field}\label{ss:Liouville}
 
 We recall that $U_C=B \times B \times D \times A$.
 For $j=1,2$, let $(\varrho_j, \vartheta_j)=(\sqrt{2p_j},q_j)$ be the polar coordinates of the first two factors respectively.
 Using ${z}_3=\sqrt{2{p}_3}e^{i{q}_3}$, we can symplectically identify $D$ 
 with a closed disk in $\CC$ centered at $a$. 
 Translating by $a$, the polar coordinates on $\CC$ induces a polar coordinates $(\varrho_3, \vartheta_3)$ on $D$ with $\{\varrho_3=0\}=\{a\}$.
 We identify $A$ with a $S^1$-equivariant neighborhood of the zero section in $T^*S^1$ such that 
 $\{|{z}_4|=R\}$ is mapped to the zero section.
 Let $\varrho_4$ and $\vartheta_4$ be the fiber and base coordinates of $T^*S^1$ and hence coordinates on $A$.
 With these new notations, the symplectic form on $U_C$ can be re-written as $d \alpha$, where 
 \begin{align*}
  \alpha:=\sum_{i=1}^3 \frac{\varrho_i^2}{2}d\vartheta_i+\varrho_4 d\vartheta_4.
 \end{align*}
 We also have a Liouville vector field (see Subsection \ref{ss:CloseUp} for some background)
 \begin{align*}
  Z_{U_C}:=\sum_{i=1}^3 \frac{\varrho_i}{2}\partial_{\varrho_i}+\varrho_4 \partial_{\varrho_4 }
 \end{align*}
 pointing outward along $\partial U_C$ making $U_C$ a convex exact symplectic manifold (or equivalently, a Liouville domain).
 The restriction of $\alpha$ to ${M}_t \cap U_C$ induces a Liouville vector field $Z_{M_t}$ on it.
 We want to show that $Z_{M_t}$ points outward along the vertical boundary $\pi^{-1}(\partial A)$ of ${M}_t \cap U_C$.

 \begin{proposition}\label{p:verticalB} 
  Given $U'_C$ as in Proposition~\ref{p:fibration}. There exists a shrinking of the $B \times B \times D$-factor of $U'_C$ to obtain an open set $U_C$ such that $Z_{M_t}$ points outward along the
  vertical boundary of the fibration $\pi:{M}_t \cap U_C \to A$.
 \end{proposition}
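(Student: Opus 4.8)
The statement is a stability/perturbation assertion: the Liouville field $Z_{M_t}$ induced by $\alpha|_{M_t\cap U_C}$ must point strictly outward along $\pi^{-1}(\partial A)$ after shrinking the $B\times B\times D$ factors. The natural approach is to compare $Z_{M_t}$ with the ambient Liouville field $Z_{U_C}$ on $U_C$, which by construction points strictly outward along the horizontal boundary $B\times B\times D\times \partial A$ of the polydisc, and to exploit that $Z_{M_t}$ is the $\omega_{M_t}$-dual of $\alpha|_{M_t}$, hence is the symplectic projection of $Z_{U_C}$ onto $T(M_t\cap U_C)$ along the symplectic-orthogonal complement of $T(M_t\cap U_C)$ inside $TU_C$. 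The key observation is that $\pi^{-1}(\partial A)$ is the intersection of $M_t\cap U_C$ with the hypersurface $\{|z_4|=a_1\}$ (and $\{|z_4|=a_0\}$), whose outward conormal is $d\varrho_4$; so I want to show $d\varrho_4(Z_{M_t})>0$ along that locus.

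First I would write $Z_{M_t} = Z_{U_C} - N$, where $N$ is the (unique) vector field tangent to $U_C$, lying in the $\omega$-orthogonal complement $(T(M_t\cap U_C))^{\omega}$, such that $Z_{U_C}-N\in T(M_t\cap U_C)$; this exists and is smooth precisely because $M_t\cap U_C$ is symplectic, by Lemma~\ref{l:TiltingTangent}/Proposition~\ref{p:fibration}. Then $d\varrho_4(Z_{M_t}) = d\varrho_4(Z_{U_C}) - d\varrho_4(N) = \varrho_4 - d\varrho_4(N)$ (using the formula $Z_{U_C}=\sum_{i\le 3}\tfrac{\varrho_i}{2}\partial_{\varrho_i}+\varrho_4\partial_{\varrho_4}$). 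On $\pi^{-1}(\partial A)$ we have $|\varrho_4|$ bounded below by a positive constant determined by $A$ (the radii $a_0<R<a_1$ are fixed), so it suffices to show that $\|N\|$ can be made uniformly small on $\pi^{-1}(\partial A)$ by shrinking $B\times B\times D$. The point is that $(T(M_t\cap U_C))^\omega$ is spanned by the $\omega$-gradients of $\re G^t$ and $\im G^t$, i.e.\ by $J_{U_C}$-related images of $DG^t = D(z_1z_2) - t\,D g_U$; and $D(z_1z_2)$ vanishes to first order along $\{z_1=z_2=0\}$, while $tDg_U$ is $O(t)$. Hence on a neighborhood where $|z_1|,|z_2|$ are small (controlled by shrinking $B$) and with $t$ small, the normal space $(T(M_t\cap U_C))^\omega$ is nearly spanned by vectors with small $\partial_{\varrho_4}$-component; more precisely, since $\partial_{x_4}g_U=\partial_{y_4}g_U=0$ along $\Disc(s_1)$ (Lemma~\ref{l:localModelAfterCorrection}, third bullet), the $z_4$-component of $Dg_U$ is as small as we want near $C$, and the $z_4$-component of $D(z_1z_2)$ is identically zero; so the normal directions have $d\varrho_4$-component $O(|z_1|)+O(|z_2|)+o(1)$, which is absorbed by shrinking $B$ and $D$.

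Concretely, the steps I would carry out are: (i) record that $\pi^{-1}(\partial A) = (M_t\cap U_C)\cap \{\varrho_4 = a_0 \text{ or } a_1\}$ and that its outward-pointing direction transverse to $\pi^{-1}(\partial A)$ inside $M_t\cap U_C$ is detected by the sign of $d\varrho_4(Z_{M_t})$, because the fibration $\pi$ is along $z_4$ and $\varrho_4$ restricts to a submersion there; (ii) decompose $Z_{U_C} = Z_{M_t} + N$ with $N\in (T M_t)^\omega$, so $d\varrho_4(Z_{M_t}) = \varrho_4 - d\varrho_4(N)$; (iii) bound $\|N\|$ pointwise by estimating the angle between $TM_t$ and the coordinate directions: since $TM_t=\ker DG^t$ and $DG^t = [z_2,z_1,0,0,-t\partial g_U]$-type expression (in complex entries), the $\omega$-orthogonal complement is close to $\Span_\RR\{$gradient directions of $z_1z_2\}$, which lies in the $(z_1,z_2,z_3)$-subspace up to an $O(t)$-error from $tD_3 g_U$ and an error in the $z_4$-slot that is $O(\|D_4 g_U\|)=o(1)$ near $C$ by Lemma~\ref{l:localModelAfterCorrection}; hence $d\varrho_4(N) = O(|z_1|)+O(|z_2|)+o(1)$ uniformly for $t$ small; (iv) choose $B,D$ small enough (independent of $t$) that $|d\varrho_4(N)| < \tfrac12\min(a_0,a_1)$ on $\pi^{-1}(\partial A)$, so $d\varrho_4(Z_{M_t}) \geq \tfrac12\min(a_0,a_1) > 0$; (v) note the shrinking does not disturb $A$, so the vertical boundary is still $\pi^{-1}(\partial A)$ over the same annulus.

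\textbf{Main obstacle.} The crux is step (iii): making the estimate on $d\varrho_4(N)$ genuinely \emph{uniform in $t$} as $t\to 0$, because $DG^t$ itself degenerates (its $z_1z_2$-part vanishes along the central stratum, exactly where the discriminant lives) and one must be careful that the $\omega$-orthogonal complement $(TM_t)^\omega$ does not acquire a large $\partial_{\varrho_4}$-component in the limit. This is handled the same way as in Lemma~\ref{l:TiltingTangent}: rescale and compare $D_3 g_U$ (bounded below near $C$) against $D_4 g_U$ (as small as desired near $C$), so that the ``almost-holomorphic'' decomposition controls the normal directions; the $\partial_{\varrho_4}$-leakage into $(TM_t)^\omega$ is then controlled by $\|D_4 g_U\|/\|D_3 g_U\|$ together with $|z_1|,|z_2|$, and both can be made small by shrinking the polydisc. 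Once that uniform bound is in hand the rest is a short computation with $Z_{U_C}$ in the polar coordinates fixed above.
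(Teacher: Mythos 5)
Your proposal takes the same first step as the paper---decomposing $Z_{U_C}=Z_{M_t}+N$ with $N\in (TM_t)^\omega$ and noting $Z_{M_t}$ equals the $TM_t$-component of the ambient Liouville field---but then diverges onto a genuinely different and considerably more laborious route. The paper does not estimate $d\varrho_4(N)$ at all. Instead it observes that the set $A_0=\{\varrho_1=\varrho_2=\varrho_3=0\}\times A$ (which is precisely the straightened discriminant $\Disc(s_1)\cap U_C$, so $A_0\subset M_t$ \emph{for every} $t$) carries the ambient Liouville field $Z_{U_C}|_{A_0}=\varrho_4\partial_{\varrho_4}$, which is already tangent to $A_0$ and hence to $M_t$; therefore $N\equiv 0$ on $A_0$ identically, and $Z_{M_t}|_{A_0}=\varrho_4\partial_{\varrho_4}$ points outward along $\partial A_0$ with no estimation required. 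Openness of the outward-pointing condition then lets one shrink $B\times B\times D$. Your approach by contrast wants a quantitative bound on $d\varrho_4(N)$ from the structure of $DG^t$, which \emph{can} be pushed through (and would deliver a more explicit, uniform-in-$t$ neighborhood, a point on which the paper's openness appeal is genuinely terse), but it carries more baggage: you need a uniform lower bound on the symplectic angle between $TM_t$ and $(TM_t)^\omega$ (to control the projection constant $\|N\|\lesssim\|Z_{U_C}\|$), an upper bound on $\|Z_{U_C}\|$, and the smallness of the $\partial_{\varrho_4}$-component of $(TM_t)^\omega$ coming from $D_4 g_U\to 0$ near $C$. You elide the first two and at one point conflate "$\|N\|$ small" with "$d\varrho_4(N)$ small" (the former is not forced away from $A_0$). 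Also the $O(|z_1|)+O(|z_2|)$ terms in your estimate are spurious: since $D_4(z_1z_2)\equiv 0$ and $D_4 g_U\to 0$ on $\Disc$, the $\partial_{\varrho_4}$-leakage of $(TM_t)^\omega$ is controlled purely by $\|D_4 g_U\|/\|D_3 g_U\|$, with no $|z_1|,|z_2|$ contribution. In short, your proof is salvageable but misses the simple tangency identity $N|_{A_0}=0$ that makes the whole analytic apparatus unnecessary.
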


 \begin{proof}
The Liouville vector field $Z_{U'_C}$ decomposes with respect 
 to $T{M}_t\oplus (T{M}_t)^{\omega}$ in say $Z_1+Z_2$.
For $v\in T{M}_t$, we have 
$$\alpha|_{M_t}(v)=\alpha(v,0)=\omega_{U'_C}(Z_1+Z_2,(v,0))=\omega_{U'_C}(Z_1,(v,0))=\omega_{M_t}(Z_1,v)$$
since $\omega_{U'_C}(Z_2,(v,0))=0$ by ${M}_t$ being symplectic in $U'_C$. This being true for all $v$, we conclude that $Z_{M_t}=Z_1$.
 
% We want to apply Lemma~\ref{l:omegaOrthogonal} for $a=DG^t$ and $e=tD_4{g}_U$. 
% Let $\eps>0$, so that the lemma spits out some $\delta$ now. 
% By the last part of to the proof of Lemma~\ref{l:TiltingTangent}, after shrinking $U_C$ if needed, we can assume that $\|tD_4{g}_U\| < \delta \|DG^t\|$ for all $t \neq 0$.
% Since $\varrho_4 \partial_{\varrho_4 } \in \ker(DG^t+tD_4{g}_U)$, by Lemma~\ref{l:omegaOrthogonal}, the $\ker(DG^t)^{\omega}$-component of $\varrho_4 \partial_{\varrho_4 }$, denoted by $(\varrho_4 \partial_{\varrho_4 })'$, satisfies 
% $|(\varrho_4 \partial_{\varrho_4 })'|< \epsilon |\varrho_4 \partial_{\varrho_4 }|$ for all $t\neq 0$.
 
% \red{By shrinking, we can assume that $|\frac{\varrho_i}{2}\partial_{\varrho_i}| \ll |\varrho_4 \partial_{\varrho_4 }|$ along the vertical boundary.
% As a result, $\frac{|Z_{U_C}'|}{|Z_{U_C}|}$ can be made  arbitrarily small so 
% $Z_{M_t}=Z_{U_C}-Z_{U_C}'$  is dominated by $\varrho_4 \partial_{\varrho_4 }$ and hence 
% points outward along the vertical boundary, where $Z_{U_C}'$ is the $\ker(DG^t)^{\omega}$-component of $Z_{U_C}$.}

Let $A_0:=\{\varrho_1=\varrho_2=\varrho_3=0\} \times A$ which lies inside $M_t$ for all $t$.
Note that $Z_{U'_C}=\varrho_4 \partial_{\varrho_4}$ on $A_0$ so it points outward along $\partial A_0$.
% Note also that $\{\varrho_1=\varrho_2=\varrho_3=0\} \times A \subset \Disc \subset M_t$ for all $t$.
% Now, by taking $\epsilon$ to be sufficiently small, $Z_{M_t}$ on $M_t \cap (\{\varrho_1=\varrho_2=\varrho_3=0\} \times A)$ also points outward along 
% $M_t \cap (\{\varrho_1=\varrho_2=\varrho_3=0\} \times \partial A)$  (actually, with respect to the standard metric, taking $\epsilon=\frac{1}{2}$ suffices because $\varrho_4 \partial_{\varrho_4 }$ is perpendicular to $\partial A$).
Note also that $Z_{U'_C}|_{A_0} \in TM_t$ so the $(T{M}_t)^{\omega}$-component of $Z_{U'_C}|_{A_0}$ is $0$, which in turn implies that 
$Z_{M_t}|_{A_0}$ points outward along $\partial A_0$.
 Since pointing outward along $M_t \cap (B \times B \times D \times \partial A)$ is an open condition, by shrinking the $B\times B \times D$ factor, 
 we can ensure that $Z_{M_t}$ points outward along $M_t \cap (B \times B \times D \times \partial A)$.
 
 \end{proof}

\subsection{A good deformation}\label{ss:aGoodDeformation}

We are going to construct a smoothly trivial exact symplectic fibration and justify Assumption \ref{a:TubularNbhd} in this subsection.
Ideally, we would like the symplectic fibration $\pi: {M}_t \cap U_C \to A$ to be a smoothly trivial exact symplectic fibration 
but it is not true in general that $\pi$ is trivial near the horizontal boundary even if we assume $U_C$ to be very small.
However, we can show that it is true after appropriately deforming ${s}_1$ to another ${s}_1$-admissible section which has been the whole purpose of introducing the notion of admissible sections.

\begin{proposition}[Homotoping into Assumption \ref{a:TubularNbhd}]\label{p:fibrationAfterGoodDeform}
 For any open neighborhood $N$ of $C$, there are tubular neighborhoods $U_{C},V_C \subset N$ as in \eqref{eq:Disc0Std2} so that $V_C \subsetneq U_C$ is a closed neighborhood of $\Disc({s}_1) \cap U_C$. 
The neighborhood
$U_C$ satisfies Proposition \ref{p:correctSingModel}, \ref{p:fibration} and \ref{p:verticalB}. 
There is also a smooth family $(s^u)_{u\in [0,1]}$ of ${s}_1$-admissible sections with $s^0={s}_1$ and for all $u$, $s^u={s}_1$ outside $N$ and
\begin{align}
 {M}_t^{s^1} \cap (U_C \backslash V_C)=\{{z}_1{z}_2= t({z}_3-a)\}. \label{eq:assumptionHorizontalTrivial}
\end{align}
Moreover, the projection to $A$, $\pi:{M}_t^{s^1} \cap U_C\ra A$  is a smoothly trivial exact symplectic fibration for all $t>0$ small.
\end{proposition}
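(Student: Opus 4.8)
\textbf{Plan for the proof of Proposition \ref{p:fibrationAfterGoodDeform}.}

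The plan is to build the family $(s^u)_{u\in[0,1]}$ by interpolating the local model of $M_t$ near $C$ to the flat model $\{z_1z_2=t(z_3-a)\}$ while keeping everything $s_1$-admissible, and then to verify that the projection $\pi$ becomes symplectically trivial near the horizontal boundary. First I would use Lemma \ref{l:localModelAfterCorrection}: after the diffeomorphism $\phi_H$ of Proposition \ref{p:correctSingModel} (dropping hats), we have $M_t\cap U_C=\{z_1z_2=tg_U(z)\}$ with $g_U$ submersive near the straightened discriminant $\{z_1=z_2=0,\,z_3=a\}$ and with $(g_U)_0|_{(D\setminus\{a\})\times A}$ homotopic to $(z_3,z_4)\mapsto z_3-a$ as a $\CC^*$-valued map. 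This homotopy statement is exactly what permits a $K$-invariant (here $K$ is trivial since we are in the manifold case, and the orbifold case is deferred to Subsection \ref{ss:OrbifoldConstruction}) deformation of $g_U$ through non-vanishing functions on $(U_C\setminus V_C)$ to the function $z_3-a$, for $V_C$ a small closed neighborhood of $\Disc(s_1)\cap U_C$ chosen so that $g_U$ vanishes only inside $V_C$. Just as in the proof of Lemma \ref{l:ExistenceStandardSymp}, this deformation can be taken compactly supported inside $N$, constant near $\Disc(s_1)$ (which preserves the discriminant, hence $s_1$-admissibility), and it patches with the original $g_U$ outside a compact set. Exponentiating/rearranging back through the coordinate changes $\Phi_U\circ\Psi^{-1}\circ\phi_H^{-1}$ as in Lemma \ref{l:localModelAfterCorrection} produces the required sections $s^u$ with $M_t^{s^1}\cap(U_C\setminus V_C)=\{z_1z_2=t(z_3-a)\}$, which is \eqref{eq:assumptionHorizontalTrivial}.

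Next I would record that the neighborhood $U_C$ can be chosen to simultaneously satisfy Propositions \ref{p:correctSingModel}, \ref{p:fibration} and \ref{p:verticalB}: these are all ``shrink $U_C$ enough'' statements and the deformation above only affects $s_1$ away from $\Disc(s_1)$, so after fixing $U_C$ small enough for \ref{p:fibration}, \ref{p:verticalB} applied to $s_1$ and then noting that the same inequalities in the proof of Lemma \ref{l:TiltingTangent} (comparing the complex-linear and anti-complex-linear parts of $DG^t$) hold for $s^u$ uniformly in $u$ — because $s^u$ is still holomorphic for the pushed-forward complex structure near $C$, or more simply because on $U_C\setminus V_C$ the defining equation is literally holomorphic and on $V_C$ one argues as before — we get that $\pi:M_t^{s^1}\cap U_C\to A$ is a symplectic fibration without singularities and $Z_{M_t^{s^1}}$ points outward along the vertical boundary. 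The point of \eqref{eq:assumptionHorizontalTrivial} is that near the horizontal boundary $\partial^h$, i.e.\ where $z_1$ or $z_2$ has maximal modulus (equivalently near $\partial(B\times B)\times D\times A$), we are in the region $U_C\setminus V_C$ where $M_t^{s^1}=\{z_1z_2=t(z_3-a)\}$ is independent of the annulus coordinate $z_4$ in the strong sense that the equation, the symplectic form $\sum\frac{\varrho_i^2}{2}d\vartheta_i+\varrho_4d\vartheta_4$, and the Liouville field $Z_{U_C}$ all split as a product with the $A$-factor; hence $M_t^{s^1}\cap(U_C\setminus V_C)$ is symplectomorphic to $F\times A$ over $A$, where $F=\{z_1z_2=t(z_3-a)\}\cap(B\times B\times D)$, which is precisely condition (3) of Definition \ref{d:exactSymFib0}. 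Condition (1), smooth triviality of the bundle, follows from Ehresmann applied to the proper submersion $\pi$ after checking properness via the vertical-boundary control, and condition (2) is Proposition \ref{p:verticalB} together with the outward-pointing of $Z_{U_C}$ along the horizontal boundary which is inherited from the product structure there.

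The main obstacle I expect is the interface between the ``flat'' region $U_C\setminus V_C$ and the ``curved'' region $V_C$: one must arrange the deformation of $g_U$ so that the resulting $s^u$ is genuinely smooth across $\partial V_C$, that the fibration $\pi$ remains singularity-free there (this is where the uniform Donaldson-type estimate of Lemma \ref{l:TiltingTangent} must be re-run for the whole family, using that $\|D_4 g_U^u\|\ll\|D_3 g_U^u\|$ continues to hold on a possibly smaller $U_C$ since $g_U^u$ agrees with the $z_4$-independent model $z_3-a$ on $U_C\setminus V_C$ and equals $g_U$ near $\Disc(s_1)\subset V_C$, with a controlled interpolation in between), and that the Liouville field still points outward along $\partial^v$ after the deformation — the latter again being an open condition preserved under a small enough shrinking. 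Once these compatibilities are in place, collecting Definition \ref{d:exactSymFib0}(1)--(3) gives that $\pi:M_t^{s^1}\cap U_C\to A$ is a smoothly trivial exact symplectic fibration, completing the proof and justifying Assumption \ref{a:TubularNbhd}.
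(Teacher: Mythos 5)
Your step-one construction of the family $(s^u)_u$ — invoking the last bullet of Lemma \ref{l:localModelAfterCorrection} to obtain the $\CC^*$-valued homotopy from $(g_U)_0$ to $z_3-a$, extending it from the toric $2$-stratum to $U_C$ through non-vanishing functions, and patching outside a compact set to get $s_1$-admissible sections with $s^1=s_1$ off $N$ — matches the paper's step one almost exactly, and your identification of the product structure of $\{z_1z_2=t(z_3-a)\}$ over the $A$-factor as the source of condition~(3) of Definition~\ref{d:exactSymFib0} is also correct.

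However, there is a genuine gap in your step-two argument, precisely at the point you yourself flag as ``the main obstacle''. You propose to re-run the Donaldson estimate of Lemma~\ref{l:TiltingTangent} for the deformed family, conditioned on $\|D_4 g^u\|\ll\|D_3 g^u\|$. This cannot work as stated: the proof of Lemma~\ref{l:TiltingTangent} rests on the identity $DG^t\circ J_\Delta=J_\CC\circ DG^t$, which holds because both $s_0$ and $s_1$ are holomorphic for $J_\Delta$. The deformed $g^u$ for $u>0$ is a $C^\infty$-deformation in symplectic $(p,q)$-coordinates, not a holomorphic one, so $(DG^{t,u})^{0,1}$ is non-zero in the interpolation region $V_C\setminus V_C'$, and the inequality $\|A^{0,1}\|<\|(DG^{t,u}+A)^{1,0}\|$ no longer follows from the bound on $\|D_4 g^u\|/\|D_3 g^u\|$ alone. (Your alternate suggestion ``on $U_C\setminus V_C$ the defining equation is literally holomorphic'' is also misleading: $z_1z_2-t(z_3-a)$ is holomorphic for the \emph{standard} complex structure on the $z$-chart, not for $J_\Delta$, and $J_\Delta$ is what Lemma~\ref{l:TiltingTangent} refers to.)

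The paper's step two avoids this by splitting the vertical boundary into two regions with genuinely different arguments. On a small neighborhood $V_C'\subset V_C$ of $\Disc(s_1)\cap U_C$, the function $G^{t,u}$ is literally independent of $u$ (since $s^u=s_1$ there), so Proposition~\ref{p:fibration} — hence Lemma~\ref{l:TiltingTangent} — applies as is. On $U_C\setminus V_C'$, the paper abandons the holomorphic framework entirely and uses the soft dominance estimate of Lemma~\ref{l:GoodDeformation}: since one of $|z_1|,|z_2|,|g^u|$ is bounded below there, one gets $\|D(z_1z_2)\|\gtrsim\sqrt{t}$ while $\|tDg^u\|=O(t)$, so $\ker(DG^{t,u})$ converges uniformly (in $u$, as $t\to 0$) to $\ker(D(z_1z_2))$; since $\ker(D(z_1z_2))\cap\ker(Dz_4)$ is symplectic, so are the fibers of $\pi_4$ on $M_t^u\setminus V_C'$ for $t$ small, and the same uniform convergence yields the outward-pointing of the Liouville field along the vertical boundary there (using Lemma~\ref{l:3sphere}). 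Your proposal needs this second, non-holomorphic argument; the ``open condition'' remark is not enough because you need uniformity over the whole family $u\in[0,1]$ and over all small $t$, which is exactly what the dominance estimate provides.
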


Recall that every ${s}_1$-admissible section equals ${s}_1$ near $\Disc({s}_1)$ (hence is more messy than \eqref{eq:assumptionHorizontalTrivial}) and recall that $\Disc({s}_1) \cap U_C=\{0\} \times \{0\} \times \{a\} \times A$, so 
we cannot hope for \eqref{eq:assumptionHorizontalTrivial} to be true if $V_C \subset U_C$ is 
not a neighborhood of $\Disc({s}_1) \cap U_C$ which is why the $A$-factors of $V_C$ and $U_C$ agree in \eqref{eq:Disc0Std2}.

The proof of Proposition \ref{p:fibrationAfterGoodDeform} is divided into two steps.
The first step is the construction of $(s^u)_{u\in [0,1]}$ and $V_C$, and the second step is to justify that 
$\pi$ is a smoothly trivial exact symplectic fibration for all $t>0$ small.

\begin{proof}[Proof of Proposition \ref{p:fibrationAfterGoodDeform}: Step one]
Pick $U'_C\subset N$ sufficiently small such that Proposition \ref{p:correctSingModel}, \ref{p:fibration} and \ref{p:verticalB} are satisfied.
We shrink the A-factor of $U'_C$ to obtain an open set $U''_C$ that still satisfies Proposition \ref{p:correctSingModel} and \ref{p:fibration}. Finally,
apply Proposition~\ref{p:verticalB} to $U''_C$ to shrink its $B\times B\times D$-factor and arrive at an open set $U_C$ that also satisfies all three propositions like $U'_C$ and furthermore $U_C$ is contained in the interior of $U'_C$ which we will need later.

We work on $U_C$ now. By the last bullet of Lemma \ref{l:localModelAfterCorrection}, we know that
 $({g}_U)_0|_{ (D \setminus \{a\} ) \times A} \to \mathbb{C}^*$ is homotopic to ${z}_3-a:(D \setminus \{a\} ) \times A \to \mathbb{C}^*$.
 Therefore, there
  is no obstruction to constructing a smooth family of functions $(h^u)_{u \in [0,1]}: D \times A \to \mathbb{C}$ such that 
  \begin{itemize}
   \item $h^0=({g}_U)_0$,
   \item  $h^u$ is independent of $u$ near the discriminant $\{a\} \times A$,
   \item $(h^u)^{-1}(0) =\{a\} \times A$ for all $u$, and
   \item  there is a neighborhood of $V_0$ of $\{a\} \times A$ inside $D \times A$ such that $h^1|_{(D \times A) \setminus V_0}={z}_3-a$.
  \end{itemize}
  The second and third bullet above correspond to admissibility of sections, and the last bullet corresponds to \eqref{eq:assumptionHorizontalTrivial}.
  
  After $h^u$ is constructed, there is no obstruction to extend it to $g^u:U_C \to \mathbb{C}$ such that $g^0={g}_U$, for all $u$,
  $g^u|_{\{{z}_1={z}_2=0\}}=h^u$, $g^u$ is independent of $u$ near the discriminant $\Disc({s}_1) \cap U_C$
  and there exists a closed neighborhood $V_C \subset U_C$ of $\Disc({s}_1) \cap U_C$ such that $g^1|_{U_C \backslash V_C}={z}_3-a$.
  Indeed, note that we permit $g^u$ to take value $0$ outside $\{{z}_1={z}_2=0\} \cap U_C$.
This is because $\{{z}_1={z}_2=0\} \cap U_C$ is exactly the intersection between 
the $2$-dimensional toric strata and $U_C$ so
even if $g^u$ is $0$ somewhere in $U_C \setminus \{{z}_1={z}_2=0\}$, it will not create new discriminant (cf. the proof of Corollary \ref{c:firstApproximation}).
  
  With this understood, we can extend the isotopy $(g^u)_{u \in [0,1]}$ from $U_C$ to $U_C'$ so that it equals $g_U$ for all $u$ near the boundary of $U'_C$ as well as near the discriminant. 
Recall that ${M}_t \cap U_C'=\{{s}_0=t{s}_1\} \cap U_C'=\{{z}_1{z}_2=t{g}_U\} \cap U_C'$. 
We can patch $g^u$ with ${g}_U$ outside $U_{C}'$ to obtain a family of ${s}_1$-admissible sections $(s^u)_{u\in [0,1]}$
 such that $s^0={s}_1$, for all $u$, $s^u={s}_1$ outside $N$ and \eqref{eq:assumptionHorizontalTrivial} is satisfied on $U_C$.  
\end{proof}

\begin{proof}[Proof of Proposition \ref{p:fibrationAfterGoodDeform}: Step two]
 
 Now, we want to address why $\pi$ is a smoothly trivial exact symplectic fibration for all $t>0$ small. 
Let $\pi_4:U_C\ra A$ be the obvious projection
(note the difference with the $\pi$ above, namely, $\pi$ is the restriction of $\pi_4$ to $M_t^s \cap U_C$ for some $s$ but $\pi_4$ is defined on the entire $U_C$).
 We use the notation $M_t^u:={M}_t^{s^u} \cap U_C$ in this proof.
 We will choose a subset $V_C'\subset V_C$ (for $V_C$ defined in step one), so the vertical boundary of the fibration $\pi_4|_{M_t^u}$ is divided into two parts, namely a) $\pi_4|_{M_t^u \cap V_C'}$, 
 b) $\pi_4|_{M_t^u \cap (U_C \setminus V'_C)}$ where we use different arguments. We first choose $V_C'$.
 
Let $G^{t,u}:={z}_1{z}_2-tg^u({z})$ for $g^u:U_C \to \CC$ constructed in step one.
In particular, we have $M_t^u=(G^{t,u})^{-1}(0)$.
Near $\Disc({s}_1) \cap U_C$, $G^{t,u}$ is independent of $u$ so by Proposition \ref{p:fibration}, there exists a neighborhood 
$V_C' \subset V_C$ of $\Disc({s}_1) \cap U_C$
such that $\pi_4|_{M_t^u \cap V_C'}$ is a symplectic fibration without singularity for all $0< |t| < \delta$ and all $u$.
Moreover, by Proposition \ref{p:verticalB}, $Z_{M_t^u}$ points outward along vertical boundary of $\pi_4|_{M_t^u \cap V_C'}$ so we are done with a).

For b), as argued in the proof of  Lemma \ref{l:GoodDeformation}, $D({z}_1{z}_2)$ dominates $tDg^u({z})$ outside $V_C'$ when $t$ is small.
Therefore, $\ker(G^{t,u})$ is an arbitrarily small perturbation of $\ker(D({z}_1{z}_2))$ outside $V_C'$ for all $u$ when $t$ is small.
%By the similar argument as in the proof of Lemma~\ref{l:TiltingTangent} and Proposition~\ref{p:fibration}, \ref{p:verticalB}, 
More precisely, $\ker(G^{t,u})|_{M_t^u \setminus V_C'}$ converges uniformly to $\ker(D({z}_1{z}_2))$ outside $V_C'$ for all $u$ when $t$ goes to $0$.
%By the same spirit as in the proof of Lemma~\ref{l:TiltingTangent} and Proposition~\ref{p:fibration}, 
Since $\ker(D({z}_1{z}_2)) \cap \ker(D(z_4))$ is symplectic, the fact that $\ker(G^{t,u})|_{M_t^u \setminus V_C'}$ converges uniformly to $\ker(D({z}_1{z}_2))$ implies that
for $t$ is small,
$\pi_4|_{M_t^u \setminus V_C'}$ is a symplectic fibration without singularity.
On the other hand, by the local model in Lemma \ref{l:3sphere},
we also know that
$\ker(G^{t,u})|_{M_t^u \setminus V_C'}$ converges uniformly to $\ker(D({z}_1{z}_2))$ implies that
for $t$ is small, $Z_{M_t^u}$ points outward along the vertical boundary of $\pi_4|_{M_t^u \setminus V_C'}$.
We conclude that there exists $\delta>0$ such that $\pi_4|_{M_t^u}$ is a symplectic fibration without singularity, and 
$Z_{M_t^u}$ points outward 
 along vertical boundary of the fibration $\pi_4|_{M_t^u}$, for all $0< |t| < \delta$ and all $u$.
 
Finally, we need to deal with the outward-pointing along the horizontal boundary. We have $g^1|_{U_C \setminus V_C}={z}_3-a$ so we 
have $M_t^1 \setminus V_C=\{{z}_1{z}_2= t({z}_3-a)\}$. 
Since the horizontal boundary of $\pi: M_t^1 \to A$ lies inside $M_t^1  \setminus V_C$
and $M_t^1  \setminus V_C$ is independent of the ${z}_4$-coordinate so $\pi$ is symplectically trivial near the horizontal boundary.
By Lemma \ref{l:3sphere}, we know that $Z_{M_t^u}$ also points outward 
 along the horizontal boundary of $\pi$. 
\end{proof}

As a consequence of Proposition \ref{p:LagrangianConstructionSingLoci}, we get the following corollary.

\begin{corollary}\label{c:FinalSolidTori}
Under Proposition \ref{p:fibrationAfterGoodDeform}, there exist a family of proper Lagrangian solid tori $L_t \subset {M}_t^{s^1} \cap U_C$, for $t>0$ small,
such that $L_t \subset {M}_t^{s^1} \cap (U_C \backslash V_C)$ is a
cylindrical Lagrangian over the Legendrian \eqref{eq:boundaryLeg}.

\end{corollary}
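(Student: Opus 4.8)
\textbf{Proof plan for Corollary~\ref{c:FinalSolidTori}.}
The plan is to simply assemble the pieces already in place. By Proposition~\ref{p:fibrationAfterGoodDeform}, after passing to the section $s^1$, we have tubular neighborhoods $V_C\subsetneq U_C\subset N$ with the two key properties: first, $M_t^{s^1}\cap(U_C\setminus V_C)=\{z_1z_2=t(z_3-a)\}$, so Assumption~\ref{a:TubularNbhd} is verified in the form \eqref{eq:localModel}; and second, $\pi:M_t^{s^1}\cap U_C\to A$ is a smoothly trivial exact symplectic fibration for all $t>0$ small. These are exactly the hypotheses under which Proposition~\ref{p:LagrangianConstructionSingLoci} was proved, so I would invoke it directly. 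The conclusion of that proposition is a Legendrian torus $\Lambda_{C,r}$ of the form \eqref{eq:boundaryLeg} sitting in the contact boundary of $M_t^{s^1}\cap U_C$, together with an embedded Lagrangian solid torus $L_{C,r}\subset M_t^{s^1}\cap U_C$ filling it, with each $\Lambda_{z_4,r}$ a meridian. Rename $L_{C,r}$ as $L_t$.

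It remains only to argue that $L_t$ can be taken to be cylindrical over $\Lambda_{C,r}$ inside $M_t^{s^1}\cap(U_C\setminus V_C)$. For this I would point to the last paragraph of the proof of Proposition~\ref{p:LagrangianConstructionSingLoci}: the Lagrangian filling is produced from the trace of a proper Lagrangian disk under symplectic parallel transport along $\{|z_4|=\sqrt{2R}\}$, and the end of that disk is a cylindrical piece $[\rho_0,\infty)\times\Lambda_{z_4,r}$ over the Legendrian unknot; after the backward Liouville flow rescaling, the cylindrical part of $L_t$ can be made as long as we like. Choosing the Liouville parameter so that the cylindrical portion of $L_t$ is contained in the region $U_C\setminus V_C$ — which is possible because $\Lambda_{C,r}$ projects to the interior of $A$ and $U_C\setminus V_C$ is a neighborhood of the horizontal boundary where the Liouville flow is the obvious radial one in the $B\times B\times D$ directions — gives the stated conclusion. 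Since $M_t^{s^1}\cap(U_C\setminus V_C)=\{z_1z_2=t(z_3-a)\}$ is precisely the model in which $\Lambda_{z_4,r}$ and its Lagrangian disk were written down explicitly in Lemma~\ref{l:LegendrianUnknot}, the cylindrical form of $L_t\cap(U_C\setminus V_C)$ over \eqref{eq:boundaryLeg} follows.

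The only mild subtlety — and the step I would treat most carefully — is matching up the Liouville structures: the rescaling in Proposition~\ref{p:LagrangianConstructionSingLoci} is carried out in the completion $\widehat{M}'_t$ using the Liouville vector field $Z_{M_t}$, and one must make sure that the cylindrical part of the resulting $L_t$, when pulled back into $M_t^{s^1}\cap U_C$, actually lands in $U_C\setminus V_C$ rather than somewhere uncontrolled. This is guaranteed because Proposition~\ref{p:verticalB} ensures $Z_{M_t}$ points outward along the vertical boundary, Lemma~\ref{l:3sphere} controls it along the horizontal boundary, and on $U_C\setminus V_C$ the hypersurface equals the explicit model $\{z_1z_2=t(z_3-a)\}$ where the Liouville flow and the Legendrian torus were computed by hand. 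Putting these together, the backward flow drags the cylindrical end of $L_t$ entirely into $M_t^{s^1}\cap(U_C\setminus V_C)$, completing the proof. Everything else is a direct citation of the results just established.
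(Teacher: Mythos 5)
Your proof is correct and takes essentially the same approach as the paper: the paper simply states the corollary ``as a consequence of Proposition~\ref{p:LagrangianConstructionSingLoci}'' with no further argument, and your write-up supplies the same logical step — verify that Proposition~\ref{p:fibrationAfterGoodDeform} puts us in the setting of Assumption~\ref{a:TubularNbhd} and then invoke Proposition~\ref{p:LagrangianConstructionSingLoci} — while additionally spelling out why the backward Liouville flow keeps the cylindrical end of $L_t$ inside $U_C\setminus V_C$, which is a reasonable (and correct) expansion of a detail the paper leaves implicit.
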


\subsection{Proof of Theorem \ref{t:LagrangianSolidTori}}\label{ss:ProofSolidTori}
Recall our convention to write $\hat p_j$ as $p_j$, etc. We undo this convention now to distinguish between the two sets of coordinates. 
The last section used the $\hat p_j$-coordinates. Recall the transformation $\Psi$ between the two sets of coordinates from \eqref{eq:Psi}.
In this section, we apply $\Psi^{-1}$ to transform the Lagrangian solid tori obtained in Corollary \ref{c:FinalSolidTori}
back to $(p,q)$-coordinates. 
After that, we will conclude the proof of Theorem \ref{t:LagrangianSolidTori}.

We start with the situation as in Proposition~\ref{p:fibrationAfterGoodDeform}, so have $U_C,V_C$ of the form \eqref{Ubeta} and a family $(s^u)_{u \in [0,1]}$ of ${s}_1$-admissible sections.
Recall that $\Psi$ is merely a change of coordinates and that it preserves the coordinates on $\{z_1=z_2=0\}=\{\hat z_1=\hat z_2=0\}$.
\begin{comment}
We also have a neighborhood $U_{C}$ given by \eqref{Ubeta}, the discriminant in $U_C$ is given by
$\Disc({s}_1) \cap U_{C}=\{0\} \times \{0\} \times \{a\} \times A_{a_0,a_1}$ 
($2^{\text{nd}}$ bullet of Proposition \ref{p:correctSingModel})
and $V_C$ can be taken to be
\begin{align}
 V_C=\{(({p}_1,{q}_1), \dots,({p}_4,{q}_4)) \in B_{r_1'} \times B_{r_2'} \times D_{a,r_3'} \times A_{a_0,a_1}\} 
\end{align}
for some $0<r_j'<r_j$ are sufficiently small.
Most importantly, ${M}_t^{s^1} \cap (U_C\setminus V_C)$ satisfies \eqref{eq:assumptionHorizontalTrivial}.
We apply $\Psi^{-1}$ to $U_C$, $\Disc({s}_1) \cap U_C$, $V_C$ and ${M}_t^{s^1}$ so that we get a description of them in $(p,q)$-coordinates 
instead of $({p},{q})$-coordinates.
Notice that $\Psi^{-1}|_{\{{p}_1={p}_2=0\}}$ is the identity (see \eqref{eq:Psi}) so
\begin{align}
 \Psi^{-1}(U_C) \cap \{p_1=p_2=0\} &= D_{a,r_3} \times A_{a_0,a_1} \\ 
 \Psi^{-1}(\Disc({s}_1) \cap U_C) \cap \{p_1=p_2=0\} &= \{a\} \times A_{a_0,a_1} \\ 
 \Psi^{-1}(V_C) \cap \{p_1=p_2=0\} &= D_{a,r_3'} \times A_{a_0,a_1}
\end{align}
where $D_{a,r}$ and $A_{a_0,a_1}$ are now defined with respect to $(p_3,q_3)$ and $(p_4,q_4)$-coordinates, respectively.
\end{comment}
By applying $\Psi^{-1}$ to ${M}_t^{s^1}$, that is inserting \eqref{eq:Psi}, we get the following
\begin{align}
 &{M}_t^{s^1} \cap (U_C\setminus V_C)=\{\hat{z}_1\hat{z}_2=t(\hat{z}_3-a)\} \nonumber\\
 =&\{\sqrt{4p_1p_2}e^{i(q_1+q_2+q_3+q_4)} =t(\sqrt{2(p_3-p_2)}e^{iq_3}- a)\}. \label{eq:SympEq}
\end{align}

Recall that the third factor of $U_C$ and $V_C$ are disks centered at $a$, say $D_U$ and $D_V$ respectively ($D_V\subsetneq D_U$).
Recall also that we have a straight line $\gamma(r)=(0,0,r,R)$ in $p$-coordinates for $r_0<r \le r_1$ (see the paragraph before Theorem \ref{t:LagrangianSolidTori}) and $a=\sqrt{2r_1}e^{iq}$, so $\gamma$ ends at $a$.
We choose $r_0 <r'<r''<r'''<r_1$ such that if $A_1$ is the $0$-centered annulus with radii $r',r''$ and $A_2$ is the $0$-centered annulus with radii $r'',r'''$ then
$$A_1\cap D_U=\emptyset,\quad A_2\cap D_V=\emptyset,\quad \hbox{ but }\quad A_2\cap D_U\neq\emptyset,$$
see Figure~\ref{fig:p3q3}.
We want to perform an additional symplectic isotopy for ${M}_t^{s^1}$
so that the new symplectic hypersurface is $x$-standard for all $x\in \gamma([r',r''])$ (see Definition \ref{d:xStandard}), as explained in the following lemma.
For $\epsilon>0$, let $B_\epsilon$ denote the closed $0$-centered $\epsilon$-ball in $\RR^2$. We set
$$W_{1,\epsilon}=B_\epsilon\times B_\epsilon\times A_1\times A,$$ 
$$W_{2,\epsilon}=B_\epsilon\times B_\epsilon\times A_2\times A.$$

\begin{figure}
 \includegraphics[width=4.4cm]{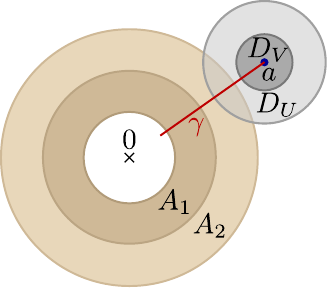} 
 \caption{Disks and annuli in the $z_3$-plane.}
 \label{fig:p3q3}
\end{figure}

\begin{lemma}\label{l:tran}
 Let $N \subset \PP_\Delta$ be an open set such that there exists $\epsilon>0$ with
 $U_C,W_{1,\epsilon},W_{2,\epsilon} \subset N$.
 Then there exists a smooth family $(s^u_{\tran})_{u \in [0,1]}$ of ${s}_1$-admissible sections such that
 $s^0_{\tran}=s^1$ and, for all $u$, $s^u_{\tran}=s^1$ inside $U_C$ and outside $N$. Furthermore,
 \begin{align}
  {M}^{s^1_{\tran}}_t \cap (W_{1,\epsilon}\cup W_{2,\epsilon}) =\{\sqrt{p_1p_2}e^{i(q_1+\dots+q_4)}=tg_{\tran}(p_3-p_2,q_3)\} \label{eq:gtran}
 \end{align}
for some  $g_{\tran}(p_3-p_2,q_3) \in C^{\infty}(W_{1,\epsilon}\cup W_{2,\epsilon} ,\CC)$ such that $g_{\tran}|_{W_{1,\epsilon}}$ is a non-zero constant.

Note that, $g_{\tran}|_{W_{1,\epsilon}}$ being a non-zero constant implies that ${M}^{s^1_{\tran}}_t$ is $x$-standard for $x \in \gamma([r',r''])$.
\end{lemma}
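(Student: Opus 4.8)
\textbf{Plan of proof of Lemma \ref{l:tran}.}
The idea is to run the same homotopy-class argument used repeatedly in Section \ref{s:AwayFromSing} (see the proofs of Lemma \ref{l:ExistenceStandardSymp}, Lemma \ref{l:ExistenceTranStandardSymp} and Lemma \ref{l:gluing2lines}) but now in the chart $U_C$ after the $\Psi^{-1}$-transformation, with the extra constraint that the deformation be trivial on the previously stabilized region $U_C$. First I would record the starting form of the hypersurface. By Proposition \ref{p:fibrationAfterGoodDeform} and Equation \eqref{eq:SympEq}, on $U_C\setminus V_C$ and hence in particular on the overlap region $(W_{1,\epsilon}\cup W_{2,\epsilon})\cap U_C$ (which by the choice of annuli is contained in $W_{2,\epsilon}$), $M_t^{s^1}$ already has the shape $\sqrt{p_1p_2}\,e^{i(q_1+\dots+q_4)}=t\,g$ with $g$ a function of $(p_3-p_2,q_3)=\hat z_3$-data only, namely $g=\frac1{\sqrt2}(\hat z_3-a)$ up to the harmless positive factor coming from $\sqrt{4p_1p_2}$ versus $\sqrt{p_1p_2}$, which we fold into a redefinition of the constant. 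So on $W_{2,\epsilon}\cap U_C$ we may take $g_{\tran}$ to be this given function and nothing needs to be changed there.

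Next I would produce the deformation on $W_{1,\epsilon}$. On $W_{1,\epsilon}=B_\epsilon\times B_\epsilon\times A_1\times A$ the two-dimensional stratum $\{z_1=z_2=0\}=\{\hat z_1=\hat z_2=0\}$ is $A_1\times A$, and since $A_1\cap D_U=\emptyset$ this region is disjoint from the discriminant $\Disc(s_1)\cap U_C=\{0\}\times\{0\}\times\{a\}\times A$; in particular the current defining function $f$ (in the notation of Example \ref{ex:Transforming hypersurfaces}, so $M_t^{s^1}\cap W_{1,\epsilon}=\Phi^{-1}(\{z_1z_2=tf\})$ with $f\in C^\infty(W_{1,\epsilon},\CC^*)$) is $\CC^*$-valued, and as in \eqref{eq:homotopyClass} its induced map on $\pi_1$ is zero because $f$ extends $\CC^*$-valuedly over all of $W_{2,\epsilon}\cup W_{1,\epsilon}$ where the $q_3$-loop is killed. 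Hence there is no topological obstruction to deforming $f$, through $K$-invariant $\CC^*$-valued functions depending only on $(p_3-p_2,q_3)$, to a non-zero constant $c_p$ on $W_{1,\epsilon}$, interpolating to the given $g_{\tran}$-value on the overlap region $W_{1,\epsilon}\cap W_{2,\epsilon}$ (here one uses that $A_1$ and $A_2$ overlap in a smaller annulus and that on that overlap the given function is already $\CC^*$-valued and nullhomotopic on $\pi_1$). As in the earlier lemmas, we arrange the deformation to be compactly supported inside $\pi_\Delta^{-1}(N)$ and, crucially, to be trivial inside $U_C$: this is possible because $W_{1,\epsilon}$ is disjoint from $V_C$ and we only modify $f$ over $W_{1,\epsilon}\setminus U_C$ together with an interpolating collar, exactly in the manner of Lemma \ref{l:gluing2lines}. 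Patching the deformed function with $f$ outside $\pi_\Delta^{-1}(N)$ and with the unchanged section inside $U_C$ gives the family $(s^u_{\tran})_{u\in[0,1]}$ of $s_1$-admissible sections with $s^0_{\tran}=s^1$, with $s^u_{\tran}=s^1$ inside $U_C$ and outside $N$, and with $M^{s^1_{\tran}}_t\cap(W_{1,\epsilon}\cup W_{2,\epsilon})$ of the asserted form \eqref{eq:gtran} and $g_{\tran}|_{W_{1,\epsilon}}$ equal to the constant $c_p$. The last sentence of the lemma is then immediate: on $W_{1,\epsilon}$ the equation reads $\sqrt{p_1 p_2}\,e^{i(q_1+\dots+q_4)}=t c_p$, which is exactly Equation \eqref{eq:SympStandardForm} with $c=c_p\neq 0$ (here $x=\gamma(r)$ has $x_1=x_2=0$ and $x_3,x_4>0$, so the product $\prod_{j,x_j=0}\sqrt{p_j}=\sqrt{p_1p_2}$), hence $M^{s^1_{\tran}}_t$ is $x$-standard with respect to $U$ for every $x\in\gamma([r',r''])$.

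The step I expect to be the main obstacle is bookkeeping the domains so that the deformation genuinely does \emph{nothing} on $U_C$ while still achieving the constant value on $W_{1,\epsilon}$: one must choose the radii $r'<r''<r'''<r_1$ and the ball-radius $\epsilon$ so that $W_{1,\epsilon}\cap U_C=\emptyset$ but $W_{1,\epsilon}\cap W_{2,\epsilon}\neq\emptyset$ and $W_{2,\epsilon}\cap U_C\neq\emptyset$, with all three contained in $N$, and verify that on the various overlaps the functions to be glued already agree up to a nullhomotopic $\CC^*$-valued factor depending only on $(p_3-p_2,q_3)$. Once the combinatorics of Figure \ref{fig:p3q3} is pinned down, the homotopy-theoretic input is the same $\pi_1(\CC^*)=\ZZ$ computation as before and the construction of $(h^u)$, $(g^u)$ mirrors Step one of the proof of Proposition \ref{p:fibrationAfterGoodDeform}.
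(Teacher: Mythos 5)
Your overall strategy — the same homotopy-class argument in $\pi_1(\CC^*)$ that runs through Section \ref{s:AwayFromSing} and Step one of Proposition \ref{p:fibrationAfterGoodDeform}, with the constraint that the deformation be trivial on $U_C$ — is exactly what the paper does. The closing observation about $x$-standardness (reducing \eqref{eq:gtran} restricted to $W_{1,\epsilon}$ to Equation \eqref{eq:SympStandardForm}) is also correct.

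There is, however, a concrete gap in coverage. You deform the defining function only over $W_{1,\epsilon}$ (plus a collar), and leave $W_{2,\epsilon}$ alone, on the grounds that on $W_{2,\epsilon}\cap U_C$ Equation \eqref{eq:SympEq} already gives $g_{\tran}$ as a function of $(p_3-p_2,q_3)$. But $W_{2,\epsilon}\setminus U_C$ (corresponding to $A_2\setminus D_U$, which is nonempty since $A_2\cap D_U$ is a proper intersection) is not governed by \eqref{eq:SympEq}, and the original $f$ there has no reason to depend only on $(p_3-p_2,q_3)$. The conclusion \eqref{eq:gtran} is asserted on all of $W_{1,\epsilon}\cup W_{2,\epsilon}$, and this dependence is used essentially in Lemma \ref{l:LAGtran} via $\nu(g_{\tran}(p_3-p_2,q_3))=0$ for $\nu=\sum_j\partial_{p_j}$. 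The paper therefore runs the nullhomotopy over all of $(A_1\cup A_2)\times A$, holding the function fixed on $((A_1\cup A_2)\cap D_U)\times A$ and deforming everywhere else, including on $A_2\setminus D_U$; your proposal omits that part of the deformation.

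Two smaller imprecisions. First, $A_1$ and $A_2$ do \emph{not} ``overlap in a smaller annulus'': with radii $r'<r''$ and $r''<r'''$ they share only the circle $\{p_3=r''\}$, so one cannot ``interpolate on the overlap''; the interpolation has to be built into a single deformation over the union, as the paper does. Second, your justification that the $\pi_1$-map vanishes — ``$f$ extends $\CC^*$-valuedly over all of $W_{2,\epsilon}\cup W_{1,\epsilon}$ where the $q_3$-loop is killed'' — is off, since $W_{1,\epsilon}\cup W_{2,\epsilon}$ still has $A_1\cup A_2$ as its $z_3$-factor and the $q_3$-loop is nontrivial there. The correct reason (used in the paper) is that $f|_{\{p_1=p_2=0\}}$ extends $\CC^*$-valuedly over a simply-connected set $G\subset\{0\}\times\{0\}\times B_{r'''}\times B_A$ containing the origin and $(A_1\cup A_2)\times A$ and disjoint from $\Disc(s_1)$; the filling of the $z_3$-disk inside radius $\sqrt{2r'}$ (still disjoint from $\{z_3=a\}$ because $r'''<r_1$) is what kills the $q_3$-loop.
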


\begin{proof}
 This proof is very similar to the proof of Lemma~\ref{l:ExistenceStandardSymp} and step one of the proof of Proposition~\ref{p:fibrationAfterGoodDeform}.
 We know that ${M}^{s^1}_t \cap (W_{1,\epsilon}\cup W_{2,\epsilon})$ is given by
 \begin{align}
  \sqrt{p_1p_2p_3p_4}e^{i(q_1+\dots+q_4)}=tf
 \end{align}
for some $f \in C^{\infty}(W_{1,\epsilon}\cup W_{2,\epsilon} ,\CC)$ (cf. \eqref{eq:CornerForm}). 
Let $B_A$ denote the smallest $0$-centered ball containing $A$ (i.e.~of radius the larger radius of $A$).
By the fact that $s^1$ is ${s}_1$-admissible, we have $\im(f|_{\{p_1=p_2=0\}}) \subset \CC^*$.
Moreover, since there is an open subset $G$ of $\big(\{0\} \times \{0\} \times B_{r'''} \times B_A\big)$
which is homeomorphic to a ball, contains both the origin and $(A_1\cup A_2) \times A$
and such that $G \cap \Disc({s}_1)= \emptyset$, we have that 
\begin{align}
 (f|_{\{p_1=p_2=0\}})_*: \pi_1((A_1\cup A_2) \times A) \to \pi_1(\CC^*) 
\end{align}
is the zero map (cf. \eqref{eq:homotopyClass}).
Thus, there is no obstruction to constructing a smooth family $f_{\tran,0}^u:(A_1\cup A_2) \times A \to \CC^*$, for $u \in [0,1]$,
such that $f_{\tran,0}^0=f|_{\{p_1=p_2=0\}}$,
$f_{\tran,0}^u$ is independent of $u$ inside $((A_1\cup A_2)\cap D_U) \times A$, 
$f_{\tran,0}^1=\sqrt{p_3p_4}g_{\tran,0}(p_3,q_3)$ for some $g_{\tran,0}:(A_1\cup A_2) \times A \to \CC^*$
and $g_{\tran,0}|_{A_1 \times A}$ is a non-zero constant.

Finally, as in the step one of the proof of Proposition \ref{p:fibrationAfterGoodDeform}, we can extend 
 $f_{\tran,0}^u$ and $g_{\tran,0}$ to $f_{\tran}^u$ and $g_{\tran}$ which are defined over the whole $W_{\epsilon, 1}\cup W_{\epsilon, 2}$ such that, by patching,
$f_{\tran}^u$ induces a family $(s^u_{\tran})_{u \in [0,1]}$ of ${s}_1$-admissible sections with all the properties listed in the proposition satisfied.
In particular, $g_{\tran}$ satisfies \eqref{eq:gtran}.
\end{proof}

Next, we want to describe the family (for $t>0$ small) of proper Lagrangian solid tori $L_t \subset {M}_t^{s^1} \cap U_C$
in Corollary \ref{c:FinalSolidTori} in $(p,q)$-coordinates.
Recall that $a=\sqrt{2r_1}e^{iq}$ and that Remark~\ref{r:FlexibleDisk} permits us to choose any argument for the Legendrian. 
For our purpose, we pick $r$ to be the map $r\mapsto re^{i\phi}$ with $\phi=\frac{\pi+q}{2}$. 
This way, $r^2+a$ parametrizes a curve that starts at $a$ and moves straight towards the origin.
We find $L_t \cap (U_C\setminus V_C)$ in $\hat{z}$-coordinates (as in \eqref{eq:LAGDisk}) be given by
\begin{align*}
\left\{\hat{z}=\left(re^{i\theta_1},re^{-i\theta_1},\frac{r^2}t+a,\sqrt{2R}e^{i\theta_2}\right) \in {M}_t^{s^1} \cap (U_C\setminus V_C)\left|{r \in 
 e^{i\frac{\pi+q}{2}}(0,\infty),}\atop{\theta_1,\theta_2 \in \mathbb{R}/2\pi \mathbb{Z}}\right.\right\}
\end{align*}
and in the $(p,q)$-coordinates (applying \eqref{eq-phat-pnohat} alias inserting \eqref{eq:Psi}) this is described by the following equations
\begin{align}
p_1=p_2=\frac{|r|^2}{2},\qquad p_3=\frac{\big(\sqrt{2r_1}-\frac{|r|^2}t\big)^2}{2}+\frac{|r|^2}{2},\qquad p_4=R+\frac{|r|^2}{2},\label{eq-p-of-Lagrangian}\\
q_1=\frac{\pi+q}{2}+\theta_1,\qquad q_2=\frac{\pi+q}{2}-\theta_1-\theta_2-q,\qquad q_3=q,\qquad q_4=\theta_2.
\label{eq-q-of-Lagrangian}
\end{align}
The tropical curve $\gamma$ is contained in the line through $(0,0,1,R)$ and $(0,0,0,R)$, so in view of Proposition~\ref{p:standardLagModel}, we define
$W$ to be the affine $2$-plane in $\RR^4$ containing the points $(1,1,1,1+R)$, $(0,0,1,R)$ and $(0,0,0,R)$, so $\gamma \subseteq W \cap \partial \coDelta$.
By inspecting \eqref{eq-p-of-Lagrangian}, we see that
$p\in W$ for all $(p,q) \in L_t \cap (U_C\setminus V_C)$
and, by deriving \eqref{eq-q-of-Lagrangian}, we find $\partial_{\theta_1}, \partial_{\theta_2} \in W^{\perp}$.

The following lemma gives a family (for $t>0$ small) of Lagrangian solid tori (with boundary) in ${M}_t^{s^1_{\tran}}$ 
that are $\gamma((r',r''))$-standard (see Definition \ref{def-Lag-standard}).

\begin{lemma}\label{l:LAGtran}
For $s^1_{\tran}$ in Lemma \ref{l:tran}, 
there is a family of Lagrangian solid tori with boundary, for $t>0$ sufficiently small,
$L_t^{\tran} \subset {M}_t^{s^1_{\tran}} \cap (W_{1,\epsilon}\cup W_{2,\epsilon} \cup U_C)$
such that 
\begin{enumerate}
 \item $p \in W \cup \pi_{\Delta}(V_C)$ for all $(p,q) \in L_t^{\tran}$, and 
 \item $W^\perp \subset T_{(p,q)} L_t$ for all $(p,q) \in L_t^{\tran}$ satisfying $p \in W\backslash \pi_{\Delta}(V_C)$, and
 \item the $p_3$-coordinate of all points in the torus boundary $\partial L_t^{\tran}$ is $r'$.
\end{enumerate}

\end{lemma}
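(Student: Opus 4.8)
\textbf{Proof plan for Lemma~\ref{l:LAGtran}.}
The plan is to combine the Lagrangian solid torus $L_t$ from Corollary~\ref{c:FinalSolidTori} (living in ${M}_t^{s^1}\cap U_C$ and cylindrical over the Legendrian \eqref{eq:boundaryLeg} in $U_C\setminus V_C$) with the $c^\circ$-transition-standard torus bundle one obtains over $\gamma([r',r'''])$ from Lemma~\ref{l:standardTransitionModel} and Proposition~\ref{p:standardLagModel}, and to splice the two along the overlap region $A_2\cap D_U$ (where $W_{2,\epsilon}$ meets $U_C\setminus V_C$). First I would record, using the explicit description \eqref{eq-p-of-Lagrangian}--\eqref{eq-q-of-Lagrangian} of $L_t\cap(U_C\setminus V_C)$ in $(p,q)$-coordinates, that on this cylindrical end the solid torus is exactly a $T^2$-bundle (in the $\theta_1,\theta_2$ variables) over the curve $\{p=P(r,t)\}$ with $P(r,t)$ tracing out a subset of $W$, and that $\partial_{\theta_1},\partial_{\theta_2}\in W^\perp$; this is precisely the form of a $c^\circ$-standard Lagrangian in the sense of Definition~\ref{def-Lag-standard} for the segment of $\gamma$ lying in $\pi_\Delta(U_C\setminus V_C)$. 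Thus the end of $L_t$ is already $c$-standard for the appropriate sub-segment $c\subset\gamma$; what remains is to extend it leftwards past $p_3=r''$ and $p_3=r'$.

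Next I would invoke Lemma~\ref{l:tran} to replace $s^1$ by $s^1_{\tran}$, which is $s^1$-admissible, agrees with $s^1$ on $U_C$ and outside $N$, and makes ${M}_t^{s^1_{\tran}}$ $x$-standard for every $x\in\gamma([r',r''])$ while leaving $M_t^{s^1}\cap U_C$ unchanged. Over the region $W_{1,\epsilon}\cup W_{2,\epsilon}\cup U_C$ we now have a hypersurface that is $x$-standard on $\gamma([r',r''])$, transition-standard on the overlap segment, and on which the given $L_t$ provides a genuine Lagrangian torus bundle near $p_3=r'''$. I would then apply Proposition~\ref{p:standardLagModel} on $W_{1,\epsilon}$ and Lemma~\ref{l:standardTransitionModel} on the transition region $W_{2,\epsilon}$ to build $c$-standard and $c^\circ$-transition-standard Lagrangian torus bundles there whose $\pi_\Delta$-image Hausdorff-converges to $\gamma([r',r'''])$, and concatenate them with $L_t$ over the overlap $A_2\cap D_U$ exactly as in the concluding paragraph of the proof of Proposition~\ref{p:GluingModels}: both pieces are $\im(c)$-standard there, so by Proposition~\ref{p:standardLagModel} they differ only by the translation function $H(r)$ of \eqref{eq:xStandardL}, and interpolating $H(r)$ glues them smoothly into one Lagrangian. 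Defining $L_t^{\tran}$ to be the union of $L_t\cap V_C$, the gluing collar, and the extension over $W_{1,\epsilon}\cup W_{2,\epsilon}$, one obtains a solid torus with torus boundary sitting over $p_3=r'$.

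Finally I would verify the three listed properties. Property~(3) is immediate from the construction: we cut off the extended piece at the slice $\{p_3=r'\}$, which is transverse to $\gamma$ by the choice of $W$, so $\partial L_t^{\tran}$ is the $T^2$ fiber over $\gamma(r')$ and has $p_3$-coordinate $r'$. Properties~(1) and~(2) hold by construction on the $x$-standard and transition-standard parts, where conditions $(I),(II)$ of Proposition~\ref{p:standardLagModel} and Lemma~\ref{l:standardTransitionModel} give $\pi_\Delta(L_t^{\tran})\subset W$ and $W^\perp\subset T_{(p,q)}L_t^{\tran}$; on the part of $L_t^{\tran}$ lying in $V_C$, where $W^\perp$-invariance is not claimed, the description \eqref{eq-p-of-Lagrangian} still shows $p\in W$, and one only needs $\pi_\Delta(V_C)$ as a safety term for the small ball $D_V$ around $a$ where the solid torus caps off. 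The main obstacle I anticipate is bookkeeping the overlap geometry of the annuli $A_1,A_2$ and the disks $D_U,D_V$ (Figure~\ref{fig:p3q3}) so that the concatenation region $A_2\cap D_U$ is non-empty, disjoint from $V_C$, and contained in the locus where \emph{both} $L_t$ (via its cylindrical form \eqref{eq-p-of-Lagrangian}) and the transition-standard extension are simultaneously $\im(c)$-standard — this is exactly the compatibility needed to apply the $H(r)$-interpolation step, and checking it requires carefully tracking which coordinate $z_3$ versus the affine $p_3$-coordinate controls each neighbourhood.
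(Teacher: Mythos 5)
Your proposal is essentially the same as the paper's argument: observe that $s^1_{\tran}|_{U_C}=s^1|_{U_C}$ so $L_t$ from Corollary~\ref{c:FinalSolidTori} survives the modification, verify from \eqref{eq-p-of-Lagrangian}--\eqref{eq-q-of-Lagrangian} that the cylindrical end already satisfies conditions (I),(II), build a torus bundle over $\gamma([r',r'''])$ in ${M}_t^{s^1_{\tran}}$, and splice using the $H(r)$-translation freedom of \eqref{eq:xStandardL}.

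One place where you should be more careful: you propose to \emph{apply} Lemma~\ref{l:standardTransitionModel} on $W_{2,\epsilon}$, but strictly speaking its hypotheses are not met there. The hypersurface equation produced by Lemma~\ref{l:tran} is $\sqrt{p_1p_2}e^{i\sum q_j}=t\,g_{\tran}(p_3-p_2,q_3)$, whose right side depends on $q_3$ and on the combination $p_3-p_2$ rather than on a product $\prod p_j$ as required by the form \eqref{eq:TranStandardSymp} in Definition~\ref{d:xTransitionStandard}; so the hypersurface is not literally $c^\circ$-transition-standard on $W_{2,\epsilon}$. What the paper does instead is fix $q_3=q$ (which is what the Lagrangian demands anyway), form $\rho:=p_1p_2/|g_{\tran}(p_3-p_2,q)|^2$, and verify directly that $\nu(\rho)>0$ for $\nu=\sum_j\partial_{p_j}$ — i.e.\ it re-runs the monotonicity argument from the proofs of Proposition~\ref{p:standardLagModel} and Lemma~\ref{l:standardTransitionModel} rather than citing either as a black box. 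Your plan is sound once this substitution is made, and for the same reason you should describe the cylindrical end of $L_t$ only as satisfying $(I),(II)$ (which is what the splicing needs) rather than calling it $c^\circ$-standard in the strict sense of Definition~\ref{def-Lag-standard}, since the hypersurface in $U_C\setminus V_C$, given by \eqref{eq:SympEq}, is not $x$-standard there either.
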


\begin{proof} 
 By the construction in Lemma \ref{l:tran}, $s^1_{\tran}|_{U_C}=s^1|_{U_C}$
 so the $L_t$ constructed in Corollary \ref{c:FinalSolidTori} are Lagrangian inside ${M}_t^{s^1_{\tran}}\cap U_C$.
 Inspecting \eqref{eq:boundaryLeg} and \eqref{eq:LegUnknot}, 
 for a fixed $t>0$ sufficiently small, by Proposition~\ref{p:LagrangianConstructionSingLoci} and the paragraph before, the $(p_3,q_3)$-coordinates of all the points in $\partial L_t$ are the same and they lie in $\partial D_U$.
 Therefore, we need to explain how to `extend' $L_t$ to $L_t^{\tran}\subset {M}_t^{s^1_{\tran}} \cap (W_{1,\epsilon}\cup W_{2,\epsilon} \cup U_C)$
 so that, in particular, the $p_3$-coordinate of all the points in the torus boundary $\partial L_t^{\tran}$ equals $r'$.
 
 The proof strategy is the same as Proposition \ref{p:standardLagModel} and Lemma \ref{l:standardTransitionModel}.
 By Lemma \ref{l:tran}, ${M}^{s^1_{\tran}}_t \cap (W_{1,\epsilon}\cup W_{2,\epsilon})$ is given by
  \begin{align} \label{eq-nearby-fibre-Lagtran}
  \sqrt{p_1p_2}e^{i(q_1+\dots+q_4)}=tg_{\tran}(p_3-p_2,q_3).
 \end{align}
 We want to construct a Lagrangian in ${M}^{s^1_{\tran}}_t \cap (W_{1,\epsilon}\cup W_{2,\epsilon})$ such that $q_3=q$ is a constant. We move $\gamma$ inside $W$ from the toric boundary to the nearby fibers as follows:
Consider the function $\rho:=\frac{p_1p_2}{|g_{\tran}(p_3-p_2,q)|^2}$ on $\pi_\Delta(W_{1,\epsilon}\cup W_{2,\epsilon})$, so $\rho=t^2$ is the moment map image of the hypersurface \eqref{eq-nearby-fibre-Lagtran}.
 Let $\nu:=\sum_{j=1}^4 \partial_{p_j}$, so $\nu(g_{\tran}(p_3-p_2,q_3))=0$. 
This implies $\nu(\rho)>0$ for all $p \in \pi_\Delta(W_{1,\epsilon}\cup W_{2,\epsilon}) \backslash \{p_1=p_2=0\}$, so $\rho$ is strictly increasing in the direction $(1,1,1,1)$ and zero on the boundary $\{p_1=0\}\cup\{p_2=0\}$.
 For small $t>0$, for all $r \in (r',r''')$, there exists a unique $\lambda$ such that $p=\gamma(r)+\lambda(1,1,1,1)$ satisfies $\sqrt{p_1p_2}=t|g_{\tran}(p_3-p_2,q)|$.
 Therefore, by the reasoning in Proposition \ref{p:standardLagModel} and Lemma \ref{l:standardTransitionModel}, 
 we get a family of Lagrangians in ${M}^{s^1_{\tran}}_t \cap (W_{1,\epsilon}\cup W_{2,\epsilon})$
 that is $\gamma((r',r''))$-standard.
 By choosing $q_1,q_2,q_4$-coordinates appropriately (cf. \eqref{eq:xStandardL}), this family can be smoothly attached to $L_t$ to give $L_t^{\tran}$ as desired.
\end{proof}

Now recall that we applied a Hamiltonian isotopy $\phi$ in Section \ref{ss:Correcting discriminant} to modify $s_1$ so that the discriminant became straight in the sense of Proposition~\ref{p:correctSingModel} at the endpoint of the tropical curve.
We will account for this step in the following and conclude the proof of Theorem \ref{t:LagrangianSolidTori} where we carefully distinguish between $s_1$ and $\hat s_1$, etc., see \eqref{eq-hat-first}-\eqref{eq-hat-last}.

\begin{proof}[Proof of Theorem \ref{t:LagrangianSolidTori}]
Let $s$ be an $s_1$-admissible section and $N$ be a neighborhood of $\gamma(r_1)$.
Let $r'< r'' <r_1$ be such that $\gamma([r',r_1]) \subset N$.
Consequently, $W_{1,\epsilon} \subset \pi^{-1}_\Delta(N)$ for $\epsilon>0$ small (indeed, recall that $W_{1,\epsilon} $ is defined with respect to $(p,q)$-coordinates).
Now, after applying the integral linear transformation $\Psi$, we let $N_1 \subset \pi_\Delta^{-1}(N)$ be a tubular neighborhood of $C$ such that $W_{1,\epsilon} \cap N_1=\emptyset$, where $N_1$ is defined in $(\hat{p}, \hat{q})$-coordinates.
We apply Proposition \ref{p:correctSingModel} to $N_1$ so that we get a Hamiltonian isotopy $\phi_H$ supported inside $N_1$ to straighten the discriminant.
By Corollary~\ref{c:FinalSolidTori}, there exist neighborhoods $V_C \subset U_C \subset N_1$ of $C$, $\hat{s_1}$-admissible sections $(s^u)_{u \in [0,1]}$ and for all $t>0$ small, Lagrangian solid tori $L_t \subset \hat{M}^{s^1}_t \cap U_C$.
Let $r'''<r_1$ such that the corresponding $W_{2,\epsilon}$ satisfies
$W_{2,\epsilon} \cap U_C \neq \emptyset$ and $W_{2,\epsilon} \cap V_C = \emptyset$ as before.
We can now apply Lemma \ref{l:LAGtran} to obtain $L_t^{\tran} \subset \hat{M}_t^{s^1_{\tran}} \cap (W_{1,\epsilon}\cup W_{2,\epsilon} \cup U_C)$.

Finally, we apply the inverse of the Hamiltonian isotopy $\phi_H$ to get $\phi_H^{-1}(L_t^{\tran}) \subset \phi_H^{-1}(\hat{M}_t^{s^1_{\tran}}) \cap \phi_H^{-1}(W_{1,\epsilon}\cup W_{2,\epsilon} \cup U_C)$.
First note that $\phi_H^{-1}(\hat{M}_t^{s^1_{\tran}})={M}_t^{s^1_{\tran} \circ \phi_H}$ and $s^1_{\tran} \circ \phi_H$ is $s_1$-admissible.
By definition, $\phi_H^{-1}$ is the identity outside $N_1$.
As a result, $\phi_H^{-1}(\hat{M}_t^{s^1_{\tran}})={M}_t^{s^1_{\tran} \circ \phi_H}$ remains $x$ standard in $W_{1,\epsilon}$ (because $W_{1,\epsilon} \cap N_1=\emptyset$).
Moreover, $\phi_H^{-1}(L_t^{\tran}) $ remains $\gamma((r',r''))$-standard for the same reason.
This finishes the proof.
\end{proof}

\subsection{Concluding the proof of Theorem \ref{t:Construction}}\label{ss:Concluding}

\begin{proof}[Proof of Theorem \ref{t:Construction}]
Let $\gamma$ be a tropical curve satisfying the assumptions of Theorem \ref{t:Construction}.
Let $N$ be a neighborhood of $\gamma$.
We can apply Theorem \ref{t:LagrangianSolidTori} to construct open Lagrangian solid tori for the endings of the tropical curve near the discriminant
such that the non-compact ends of the tori are standard with respect to an open subset of $\gamma$.
Therefore, we can apply Proposition \ref{p:assemble} to obtain, for all $t>0$ small, a closed Lagrangian $L_t \subset M^s_t$
such that $\pi_{\Delta}(L_t) \subset N$.

Again, as explained in the proof of Proposition \ref{p:assemble}, we can assume the families of $s_1$-admissible sections we have constructed are constant outside $\pi_{\Delta}^{-1}(N)$.
Therefore, we can apply Lemma \ref{l:symplecticIsotopy} to conclude that  $L_t \subset M^s_t$
can be brought back, via a symplectic isotopy, to a closed embedded Lagrangian inside $M_t \cap \pi_{\Delta}^{-1}(N)$.

The statement regarding multiplicity is proved in Proposition \ref{p:multiplicity}.
\end{proof}

\subsubsection{Orbifold case}\label{ss:OrbifoldConstruction}

When $\PP_\Delta$ is a toric orbifold, the proof of Theorem~\ref{t:Construction} goes very similar. 
First, by Lemma \ref{l:unbranched}, the cover $\CC^n \to \CC^n/K$ is unbranched away from the origin.
It means that if $U/K$ is a symplectic corner chart for $\PP_\Delta$, then $U \backslash \{0\} \to (U \backslash \{0\})/K$
is an unbranched cyclic covering.
Note that near the discriminant, the tropical curve is in the direction $(0,0,1,0)$ with respect to the symplectic corner chart $U$.
Since the cyclic group $K$ is generated by an element in $(\mathbb{R}/2\pi\mathbb{Z})^4$ with non-zero components (otherwise, the orbifold points will not be isolated),
it implies that we are necessarily in the case $K \cap W^\perp_T=\{0\}$ in Proposition \ref{p:standardLagModel}.
Therefore, if we denote the lift of $\Disc \subset U/G$ to $U$ by $\widetilde{\Disc}$, then we can apply Theorem \ref{t:LagrangianSolidTori}
in $U$ to get a family of solid Lagrangian tori $L_t$ near $\widetilde{\Disc}$ and its $K$-orbit is a disjoint 
union of $|K|$ solid Lagrangian tori.
Since $L_t$ are away from the origin in $U$, it descends to a family of solid Lagrangian tori in $U/K$ near $\Disc$.
%The same covering and descending method applies for the construction of Lagrangians away from the discriminant.
Therefore, the result follows.

{\small
\bibliography{ToricLens}
\bibliographystyle{plain}
}
\end{document}